\tikzstyle arrowstyle=[scale=1.2]
\newtheorem{thm}{Theorem}[section]
\newtheorem{cor}[thm]{Corollary}
\newtheorem{lem}[thm]{Lemma}
\newtheorem{prop}[thm]{Proposition}
\newtheorem*{prob*}{Problem}
\newtheorem*{thm*}{Theorem}
 \theoremstyle{definition}
\newtheorem{defn}[thm]{Definition}
\newtheorem{rem}[thm]{Remark}
\numberwithin{equation}{section}
\newcommand{\bo}{\mathcal{O}}
\newcommand{\Epsilon}{\mathcal{E}}
\newcommand{\R}{\mathbb R}
\newcommand{\eps}{\varepsilon}
\DeclareMathOperator{\diag}{diag}
\newcommand{\re}{\mathop{\mathrm{Re}}}
\newcommand{\im}{\mathop{\mathrm{Im}}}
\newcommand{\Tr}{\mathop{\mathrm{Tr}}}
\newcommand{\eins}{\leavevmode\hbox{\small1\kern-3.8pt\normalsize1}}
\newcommand{\e}{\,\mbox{e}}
\newcommand{\abs}[1]{\left\vert#1\right\vert}
\begin{document}
\title{\bf{Singular Value Statistics For the  Spiked Elliptic Ginibre Ensemble}}

\author{Dang-Zheng Liu}
\address{Key Laboratory of Wu Wen-Tsun Mathematics, CAS, School of Mathematical Sciences, University of Science and Technology of China, Hefei 230026, P.R.~China}
 \email{dzliu@ustc.edu.cn}

\author{Yanhui Wang}
\address{School of Mathematics and Statistics, Henan University, Henan, 475001, P.R.~China}
\email{yhwang@henu.edu.cn}

\keywords{Spiked Wishart matrix,  elliptic ensemble, Pfaffian  point processes, Fredholm Pfaffian, Tracy-Widom distribution}

\begin{abstract}
The complex elliptic Ginibre ensemble with coupling $\tau$ is a complex Gaussian  matrix interpolating   between  the Gaussian Unitary Ensemble (GUE) and the Ginibre ensemble. It has been known for some time  that its eigenvalues form a determinantal point process in the complex plane. A recent result of Kanazawa and Kieburg (arXiv:1804.03985) shows that  the   singular values  form   a Pfaffian point process. In this paper we turn to consider an extended  elliptic Ginibre ensemble, which  connects  the  GUE and the spiked Wishart matrix, and prove that the singular values still build a Pfaffian point process with  correlation kernels  expressed by   contour integral representations.  As   $\tau$ tends to 1  at  a certain critical rate,  we prove that the  limiting distribution  of the largest singular value  is described   as a  new Fredholm Pfaffian  series,   which connects  two distributions $F_{\mathrm{GUE}}$ and  $F^{2}_{\mathrm{GUE}}$ where  $F_{\mathrm{GUE}}$ is
the  GUE Tracy-Widom distribution.
For fixed $\tau$,  we prove  the Baik-Ben Arous-P\'ech\'e  transition of   the largest singular value  and the sine kernel in the bulk.
  We also observe  a  crossover phenomenon    at the origin when $\tau$ tends to 1  at  another critical rate. \end{abstract}
\date{\today}

\maketitle

\tableofcontents

\section{Introduction and main results}

\subsection{Introduction}

The study of non-Hermitian random matrices     was first initiated in 1965
by Ginibre \cite{ginibre65},  who introduced Gaussian random matrices (complex, real and quaternion real) as a mathematical
extension of Hermitian random matrix theory.
 Although Ginibre  was  motivated  by the works of Wigner, Dyson and Mehta on   random Hamiltonians,    he  remarked   that `\textit{apart from the intrinsic interest of the problem, one may hope that the methods and
results will provide further insight in the cases of physical interest or suggest as yet
lacking applications}'. It is indeed true that  now non-Hermitian random matrices  have many interesting  and important applications, including   modelling of
fractional quantum-Hall effect,  Coulomb plasma \cite{forrester10}, and stability
of the complex ecological system \cite{May72,AT12b,AT15}; see \cite{KS11} for an overview.

In the case of the complex  Ginibre ensemble, which is an $N\times N$ complex random matrix  $X$ with  density  proportional  to $\exp\{-\Tr(X^* X)\}$, its  eigenvalues  form a determinantal point process in the complex plane \cite{ginibre65} and the corresponding eigenvalue statistics have been extensively studied, see e.g. \cite{forrester10}. Another  well-studied model is the (complex) elliptic Ginibre ensemble which was introduced  as an interpolation between Hermitian and non-Hermitian random matrices; see \cite{girko85} and \cite{FKS97,FKS98}. To be precise,  let $H_1,H_2$ be two independent random  matrices sampled  from  the Gaussian Unitary Ensemble (GUE for short), which is described by the Gaussian measure with density $C_{N}\exp\{-\Tr(H^2)\}$ on the space of $N\times N$ Hermitian matrices, then  $X=\sqrt{1+\tau}H_{1}+i\sqrt{1-\tau}H_{2}$ with a coupling parameter  $\tau\in [0, 1]$ is an interpolating ensemble   between the  Ginibre ensemble  ($\tau=0$) and  the GUE  ($\tau=1$).  The matrix $X=[X_{i,j}]$ has a probability density  function (PDF for short)
			 \begin{equation} \label{ellipticPDF}
			 \widetilde{P}_{N}(X)= \frac{1}{(\pi\sqrt{1-\tau^2})^{N^2}}\exp\!\Big\{-\frac{1}{1-\tau^2}\Tr\left(X^{*}X-\frac{\tau}{2}\big(X^{2}+(X^{*})^{2}\big)\right)\Big\}.
			 \end{equation}
Here it is worth stressing that (i) $\{X_{j,j}:1\leq j\leq N\} \cup \{(X_{i,j}, X_{j,i}):1\leq i<j\leq N\}$ are independent and (ii)  $\tau$  is the correlation coefficient between  $\re\{X_{i,j}\}$ and $\re\{X_{j,i}\}$   while
$-\tau$  is the correlation coefficient between   $\im\{X_{i,j}\}$ and $\im\{X_{j,i}\}$  for any $i<j$.

The model \eqref{ellipticPDF}  has an interesting  variation    in Quantum Chromodynamics (QCD), which relates to  a certain Dirac operator
$$\mathcal{D}= \begin{bmatrix}
       0& iX\\
       iX^{*} & 0
      \end{bmatrix}.$$
  This chiral form     has  at least three major applications in   QCD: 4D QCD at high temperature, 3D QCD at  finite   isospin chemical potential and 3D lattice QCD for staggered fermions; see  \cite{KK1,KK2} and references therein  for detailed discussion. An important characteristic of  $ \mathcal{D}$ is that  its eigenvalues come in pairs  $\{ \lambda_j, -\lambda_j\}$ because of  the chiral symmetry, and this is crucial in the study of  the Hermitian Wilson Dirac operator
  $$\mathcal{D}_5= \begin{bmatrix}
       0& X\\
       X^{*} & 0
      \end{bmatrix}+\mu H,$$
  where $H$ is a  GUE matrix of size $2N$. The latter  was considered by Akemann and Nagao \cite{AN2011}
 as an  interpolating ensemble between the GUE and chiral GUE.

It is known from \cite{FKS97} that the elliptic Ginibre ensemble \eqref{ellipticPDF}  has  eigenvalue  PDF    \begin{align}\label{EEdensity}
      \widetilde{\mathcal{P}}_{N}(z)
      &=\frac{1}{(\pi\sqrt{1-\tau^2})^{N} \prod_{j=1}^{N}j!}  \exp\!\Big\{-\frac{1}{1-\tau^2}\sum_{j=1}^{N}\left(|z_{j}|^2-\frac{\tau}{2}\big(z_{j}^{2}+\overline{z}_{j}^{2}\big)\right)\Big\} \,
       \prod_{j<k}|z_k-z_j|^2, \nonumber
    \end{align}
and it is a simple corollary  that the statistical state is a determinantal point process in the complex plane. For any fixed $\tau\in [0,1)$,  the
empirical spectral measure  of eigenvalues converges   to a uniform law   on  an ellipse
$$\Big\{z=x+iy\in \mathbb{C}:  \frac{x^2}{(1+\tau)^2}+\frac{y^2}{(1-\tau)^2}\leq 1 \Big\}.$$
This  is called an   elliptic law in the literature and in fact  holds for iid type random matrices, see e.g. \cite{girko85, NO15}.
 When the parameter $\tau$ approaches 1, its support   rapidly degenerates from an ellipse   to an interval $[-2,2]$ of the real line. Then certain crossover phenomena   for local eigenvalue statistics may occur when $\tau$ changes  at  a certain  critical rate.
 Actually, with the help of Hermite polynomials,  in the so-called  weak non-Hermiticity situation that $1-\tau=\kappa/N$,
Fyodorov, Khoruzhenko and Sommers observed  a   transition  of eigenvalues in the  bulk    from Wigner-Dyson to
Ginibre   statistics;  see \cite{FKS97,KS11}. In another critical regime   of  $1-\tau=\kappa N^{-1/3}$,   Bender  \cite{bender10} proved an  edge  transition for  real part of complex eigenvalues  from Poisson to Airy point processes.  See \cite{ACV16} for very recent  results related to these settings.%on local statistical properties of  the elliptic ensemble.

The squared singular values (to be abbreviated  `singular values' throughout the paper  in this context for brevity) of the complex Ginibre  ensemble $X$,   equivalently,
eigenvalues of the well-known  complex Wishart  ensemble $X^{*}X$,  admit a determinantal structure and have been studied extensively; see e.g. \cite{forrester93, Johansson00,Johnstone01, tw1994, tw1996}.  However,   it is only very recently  that    Kanazawa and Kieburg \cite{KK2} have  identified the statistical state formed by the   singular values  of the elliptic Ginibre ensemble.   Instead of a determinantal point process, it is proved that   the singular values  build  a Pfaffian point process.  Further,  explicit formulas for  skew orthogonal polynomials are constructed  via the supersymmetry method  and  a finite sum of  skew-orthogonal polynomials for correlation kernel  is thus derived.
In principle, this has opened up the possibility for  asymptotic analysis of the local statistics,  e.g.,  limiting distributions of both the smallest and largest eigenvalues.

 In the present paper  we study the singular values of a generalization of the elliptic Ginibre ensemble,  interpolating between  the GUE and  the spiked Wishart ensemble.  Specifically,  for positive integers $M\geq N$, introduce an $M\times N$ rectangular matrix   $A = \begin{bmatrix}
      I_{N}&0_{N\times(M-N)}
    \end{bmatrix}$ where $I_N$ is an identity matrix of size $N$ and $0_{N\times(M-N)}$ is a null matrix, and  let $\Sigma$ be an $N \times N$  positive definite matrix with eigenvalues $\sigma_{1}, \ldots, \sigma_{N}>0$.  We define an  $M\times N$ random  matrix $X$  with  PDF  with respect to  the Lebesgue measure on $\mathbb{C}^{M \times N}$
  \begin{equation}\label{densitySCEE}
    P_{N}(X) = \frac{1}{Z_{N}} \exp\!\Big\{-\eta_{+} \Tr\big(X^{*}X\Sigma\big) + \frac{1}{2}\eta_{-}\Tr\big((X A)^{2} + (A^{*}X^{*})^{2}\big)\Big\},
  \end{equation}
  where   $\eta_{+}>0$,   $\sigma_1, \ldots, \sigma_{N}>\eta_{-}/\eta_{+}$ and the normalization constant    \begin{equation*}  \frac{1}{Z_{N}}= \Big(\frac{\eta_{+}}{\pi} \Big)^{M} \big(\!\det{\Sigma}\big)^{M-N}
     \sqrt{\det \Big(\Sigma \otimes \Sigma - \big(\frac{\eta_{-}}{\eta_{+}}\big)^{2}I_{N^2}\Big)},
  \end{equation*}
  and throughout  the paper
 \begin{equation}
    \eta_{+}=\frac{1}{1-\tau^2},  \qquad  \eta_{-}=\frac{\tau}{1-\tau^2}, \qquad \tau\in (0,1). \label{etapm}
  \end{equation}
This may be treated as   a spiked rectangular version of the complex elliptic ensemble,  since the case of  $M=N$ and $\Sigma=I_{N}$  reduces to  the elliptic ensemble. In the special case  of  $\tau=0$,  $X^{*}X$ is  the complex Wishart matrix with a spike.  We remark that spiked Wishart matrices  appear  in a wide range of applications of Random Matrix  Theory (RMT),  for example   in statistics,  signal
 processing and random growth models;   see e.g. \cite{BBP05, peche14}.

The data matrix $X$ defined  in \eqref{densitySCEE}  plays a significant role  in multivariate statistics and data science. When $\tau=0$ in \eqref{densitySCEE},   $X$ consists of $M$  independent  column variables (samples), each  of which has a  covariance matrix $\Sigma^{-1}$. However, for many types of  microarray datasets, e.g., genetic and  financial data,  the case of independent samples is then not applicable; see e.g.
\cite{AT12, efron09} and references therein.    It raises  a serious problem  about   the seemingly natural independence assumption  of samples and    suggests the need for conducting independence test  among samples. The challenge   arising from correlation among samples  has recently been taken on  by Allen and Tibshirani \cite{AT12}, Chen and Liu \cite{CL18}, Pan  et al.  \cite{pan14}.  Correlated sample matrices  also appear in MIMO channels  wireless communications \cite{HG12,MGB07}.   When $\tau\in (0,1)$,  the data matrix $X$  in \eqref{densitySCEE} is in fact transposable, meaning that    both rows and columns are potentially correlated  \cite{AT12}.   Thus the mixing effect from correlation between columns  caused by $\Sigma$ and  correlation  between  entries  $\re\{X_{i,j}\}$ and $\re\{X_{j,i}\}$ ($i\neq j$) by     $\tau$  shows that  both rows and columns are   correlated.
To the best of our  knowledge,  our study   is the first example of transposable data matrices  for which the singular value PDF is   given in closed form and moreover     local    statistics  can be  analyzed  in more details; cf. Theorems
\ref{edgecritthm}, \ref{largestcritthm}, \ref{bulk limit}, \ref{edge limit} and \ref {originlimit} below.

 One  fundamental problem in RMT is to prove universality of local eigenvalue statistics, typically according to the spectral position of limiting eigenvalue density and symmetry class  of matrix entries.    In the Hermitian case the universal phenomenon   is usually described by  Airy kernel at the soft edge, Sine kernel in the bulk and Bessel kernel at the hard edge.     In the literature there are a lot of relevant articles on universality phenomenon, say, \cite{EY2010, EY2012, TV2010,TV2011,TV2012} or the  recent monographs  \cite{AB11, Deift99, EY2017, forrester10} and references therein.
 %Except for above three kinds of  universal patterns, two  new families  of  hard edge kernels, which are so-called Meijer G- and Pearcey  Meijer G- kernels, have been found in a rapid developing  topic of products of random matrices respectively in \cite{KZ14} and \cite{FL16}.  Also, there exist certain transitions between different limiting kernels  in products of two coupled random matrices; see \cite{ACLS, AS16a, AS16b, Liu18}.
 At the soft edge, the spiked Wishart matrix with $\Sigma$ being a low rank (rank $r$ say) perturbation of the identity  matrix   can  unlock  a separation  of the $r$ largest eigenvalues. This  novel phenomenon is usually called the BBP transition after Baik, Ben Arous and P\'ech\'e   \cite{BBP05}, and has  aroused great concern   in recent years.
See \cite{FL16}  for a hard-edge analog of the BBP transition for the smallest singular values  in a non-central complex Wishart matrix.
%Except for the three  universal patterns some novel phenomena  have attracted great attention, say,  the BBP transition \cite{BBP05} for the largest eigenvalue in  the spiked complex Wishart  matrix.
 % a phase transition   is established at the hard edge from  Bessel kernel  to Pearcey  kernel  and to finite LUE kernel; see \cite{FL16}.  This is a hard-edge analog of the BBP transition for the largest eigenvalues first proved in \cite{BBP05}.

Furthermore, RMT affords  a versatile description of  crossover  phenomena  between different symmetry classes. Two classical interpolating random matrix ensembles  were introduced
and solved by Mehta and Pandey  \cite{PM83, MP83}; see \cite{FNH99} for recent and relevant results. These works  describe   transitions between the  GUE, relevant for systems without time-reversal invariance, and the Gaussian orthogonal ensemble as well as the Gaussian symplectic ensemble   for systems with time-reversal symmetry and
even or odd spin, respectively.  Various limits of the symplectic-unitary and orthogonal-unitary transition kernels were  first considered in  \cite{FNH99}.  Similarly,
 as  an interpolating ensemble  between the
 GUE and the Ginibre ensemble,  the eigenvalues of  the complex elliptic  Ginibre ensemble describe a transition from complex to real eigenvalues in the weak non-Hermiticity   regime; see \cite{bender10, FKS97,FKS98} or \cite{ACV16} for recent developments.   It is reasonable to expect  that  the  singular values will exhibit an analogous effect.
 Indeed, the recent work of Kanazawa and Kieburg \cite{KK1} conjectured
a crossover behaviour involving the smallest singular values, and employed a combination of
analytic and numeric methods to predict an expression for the corresponding microscopic level
density at the origin; see \cite[eq.(14)]{KK1}.
 % Actually, in one previous paper \cite{KK1}, Kanazawa and Kieburg    conjectured  that there would be  a crossover  of smallest singular values at the origin. Moreover, they employed the supersymmetry method  and Monte Carlo simulation to claim an expression of the microscopic level density at the origin; see \cite[eq.(14)]{KK1}.

\subsection{Main results}
As  noted above,  Kanazawa and Kieburg \cite{KK2} have studied the singular values for the  complex elliptic Ginibre ensemble defined  in \eqref{ellipticPDF} and  proved that these  form  a Pfaffian point process with   correlation kernel in terms of certain   skew orthogonal polynomials.  One of our main  goals  is to  further study  the largest singular value and
investigate its  limiting distribution.
In the case when $\tau$ changes at a critical rate such that  $1-\tau \propto N^{-1/3}$, we will show that the largest singular value exhibits  a new crossover phenomenon.
%We will show that  the  BBP phase transition  (cf. \cite{BBP05}) holds for  the largest  singular value  when $\tau\in (0,1)$ is fixed, while as  $\tau$ changes at a critical rate such that  $1-\tau \propto N^{-1/3}$  there exists  a new crossover phenomenon.
It   is described   by  a  particular  Fredholm Pfaffian  series,  and  is actually an interpolation between  the  GUE Tracy-Widom distribution  $F_{\mathrm{GUE}}$  and  the distribution of the maximum of two independent and identically distributed random variables with  distribution  $F_{\mathrm{GUE}}$ (cf.\,Theorem\,\ref{SVGUE} in Sect. \ref{lastsect}).

More generally,  for the spiked model \eqref{densitySCEE}  we derive the singular value PDF
 as  a Pfaffian point process and further  investigate scaling limits  in all different spectral regimes.   We will prove   the  BBP transition (cf. \cite{BBP05})  for  the largest  singular value  when $\tau\in (0,1)$ is fixed,   while
at the  same critical value of $\tau$  we obtain  a multi-parameter    variation of the interpolating distribution given in Definition \ref{pflargest} below. In addition,   our method to solve a $2\times 2$ matrix  kernel  for the Pfaffian point process  in Theorem \ref{pfstruct} of Section \ref{sectkernel} should be of independent interest.

  Our first main result is a Pfaffian structure for  the  eigenvalue PDF of $W=X^{*}X$.
  \begin{thm} \label{SVdensity} With  $X$ defined as in  \eqref{densitySCEE},
   for even $M$ the joint probability density function of eigenvalues  $\lambda_{1}, \ldots, \lambda_{N}$ of $W=X^{*}X$ is given by
    \begin{align}\label{densityCEES}
      f_{N}(\lambda)
      &=\frac{1}{Z_{M,N}} \det\!\big[e^{-(\eta_{+}\sigma_{j} + \eta_{-})\lambda_{k}}\big]_{j,k=1}^{N} \mathrm{Pf}\!\begin{bmatrix}
        \begin{array}{c|c}
          \Epsilon(\eta_{-}\lambda_{j}, \eta_{-}\lambda_{k}) & g_{b}(\lambda_{j})\\
          \hline
          -g_{a}(\lambda_{k}) & \alpha_{a,b}
        \end{array}
      \end{bmatrix}_{\substack{j,k = 1, \ldots, N\\a, b = 1, \ldots, M-N}},
    \end{align}
    where  $Z_{M, N}$ is the normalisation  constant and  with  $I_{0}$ denoting the modified Bessel function
    \begin{align}
      \Epsilon(u, v) = \int_{\mathbb{R}^{2}} dx dy\, \frac{x-y}{x+y} e^{-\frac{1}{2}(x^{2}+y^{2})}       \left(I_{0}(2 x \sqrt{u}) I_{0}(2 y \sqrt{v}) - I_{0}(2 y \sqrt{u})I_{0}(2 x \sqrt{v})\right), \label{pdfeq1}
    \end{align}
  \begin{align}
      g_{a}(u) &= \eta_{-}^{a-1} \int_{\mathbb{R}^{2}} dx dy\, \frac{x-y}{x+y} e^{-\frac{1}{2}(x^{2}+y^{2})}  \left(  x^{2(a-1)}  I_{0}(2 y \sqrt{\eta_{-} u}) - y^{2(a-1)} I_{0}(2 x \sqrt{\eta_{-} u}) \right), \label{pdfeq2}\\
      \alpha_{a,b} &= \eta_{-}^{a+b-2} \int_{\mathbb{R}^{2}} dx dy\,  \frac{x-y}{x+y} e^{-\frac{1}{2}(x^{2}+y^{2})} \big (x^{2(a-1)} y^{2(b-1)}  -  y^{2(a-1)}x^{2(b-1)}\big).\label{pdfeq3}
    \end{align}
  \end{thm}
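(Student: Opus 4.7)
The plan is to perform an SVD-type change of variables, decouple the right-unitary factor via Haar invariance, carry out the right-unitary integral with the Harish--Chandra--Itzykson--Zuber (HCIZ) formula, and convert the remaining left-unitary integral to the claimed bordered Pfaffian by a de Bruijn identity. This extends the supersymmetric computation of Kanazawa--Kieburg \cite{KK2} from the unspiked square case ($M=N$, $\Sigma = I$) to the present spiked rectangular setting.

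First, I would decompose $X$ into blocks $X_1 \in \mathbb{C}^{N\times N}$ (top) and $X_2 \in \mathbb{C}^{(M-N)\times N}$ (bottom); a direct check gives $\Tr((XA)^2) = \Tr(X_1^2)$, so the coupling touches only $X_1$. Performing the SVD $X = U\Lambda V^*$ with $U \in U(M)$, $V \in U(N)$, $\Lambda_{jj} = \sqrt{\lambda_j}$ contributes the Jacobian $\prod_j \lambda_j^{M-N}\prod_{j<k}(\lambda_j - \lambda_k)^2$. Under the left-Haar-invariant substitution $U = \diag(V, I_{M-N})\,\tilde U$, one has $U_{11} = V\tilde U_{11}$ and hence $X_1 = V\tilde U_{11}DV^*$ with $D = \diag(\sqrt{\lambda_j})$, so that
\[
\Tr(X_1^2) \;=\; \Tr\!\bigl(V\tilde U_{11}DV^{*}V\tilde U_{11}DV^{*}\bigr) \;=\; \Tr\!\bigl((\tilde U_{11}D)^2\bigr),
\]
independent of $V$. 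The $V$-integral then reduces to the pure HCIZ integral
\[
\int_{U(N)} dV\, e^{-\eta_+\Tr(\Sigma VD^2V^*)} \;=\; \frac{c_N}{\Delta(\sigma)\Delta(\lambda)}\,\det\!\bigl[e^{-\eta_+\sigma_j\lambda_k}\bigr]_{j,k=1}^N,
\]
which combined with $\prod_j\lambda_j^{M-N}\Delta(\lambda)^2$ from the Jacobian produces the main determinant of \eqref{densityCEES}; the extra shift $\prod_k e^{-\eta_-\lambda_k}$ will emerge from the remaining group integral.

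The core analytic task is then to evaluate
\[
I(\lambda) \;=\; \int_{U(M)} d\tilde U\, \exp\!\Bigl(\tfrac{\eta_-}{2}\Tr((\tilde U_{11}D)^2) + \tfrac{\eta_-}{2}\Tr((D\tilde U_{11}^{*})^2)\Bigr).
\]
I would linearize each of the two non-Hermitian quadratics in $\tilde U_{11}$ by a Hubbard--Stratonovich identity, trading each for an auxiliary Gaussian integral against a pair of real variables $x, y$ carrying the weight $e^{-(x^2+y^2)/2}$ that appears in $\Epsilon, g_a, \alpha_{a,b}$, with an exponent that is linear in $\tilde U_{11}$. The resulting Haar integral over the constrained top-block $\tilde U_{11}$ can then be evaluated by a rectangular Itzykson--Zuber-type calculation: the truncated-unitary structure of $\tilde U_{11}$ produces for each ``dynamical'' column of $\tilde U$ (one paired with a nonzero row of $\Lambda$) a radial piece of the form $\int t^{2(a-1)} e^{-t^2/2} I_0(2 t \sqrt{\eta_- \lambda_k})\, dt$, which is precisely the ingredient of the Bessel kernel in $\Epsilon$ and of the border $g_a$. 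The remaining $M-N$ ``boundary'' columns (paired with the zero rows of $\Lambda$) produce rows of fixed powers $x^{2(a-1)}, y^{2(b-1)}$ yielding the constants $\alpha_{a,b}$. Finally, a de Bruijn Pfaffian identity---whose characteristic antisymmetrization kernel $(x-y)/(x+y)$ arises from the leftover Vandermonde factor in $(x,y)$---collapses the resulting $N$-fold integral to the $M\times M$ bordered Pfaffian of the theorem. The hypothesis that $M$ be even is precisely the parity condition for a nonvanishing Pfaffian of an $M\times M$ antisymmetric matrix.

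The main obstacle is this last step: the two Hubbard--Stratonovich linearizations must be implemented \emph{compatibly} with the truncated-unitary structure of $\tilde U_{11}$ (the top block of a Haar unitary, not a free matrix), and the resulting auxiliary double integrals must match precisely the $(x,y)$-forms of $\Epsilon, g_a, \alpha_{a,b}$ under the de Bruijn step. Tracking the normalizations---from the SVD Jacobian, the HCIZ constant, the two Gaussian linearizations, and the Haar normalization---into the single constant $Z_{M,N}$ is also technically delicate.
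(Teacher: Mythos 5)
Your first half matches the paper's proof: after the SVD $X = U\Lambda V^{*}$ you factor the matrix integral as $I_1 I_2$ (the $V$-integral and the $U$-integral), and evaluate $I_1$ by HCIZ, exactly as in the paper. The decoupling of the coupling term from $V$ is also right; the paper achieves it with $V^*A = A\,\mathrm{diag}(V^*, I_{M-N})$, which is the same observation you make.

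The gap is in the core step, the evaluation of the $U(M)$ integral $I_2$. You propose to linearize the two non-Hermitian quadratics $\Tr((\tilde U_{11}D)^2)$ and its conjugate by a Hubbard--Stratonovich transformation, and then to integrate the truncated block $\tilde U_{11}$ by a ``rectangular Itzykson--Zuber-type'' calculation. Neither of these is standard: the exponent $2\Re\Tr(B^2)$ with $B=\tilde U_{11}D$ is not sign-definite, so a naive HS kernel is not convergent; and $\tilde U_{11}$ is a truncated unitary, not a free rectangular matrix, so there is no off-the-shelf HCIZ analogue for it. You yourself flag this step as the ``main obstacle,'' but the outline does not resolve it. The paper instead imports an explicit integral formula over $U(M)$ due to Kanazawa and Kieburg (their Appendix A; derived via supersymmetry), which writes $I_2$ directly as a limit of an $M$-fold real integral of $\prod e^{-x_k^2/2}\,\frac{\Delta_M^2(x)}{\Delta_M(x^2)}\det[I_0(2x_i\sqrt{\eta_-\lambda_j})]$; the factor $e^{-\eta_-\lambda_k}$ and the border entries $g_a,\alpha_{a,b}$ then come out of an L'H\^{o}pital limit as $\lambda_{N+1},\ldots,\lambda_M\to 0$. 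This is the missing analytic ingredient, and without it the HS route remains a sketch rather than a proof.

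A second, smaller issue: you invoke ``a de Bruijn Pfaffian identity'' to collapse the $M$-fold integral. The paper is careful to observe that the standard de Bruijn formula does \emph{not} apply here, because the required integrability of $\int \epsilon(x,y)\phi_i(x)\phi_j(y)\,dxdy$ fails for the factor $\frac{x-y}{x+y}$; it therefore proves a variant (Lemma 3.2) by an inductive Laplace/Pfaffian-expansion argument. If you use the standard de Bruijn statement you would be appealing to a formula whose hypotheses are not met, so this step also needs the replacement lemma.
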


We remark that  $Z_{M, N}$ can be expressed  in terms of the determinant of the Gram type moment matrix  in
Sect.\,\ref{sectkernel}; cf. \eqref{grammatrix} below.
When some of parameters $\sigma_{1}, \ldots, \sigma_{N}$ coincide, \eqref{densityCEES} reduces to a limiting  density by applying L' Hospital's rule. In the case  that $M=N$ and $\Sigma=I_N$,  it  reduces  to the Kanazawa-Kieburg's result \cite[Sect.\,2]{KK2}. When $\tau$ tends to $0$, the singular value PDF indeed converges to that of the spiked complex Wishart matrix.

  Next,  we will prove that  the singular value PDF  of \eqref{densityCEES} forms a Pfaffian point process and  the associated correlation kernel  $K_{N}(u,v)$ can be    given by   contour integrals. Thus, recalling the definition of $k$-point correlation functions (see e.g. \cite{forrester10,mehta2004}) by
  \begin{equation}
   R_{N}^{(k)}(\lambda_1, \ldots, \lambda_k):=\frac{N!}{(N-k)!} \int \cdots \int f_{N}(\lambda_1, \ldots, \lambda_N) d\lambda_{k+1} \cdots d\lambda_{N},
  \end{equation}
 we then  have
   \begin{equation}
   R_{N}^{(k)}(\lambda_1, \ldots, \lambda_k)=  \mathrm{Pf} \left[
        K_{N}(\lambda_{i}, \lambda_{j}) \right]_{i,j = 1}^{k}. \label{kernelexpression}
  \end{equation}

{\bf Notation.}  In the present paper    we let   $\sqrt{z}$   denote  the principal value  such that  it is positive for positive $z$, and let  $\sqrt{z^{2}-1}=\sqrt{z+1}\sqrt{z-1}$ and $\sqrt{1-z^{2}}=\sqrt{1+z}\sqrt{1-z}$ unless otherwise specified. The notation  $\mathcal{C}_{A}$ will be used to denote an anti-clockwise contour encircling all  points in the set $A$ but not any other poles of the integrand.

  \begin{thm} \label{thmkernel}
    With the same notations as in Theorem \ref{SVdensity}, for even $M$  the correlation kernel associated with \eqref{densityCEES}
    is given by a $2\times 2$ matrix
      \begin{align}
      K_{N}(u, v) = \begin{bmatrix}
        DS_{N}(u, v) & S_{N}(u, v)\\
        -S_{N}(v, u) & IS_{N}(u, v)
      \end{bmatrix}, \label{matrixkernel}
    \end{align}
    where
     \begin{align}
      DS_{N}(u, v)       &= \frac{\eta_{-}^2}{4\pi}   \int_{\mathcal{C}_{\{\sigma_{1}/\tau, \ldots, \sigma_{N}/\tau\}}}\frac{dz}{2\pi i} \int_{ \mathcal{C}_{\{\sigma_{1}/\tau, \ldots, \sigma_{N}/\tau\}}} \frac{dw}{2\pi i}  \frac{e^{-\eta_{-} v(z+1)} } {\sqrt{z^2-1}} \frac{e^{-\eta_{-} u(w+1)} } {\sqrt{w^2-1}}
        \nonumber\\
      & \quad\times
            \big(zw\big)^{M-N}\frac{z-w}{1-zw}    \prod_{k = 1}^{N} \frac{\sigma_{k}z-\tau}{ \tau z-\sigma_{k}}
        \frac{\sigma_{k}w-\tau}{ \tau w-\sigma_{k}}, \label{DSneq-2}
    \end{align}
    \begin{align}
      S_{N}(u, v) &= \int_{0}^{\infty} dt\,\Epsilon(\eta_{-}v, \eta_{-}t) DS_{N}(u, t),\label{kernel12}\\
      IS_{N}(u, v) &= -\Epsilon(\eta_{-}u, \eta_{-}v) +\int_{0}^{\infty} \int_{0}^{\infty} dsdt\,  \Epsilon(\eta_{-}v,\eta_{-}t) DS_{N}(s, t)  \Epsilon(\eta_{-}u, \eta_{-}s).\label{kernel22}
    \end{align}
     \end{thm}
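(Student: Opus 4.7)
The plan is to proceed in two stages. First, I would apply the general $2\times 2$ matrix kernel formula from Theorem \ref{pfstruct} (stated in Section \ref{sectkernel} as a result of independent interest for mixed Pfaffian--determinantal ensembles) to the joint density \eqref{densityCEES}. This produces finite-sum expressions for the three entries $DS_N$, $S_N$, and $IS_N$ in which the test functions $e^{-(\eta_+\sigma_j+\eta_-)u}$ (coming from the determinantal prefactor) and $g_a(u)$ (coming from the Pfaffian blocks) are paired via a Schur complement of an augmented Gram-type moment matrix whose blocks are $[\int\!\!\int e^{-(\eta_+\sigma_j+\eta_-)s}\Epsilon(\eta_-s,\eta_-t)e^{-(\eta_+\sigma_k+\eta_-)t}\,ds\,dt]$, $[\int e^{-(\eta_+\sigma_j+\eta_-)s}g_a(s)\,ds]$, and $[\alpha_{a,b}]$. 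The relations \eqref{kernel12} and \eqref{kernel22} between $S_N$, $IS_N$ and $DS_N$ are immediate from this general formalism, because the lower-left and lower-right entries of the matrix kernel are convolutions of the upper-left entry with $\Epsilon$, the $-\Epsilon$ summand in \eqref{kernel22} being the inhomogeneous piece inherent to the Pfaffian kernel construction.

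Second, I would convert the finite-sum expression for $DS_N$ into the double contour integral \eqref{DSneq-2}. The starting point is the standard Bessel identity $\int_0^\infty e^{-\lambda u}I_0(2\sqrt{\alpha u})\,du=e^{\alpha/\lambda}/\lambda$, which collapses the $(u,v)$-integrals against the exponentials $e^{-(\eta_+\sigma_j+\eta_-)\cdot}$ and reduces the remaining Gaussian integrals in the variables $x,y$ of \eqref{pdfeq1}--\eqref{pdfeq3} to rational expressions in the $\sigma_k$. These rational expressions can then be packaged as a sum of residues by introducing the generating kernel $\prod_{k=1}^N(\sigma_kz-\tau)/(\tau z-\sigma_k)$, whose $N$ simple poles at $z=\sigma_k/\tau$ inside $\mathcal{C}_{\{\sigma_1/\tau,\ldots,\sigma_N/\tau\}}$ reproduce the sum over spikes. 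The rational factor $(z-w)/(1-zw)$ should emerge from the antisymmetrization $x\leftrightarrow y$ in \eqref{pdfeq1}, while the factor $(zw)^{M-N}$ should arise from the coupling to the $(M-N)$-dimensional blocks $[g_a]$ and $[\alpha_{a,b}]$, whose $x^{2(a-1)}$-moments encode the rectangular dimension.

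The main obstacle I expect is the explicit inversion of the Schur complement and its repackaging as the single integrand in \eqref{DSneq-2}. Concretely, the cumulative effect of the $(M-N)\times(M-N)$ block $[\alpha_{a,b}]$ combined with the mixed block $[g_a]$ on the Schur complement of the augmented Gram matrix must be shown to contribute precisely the factor $(zw)^{M-N}/\sqrt{(z^2-1)(w^2-1)}$, which will require a Cauchy--Binet-type identity for the Schur complement of the relevant moment matrix. Tracking signs, powers of $\eta_-$, and contour orientations (anti-clockwise around $\{\sigma_k/\tau\}$ while avoiding the branch points at $z=\pm 1$ of $\sqrt{z^{2}-1}$) will be the main technical burden, and verifying that no spurious residues at the poles of $\sqrt{z^2-1}$ contribute is the last check I would perform.
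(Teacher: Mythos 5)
Your Stage 1 framework is correct: the proof does begin by invoking a general Pfaffian-kernel formula (Proposition \ref{rains}, not Theorem \ref{pfstruct} — the latter is the specific application to this ensemble) to express $DS_N$, $S_N$, $IS_N$ as finite sums over the entries of $C=G^{-1}$, where $G$ is a Gram-type moment matrix built from the functions $\phi_j$, $h_a$, the antisymmetric kernel $\Epsilon$, and the block $\alpha_{a,b}$. However, your Stage 2 misses the central idea and proposes a mechanism that I do not think can be made to work as stated.

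The paper does \emph{not} take a Schur complement of the Gram matrix and then apply a Cauchy--Binet identity. The essential trick is a \emph{regularization of the Gram matrix into a single uniform Cauchy-like matrix}. One introduces the new parameters $\varrho_k=\tau/(\sigma_k+\tau)$ for $k\leq N$ and, crucially, $M-N$ \emph{extra} parameters $\varrho_{N+1},\dots,\varrho_M$ so that the $h_a$-block and the $\alpha$-block have exactly the same functional form as the $\phi_j$-block. After evaluating the moment integrals (Lemma \ref{intgauss}), the full $(N+n)\times(N+n)$ Gram matrix factors as $G=4\pi D_1D_2\widetilde{G}D_2D_1$ with $\widetilde{G}_{jk}=(\varrho_j-\varrho_k)/(1-\varrho_j-\varrho_k)$, a single anti-symmetric Cauchy-like matrix. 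The inverse $\widetilde C$ is never computed entrywise; instead one observes that $f_j(z)=\sum_l\tilde c_{lj}\,\frac{z-\varrho_l}{1-z-\varrho_l}$ satisfies $f_j(\varrho_k)=\delta_{jk}$, hence $f_j$ is a rational function with known zeros and poles and is therefore explicitly $f_j(z)=\prod_{k\neq j}\frac{z-\varrho_k}{1-z-\varrho_k}\frac{1-\varrho_j-\varrho_k}{\varrho_j-\varrho_k}$. Combined with a contour-integral representation for $\sqrt{1-2\varrho_k}\,\varrho_k^{-1}e^{-\eta_-t/\varrho_k}$ (eq.~\eqref{intrepvip}), the sums over $j,k$ become double contour integrals by Cauchy's residue theorem. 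Theorem \ref{thmkernel} then follows by sending the auxiliary $\varrho_{N+1},\dots,\varrho_M\to0$ and changing variables $z\to z/(z+1)$, $w\to 1/(w+1)$.

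Two consequences for your plan. First, the attribution of the factor $(zw)^{M-N}$ to ``Cauchy--Binet on the rectangular blocks'' is not correct: it arises in the paper from the limit $\varrho_{N+1},\dots,\varrho_M\to0$ of the product $\prod_{k=N+1}^{M}\frac{z-\varrho_k}{w-\varrho_k}\frac{1-w-\varrho_k}{1-z-\varrho_k}$ after the change of variables, and the $1/\sqrt{(z^2-1)(w^2-1)}$ factor comes from the $1/\sqrt{1-2\varrho_k}$ terms produced by the Gaussian integration and the branch cut of the contour-integral representation, not from the rectangular blocks. Second, the relations \eqref{kernel12} and \eqref{kernel22} are \emph{not} immediate from Proposition \ref{rains}, because $S_N$ and $IS_N$ sum over all $k=1,\dots,N+n$ (including the functions $h_a$), while $DS_N$ only sums over $k\leq N$. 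The reason the extra $h_a$ pieces can be absorbed into the convolution $\int\Epsilon\,DS_N$ is precisely the identity $h_a(v)=\eta_-^a\int_0^\infty\Epsilon(\eta_-v,\eta_-t)\,e^{-\eta_-t/\varrho_{a+N}}\,\varrho_{a+N}^{-1}\,dt$, i.e.\ the $h_a$ look like $\epsilon\phi_k$ with respect to the auxiliary parameters. Without the regularization this collapse does not happen, and I do not see how a Schur-complement plus Cauchy--Binet route would produce it.
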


  Finally, we turn to investigate  local statistical behaviour of the singular values as the matrix size $N$ goes to infinity.  In addition to the usual  sine kernel in  the bulk and Airy kernel at the soft edge in Sect. \ref{sect:noncritical},   attention will be payed  to crossover phenomena
 as the coupling parameter $\tau$ changes from 0 to 1 at a certain  critical rate.
   At the  soft and hard  edges of the spectrum,  we  observe transitions of   the  singular values from those of Ginibre  to GUE  exactly at  critical values of   $\tau=1-2^{2/3}N^{-1/3}\kappa$ and $\tau=1 -  \kappa/N$ with $\kappa\in (0,\infty)$, respectively.  We just state  the  crossover   results at the soft edge   and detail the hard edge  limits in Sect. \ref{sectlimits}.

 For $\kappa\geq 0$ and an integer  $m\geq 0$,   with   $\pi_1, \ldots, \pi_m>0$
 we use double contour integrals to specify the following three functions
  \begin{align}
 S^{(\mathrm{soft})}(\kappa, \pi;u,v)=& \int_{\mathcal{C}_{>}}\frac{dz}{2\pi i }    \int_{\mathcal{C}_{<}}  \frac{dw}{2\pi i }   e^{\frac{1}{3}(w-\kappa)^3-u(w-\kappa)-\frac{1}{3}(z-\kappa)^3+ v(z-\kappa)}    \nonumber\\
& \times  \frac{1}{\sqrt{4zw}}
\frac{z+w}{z-w}   \prod_{k = 1}^{m} \frac{z-\pi_{k}}{ z+ \pi_{k}}\frac{w+\pi_{k}}{w-\pi_{k}}, \label{Ssoft}
  \end{align}
  \begin{align}
 DS^{(\mathrm{soft})}(\kappa, \pi;u,v)=& \int_{\mathcal{C}_{<}}\frac{dz}{2\pi i }    \int_{\mathcal{C}_{<}}  \frac{dw}{2\pi i }   e^{\frac{1}{3}(w-\kappa)^3-u(w-\kappa)+\frac{1}{3}(z-\kappa)^3- v(z-\kappa)}    \nonumber\\
& \times  \frac{1}{\sqrt{4zw}}
\frac{w-z}{z+w}   \prod_{k = 1}^{m} \frac{z+\pi_{k}}{z - \pi_{k}}\frac{w+\pi_{k}}{w - \pi_{k}}, \label{DSsoft}
  \end{align}
    \begin{align}
 IS^{(\mathrm{soft})}(\kappa, \pi;u,v)=& \int_{\mathcal{C}_{>}}\frac{dz}{2\pi i }    \int_{\mathcal{C}_{>}}  \frac{dw}{2\pi i }   e^{-\frac{1}{3}(w-\kappa)^3+u(w-\kappa)-\frac{1}{3}(z-\kappa)^3+v(z-\kappa)}    \nonumber\\
& \times  \frac{1}{\sqrt{4zw}}
\frac{w-z}{z+w}   \prod_{k = 1}^{m} \frac{z-\pi_{k}}{z +\pi_{k}}\frac{w-\pi_{k}}{w + \pi_{k}}, \label{ISsoft}
  \end{align}
%   \begin{equation}
% DS^{(\mathrm{soft})}(\pi;u,v)= \int_{\mathcal{C}_{>}}\frac{dz}{2i\pi }    \int_{\mathcal{C}_{<}}   \frac{dw}{2i\pi } \frac{1}{\sqrt{4zw}}
% \frac{e^{-\frac{1}{3}(z-\kappa)^3+v(z-\kappa)}}{e^{-\frac{1}{3}(w-\kappa)^3+u(w-\kappa)} }
% %e^{\frac{1}{3}(w-\kappa)^3 -\frac{1}{3}(z-\kappa)^3}   e^{v(z-\kappa)-u(w-\kappa)}
%  \frac{z+w}{z-w}   \prod_{k = 1}^{n} \frac{z-\pi_{k}}{w - \pi_{k}}\frac{w+\pi_{k}}{z + \pi_{k}},
%  \end{equation}
  where  $\mathcal{C}_{>}$  is a contour from   $e^{-2i\pi/3}\infty$ to $\delta/2$ and then to   $e^{2i\pi/3}\infty$, while   $\mathcal{C}_{<}$   is a contour  from
 $e^{i\pi/3}\infty$ to $\delta$ and then  to  $e^{-i\pi/3}\infty$ with some $0<\delta<\min\{\pi_1, \ldots, \pi_m\}$.
Further,  introduce a   $2\times 2$ matrix-valued  skew-symmetric kernel
  \begin{equation}
  K^{(\mathrm{soft})}(\kappa, \pi;u, v) =\begin{bmatrix}
       DS^{(\mathrm{soft})}(\kappa, \pi;u,v) &  S^{(\mathrm{soft})}(\kappa, \pi;u,v)  \\
       - S^{(\mathrm{soft})}(\kappa, \pi;v,u)  &  IS^{(\mathrm{soft})}(\kappa, \pi;u,v)
        \end{bmatrix}. \label{mkernel}
\end{equation}

 \begin{defn} \label{pflargest}   Define a
  Fredholm Pfaffian   by the series expansion
       \begin{align}
    F^{(\mathrm{soft})}&(\kappa,\pi;x):=  \mathrm{Pf}[J- K^{(\mathrm{soft})}(\kappa, \pi)] \nonumber \\
    &=1+ \sum_{k=1} ^{\infty}  \frac{(-1)^k}{k!}\int_{x}^{\infty} \cdots   \int_{x}^{\infty}  \mathrm{Pf}[
      {   K^{(\mathrm{soft})}(\kappa, \pi;  u_i, u_j)}]_{i,j = 1}^{k} du_1 \cdots   du_k, \label{pfaffianseries}
  \end{align}
  where  $J$ and  $K^{(\mathrm{soft})}(\kappa, \pi)$ are   the $2\times 2$ integral operators with  kernel
   \begin{equation*}
 J(u, v) =\begin{bmatrix}
       0 &  1_{u=v}  \\
       - 1_{u=v}   &  0
        \end{bmatrix}
\end{equation*}
  and  $K^{(\mathrm{soft})}(\kappa, \pi;u,v)$ given   in  \eqref{mkernel}, respectively.
    \end{defn}
We will see that  the Pfaffian series expansion  in \eqref{pfaffianseries} converges  for any $x\in \R$   and
hence  $F^{(\mathrm{soft})}(\kappa,\pi;x)$ is  well defined (cf. the proof of Theorem \ref{largestcritthm}  in Sect. \ref{proofcritical}).   Moreover,
    as a limit of nondecreasing functions $F^{(\mathrm{soft})}(\kappa,\pi;x)$   is a continuous  nondecreasing   function,    and  we know  from   \eqref{upperbound} and \eqref{seriessum} below that
$F^{(\mathrm{soft})}(\kappa,\pi;x)\to 1$ as $x\to +\infty$.
Although   the proof  of the fact that  $F^{(\mathrm{soft})}(\kappa,\pi;x)\to 0$ as $x\to -\infty$  is not trivial, we believe that $F^{(\mathrm{soft})}(\kappa,\pi;x)$ is a    probability distribution.

Next, we state    the limit theorems about correlation functions  and  the largest eigenvalue.  They characterize certain  interpolating phenomenon   between the   singular values of the GUE and the spiked complex Ginibre ensemble.
 \begin{thm}
 \label{edgecritthm}
 With the same notations as in Theorem \ref{SVdensity} and with     $M=N$ even,  for $\kappa>0$  let   $\tau=1-2^{\frac{2}{3}}N^{-\frac{1}{3}}\kappa$. Given  two fixed nonnegative integer $0\leq m\leq n$,  assume  that $\sigma_{n+1} = \cdots = \sigma_{N} = 1$ and
    \begin{equation}
     \sigma_k=\frac{1}{2}(1+\tau^2)+  N^{-\frac{1}{3}}2^{-\frac{4}{3}} (1-\tau^2)(\pi_{k}-\kappa),  \quad  k= 1, \ldots, m, \label{criticalsigma}
    \end{equation}
    where all $\pi_k>0$.  When  $\pi_1, \ldots, \pi_m$  are in a compact subset of $(0,\infty)$  and $\sigma_{m+1},\ldots,\sigma_n$       are in a compact subset of $(1, \infty)$,
we have       \begin{align}
   \lim_{N \to \infty} & \big(2^{\frac{4}{3}}N^{\frac{1}{3}}\big)^k R_{N}^{(k)}\Big(4N+2^{\frac{4}{3}}N^{\frac{1}{3}}u_1, \ldots,  4N+2^{\frac{4}{3}}N^{\frac{1}{3}}u_k\Big)=  \mathrm{Pf}\big[
      {   K^{(\mathrm{soft})}(\kappa, \pi;  u_i, u_j)}\big]_{i,j = 1}^{k},
  \end{align}
 uniformly for  $u_1, \ldots, u_k$ in a compact subset of $\mathbb{R}$.
    \end{thm}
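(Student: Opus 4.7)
The plan is to start from the exact contour-integral representation of $K_N$ in Theorem~\ref{thmkernel} and carry out a steepest-descent analysis under the rescaling $u = 4N + 2^{4/3}N^{1/3}\hat u$. By~\eqref{kernelexpression} together with the conjugation identity $\mathrm{Pf}(D^T K D) = \det(D)\,\mathrm{Pf}(K)$, it suffices to establish convergence of the three independent entries $DS_N$, $S_N$, $IS_N$ (after a block-diagonal rescaling that absorbs the Jacobians into the Pfaffian prefactor $(2^{4/3}N^{1/3})^k$) to $DS^{(\mathrm{soft})}$, $S^{(\mathrm{soft})}$, $IS^{(\mathrm{soft})}$ respectively. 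A preliminary conjugation by the $u$-dependent gauge $e^{-2\eta_- u}$ removes a divergent $u$-linear factor that does not affect Pfaffian values.

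The heart of the argument is the asymptotics of $DS_N$. Since $1-\tau \sim N^{-1/3}$, the two saddles of the bulk factor $N\log[(w-\tau)/(\tau w - 1)] - \eta_- v(w+1)$ coalesce at $w = 1$, forcing a confluent-saddle (Airy) scaling. I introduce
\[
z = 1 + 2^{1/3}\kappa\,\zeta\,N^{-2/3}, \qquad w = 1 + 2^{1/3}\kappa\,\sigma\,N^{-2/3},
\]
equivalently $s := (w-1)/(1-\tau) = 2^{-1/3}N^{-1/3}\sigma$. The identity $(w-\tau)/(\tau w - 1) = (1+s)/(\tau s - 1)$ is exact, and a Taylor expansion of $\log(1+s) - \log(1-\tau s)$ combined with the expansions of $\tau$, $\eta_-$ and $u$ produces systematic cancellations of the $O(N^{2/3})$ and $O(N^{1/3})$ terms at each order $s,\,s^2,\,s^3$. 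What remains at $O(1)$ is precisely the Airy cubic $\tfrac{1}{3}(\sigma-\kappa)^3 - \hat u(\sigma-\kappa)$ of~\eqref{DSsoft}. Executing this multi-scale cancellation consistently to cubic order is the main technical obstacle of the proof.

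The remaining factors in $DS_N$ are treated uniformly under the same change of variables. The $m$ critical spike factors become
\[
\frac{\sigma_k w - \tau}{\tau w - \sigma_k} \longrightarrow \frac{\sigma + \pi_k}{\sigma - \pi_k},
\]
since the spike zero $\tau/\sigma_k$ and pole $\sigma_k/\tau$ map to $\sigma = \mp \pi_k$; the non-critical spike factors tend to $-1$ uniformly, the resulting sign being absorbed by the branch convention. The Jacobian $dw = 2^{1/3}\kappa N^{-2/3}d\sigma$, the expansion $\sqrt{w^2-1} \sim 2^{2/3}(\kappa\sigma)^{1/2}N^{-1/3}$, the ratio $(z-w)/(1-zw) \to (\sigma-\zeta)/(\sigma+\zeta)$, and the prefactor $\eta_-^2/(4\pi) \sim N^{2/3}$ combine with the diagonal-similarity rescaling to reproduce the factor $1/\sqrt{4\sigma\zeta}$ in~\eqref{DSsoft}. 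The closed contour $\mathcal{C}_{\{\sigma_j/\tau\}}$ is then deformed to $\mathcal{C}_<$ in the $\sigma$-plane: bulk and non-critical poles recede to $+\infty$, the critical poles at $+\pi_k$ remain, and $\mathcal{C}_<$ lies in the Airy decay sectors $\arg(\sigma-\kappa)\in(\pi/6,\pi/2)\cup(-\pi/2,-\pi/6)$, so uniform exponential tail estimates justify the interchange of limit and integral via dominated convergence on compact subsets of $\hat u,\hat v$.

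The convergence of $S_N$ and $IS_N$ is extracted from~\eqref{kernel12}--\eqref{kernel22}. The missing ingredient is the scaling limit of $(u,v)\mapsto\Epsilon(\eta_- u,\eta_- v)$, which I obtain from~\eqref{pdfeq1} by the standard asymptotic $I_0(z)\sim e^z/\sqrt{2\pi z}$ and a Gaussian saddle-point evaluation; the antisymmetric structure of the integrand in~\eqref{pdfeq1} controls the leading oscillatory boundary terms. The net effect of convolving $DS_N(u,t)$ with $\Epsilon(\eta_- v,\eta_- t)$ in the Airy-scaled regime is to flip the sign of the $z$-cubic and to push the $z$-contour from $\mathcal{C}_<$ onto $\mathcal{C}_>$, converting~\eqref{DSsoft} into~\eqref{Ssoft}; a second such convolution together with the subtracted pointwise term $-\Epsilon(\eta_- u,\eta_- v)$ in~\eqref{kernel22} delivers~\eqref{ISsoft}. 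Interchange of limit with the double convolution integrals is justified by uniform exponential bounds along the deformed contours. Assembling the three block-entries and invoking~\eqref{kernelexpression} yields the claimed Pfaffian limit for the correlation functions.
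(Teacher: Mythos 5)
Your steepest-descent analysis of $DS_N$ is essentially the same as the paper's: the confluent saddle at $z_0\to 1$, the Airy scaling $z=1+2^{1/3}\kappa\zeta N^{-2/3}$, the exact spike-factor identity, the cancellations producing the cubic phase $\tfrac13(\sigma-\kappa)^3-\hat u(\sigma-\kappa)$, and the prefactor bookkeeping producing $1/\sqrt{4\sigma\zeta}$ all match the paper's local contribution $I_N^1$ after the translation $\zeta=\hat z+\kappa$, $\sigma=\hat w+\kappa$.

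The genuine gap is in your treatment of $S_N$ and $IS_N$. You propose to work directly with the convolution formulas \eqref{kernel12}--\eqref{kernel22}, evaluate the scaling limit of $\Epsilon(\eta_-u,\eta_-v)$ by ``a Gaussian saddle-point evaluation'' of \eqref{pdfeq1}, and then assert that convolving with $\Epsilon$ in the Airy regime ``flips the sign of the $z$-cubic and pushes the $z$-contour from $\mathcal{C}_<$ onto $\mathcal{C}_>$.'' This is a statement of the answer, not a derivation, and there are two concrete obstacles you do not address. First, the $t$-integral in $\int_0^\infty dt\,\Epsilon(\eta_-v,\eta_-t)\,DS_N(u,t)$ ranges over all scales, not just the Airy window around $t\approx 4N$; your dominated-convergence argument on compact sets of $\hat u,\hat v$ does nothing to control the contributions away from the Airy window, and $\Epsilon(\eta_-v,\eta_-t)$ with $\eta_-v\sim N^{4/3}$ is not a function for which naive $I_0$ asymptotics combined with a single Gaussian saddle give a uniform-in-$t$ estimate (the $x,y$ integrals in \eqref{pdfeq1} have both an endpoint regime and a saddle regime). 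Second, and more structurally, the sign-flip and contour reorientation you claim are not an automatic consequence of Airy scaling; they arise from a dichotomy in the asymptotics of the object obtained by \emph{exactly} integrating out $t$. The paper achieves this by rewriting the $\Epsilon$-convolution as a double contour integral against the function $g(z,v)$ (a confluent hypergeometric) in Theorem~\ref{newrep}, via the exact Bessel identity \eqref{intBessel01} and Lemma~\ref{intgauss}, and then establishing in Proposition~\ref{g-asym} that $g(z,v)$ has the common subleading term $e^{2v}/\sqrt{2\pi v}$ for all $z$ plus the extra term $-e^{(z+1)v}\sqrt{z^2-1}$ \emph{only} when $\Re z>1$. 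That extra term is what produces the $I_N$ piece with the flipped cubic and the $\mathcal{C}_>$-type contour; the common term is the $J_N$ error that one shows is exponentially small. Without some equivalent of the $g$-function and its $\Re z\gtrless 1$ dichotomy, your route does not close, and the ``net effect'' step is a bare assertion. The rest of your argument (gauge conjugation to remove divergent prefactors, contour deformation from the closed $\mathcal{C}_{\{\sigma_j/\tau\}}$ to $\mathcal{C}_<$ with tail estimates, handling of critical vs.\ non-critical spikes) is consistent with the paper, but the $S_N$, $IS_N$ machinery needs to be supplied.
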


   \begin{thm}
 \label{largestcritthm}  With  $X$ being defined  in  \eqref{densitySCEE} and  with the same  assumptions  as in Theorem \ref{edgecritthm},
 let $\lambda_{\mathrm{max}}$ be  the largest  eigenvalue  of $W=X^{*}X$. For any  $x$ in a compact set of $\mathbb{R}$,   we have          \begin{align}
  \lim_{N \to \infty}\mathbb{P}\Big(2^{-\frac{4}{3}}N^{-\frac{1}{3}}\big(\lambda_{\mathrm{max}}-4N\big) \leq   x\Big)=  F^{(\mathrm{soft})}(\kappa,\pi;x).
  \end{align}
    \end{thm}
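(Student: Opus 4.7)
The plan is to express $\mathbb{P}(\lambda_{\max}\leq a)$ as a Fredholm Pfaffian, show that the integrand in each term converges to the corresponding integrand of the series \eqref{pfaffianseries}, and justify exchanging the limit with both the $k$-fold integration and the summation over $k$. By standard inclusion--exclusion for a simple point process together with \eqref{kernelexpression},
\[
\mathbb{P}(\lambda_{\max}\leq a) = \sum_{k=0}^{N}\frac{(-1)^k}{k!}\int_{[a,\infty)^k}\mathrm{Pf}\bigl[K_N(\lambda_i,\lambda_j)\bigr]_{i,j=1}^k\,d\lambda_1\cdots d\lambda_k.
\]
Taking $a=4N+2^{4/3}N^{1/3}x$ and substituting $\lambda_j=4N+2^{4/3}N^{1/3}u_j$, one obtains a series of the same shape as \eqref{pfaffianseries} but with $K^{(\mathrm{soft})}$ replaced by a rescaled finite-$N$ kernel $\tilde K_N(u,v)$: each of the four entries $DS_N,S_N,-S_N(v,u),IS_N$ in \eqref{matrixkernel} acquires the appropriate Jacobian power of $2^{4/3}N^{1/3}$. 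Theorem \ref{edgecritthm} supplies pointwise convergence of these integrands, uniformly on compact sets.

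To promote this pointwise convergence to convergence of the whole series, I would establish a uniform bound in $N$ of the form
\[
\bigl|\mathrm{Pf}[\tilde K_N(u_i,u_j)]_{i,j=1}^k\bigr|\leq k^{k/2}C^k\,\phi(u_1)\cdots\phi(u_k),\qquad u_1,\ldots,u_k\geq x,
\]
for some integrable $\phi:[x,\infty)\to(0,\infty)$ independent of $N$. The combinatorial prefactor $k^{k/2}$ comes from an Hadamard-type inequality for Pfaffians applied to the $2k\times 2k$ skew matrix with $2\times 2$ blocks $\tilde K_N(u_i,u_j)$, while $\phi(u_i)$ records suitable column-wise decay of $\tilde K_N$ as $u_i\to+\infty$. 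Given this bound, dominated convergence inside each $k$-fold integral and the summability of $\sum_k (k^{k/2}C^k/k!)\|\phi\|_1^k$ will permit the exchange of limit with integration and summation and yield the claimed identity.

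The technical heart is therefore to establish pointwise decay estimates of the form
\[
\|\tilde K_N(u,v)\|\leq C\,e^{-c(u+v)},\qquad u,v\geq x,\ N\geq N_0,
\]
with $C,c>0$ depending on $x$ but not on $N$. For the $DS_N$ entry this follows, along the lines of the proof of Theorem \ref{edgecritthm}, by deforming the contours $\mathcal{C}_{\{\sigma_k/\tau\}}$ in \eqref{DSneq-2} after the substitutions $z=1+(2N)^{-1/3}\zeta$, $w=1+(2N)^{-1/3}\omega$ to steepest-descent paths near the soft-edge saddle at $z=w=1$, producing Airy-type cubic decay in the rescaled variables. For $S_N$ and $IS_N$ the extra convolutions with $\Epsilon(\eta_-\cdot,\eta_-\cdot)$ appearing in \eqref{kernel12}--\eqref{kernel22} must be controlled using the Gaussian weight and the standard asymptotics of $I_0$ in \eqref{pdfeq1}--\eqref{pdfeq3}; the point is that $\Epsilon(\eta_- v,\eta_- t)$ decays in $v$ sufficiently fast that the convolution transfers the decay of $DS_N$ to the off-diagonal and lower-right blocks without loss.

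The principal obstacle is exactly this last point: the $\Epsilon$-convolutions integrate over all of $[0,\infty)$, not only the $O(N^{1/3})$ neighbourhood of $4N$ where the critical rescaling is valid, so one must show that the contribution from $t\ll 4N$ is negligible. Once these uniform estimates are in place, dominated convergence combined with Theorem \ref{edgecritthm} identifies the limit of each term of the finite-$N$ series with the corresponding term of \eqref{pfaffianseries}, giving the theorem. As a by-product, the same bounds applied to $K^{(\mathrm{soft})}$ show that the series defining $F^{(\mathrm{soft})}(\kappa,\pi;x)$ is absolutely convergent for every $x\in\mathbb{R}$, which is the well-definedness claim made after Definition \ref{pflargest}.
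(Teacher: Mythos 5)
Your overall scheme matches the paper's: write $\mathbb{P}(\lambda_{\max}\leq 4N+2^{4/3}N^{1/3}x)$ as the finite-$N$ Fredholm Pfaffian series \eqref{seriessum}, establish uniform-in-$N$ exponential decay of the rescaled entries, invoke the Hadamard-type Pfaffian bound of \cite[Lemma 2.6]{BBCS} to get the $k^{k/2}C^k\prod\phi(u_j)$ control, and close by dominated convergence. However, you flag as the ``principal obstacle'' the need to show that the $\Epsilon$-convolutions in \eqref{kernel12}--\eqref{kernel22} are negligible for $t\ll 4N$, and you propose attacking that directly. The paper instead sidesteps this obstacle entirely: Theorem \ref{newrep} rewrites $S_N$ and $IS_N$ as \emph{pure} double contour integrals involving the auxiliary function $g(z,\eta_- v)$ (which satisfies the clean asymptotics of Proposition \ref{g-asym}), so that all three sub-kernels are analyzed by one and the same steepest-descent argument, and there is no leftover integral over $t\in[0,\infty)$ to estimate. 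That rewriting is the step your plan is missing; if you try to bound the convolutions with $\Epsilon$ directly, you will need rather delicate asymptotics of $\Epsilon(\eta_- v,\eta_- t)$ uniformly in both arguments, which the paper avoids.

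One further imprecision worth correcting: your claimed uniform bound $\|\tilde K_N(u,v)\|\leq Ce^{-c(u+v)}$ on the whole matrix kernel is not what can be proved and not what is needed. After the (Pfaffian-preserving) gauge transformation, the three independent blocks satisfy \emph{different and asymmetric} bounds: $DS_N$ decays like $e^{-au-av}$, $S_N$ like $e^{-au+bv}$, and $IS_N$ \emph{grows} like $e^{bu+bv}$, with $a>b>0$ (here $a=\kappa-\tfrac49\pi_*$, $b=\kappa-\tfrac59\pi_*$, cf.\ \eqref{ab}). It is only at the level of the full $2k\times 2k$ Pfaffian, via \cite[Lemma 2.6]{BBCS}, that these three imbalanced bounds recombine into the product $\prod_j e^{-(a-b)u_j}$ appearing in \eqref{upperbound}; the matrix-norm bound $e^{-c(u+v)}$ on $\tilde K_N(u,v)$ itself is false for the $IS_N$ entry. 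With these two corrections your argument coincides with the paper's proof.
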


The rest of this article is organized as follows.
In the next Sect. \ref{sectpdf} we are devoted to the derivation of the  singular value PDF in Theorem \ref{SVdensity}.
In Sect. \ref{sectkernel} we show that  singular values form  a Pfaffian point process and all the  sub-kernels of the   $2 \times 2$  matrix   kernel admit contour integral representations. In  Sect. \ref{sectedge}  we focus  on local singular value statistics  in the bulk and at the soft edge. In particular, we will complete the proof of Theorems  \ref{edgecritthm} and \ref{largestcritthm}.
We leave the study of the hard edge limits  at the origin  in  Sect. \ref{sectlimits} and make some further discussions   in Sect. \ref{lastsect}.

  \section{Singular value PDF} \label{sectpdf}
  In this section we present   the derivation of the singular value PDF of $X$,
  by adapting the approach of \cite{KK2} used to analyze  \eqref{ellipticPDF}.

  \begin{proof}[Proof of Theorem \ref{SVdensity}]
    Write $n=M-N$ for simplicity.
    Use  the singular value decomposition $X = U \Lambda V^{*}$, where two unitary matrices  $U \in \mathcal{U}(M)$, $V \in \mathcal{U}(N)$, and
    \begin{equation}
      \Lambda = \begin{bmatrix}
        \Lambda_{0}\\
         0_{n\times N}
      \end{bmatrix} ,  \quad
        \Lambda_{0}=\mathrm{diag}\Big(\sqrt{\lambda_{1}}, \ldots, \sqrt{\lambda_{N}}\Big). \nonumber
    \end{equation}
    Under this change of variables, the standard  approach  of calculating the Jacobian  as in \cite[Chapter 3]{forrester10} shows
    \begin{align*}
      [dX] =
      C_{N,M} \Delta_{N}^{2}(\lambda_{1}, \ldots, \lambda_{N}) \big(\prod_{k=1}^{N} \lambda_{k}^{n} d\lambda_{k} \big)\,d\mu_{M}(U) d\mu_{N}(V),
    \end{align*}
    where $\Delta_{N}(\lambda_{1}, \ldots, \lambda_{N})$  is the Vandermonde determinant, $[dX]$  and $ \mu_{N}$ denote the  Lebesgue measure on $\mathbb{C}^{M \times N}$ and   the Haar measure on the space of unitary $N\times N$ matrices  $\mathcal{U}(N)$ respectively. Integrating over the  unitary matrices gives that   $W=X^* X$ has   eigenvalue PDF
    \begin{align*}
      & f_{N}(\lambda)
      = \frac{C_{N,M}}{Z_{N}} \Delta_{N}^{2}(\lambda) \prod_{k=1}^{N} \lambda_{k}^{n}  \int_{\mathcal{U}(N)} d\mu_{N}(V) \exp\left\{-\eta_{+} \Tr\big(V \Lambda_{0}^{2} V^{*}\Sigma\big)\right\} \nonumber\\
      &\quad \times \int_{\mathcal{U}(M)} d\mu_{M}(U) \exp\left\{\frac{\eta_{-}}{2}\Tr\big((\Lambda A \diag(V^{*}, I_{n}) U)^{2} + (U^{*} \diag(V, I_{n}) A^{*} \Lambda^{\dag})^{2}\big)\right\},
    \end{align*}
    where we have used the simple fact that $V^{*}A=A\diag(V^*, I_{n})$.

    Absorbing the factor $ \diag(V^{*}, I_{n})$ into the group $ \mathcal{U}(M)$  shows
    \begin{align}
      f_{N}(\lambda)
      = \frac{C_{N,M}}{Z_{N}} \Delta_{N}^{2}(\lambda) \big(\prod_{k=1}^{N} \lambda_{k}^{n} \big)\, I_{1} I_{2},     \label{eqn1}
    \end{align}
    where
    \begin{align}
      I_{1} =\int_{\mathcal{U}(N)} d\mu_{N}(V) \exp\left\{-\eta_{+} \Tr\big(V \Lambda_{0}^{2} V^{*}\Sigma\big)\right\},
    \end{align}
    and
    \begin{align}
      I_{2}=  \int_{\mathcal{U}(M)} d\mu_{M}(U) \exp\left\{\frac{\eta_{-}}{2}\Tr\big((\Lambda A U)^{2} + (U^{*} A^{*} \Lambda^{*})^{2}\big)\right\}.
    \end{align}
    Applying the Harish-Chandra-Itzykson-Zuber (HCIZ) integral formula (cf. \cite{forrester10,mehta2004})  we then have
    \begin{align}
      I_{1}
      = \prod_{k=0}^{N-1} k!\,  \frac{\det[e^{-\eta_{+}\sigma_{i}\lambda_{j}}]_{i,j=1}^N}{\Delta_{N}(\sigma) \Delta_{N}(\lambda)}.  \label{eqn2}
    \end{align}

    For  the integral $I_2$, noting  $\Lambda A=\diag(\Lambda_0, 0_n)$,  let's introduce  an extended $M\times M$ matrix
    $$\tilde{\Lambda}=\diag\big( \sqrt{\lambda_1}, \ldots, \sqrt{\lambda_N}, \sqrt{\lambda_{N+1}}, \ldots,\sqrt{\lambda_M}\big).
    $$
    By a new integral formula over the unitary group due to Kanazawa and Kieburg \cite[Appendix A]{KK2}, we obtain     \begin{align}
      I_{2}      &=\lim_{\lambda_{N+1}, \ldots, \lambda_{M} \to 0} \int_{\mathcal{U}(M)} \exp\left\{\frac{1}{2}\eta_{-}\Tr\big((\tilde{\Lambda} U)^{2} + (U^{*}\tilde{\Lambda})^{2}\big)\right\} d\mu_{M}(U) \nonumber\\
      &= \frac{\prod_{k=0}^{M-1} k!}{(2\pi)^{M/2} M!} \eta_{-}^{{-(M-1)M}/{2}}  \int_{\mathbb{R}^{M}} dx \prod_{k=1}^{M} e^{-\frac{1}{2} x_{k}^{2}} \frac{\Delta_{M}^{2}(x_{1}, \ldots, x_{M})}{\Delta_{M}(x_{1}^{2}, \ldots, x_{M}^{2})} \nonumber\\
      &\quad \times \lim_{\lambda_{N+1}, \ldots, \lambda_{M} \to 0} \frac{\prod_{k=1}^{M} e^{-\eta_{-} \lambda_{k}}}{\Delta_{M}(\lambda_{1}, \ldots, \lambda_{M})} \det\left[I_{0}(2 x_{i} \sqrt{\eta_{-} \lambda_{j}})\right]_{1 \leq i, j \leq M}.
    \end{align}
    %Here we use the Berezin-Karpelevich integral \cite{GW1} and Lebesgue's dominated convergence theorem.
    Applying  the L'H\^{o}spital's rule to take the limit shows
    \begin{align}
      I_{2} &= \frac{\prod_{k=0}^{M-1} k!}{(2\pi)^{M/2} M! \prod_{k=0}^{n-1} k!} \frac{\prod_{k=1}^{N} \lambda_{k}^{-n} e^{-\eta_{-} \lambda_{k}}}{\Delta_{N}(\lambda) \eta_{-}^{(M(M-1))/2}} \int_{\mathbb{R}^{M}} dx \prod_{k=1}^{M} e^{-\frac{1}{2} x_{k}^{2}} \nonumber\\
      &\quad\times  \frac{\Delta_{M}^{2}(x_{1}, \ldots, x_{M})}{\Delta_{M}(x_{1}^{2}, \ldots, x_{M}^{2})} \det\begin{bmatrix}
        \begin{array}{c|c}
          I_{0}(2 x_{i} \sqrt{\eta_{-} \lambda_{j}}) & \frac{\eta_{-}^{a-1}}{(a-1)!}x_{i}^{2(a-1)}
        \end{array}
      \end{bmatrix}_{\substack{i = 1, \ldots, M;\\j = 1, \ldots, N;~a = 1, \ldots, n}}. \label{2.6}
    \end{align}

    By the Schur Pfaffian identity
    \begin{align}
      \frac{\Delta_{M}^{2}(x_{1}, \ldots, x_{M})}{\Delta_{M}(x_{1}^{2}, \ldots, x_{M}^{2})}
      =
      \begin{cases}
        \mathrm{Pf} \left[
          \frac{x_{i} - x_{j}}{x_{i}+x_{j}}\right]_{i,j = 1}^{M},  &\qquad M~\text{even}; \\
        \mathrm{Pf}  \begin{bmatrix}
          \begin{array}{c|c}
       % \Big(
            \frac{x_{i} - x_{j}}{x_{i}+x_{j}} %\Big)_{i,j = 1}^{M}
              &  {1_{M\times 1}}\\
            \hline
            -1_{1\times M}&0
          \end{array}
         \end{bmatrix}_{i,j = 1}^{M}, &\qquad M~\text{odd},
      \end{cases} \label{schur}
    \end{align}
  where $1_{1\times M}=(1, \ldots, 1)$ with $M$ 1's
 and together with Lemma \ref{Bruijn0} below,    for even $M$ we obtain
    \begin{align}
      I_{2} &= \frac{(4\pi)^{-M/2}  \prod_{k=0}^{M-1} k!}{ \prod_{k=0}^{n-1} (k!)^2} \frac{\prod_{k=1}^{N} \lambda_{k}^{-n} e^{-\eta_{-} \lambda_{k}}}{\Delta_{N}(\lambda) \eta_{-}^{M(M-1)/2}}    \mathrm{Pf} \begin{bmatrix}
        \begin{array}{c|c}
          \Epsilon(\eta_{-}\lambda_{j}, \eta_{-}\lambda_{k}) & g_{b}(\lambda_{j})\\
          \hline
          -g_{a}(\lambda_{k}) & \alpha_{a,b}
        \end{array}
      \end{bmatrix}_{\substack{j,k = 1, \ldots, N\\a, b = 1, \ldots, n}}, \label{eqn3}
   \end{align}
    with the same notations as in Theorem \ref{SVdensity}.

    Combining \eqref{eqn1},  \eqref{eqn2} and \eqref{eqn3}, we thus have completed the proof of Theorem \ref{SVdensity}.
  \end{proof}

  \begin{lem}\label{Bruijn0}
    Let  $m\geq 2$ be even. Given a measure space $(\Omega, \nu)$, $\phi_{1}, \ldots, \phi_{m}: \Omega \longrightarrow \mathbb{C}$, and an antisymmetric function  $\epsilon: \Omega \times \Omega \longrightarrow \mathbb{C}$, if
    \begin{equation}
      \epsilon_{i, j} := \int_{\Omega^{2}} \epsilon(x, y) \det\!\begin{bmatrix}
        \phi_{i}(x) & \phi_{i}(y)\\
        \phi_{j}(x) & \phi_{j}(y)
      \end{bmatrix} d\nu(x) d\nu(y) \label{Ematrix}
    \end{equation}
    are well defined for any $1 \leq i, j \leq m$, then
    \begin{align}
      \int_{\Omega^{m}} \mathrm{Pf}\begin{bmatrix}
        \epsilon(x_{i}, x_{j})
      \end{bmatrix}_{i, j=1}^{m} \det\left[\phi_{i}(x_{j})\right]_{i, j=1}^{m} \prod_{i=1}^{m} d\nu(x_{i})
      = \frac{m!}{2^{\frac{m}{2} }} \mathrm{Pf}\begin{bmatrix}
        \epsilon_{i,j}
      \end{bmatrix}_{i, j=1}^{m}. \label{generaleq}
    \end{align}
  \end{lem}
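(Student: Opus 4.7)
The plan is to reduce both sides to sums over permutations and exploit the antisymmetry of $\epsilon$; this is a de Bruijn-type argument adapted to a general antisymmetric kernel $\epsilon(x,y)$ in place of the usual $\mathrm{sgn}(y-x)$. The key algebraic tool is the permutation-sum formula for the Pfaffian of an antisymmetric matrix $A=[a_{i,j}]_{i,j=1}^{m}$ with even $m$,
\[
\mathrm{Pf}(A) \;=\; \frac{1}{2^{m/2}(m/2)!}\sum_{\sigma\in S_m}\mathrm{sgn}(\sigma)\prod_{k=1}^{m/2} a_{\sigma(2k-1),\sigma(2k)}.
\]

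First I would apply this formula to $\mathrm{Pf}[\epsilon(x_i,x_j)]_{i,j=1}^m$ inside the integral and interchange the finite sum with integration. For each fixed $\sigma\in S_m$, perform the change of variables $y_i = x_{\sigma(i)}$: the product $\prod_k \epsilon(x_{\sigma(2k-1)},x_{\sigma(2k)})$ becomes $\prod_k \epsilon(y_{2k-1},y_{2k})$, while $\det[\phi_i(x_j)]$ becomes $\det[\phi_i(y_{\sigma^{-1}(j)})]$, which after reordering columns equals $\mathrm{sgn}(\sigma)\det[\phi_i(y_j)]$. The two sign factors cancel, so each of the $m!$ summands contributes the same integral and the prefactor collapses to $m!/(2^{m/2}(m/2)!)$.

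Next I would expand the remaining determinant as $\det[\phi_i(y_j)] = \sum_{\tau\in S_m}\mathrm{sgn}(\tau)\prod_j \phi_{\tau(j)}(y_j)$ and use Fubini to factor the integral over $\Omega^m$ into a product of $m/2$ two-dimensional integrals. A typical factor is
\[
\int_{\Omega^{2}} \epsilon(y,z)\,\phi_a(y)\,\phi_b(z)\,d\nu(y)\,d\nu(z),
\]
which by antisymmetry of $\epsilon$ (swap $y\leftrightarrow z$ in one copy of the integral and compare with the definition \eqref{Ematrix}) equals $\tfrac{1}{2}\epsilon_{a,b}$. Multiplying $m/2$ such factors and summing over $\tau$ yields
\[
\frac{1}{2^{m/2}}\sum_{\tau\in S_m}\mathrm{sgn}(\tau)\prod_{k=1}^{m/2}\epsilon_{\tau(2k-1),\tau(2k)} \;=\; (m/2)!\,\mathrm{Pf}[\epsilon_{i,j}]
\]
by the Pfaffian formula recalled above. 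Combined with the prefactor from the first step, this produces the claimed value $(m!/2^{m/2})\,\mathrm{Pf}[\epsilon_{i,j}]$.

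I do not anticipate any essential obstacle. The hypothesis that every $\epsilon_{i,j}$ is well defined together with the finiteness of $m$ make all sum/integral interchanges legitimate, and the evenness of $m$ is what permits the grouping of indices into $m/2$ pairs at both ends of the argument. The only bookkeeping that warrants care is the sign accounting in the column permutation of $\det[\phi_i(y_{\sigma^{-1}(j)})]$, but this is a routine use of $\det(AP_\sigma)=\mathrm{sgn}(\sigma)\det A$.
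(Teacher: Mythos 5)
Your argument is essentially the classical de Bruijn derivation, and steps 1--3 (summing over $\sigma$ in the Pfaffian, changing variables $y_i = x_{\sigma(i)}$, and matching the two sign factors) are sound. However, there is a genuine gap at the Fubini factorization: after expanding $\det[\phi_i(y_j)] = \sum_{\tau}\mathrm{sgn}(\tau)\prod_j\phi_{\tau(j)}(y_j)$, you treat each monomial separately and evaluate factors of the form
\[
\int_{\Omega^{2}}\epsilon(y,z)\,\phi_a(y)\,\phi_b(z)\,d\nu(y)\,d\nu(z),
\]
asserting that each equals $\tfrac12\epsilon_{a,b}$. This is precisely the step the lemma is designed to avoid. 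The hypothesis only guarantees convergence of the antisymmetrized integrals $\epsilon_{i,j}$; it does \emph{not} guarantee that the ``single'' integrals $\tilde\epsilon_{a,b}:=\int\epsilon(y,z)\phi_a(y)\phi_b(z)\,d\nu d\nu$ converge. In the paper's application, $\epsilon(x,y)=\tfrac{x-y}{x+y}e^{-\frac12(x^2+y^2)}$ has a non-integrable singularity on $\{x+y=0\}$ that is cancelled inside the $2\times2$ minor $\phi_i(x)\phi_j(y)-\phi_i(y)\phi_j(x)$ but not in the individual monomials; so the $\tilde\epsilon_{a,b}$ are divergent, the interchange of $\sum_\tau$ with $\int_{\Omega^m}$ is unjustified, and the factorization of the $m$-fold integral into $m/2$ double integrals fails. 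The paper's remark immediately after the lemma states exactly this: when the $\tilde\epsilon_{a,b}$ do converge, the result is the ordinary de Bruijn formula, and that assumption is \emph{not} met here.

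To repair your approach you would need to avoid decomposing the $2\times2$ minors: rather than expanding the determinant into $m!$ monomials, group the permutations $\tau$ into the $m!/2^{m/2}$ cosets of the subgroup generated by the transpositions $(2k-1,2k)$, $k=1,\ldots,m/2$. Summing over each coset before integrating reproduces the product $\prod_k\bigl(\phi_{a_k}(y_{2k-1})\phi_{b_k}(y_{2k})-\phi_{a_k}(y_{2k})\phi_{b_k}(y_{2k-1})\bigr)$, and then Fubini factors the integral into a product of the well-defined $\epsilon_{a_k,b_k}$ directly, without ever invoking $\tilde\epsilon$. The paper instead proves the identity by induction on $m$, pairing the recursive (first-row) expansion of the Pfaffian with a Laplace expansion of the determinant along two columns; that pairing likewise produces the $2\times2$ minor (hence $\epsilon_{i,j}$) in one piece at every stage, which is the mechanism by which it sidesteps the integrability issue your factorization runs into.
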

  \begin{proof} We proceed by induction.
   Obviously, the desired result is true for $m=2$.

   Suppose that it remains  true  for $m-2$.  For $m$, starting from the recursive definition of the Pfaffian, we have
    \begin{align}\label{recPf}
      \mathrm{Pf} \begin{bmatrix}
        \epsilon(x_{i}, x_{j})
      \end{bmatrix}_{1 \leq i, j \leq m}
      &= \sum_{k=2}^{m} (-1)^{k} \epsilon(x_{1}, x_{k}) \mathrm{Pf}\begin{bmatrix}
        \epsilon(x_{i}, x_{j})
      \end{bmatrix}_{i, j \notin \{1, k\}}.
    \end{align}
    Here $\mathrm{Pf}\begin{bmatrix} \epsilon(x_{i}, x_{j}) \end{bmatrix}_{i, j \notin \{1, k\}}$ is the Pfaffian of the matrix $[\epsilon(x_{r}, x_{s})]_{1 \leq r, s \leq m}$ with both $1$-th and $k$-th rows and columns removed.
    Correspondingly, take the Laplace expansion of the determinant $\det\left[\phi_{i}(x_{j})\right]$ as
    \begin{align}\label{LapExp}
      \det\left[\phi_{i}(x_{j})\right]
      &= \sum_{1 \leq i < j \leq m} (-1)^{i+j+k+1} \det\begin{bmatrix}
        \phi_{i}(x_{1}) & \phi_{i}(x_{k}) \\
        \phi_{j}(x_{1}) & \phi_{j}(x_{k})
      \end{bmatrix} \det\begin{bmatrix}
        \phi_{r}(x_{s})
      \end{bmatrix}_{r \notin \{i, j\}, s \notin\{1, k\}}.
    \end{align}
   % where $\det(\phi_{r}(x_{s}))_{r \notin \{i, j\}, s \notin\{1, k\}}$ is the determinant of matrix $(\phi_{r}(x_{s}))_{1 \leq r, s \leq m}$ with $i$-th and $j$-th rows, $1$-st and $k$-th columns removed.
    Taking \eqref{recPf} and \eqref{LapExp} into consideration, we obtain
    \begin{align*}
%      &\int_{\Omega^{m}} \mathrm{Pf}\begin{bmatrix}
%        \epsilon(x_{i}, x_{j})
%      \end{bmatrix}_{1 \leq i, j \leq m} \det\left[\phi_{i}(x_{j})\right]_{1 \leq i, j \leq m} \prod_{i=1}^{m} d\nu(x_{i}) \\
      \mathrm{LHS \ of\  \eqref{generaleq}}&= \sum_{k=2}^{m} \sum_{1 \leq i < j \leq m} (-1)^{i+j+k+1} (-1)^{k} \int_{\Omega^{m}} \epsilon(x_{1}, x_{k}) \mathrm{Pf}\begin{bmatrix}
        \epsilon(x_{i}, x_{j})
      \end{bmatrix}_{i, j \notin \{1, k\}}\\
      &\quad \times \det\begin{bmatrix}
        \phi_{i}(x_{1}) & \phi_{i}(x_{k}) \\
        \phi_{j}(x_{1}) & \phi_{j}(x_{k})
      \end{bmatrix} \det\begin{bmatrix}
        \phi_{r}(x_{s})
      \end{bmatrix}_{r \notin \{i, j\}, s \notin\{1, k\}} \prod_{i=1}^{m} d\nu(x_{i})\\
      &= \sum_{k=2}^{m} \sum_{1 \leq i < j \leq m} (-1)^{i+j+1}  \int_{\Omega^{2}} \epsilon(x, y) \det\begin{bmatrix}
        \phi_{i}(x) & \phi_{i}(y) \\
        \phi_{j}(x) & \phi_{j}(y)
      \end{bmatrix} d\nu(x) d\nu(y)\\
      &\quad \times \int_{\Omega^{m-2}} \mathrm{Pf}\begin{bmatrix}
        \epsilon(x_{i}, x_{j})
      \end{bmatrix}_{1 \leq i, j \leq m-2} \det\begin{bmatrix}
        \phi_{r}(x_{s})
      \end{bmatrix}_{r \notin \{i, j\}, 1 \leq s \leq m-2} \prod_{i=1}^{m-2} d\nu(x_{i}).    \end{align*}

    By the induction hypothesis, we further get
     \begin{align*}
      \mathrm{LHS \ of\  \eqref{generaleq}}
      &= \frac{(m-1)!}{2^{\frac{m-2}{2}}} \sum_{1 \leq i < j \leq m} (-1)^{i+j+1} \epsilon_{i,j} \mathrm{Pf}\begin{bmatrix}
        \epsilon_{r,s}
      \end{bmatrix}_{r, s \notin \{i, j\}}\\
      &= \frac{(m-1)!}{2^{\frac{m}{2}}} \sum_{1 \leq i \neq j \leq m} (-1)^{i+j+1+\Theta(i-j)} \epsilon_{i,j} \mathrm{Pf}\begin{bmatrix}
        \epsilon_{r,s}
      \end{bmatrix}_{r, s \notin \{i, j\}}\\
      &= \frac{m!}{2^{\frac{m}{2}}} \mathrm{Pf}\begin{bmatrix}
        \epsilon_{i,j}
      \end{bmatrix}_{1 \leq i, j \leq m},
    \end{align*}
    where $\Theta$ denotes  the  Heaviside step function.  We
   thus complete the proof.
  \end{proof}

 Lemma \ref{Bruijn0}  is nothing but  a slight variation  of  the de Bruijn integration  formula \cite{de}.   Indeed, if   \begin{equation*}
     \tilde{\epsilon}_{i, j} := \int_{\Omega^{2}} \epsilon(x, y) 
        \phi_{i}(x)   \phi_{j}(y)
    d\nu(x) d\nu(y)<\infty
    \end{equation*}
    are well defined  for any $1 \leq i, j \leq m$, 
  then  \eqref{generaleq}  is  de Bruijn integration  formula since   $\epsilon$ is antisymmetric and thus  $ \epsilon_{i, j} =2\tilde{\epsilon}_{i, j}$; see   e.g.   
  \cite[Proposition 6.3.5]{forrester10}.  However, in our case (cf.  \eqref{2.6} and \eqref{schur})   the above  integrability  assumption is  not  met,  so  it is necessary  for us to have recourse  to  Lemma \ref{Bruijn0}. 
   Indeed,  the formula \eqref{generaleq}  has been   used   in a particular situation by Kanazawa and Kieburg \cite{KK2} but without a detailed proof.

We conclude this section with a remark about the limit of $\tau \to 0$,   following the method of \cite[Sect. 6.2]{KK2}. A different limit of $\tau \to 1$,  relevant  to singular values of GUE,  is much more technical; see   \cite{Edelman,KK2}.
  Since  each entry from   the Pfaffian in \eqref{eqn3} tends to $0$ as  $\tau \to 0$, we need to expand it  into a series in $\eta_{-}$.
 Recalling   the  series expansion of the modified Bessel function $I_0$, we have
  \begin{align*}
    \Epsilon(\eta_{-}\lambda_{j}, \eta_{-}\lambda_{k}) &= \sum_{r, s = 0}^{\infty} \beta_{r,s}(\eta_{-}\lambda_{j})^{r} (\eta_{-}\lambda_{k})^{s}, \\
    g_{a}(\lambda_{j}) &= \eta_{-}^{a-1} \sum_{r=0}^{\infty} \beta_{r,s}(\eta_{-}\lambda_{j})^{r},
  \end{align*}
  where
  \begin{align*}
    \beta_{r,s} = \int_{\mathbb{R}^{2}} dx dy \frac{x-y}{x+y} e^{-\frac{1}{2}(x^{2}+y^{2})} \frac{1}{(r!s!)^{2}} (x^{2r}y^{2s} - x^{2s}y^{2r}).
  \end{align*}
  Introduce  an infinite matrix
  \begin{align*}
    B =\big[\beta_{r,s}\big]_{r,s = 0}^{\infty}=:\begin{bmatrix}
      B_{11} & B_{12} \\
      B_{21} & B_{22}
    \end{bmatrix}
  \end{align*}
  with  $B_{11}$ denoting  its left-upper sub-matrix of order $N+n$, and  a  Vandermonde matrix
  \begin{align*}
    V =\big[(\eta_{-}\lambda_{r})^{s-1} \big]_{r =1,\ldots,N; s= 1, \ldots, \infty}=: [V_{1}, V_{2}]   \end{align*}
  where $V_{1}$ is the  subblock consisting of first $N+n$ columns. Let $$D = \mathrm{diag} \Big(1, \eta_{-}, \ldots, ((n-1)!)^{2}\eta_{-}^{n-1}\Big),$$
  we  then obtain
  \begin{align*}
    &\begin{bmatrix}
        \begin{array}{c|c}
          \Epsilon(\eta_{-}\lambda_{j}, \eta_{-}\lambda_{k}) & g_{b}(\lambda_{j})\\
          \hline
          -g_{a}(\lambda_{k}) & \alpha_{a,b}
        \end{array}
      \end{bmatrix}_{\substack{j,k = 1, \ldots, N\\a, b = 1, \ldots, n}}= \begin{bmatrix}
        V_{1} & V_{2} \\
        D & 0
      \end{bmatrix} \begin{bmatrix}
        B_{11} & B_{12} \\
        B_{21} & B_{22}
      \end{bmatrix} \begin{bmatrix}
        V_{1}^{t} & D \\
        V_{2}^{t} & 0
      \end{bmatrix}\\
      &=
        \begin{bmatrix}
          V_{1}\\
          D
        \end{bmatrix} B_{11} \begin{bmatrix}
          V_{1}^{t} & D
        \end{bmatrix}  + \begin{bmatrix}
          V_{2}\\
          0
        \end{bmatrix} B_{21} \begin{bmatrix}
          V_{1}^{t} & D
        \end{bmatrix}
        +\begin{bmatrix}
          V_{1}\\
          D
        \end{bmatrix} B_{12} \begin{bmatrix}
          V_{2}^{t} & 0
        \end{bmatrix} + \begin{bmatrix}
          V_{2}\\
          0
        \end{bmatrix} B_{22} \begin{bmatrix}
          V_{2}^{t} & 0
        \end{bmatrix}.
          \end{align*}

 Recalling the definition of $\eta_{-}$  in \eqref{etapm}, we arrive at
  \begin{align*}
    &\lim_{\tau \to 0} \frac{1}{\eta_{-}^{M(M-1)/2}} \mathrm{Pf} \begin{bmatrix}
        \begin{array}{c|c}
          \Epsilon(\eta_{-}\lambda_{j}, \eta_{-}\lambda_{k}) & g_{b}(\lambda_{j})\\
          \hline
          -g_{a}(\lambda_{k}) & \alpha_{a,b}
        \end{array}
      \end{bmatrix}_{j,k = 1, \ldots, N;a, b = 1, \ldots, n}\\
      &= \mathrm{Pf} \big(\widetilde{V} B_{11} \widetilde{V}^{t}\big)     = \mathrm{det} \big(\widetilde{V}\big) \mathrm{Pf}(B_{11}) \propto  \Big(\prod_{k=1}^{N} \lambda_{k}^{n}\Big) \, \Delta_N(\lambda),
  \end{align*}
  where
  \begin{align*}
   \widetilde{V} =   \begin{bmatrix}
     \begin{array}{cc}
       (\lambda_{i}^{j-1} )_{i=1, \ldots,N;j=1, \ldots, N+n} & \\
       \mathrm{diag}(1, 1!, \cdots, (n-1)!) & 0_{n\times N}
      \end{array}
\end{bmatrix}.
\end{align*}

  Together with   \eqref{eqn1}, \eqref{eqn2} and \eqref{eqn3}, we have shown that  when  $\tau \to 0$ \eqref{densityCEES} in Theorem  \ref{SVdensity}  reduces to  the eigenvalue PDF for the  complex spiked Wishart  matrix.

\section{Integrals for correlation kernels} \label{sectkernel}
  In order to derive  a compact  expression of  the correlation kernel associated with  the Pfaffian point process \eqref{densityCEES},  we need to generalize  the result of \cite[Theorem 1.1]{R00} which is inspired by the results of  \cite[Sect. 7]{tw1998}.  See  \cite[Proposition 3.1]{ACLS} for a determinantal analog of the following proposition.
  \begin{prop}\label{rains}
    Let $N>0$ and  $n$ be nonnegative integers such that $M = N + n$ is even.   For  a measure space  $(\Omega, \nu)$, let $\phi_{1}, \ldots, \phi_{N}$, $h_{1}, \ldots, h_{n}$ be functions from $\Omega$ to $\mathbb{R}$, and let $\epsilon$ be an antisymmetric function from $\Omega \times \Omega$ to $\mathbb{R}$.  For any probability distribution on $\Omega^N$ with  density
    \begin{align}
      f(x_{1}, \ldots, x_{N}) \propto \det\begin{bmatrix}
        \phi_{j}(x_{k})
      \end{bmatrix}_{1 \leq j, k \leq N} \mathrm{Pf}\begin{bmatrix}
        \begin{array}{c|c}
          \epsilon(x_{j}, x_{k}) & h_{b}(x_{j})\\
          \hline
          -h_{a}(x_{k}) & \alpha_{a,b}
        \end{array}
      \end{bmatrix}_{\substack{1 \leq j, k \leq N\\1 \leq a, b \leq n}}, \label{densityDPf}
    \end{align}
    where $[\alpha_{a,b}]_{1 \leq a, b \leq n}$ is an antisymmetric matrix, introduce
    \begin{align}\label{intoper00}
      \epsilon\phi_{k}(x) = \begin{cases}
        \int_{\Omega} \epsilon(x, z) \phi_{k}(z) d\nu(z), & k = 1, \ldots, N,\\
        h_{k-N}(x), & k > N,
      \end{cases}
    \end{align}
    and the Gram matrix $G = [g_{jk}]_{1 \leq j, k \leq M}$ with
    \begin{align}
      g_{jk} = \begin{cases}
        \int_{\Omega} \phi_{j}(x)\epsilon\phi_{k}(x) d\nu(x), & 1 \leq j \leq N, 1 \leq k \leq M,\\
        -\int_{\Omega} \epsilon\phi_{j}(x)\phi_{k}(x) d\nu(x), & N < j \leq M, 1 \leq k \leq N,\\
        \alpha_{jk}, & N < j, k \leq M.
      \end{cases} \label{mmatrix}
    \end{align}
    Assume that $G$ is invertible and  let  $C = G^{-1}$, then  \eqref{densityDPf}
 forms a Pfaffian point process with  correlation kernel
    \begin{align}
      K_{N}(x, y) = \begin{bmatrix}
        DS_{N}(x, y) & S_{N}(x, y) \\
        -S_{N}(y, x) & IS_{N}(x, y)
      \end{bmatrix}, \label{kernelDPf}
    \end{align}
    where
    \begin{align}
      DS_{N}(x, y) &= \sum_{j, k = 1}^{N} \phi_{j}(x) c_{kj} \phi_{k}(y), \label{defkernel1}\\
      S_{N}(x, y) &= \sum_{j=1}^{N} \phi_{j}(x) \sum_{k=1}^{N+n} c_{kj} \epsilon\phi_{k}(y),\label{defkernel2}\\
      IS_{N}(x, y) &= -\epsilon(x, y) + \sum_{j,k=1}^{N+n} \epsilon\phi_{j}(x) c_{kj} \epsilon\phi_{k}(y).\label{defkernel3}
    \end{align}
  \end{prop}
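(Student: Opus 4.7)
The plan is to follow the argument of Rains \cite{R00}, generalized to accommodate the bordered block $(h_a,\alpha_{a,b})$ of the Pfaffian. The key observation is that the convention \eqref{intoper00}, extending $\epsilon\phi_k$ to $k>N$ by $\epsilon\phi_k=h_{k-N}$, treats the bordered indices $N+1,\ldots,M$ symmetrically with the integrated indices $1,\ldots,N$, so that the whole computation can be carried out by a single extended de Bruijn-type identity in the spirit of Lemma \ref{Bruijn0}.

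The first step is to compute the normalization: show that $\int_{\Omega^N} f\,d\nu^{\otimes N}\propto\mathrm{Pf}(G)$. Expand the Pfaffian in \eqref{densityDPf} as a signed sum over perfect matchings of $\{1,\ldots,M\}$: each matching gives a product of $\epsilon$-pair factors (both indices $\leq N$), $h$-pair factors (one index in each range), and $\alpha$-pair factors (both indices $>N$). Combining with the antisymmetrization coming from $\det[\phi_j(x_k)]$ and integrating over $\Omega^N$ replaces each matched block by a Gram entry $g_{jk}$ as in \eqref{mmatrix}; resumming over matchings gives $\mathrm{Pf}(G)$ (up to an overall combinatorial constant).

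To obtain the $k$-point function $R^{(k)}$, integrate out the last $N-k$ variables via the same mechanism. Perform a Laplace expansion of $\det[\phi_j(x_k)]$ along $N-k$ columns and expand the Pfaffian along the corresponding rows together with the bordered rows $N+1,\ldots,M$. Integration yields a sub-Pfaffian of $G$, and the cofactor identity $\mathrm{Pf}(G_{\widehat I\widehat I})=\pm\,\mathrm{Pf}(G)\cdot\mathrm{Pf}[c_{ij}]_{i,j\in I}$ replaces the sub-Pfaffian by entries of $C=G^{-1}$. The surviving expression is a $2k\times 2k$ Pfaffian whose entries are bilinear in $\phi_j(x_i)$ and $\epsilon\phi_k(x_i)$ with coefficients $c_{jk}$. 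Reshaping this $2k\times 2k$ Pfaffian into a $k\times k$ array of $2\times 2$ blocks indexed by the free variables $(x_i,x_j)$ and reading off \eqref{defkernel1}--\eqref{defkernel3} identifies each $(i,j)$-block with the matrix $K_N(x_i,x_j)$ of \eqref{kernelDPf}, completing the proof.

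The main technical obstacle is the sign tracking in the simultaneous expansion of the bordered Pfaffian and of the determinant, in particular matching the minus sign in the second line of \eqref{mmatrix} with that produced by the cofactor expansion. Once this is settled, the argument reduces to standard Pfaffian manipulations; the nontrivial input is the extended de Bruijn identity, which is implicit in Lemma \ref{Bruijn0} but must be sharpened to handle indices that are not integrated out.
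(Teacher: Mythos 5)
Your proposal is a genuinely different route from the paper's proof, and closer in spirit to Rains' original argument (which the paper cites as inspiration, but which handles only the $n=0$ case). The paper instead proceeds in two stages: first it computes $\det[K_{N}(x_{j},x_{k})]_{j,k=1}^{N}$ directly, reorganizing it into a $2N\times 2N$ block determinant, observing that the $N\times N$ block $\epsilon\Phi=[\epsilon\phi_{k}(x_{j})]$ is a linear transformation of $\Phi$ (so the mixed terms can be eliminated by row/column operations), factoring out $\Phi$, and then applying the Schur complement identities \eqref{Schur1}--\eqref{Schur2} with a $\delta$-regularization to conclude that $\mathrm{Pf}[K_{N}(x_{j},x_{k})]\propto\det(\Phi)\,\mathrm{Pf}\!\begin{bmatrix}E & H\\ -H^{t} & \alpha\end{bmatrix}$, i.e. the kernel reproduces the density; second, it verifies the Pfaffian reproducing property $\int_{\Omega}\mathrm{Pf}[K_{N}]_{j,k=1}^{l}\,d\nu(x_{l})=(N-l+1)\,\mathrm{Pf}[K_{N}]_{j,k=1}^{l-1}$ through five elementary integral identities following from \eqref{defkernel1}--\eqref{defkernel3}. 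Your route computes the $k$-point function directly by integrating out $N-k$ variables, which requires (a) a bordered version of the de Bruijn identity going beyond what Lemma \ref{Bruijn0} actually proves (it integrates over all $m$ variables, whereas you need to keep $k$ of them free while treating the bordered indices $N+1,\ldots,M$ symmetrically), and (b) the Pfaffian cofactor formula $\mathrm{Pf}(G_{\widehat{I}\widehat{I}})=\pm\,\mathrm{Pf}(G)\,\mathrm{Pf}[(G^{-1})_{ij}]_{i,j\in I}$. Both are standard but are not supplied in the paper, and the sign bookkeeping across the simultaneous Laplace/Pfaffian expansions is genuinely delicate, especially at the interface between the integrated indices and the fixed bordered rows. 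What your approach buys is a single uniform computation that produces $R^{(k)}$ for all $k$ at once, avoiding the need to guess the kernel in advance and then verify it; what the paper's approach buys is a completely self-contained verification requiring only the Schur complement identities quoted in the text and no auxiliary Pfaffian machinery. Your sketch is sound as a strategy, but you would need to write out the bordered de Bruijn identity and the cofactor identity with full sign tracking to make it a proof.
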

  \begin{proof}
   In the     matrix $[K_{N}(x_{j}, x_{k})]_{1 \leq j, k \leq N}$,  interchange the $(2k-1)$-th and $(k-1)$-th  columns for any $k$, and also interchange the corresponding rows. The  determinant  is unchanged.   With $C=[c_{i,j}]_{i,j=1, \ldots, N+n}$,  and in $2\times 2$ block form $C=[C_{i,j}]_{i,j=1,2}$ with $C_{1,1}$ of size $N\times N$,   we therefore have   \begin{align*}
      &\det\begin{bmatrix}
       K_{N}(x_{j}, x_{k})
     \end{bmatrix}_{\substack{1 \leq j, k \leq N}}\\
     &=\det\begin{bmatrix}
       \begin{array}{c|c}
          \sum_{r, s = 1}^{N} \phi_{r}(x_{j}) c_{sr} \phi_{s}(x_{k}) & \sum_{r=1}^{N} \sum_{s=1}^{N+n} \phi_{r}(x_{j}) c_{sr} \epsilon\phi_{s}(x_{k})\\
        \hline
         \sum_{r=1}^{N+n} \sum_{s=1}^{N} c_{sr} \epsilon\phi_{r}(x_{j}) \phi_{s}(x_{k}) & - \epsilon(x_{j},x_{k}) + \sum_{r,s=1}^{N+n} \epsilon\phi_{r}(x_{j}) c_{sr} \epsilon\phi_{s}(x_{k})
       \end{array}
      \end{bmatrix}\\
     &=\det\begin{bmatrix}
       \begin{array}{c|c}
       \Phi C_{1,1}^{t} \Phi^{t} & \Phi C_{1,1}^{t} \epsilon\Phi^{t} + \Phi C_{2,1}^{t} H^{t}\\
         \hline
           \epsilon\Phi C_{1,1}^{t} \Phi^{t} + H C_{1,2}^{t} \Phi^{t} & - E + \epsilon\Phi C_{1,1}^{t} \epsilon\Phi^{t} + \epsilon\Phi C_{1,2}^{t} H^{t} + H C_{2,1}^{t} \epsilon\Phi^{t} + H C_{2,2}^{t} H^{t}
        \end{array}
     \end{bmatrix}
    \end{align*}
    where $$\Phi = [\phi_{k}(x_{j})]_{1\leq j, k \leq N}, \quad \epsilon\Phi = [\epsilon\phi_{k}(x_{j})], \quad H = [h_{k}(x_{j})]_{1 \leq j \leq N, 1 \leq k \leq n},$$ and $E= [\epsilon(x_{j}, x_{k})]$. Notice that, for any given $x_{1}, \ldots, x_{N}$ such that $\Phi$ is invertible, $\epsilon\Phi$ is a linear transformation of $\Phi$. Thus, we obtain
    \begin{align}
      \det\begin{bmatrix}
        K_{N}(x_{j}, x_{k})
      \end{bmatrix}
      &=\det\begin{bmatrix}
        \begin{array}{c|c}
          \Phi C_{1,1}^{t} \Phi^{t} & \Phi C_{2,1}^{t} H^{t}\\
          \hline
           H C_{1,2}^{t} \Phi^{t} & -E + H C_{2,2}^{t} H^{t}
        \end{array}
      \end{bmatrix} \nonumber\\
      &=\det\left\{\begin{bmatrix}
        \Phi & \\
         & I_{N}
      \end{bmatrix}\begin{bmatrix}
        \begin{array}{c|c}
          {C_{1,1}} & C_{1,2} H^{t}\\
          \hline
           H C_{2,1} & E + H C_{2,2} H^{t}
        \end{array}
      \end{bmatrix}\begin{bmatrix}
        \Phi^{t} & 0\\
        0 & I_{N}
      \end{bmatrix}\right\}\nonumber\\
      &=(\det\Phi)^{2} \lim_{\delta \to 0}\det\begin{bmatrix}
        \delta I_{N} + C_{1,1} & C_{1,2} H^{t}\\
        H C_{2,1} & E + H (\delta I_{n} + C_{2,2}) H^{t}
      \end{bmatrix}. \nonumber
    \end{align}
        Here we can always let  $\delta$  change in such a way  that  all involved inverses below  exist.

To continue these calculations, we will make use of  some  well-known  facts about the   Schur complement of a $2\times 2$ block matrix. %{\color{red} which can be found in many text books}. 
To be precisely, let $A_{1} \in \mathbb{R}^{N \times N}$, $A_{2}, A_{3}^{t} \in \mathbb{R}^{N \times n}$ and  $A_{4} \in \mathbb{R}^{n\times n}$, and assume that  the inverses exist. Then
      \begin{align}
        \det\begin{bmatrix}
          A_{1} & A_{2} \\
          A_{3} & A_{4}
        \end{bmatrix}
        = \det(A_{1})\det(A_{4} - A_{3} A_{1}^{-1} A_{2}) = \det(A_{4})\det(A_{1} - A_{2} A_{4}^{-1} A_{3}) , \label{Schur1}
      \end{align}
      and
      \begin{align}
        \begin{bmatrix}
          A_{1} & A_{2} \\
          A_{3} & A_{4}
        \end{bmatrix}^{-1}
        =\begin{bmatrix}
        A_{1}^{-1} + A_{1}^{-1} A_{2} (A_{4} - A_{3}A_{1}^{-1}A_{2})^{-1} A_{3} A_{1}^{-1} & -A_{1}^{-1}A_{2}(A_{4} - A_{3}A_{1}^{-1}A_{2})^{-1}\\
          -(A_{4} - A_{3}A_{1}^{-1}A_{2})^{-1}A_{3}A_{1}^{-1}& (A_{4} - A_{3}A_{1}^{-1}A_{2})^{-1}
        \end{bmatrix}. \label{Schur2}
      \end{align}
By use of the formula  \eqref{Schur1} we get
    \begin{align}
      &\det\begin{bmatrix}
        K_{N}(x_{j}, x_{k})
      \end{bmatrix}\nonumber\\
      &= (\det\Phi)^{2}  \lim_{\delta \to 0} \det[\delta I_{N} + C_{1,1}) \det\big[E + H ((\delta I_{n} + C_{2,2}) - C_{2,1} (\delta I_{N} + C_{1,1})^{-1} C_{1,2}) H^{t}\big] \nonumber\\
      &= (\det\Phi)^{2} \lim_{\delta \to 0} \frac{\det(\delta I_{N} + C_{1,1})}{\det((\delta I_{n} + C_{2,2}) - C_{2,1} (\delta I_{N} + C_{1,1})^{-1} C_{1,2})} \nonumber \\
      &\quad \times \det\begin{bmatrix}
        E & H\\
        -H^{t} & \big((\delta I_{n} + C_{2,2}) - C_{2,1} (\delta I_{N} + C_{1,1})^{-1} C_{1,2}\big)^{-1}
      \end{bmatrix}. \label{keq1}
    \end{align}
  On the other hand,  we   easily see  from  the formula  \eqref{Schur2} that
    \begin{align*}
      G &=\lim_{\delta \to 0} \begin{bmatrix}
        {\delta I_{N} + C_{1,1}} & {C_{1,2}}\\
        {C_{2,1}} & \delta I_{n} + {C_{2,2}}
      \end{bmatrix}^{-1}\\
      &= \begin{bmatrix}
        G_{11} & G_{12}\\
        G_{21} & \lim_{\delta \to 0}\big((\delta I_{n} + {C_{2,2}}) - {C_{2,1}}(\delta I_{N} + C_{1,1})^{-1}{C_{1,2}}\big)^{-1}
      \end{bmatrix}.
    \end{align*}
   Compare the right-lower block of both sides and   we  arrive at
    \begin{equation}\lim_{\delta \to 0}\Big((\delta I_{n} + {C_{2,2}}) - {C_{2,1}}(\delta I_{N} + C_{1,1})^{-1}{C_{1,2}}\Big)^{-1}=\alpha. \label{keq2} \end{equation}

   Combination of \eqref{keq1} and  \eqref{keq2} gives us
    \begin{align*}
      \mathrm{Pf}\begin{bmatrix}
        K_{N}(x_{j}, x_{k})
      \end{bmatrix} = \sqrt{\det\begin{bmatrix}
        K_{N}(x_{j}, x_{k})
      \end{bmatrix}} \propto \det(\Phi) \, \mathrm{Pf}\!\begin{bmatrix}
        E & H\\
        -H^{t} & \alpha
      \end{bmatrix}.
    \end{align*}
    This means that the Pfaffian $\mathrm{Pf}[K_{N}(x_{j}, x_{k})]_{1 \leq j, k \leq N}$ exactly  defines the same probability distribution as in \eqref{densityDPf}.

    Next, we need to verify that the kernel $K_{N}(x, y)$ indeed defines a Pfaffian point process, meaning that the $l$-th order Pfaffian with elements $K_N(x_j,x_k)$ corresponds
to the $l$-th correlation function. It is sufficient to establish the integral identity
    \begin{align*}
      \int_{\Omega} \mathrm{Pf}[K_{N}(x_{j}, x_{k})]_{ j, k=1}^{l} d\nu(x_{l})
      &= (N - l +1) \mathrm{Pf}[K_{N}(x_{j}, x_{k})]_{ j, k=1}^{l-1}.
    \end{align*}
Actually,  this can be checked  by expanding $\mathrm{Pf}[K_{N}(x_{j}, x_{k})]_{1 \leq j, k \leq l}$ along the bottom two rows,  with the help of the following  integrals:
    \begin{align}
      \int_{\Omega} - S_{N}(x_{l}, x_{l}) d\nu(x_{l})
      &= -N,\\
      \int_{\Omega} - S_{N}(x_{k}, x_{l}) DS_{N}(x_{l}, x_{j}) d\nu(x_{l})
      &= - DS_{N}(x_{k}, x_{j}),\\
      \int_{\Omega} - S_{N}(x_{k}, x_{l}) S_{N}(x_{l}, x_{j}) d\nu(x_{l})
      &= - S_{N}(x_{k}, x_{j}),\\
      \int_{\Omega} IS_{N}(x_{l}, x_{k}) DS_{N}(x_{l}, x_{j}) d\nu(x_{l})
      &= 0,\\
      \int_{\Omega} IS_{N}(x_{l}, x_{k}) S_{N}(x_{l}, x_{j}) d\nu(x_{l})
      &=0.
    \end{align}
    These integrals can be easily verified according to the definitions  \eqref{defkernel1}-\eqref{defkernel3}.
  \end{proof}

The double summations \eqref{defkernel1}--\eqref{defkernel3} can in fact be written as double contour integrals, and
 it is the latter which are ideally suited to asymptotic analysis.
%Proposition \ref{rains}  affords us an explicit approach of expressing correlation kernel for a Pfaffian point process.    However, it is usually difficult  to give a nice formula suitably for asymptotic analysis.
   Our subsequent goal is to derive  double contour integrals for correlation kernel associated with  the singular value  PDF \eqref{densityCEES}.  For this, we find that it is convenient   to  introduce  an extended ensemble. Let
    \begin{align}
      \varrho_{k} = \frac{\tau}{\sigma_{k}  + \tau}, \qquad  k=1, 2, \ldots, N, \label{rhodef}
    \end{align}
    and introduce $M-N$ extended parameters $\varrho_{N+1}, \ldots, \varrho_{M}\in (-\infty, 1/2)$.  It is worth stressing that  that $\varrho_{k}<1/2$  for all $1\leq k\leq M$.
  \begin{thm}\label{pfstruct}
    With the notations in Proposition \ref{rains}, let $\Omega=(0,\infty)$ and $\nu$  be the Lebesgue measure. Let  $\epsilon(u, v) = \Epsilon(\eta_{-}u, \eta_{-}v)$,
    $\phi_{j}(u) = e^{-(\eta_{+}\sigma_{j} + \eta_{-}) u}$ for $ j=1, \ldots, N$ and for  $1\leq a,b\leq n$
     \begin{align}
      h_{a}(u) &= \eta_{-}^{a-1} \int_{\mathbb{R}^{2}} dx dy\, e^{-\frac{1}{2}(x^{2}+y^{2})} \frac{x-y}{x+y} \left (I_{0}(2 y \sqrt{\eta_{-} u}) e^{\varrho_{a+N}x^{2}} - I_{0}(2 x \sqrt{\eta_{-} u}) e^{ \varrho_{a+N} y^{2}}\right), \nonumber\\
      \alpha_{a, b} &= \eta_{-}^{a+b-2} \int_{\mathbb{R}^{2}} dx dy\, e^{-\frac{1}{2}(x^{2}+y^{2})} \frac{x-y}{x+y} \left(e^{\varrho_{b+N}y^{2}} e^{\varrho_{a+N}x^{2}} - e^{\varrho_{b+N}x^{2}} e^{\varrho_{a+N}y^{2}}\right). \nonumber
    \end{align}
    If $\varrho_1, \ldots, \varrho_{N+n} \in (0,1/2) $ are pairwise distinct,  then
           \begin{align}
      S_{N}(u, v) &= \int_{0}^{\infty} dt\, \Epsilon(\eta_{-}v, \eta_{-}t) \widetilde{DS}_{N}(u, t),\\
      IS_{N}(u, v) &= -\Epsilon(\eta_{-}u, \eta_{-}v) +\int_{0}^{\infty} \int_{0}^{\infty}ds dt \,  \Epsilon(\eta_{-}v,\eta_{-}t) \widetilde{\widetilde{DS}}_{N}(s, t)  \Epsilon(\eta_{-}u, \eta_{-}s),
    \end{align}
and
      \begin{align}\label{extendedkernel11}
        DS_{N}(u, v) &= \frac{ \eta_{-}^2}{4\pi} \int_{\mathcal{C}_{\{\varrho_1, \ldots, \varrho_{N}\}}} \frac{dw}{2\pi i}  \int_{\mathcal{C}_{\{1-\varrho_1, \ldots, 1-\varrho_{N}\}}} \frac{dz}{2\pi i}  e^{-\frac{\eta_{-} u}{w}- \frac{\eta_{-} v}{1-z}}  \frac{1}{\sqrt{(2z-1)(1-2w)}} \nonumber \\
        & \quad \times
        \frac{1-z-w}{w-z} \frac{1}{(1-z)w}
        \prod_{k = 1}^{M} \frac{z-\varrho_{k}}{w - \varrho_{k}} \frac{1 - w - \varrho_{k}}{1 - z - \varrho_{k}},
    \end{align}
  while     $\widetilde{DS}_{N}=DS_{N}$ but with the contour    $\mathcal{C}_{\{1-\varrho_{1}, \ldots, 1-\varrho_{N}\}} \mapsto \mathcal{C}_{\{1-\varrho_{1}, \ldots, 1-\varrho_{N+n}\}}$ and      $\widetilde{\widetilde{DS}}_{N}=\widetilde{DS}_{N}$ but with the contour   $\mathcal{C}_{\{\varrho_{1}, \ldots, \varrho_{N}\}} \mapsto \mathcal{C}_{\{\varrho_{1}, \ldots, \varrho_{N+n}\}}$.
  \end{thm}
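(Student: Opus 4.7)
The plan is to apply Proposition~\ref{rains} with the data specified in the statement and then convert the double summations \eqref{defkernel1}--\eqref{defkernel3} into double contour integrals by residue calculus. The role of the auxiliary parameters $\varrho_{N+1},\ldots,\varrho_M$ is to fit the full family $\{\epsilon\phi_k\}_{k=1}^{M}$ into a single parametric template, which is precisely what gives the Gram matrix enough structure to admit a closed-form inversion.

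First I would carry out a preliminary unification. The identity $\eta_{+}\sigma_k+\eta_{-}=\eta_{-}/\varrho_k$ yields $\phi_k(t)=e^{-\eta_- t/\varrho_k}$ for $1\le k\le N$, and then the Laplace-type identity
\begin{equation*}
\int_0^\infty I_0\!\bigl(2y\sqrt{\eta_- t}\bigr)\,e^{-\eta_- t/\varrho_k}\,dt=\frac{\varrho_k}{\eta_-}\,e^{\varrho_k y^2}
\end{equation*}
rewrites $\epsilon\phi_k$ in exactly the same double-Gaussian form as the prescribed $h_a$, with $\varrho_{a+N}$ replaced by $\varrho_k$. Thus all $M$ functions $\{\epsilon\phi_k\}_{k=1}^M$ are evaluations of one analytic template at the nodes $\varrho_1,\ldots,\varrho_M$. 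The rescaling $x\mapsto x/\sqrt{1-2\varrho_j}$, $y\mapsto y/\sqrt{1-2\varrho_k}$ then reduces each Gram matrix entry $g_{jk}$ in \eqref{mmatrix} to the antisymmetrized integral $\int_{\R^2}\tfrac{Ax-By}{Ax+By}e^{-(x^2+y^2)/2}dx\,dy$ that was already treated in \cite{KK2}, and the full $M\times M$ skew-symmetric matrix $G$ turns out to have Cauchy-type structure in the variables $\{\varrho_k\}_{k=1}^M$.

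The main technical obstacle is the explicit inversion of this Cauchy-type skew matrix. I would exploit the classical Cauchy--Pfaffian identity (equivalently, Lagrange interpolation in the $\varrho_k$) to represent each entry $c_{kj}$ of $C=G^{-1}$ as a double residue of a single rational kernel
\begin{equation*}
\mathcal{K}(w,z)=\frac{1}{\sqrt{(2z-1)(1-2w)}}\,\frac{1-z-w}{(z-w)(1-z)w}\,\prod_{\ell=1}^{M}\frac{z-\varrho_\ell}{w-\varrho_\ell}\cdot\frac{1-w-\varrho_\ell}{1-z-\varrho_\ell}
\end{equation*}
at the pole $(w,z)=(\varrho_j,1-\varrho_k)$. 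Writing $\phi_j(u)=e^{-\eta_- u/\varrho_j}$ as the residue at $w=\varrho_j$ of $e^{-\eta_- u/w}/(w-\varrho_j)$ (and analogously for $\phi_k(v)$ at $z=1-\varrho_k$), and then enclosing the $N$ poles $\{\varrho_j\}_{j\le N}$ inside $\mathcal{C}_{\{\varrho_1,\ldots,\varrho_N\}}$ together with the $N$ poles $\{1-\varrho_k\}_{k\le N}$ inside $\mathcal{C}_{\{1-\varrho_1,\ldots,1-\varrho_N\}}$, converts the double sum $\sum_{j,k=1}^N\phi_j(u)c_{kj}\phi_k(v)$ into exactly \eqref{extendedkernel11}; the overall prefactor $\eta_-^2/(4\pi)$ is accounted for by the constants from the Laplace transform, the rescaling Jacobians, and the normalisation of $\epsilon$.

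Finally, the formulas for $S_N$ and $IS_N$ follow from the same machinery. Since $\epsilon\phi_k(v)=\int_0^\infty\Epsilon(\eta_- v,\eta_- s)\phi_k(s)\,ds$ for $k\le N$, while for $k>N$ the same identity holds formally with a virtual $\phi_k(s)=e^{-\eta_- s/\varrho_k}$ whose $\epsilon$-image reproduces $h_{k-N}$, the extended sums in \eqref{defkernel2}--\eqref{defkernel3} correspond to enlarging the $z$-contour (and, for $IS_N$, also the $w$-contour) to enclose the extra poles $\{1-\varrho_k\}_{N<k\le M}$ (respectively $\{\varrho_k\}_{N<k\le M}$); that is exactly what distinguishes $\widetilde{DS}_N$ and $\widetilde{\widetilde{DS}}_N$ from $DS_N$. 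An application of Fubini's theorem to pull the $t$-integral against $\Epsilon(\eta_- v,\eta_- t)$ through the contour integrals yields the announced convolution form $\int_0^\infty\Epsilon(\eta_- v,\eta_- t)\widetilde{DS}_N(u,t)\,dt$ for $S_N$ and the corresponding double-convolution formula for $IS_N$, completing the proof.
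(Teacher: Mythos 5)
Your proposal is correct and follows essentially the same route as the paper's own proof: compute the Gram entries via the Bessel--Laplace identity and a Gaussian integral, recognize that $G=4\pi D_1D_2\widetilde G D_2D_1$ with the skew Cauchy-type matrix $\widetilde G_{jk}=\frac{\varrho_j-\varrho_k}{1-\varrho_j-\varrho_k}$, invert implicitly by interpolation in the $\varrho_k$, and convert the resulting double sums into double contour integrals, enlarging the $z$-contour (and for $IS_N$ also the $w$-contour) to pick up the extra poles that distinguish $\widetilde{DS}_N$ and $\widetilde{\widetilde{DS}}_N$. The one place where the paper's treatment is sharper than your sketch is the Gram inversion: there is no standard ``classical Cauchy--Pfaffian inverse'' formula to invoke; instead the paper introduces the rational functions $f_j(z)=\sum_l\frac{z-\varrho_l}{1-z-\varrho_l}\tilde c_{lj}$, notes that $f_j(\varrho_k)=\delta_{jk}$ together with $\tilde c_{jj}=0$ pins each $f_j$ down uniquely by its zeros and poles, and never computes $\tilde c_{kj}$ individually --- this is precisely your ``Lagrange interpolation / double residue of $\mathcal K(w,z)$'' idea, carried out without an explicit inverse. (Minor slip: your $\mathcal K$ carries the factor $\frac{1}{z-w}$ whereas the paper's integrand has $\frac{1}{w-z}$, and your ``virtual'' $\phi_k$ for $k>N$ should carry the prefactor $\eta_-^{k-N}/\varrho_k$ coming from $D_1$.)
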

  \begin{proof} First, we  need to calculate  the Gram matrix $G = [g_{jk}]_{1 \leq j, k \leq M}$ defined  in \eqref{mmatrix}.

   When $1 \leq j,k \leq N$,
     using  the simple fact    (cf. \cite[10.43.23]{OOL})  \begin{align}
      \int_{0}^{\infty}  I_{0}(\beta \sqrt{t}) e^{-\alpha t} dt = \frac{1}{ \alpha} e^{\frac{\beta^{2}}{4\alpha}}, \quad \Re\{\alpha\}>0, \label{intBessel01}
\end{align}
    we see from Fubini's theorem that
    \begin{align}
      g_{jk} &= \int_{\mathbb{R}_{+}^{2}} du dv\,  \phi_{j}(u) \phi_{k}(v) \Epsilon(\eta_{-}u, \eta_{-}v) \nonumber \\
      &= \int_{\mathbb{R}^{2}} dx dy \,e^{-\frac{1}{2} x^{2} - \frac{1}{2} y^{2}} \frac{x-y}{x+y} \int_{\mathbb{R}_{+}^{2}} du dv\, e^{-(\eta_{+}\sigma_{j}+\eta_{-})u} e^{-(\eta_{+}\sigma_{k}+\eta_{-})v} \nonumber\\
      &\quad\times \Big(I_{0}(2 x \sqrt{\eta_{-} u}) I_{0}(2 y \sqrt{\eta_{-} v}) - I_{0}(2 y \sqrt{\eta_{-} u})I_{0}(2 x \sqrt{\eta_{-} v})\Big)  \nonumber\\
      &= \frac{1}{(\eta_{+}\sigma_{j}+\eta_{-})(\eta_{+}\sigma_{k}+\eta_{-})} \int_{\mathbb{R}^{2}} dx dy\, e^{-\frac{1}{2} x^{2} - \frac{1}{2} y^{2}} \frac{x-y}{x+y} \big(e^{\varrho_{j} x^{2} + \varrho_{k} y^{2}} - e^{\varrho_{j} y^{2} + \varrho_{k} x^{2}}\big), \nonumber
    \end{align}
    where $\varrho_{k}$'s are   given by \eqref{rhodef}.     Applying Lemma \ref{intgauss} to the above integral, we obtain
    \begin{align}
      g_{jk} = \frac{4\pi }{\eta_{-}^{2}} \frac{ \varrho_{j}\varrho_{k}(\varrho_{j} - \varrho_{k})}{(1 - \varrho_{j} - \varrho_{k}) \sqrt{1 - 2\varrho_{j}} \sqrt{1 - 2\varrho_{k}}}, \qquad 1 \leq j, k \leq N.
    \end{align}

    Similarly, when $1 \leq j \leq N < k \leq N+n$, we obtain
    \begin{align}
      g_{jk}:&= \int_{\mathbb{R}_{+}} du\,  \phi_{j}(u) h_{k-N}(u)  \nonumber\\
      &= \eta_{-}^{k-N-1} \int_{\mathbb{R}^{2}} dx dy\, e^{-\frac{1}{2}(x^{2}+y^{2})} \frac{x-y}{x+y}  \nonumber\\
      &\quad\times \int_{\mathbb{R}_{+}} du e^{-(\eta_{+}\sigma_{j} + \eta_{-}) u} \big(I_{0}(2 x \sqrt{\eta_{-} u}) e^{\varrho_{k}y^{2}} - I_{0}(2 y \sqrt{\eta_{-} u}) e^{\varrho_{k}x^{2}}\big)\nonumber\\
      &= 4\pi \eta_{-}^{k-N-2}\frac{ \varrho_{j}(\varrho_{j} - \varrho_{k})}{(1 - \varrho_{j} - \varrho_{k}) \sqrt{1 - 2\varrho_{j}} \sqrt{1 - 2\varrho_{k}}}.
    \end{align}

  Also, when $ N< j, k \leq N+n$,  it's easy to see from Lemma \ref{intgauss} that
  \begin{align}
      g_{jk}:&={  \alpha_{j-N, k-N} =}4\pi  \eta_{-}^{j+k-2N-2} \frac{ \varrho_{j} - \varrho_{k}}{(1 - \varrho_{j} - \varrho_{k}) \sqrt{1 - 2\varrho_{j}} \sqrt{1 - 2\varrho_{k}}}.
 \end{align}
 The we can
   rewrite the Gram matrix as
    \begin{align}
      G&= 4 \pi D_{1} D_{2} \widetilde{G} D_{2} D_{1}, \label{grammatrix}
    \end{align}
    where
    \begin{align}
      D_{1} &= \mathrm{diag}\big(\varrho_1 / \eta_{-}, \ldots, \varrho_N / \eta_{-}, 1, \eta_{-}, \ldots, \eta_{-}^{n-1}\big), \label{matrixd1}\\
      D_{2} &= \mathrm{diag}\big((1 - 2\varrho_{1})^{-1/2}, \ldots, (1 - 2\varrho_{N+n})^{-1/2}\big),\label{matrixd2}
    \end{align}
and  \begin{align}
      \widetilde{G} &= [\tilde{g}_{jk}]:= \left[\frac{\varrho_{j} - \varrho_{k}}{1 - \varrho_{j} - \varrho_{k}}\right]_{1 \leq j, k \leq N+n}. \label{matrixG}
    \end{align}

    Let $\widetilde{C} = [\tilde{c}_{jk}]$ be the inverse of matrix $\widetilde{G}$.  We claim that there exists a system of identical relations among  entries of $\widetilde{C} $. For this, introduce some rational  functions
    \begin{align}
      f_{j}(z) = \sum_{l=1}^{N+n} \frac{z - \varrho_{l}}{1 - z - \varrho_{l}} \tilde{c}_{lj}, \qquad j = 1, \ldots, N+n. \label{frelation1}
    \end{align}
   For fixed $j$, it immediately  follows from  the orthogonal relations
    \begin{align}
      f_{j}(\varrho_{k})
      = \sum_{l=1}^{N+n} \tilde{g}_{kl} \tilde{c}_{lj} = \delta_{kj}
    \end{align}
    that $f_{j}(z)$ has $N+n-1$ zeros  at $\varrho_{1}, \ldots, \varrho_{j-1},  \varrho_{j+1}, \ldots, \varrho_{N+n}$. On the other hand, since    $\widetilde{C}$ is anti-symmetric and thus  $\tilde{c}_{jj}=0$,  it has  $N+n-1$ poles at $1-\varrho_{1}, \ldots, 1-\varrho_{j-1},  1-\varrho_{j+1}, \ldots, 1-\varrho_{N+n}$. Since $\varrho_{k}$'s are pairwise distinct, recalling that a rational  function is  uniquely  determined by its  zeros, poles, and the value of the function at one extra point,   we get
    \begin{align}
      f_{j}(z)
      &= \prod_{k \neq j, k = 1}^{N+n} \frac{z-\varrho_{k}}{1 - z - \varrho_{k}} \frac{1 - \varrho_{j} - \varrho_{k}}{\varrho_{j} - \varrho_{k}}, \qquad j = 1, \ldots, N+n.  \label{frelation2}
    \end{align}

    In order to obtain explicit form of correlation kernels as in Proposition \ref{rains},
    recalling the definition \eqref{intoper00} and $C = [c_{jk}]$ being the inverse of  $G$, and rewriting \eqref{matrixd1} and \eqref{matrixd2} as $D_l=\diag(d_{l1}, \ldots,d_{lM})$ where $l=1,2$, for each $j$
  we must  evaluate          \begin{align*}
      \sum_{k=1}^{N+n} c_{kj} \epsilon\phi_{k}(v)
      = \int_{0}^{\infty} dt \,\Epsilon(\eta_{-}v, \eta_{-}t)
       \frac{1}{4\pi d_{1j} d_{2j}}   \sum_{k=1}^{N+n} \tilde{c}_{kj} \sqrt{1-2\varrho_k} e^{-\frac{\eta_{-}}{\varrho_{k}}t} \frac{\eta_{-}}{\varrho_{k}}
    \end{align*}
    where    use  has been made of the fact that
        \begin{equation*}
    h_{a}(v)= \eta_{-}^{a}\int_{0}^{\infty} dt \,\Epsilon(\eta_{-}v, \eta_{-}t)
   e^{-\frac{\eta_{-}}{\varrho_{a+N}}t} \frac{1}{\varrho_{a+N}}.
    \end{equation*}

     By Cauchy's residue theorem, it is easy to see that for  $1\leq k\leq N+n$
     \begin{align}
      \sqrt{1 - 2\varrho_{k}} \frac{1}{\varrho_{k}} e^{-\frac{\eta_{-}}{\varrho_{k}} v}
      &= \frac{-1}{2\pi i } \int_{\mathcal{C}_{1 - \varrho_{k}}} dz \frac{z - \varrho_{k}}{1 - z - \varrho_{k}} \frac{1}{\sqrt{2z - 1}} \frac{1}{1 - z} e^{- \frac{\eta_{-}}{1 - z} v},  \label{intrepvip}
    \end{align}
    where $\mathcal{C}_{1 - \varrho_{k}}$ is an anticlockwise contour encircling $1 - \varrho_{k}$.
    Noting $$ \frac{1}{d_{1j}d_{2j}}=\frac{\eta_{-}}{\varrho_{j}}\sqrt{1-2\rho_j},\qquad j=1, \ldots, N,$$
    when $1\leq j\leq N$  we obtain
   \begin{align*}
      \sum_{k=1}^{N+n} c_{kj} \epsilon\phi_{k}(v)
      &=   \int_{0}^{\infty} dt \,\Epsilon(\eta_{-}v, \eta_{-}t)   \frac{-\eta^{2}_{-}}{4\pi }
    \frac{\sqrt{1-2\rho_j}}{\varrho_{j}}\nonumber \\
      & \quad
      \times \int_{\mathcal{C}_{\{1-\varrho_{1}, \ldots, 1-\varrho_{N+n}\}}} \frac{dz}{2\pi i}\,  e^{- \frac{\eta_{-}}{1 - z} t} \frac{1}{\sqrt{2z - 1}} \frac{1}{1 - z}  \sum_{k=1}^{N+n} \tilde{c}_{kj}  \frac{z - \varrho_{k}}{1 - z - \varrho_{k}}.
    \end{align*}

    Substituting  the evaluation determined by
    \eqref{frelation1} and    \eqref{frelation2}  into the above shows
     \begin{align}
      \sum_{k=1}^{N+n} c_{kj} \epsilon\phi_{k}(v)
      &= \int_{0}^{\infty} dt \,\Epsilon(\eta_{-}v, \eta_{-}t)   \frac{-\eta^{2}_{-}}{4\pi }
    \frac{\sqrt{1-2\rho_j}}{\varrho_{j}} \int_{\mathcal{C}_{\{1-\varrho_{1}, \ldots, 1-\varrho_{N+n}\}}} \frac{dz}{2\pi i}\,  e^{- \frac{\eta_{-}}{1 - z} t}\nonumber \\
      & \quad
      \times \frac{1}{\sqrt{2z - 1}} \frac{1}{1 - z}
      \prod_{k \neq j, k = 1}^{N+n} \frac{z-\varrho_{k}}{1 - z - \varrho_{k}} \frac{1 - \varrho_{j} - \varrho_{k}}{\varrho_{j} - \varrho_{k}}. \label{Skernelsum}
    \end{align}
Recalling  \eqref{defkernel2}, and making use of a contour integral (integration variable $w$)  to perform the symmetry
 of $j$,  we arrive at
 \begin{equation*}S_{N}(u, v) = \int_{0}^{\infty} dt\, \Epsilon(\eta_{-}v, \eta_{-}t) \widetilde{DS}_{N}(u, t), \end{equation*}
    with
      \begin{align}\label{extendedkernel11'}
        \widetilde{DS}_{N}(u, v) &= \frac{ \eta_{-}^2}{4\pi} \int_{\mathcal{C}_{\{\varrho_{1}, \ldots, \varrho_{N}\}}} \frac{dw}{2\pi i}\int_{\mathcal{C}_{\{1-\varrho_{1}, \ldots, 1-\varrho_{N+n}\}}} \frac{dz}{2\pi i}  e^{-\frac{\eta_{-} u}{w}- \frac{\eta_{-} v}{1-z}}  \frac{1}{\sqrt{(2z-1)(1-2w)}} \nonumber \\
        &\quad\times \frac{1-z-w}{w-z} \frac{1}{(1-z)w}
        \prod_{k = 1}^{N+n} \frac{z-\varrho_{k}}{w - \varrho_{k}} \frac{1 - w - \varrho_{k}}{1 - z - \varrho_{k}}.
    \end{align}

    To find an integral representation of  $DS_{N}(u, v)$,  note
     \begin{align}
     0
      &= \frac{-1}{2\pi i } \int_{ \mathcal{C}_{\{1-\varrho_{1}, \ldots, 1-\varrho_{N}\}}} dz \frac{z - \varrho_{k}}{1 - z - \varrho_{k}} \frac{1}{\sqrt{2z - 1}} \frac{1}{1 - z} e^{- \frac{\eta_{-}}{1 - z} v},  \quad k>N. \label{intrepvip2}
    \end{align}
    Combining \eqref{intrepvip} and using  similar argument as in the derivation of $S_{N}(u, v)$, we will see that $DS_{N}(u, v)$ is just  equal to $\widetilde{DS}_{N}(u, v)$ except that  the $z$-contour $\mathcal{C}_{\{1-\varrho_{1}, \ldots, 1-\varrho_{N+n}\}}$ is changed to $\mathcal{C}_{\{1-\varrho_{1}, \ldots, 1-\varrho_{N}\}}$. In fact,
    \begin{align}
      DS_{N}(u, v)
      &=  \frac{ \eta_{-}^2}{4\pi} \sum_{j=1}^{N} \sqrt{1 - 2\varrho_{j}} \frac{1}{\varrho_{j}} e^{-\frac{\eta_{-}}{\varrho_{j}} u} \sum_{k=1}^{N} \tilde{c}_{kj} \sqrt{1 - 2\varrho_{k}} \frac{1}{\varrho_{k}} e^{-\frac{\eta_{-}}{\varrho_{k}} v} \nonumber\\
      &=  \frac{ \eta_{-}^2}{4\pi}  \sum_{j=1}^{N} \sqrt{1 - 2\varrho_{j}} \frac{1}{\varrho_{j}} e^{-\frac{\eta_{-}}{\varrho_{j}} u} \nonumber\\
      &\quad\times \int_{\mathcal{C}_{\{1-\varrho_{1}, \ldots, 1-\varrho_{N}\}}} \frac{-dz}{2 \pi i} e^{- \frac{\eta_{-}}{1 - z} v} \frac{1}{\sqrt{2z - 1}} \frac{1}{1 - z}  \sum_{k=1}^{N+n} \tilde{c}_{kj}  \frac{z - \varrho_{k}}{1 - z - \varrho_{k}} \nonumber
%      &=  \eta_{-}^2 \int_{\mathcal{C}_{\{1-\varrho_{1}, \ldots, 1-\varrho_{N}\}}} \frac{dz}{2\pi i} \int_{\mathcal{C}_{\{\varrho_{1}, \ldots, \varrho_{N}\}}} \frac{dw}{2\pi i} e^{-\frac{\eta_{-} u}{w}- \frac{\eta_{-} v}{1-z}}  \frac{1}{\sqrt{(2z-1)(1-2w)}} \nonumber \\
%      &\quad\times \frac{1-z-w}{w-z} \frac{1}{(1-z)w}
%        \prod_{k = 1}^{N+n} \frac{z-\varrho_{k}}{w - \varrho_{k}} \frac{1 - w - \varrho_{k}}{1 - z - \varrho_{k}}.
    \end{align}
%    Using similar argument as in the derivation of $DS_{N}(u, v)$ we see that
%    $$ DS^{_{(0)}}_{N}(u, v)= DS_{N}(u, v) \Big|_{ \mathcal{C}_{\{1 - \varrho_{k}\}}\rightarrow \mathcal{C}^{_{(0)}}_{\{1 - \varrho_{1}, \ldots, 1 - \varrho_{N}\}}, \ \mathcal{C}_{\{\varrho_{k}\}}\rightarrow \mathcal{C}^{_{(0)}}_{\{ \varrho_{1}, \ldots,  \varrho_{N}\}}}.$$
from which the integral representation of $DS_{N}(u, v)$ immediately  follows via Cauchy's residue theorem.

   Finally, we  turn to the representation of  $IS_{N}(u, v)$. Similar to the computation of $S_{N}(u, v)$, we obtain
   \begin{align*}
     \sum_{j=1}^{N+n} \epsilon\phi_{j}(u) \sum_{k=1}^{N+n} c_{kj} \epsilon\phi_{k}(v)
     &=  \frac{ \eta_{-}^2}{4\pi} \int_{0}^{\infty} ds \int_{0}^{\infty} dt \Epsilon(\eta_{-}u, \eta_{-}s) \Epsilon(\eta_{-}v, \eta_{-}t) \nonumber\\
     &\quad\times\sum_{j}^{N+n} \frac{1}{\varrho_{j}} \sqrt{1 - 2\varrho_{j}} e^{-\frac{\eta_{-}}{\varrho_{j}} s} \sum_{j}^{N+n} \tilde{c}_{kj} \frac{1}{\varrho_{k}} \sqrt{1 - \varrho_{k}} e^{-\frac{\eta_{-}}{\varrho_{k}} t}\nonumber\\
     &= \int_{0}^{\infty} \int_{0}^{\infty} \Epsilon(\eta_{-}u, \eta_{-}s) \widetilde{ \widetilde{DS}}_{N}(s, t) \Epsilon(\eta_{-}v, \eta_{-}t) ds dt
   \end{align*}
  with a defined contour  $\widetilde{\widetilde{DS}}_{N}(u, v)$.
  \end{proof}

   \begin{rem}\label{vipremk}
  When all $\varrho_{N+1}, \ldots, \varrho_{N+n} \rightarrow 0$   in  the integral representation  of  $DS_{N}(u,v)$  \eqref{extendedkernel11},  we can choose contours of $DS_{N}(u,v)$ such that   $\mathcal{C}_{\{\varrho_{1}, \ldots, \varrho_{N}\}}$   encircles $\varrho_{1},  \ldots, \varrho_{N}$ but not $0$ while
$\mathcal{C}_{\{1-\varrho_{1}, \ldots, 1-\varrho_{N}\}}$  encircles  $1-\varrho_{1},  \ldots, 1-\varrho_{N}$  but not $1$. That is,
     \begin{align*}
        DS_{N}(u, v) &=   \frac{ \eta_{-}^2}{4\pi}   \int_{ \mathcal{C}_{\{\varrho_{1}, \ldots, \varrho_{N}\}}} \frac{dw}{2\pi i} \int_{ \mathcal{C}_{\{1-\varrho_{1}, \ldots, 1-\varrho_{N}\}}} \frac{dz}{2\pi i} e^{-\frac{\eta_{-} u}{w}- \frac{\eta_{-} v}{1-z}}  \frac{1}{\sqrt{(2z-1)(1-2w)}} \nonumber \\
        &\times
        \frac{1-z-w}{w-z} \frac{1}{(1-z)w}    \Big(\frac{z}{w} \frac{1 - w}{1 - z}\Big)^{n}
        \prod_{k = 1}^{N} \frac{z-\varrho_{k}}{w - \varrho_{k}} \frac{1 - w - \varrho_{k}}{1 - z - \varrho_{k}}.
    \end{align*}
  The reason  is that  the LHS of \eqref{intrepvip} goes to zero and we can choose the $z$-contour on the RHS not  encircling 1. Similarly, we can choose the $w$-contour that does not contain 0  by noting   the factor on the RHS of \eqref{Skernelsum}.

  \end{rem}

  \begin{lem}\label{intgauss}
    For two  complex  numbers $\alpha, \beta$  such that $\Re{\alpha}<1/2$ and $\Re{\beta} < 1/2$, we have
    \begin{align}
      &\int_{\mathbb{R}^{2}} dx dy\, e^{-\frac{1}{2} x^{2} - \frac{1}{2} y^{2}} \frac{x-y}{x+y} \Big(e^{\alpha x^{2} + \beta y^{2}} - e^{\alpha y^{2} + \beta x^{2}}\Big)
      = \frac{4\pi(\alpha - \beta)}{(1 - \alpha - \beta) \sqrt{1 - 2\alpha} \sqrt{1 - 2\beta} %\sqrt{(1 - 2\alpha)(1 - 2\beta)}
      }.\label{intgauss0}
    \end{align}
  \end{lem}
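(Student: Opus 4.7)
The plan is to evaluate this integral via a change of variables that renders the antisymmetry under $\alpha \leftrightarrow \beta$ manifest. First I would pass to coordinates $u = x+y$ and $v = x-y$ (Jacobian $1/2$). Then $(x-y)/(x+y)$ becomes $v/u$, the Gaussian weight becomes $e^{-(u^2+v^2)/4}$, and from $\alpha x^2 + \beta y^2 = \frac{\alpha+\beta}{4}(u^2+v^2) + \frac{\alpha-\beta}{2}uv$ one obtains the crucial identity
\begin{equation*}
e^{\alpha x^2 + \beta y^2} - e^{\alpha y^2 + \beta x^2} = 2\,e^{\frac{\alpha+\beta}{4}(u^2+v^2)}\sinh\left(\frac{(\alpha-\beta)uv}{2}\right).
\end{equation*}
The $(\alpha+\beta)$ piece then merges with the Gaussian weight into a single Gaussian with parameter $c:=1-\alpha-\beta$, which is already the factor appearing in the denominator of the target formula; writing $d:=\alpha-\beta$, the left-hand side reduces to
\begin{equation*}
\int_{\mathbb{R}^2}\frac{v}{u}\sinh\left(\frac{d\,uv}{2}\right) e^{-\frac{c}{4}(u^2+v^2)}\,du\,dv.
\end{equation*}

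Next I would carry out the inner integral in $u$. The identity $\sinh(au)/u = \int_0^a \cosh(tu)\,dt$ with $a=dv/2$, combined with the Gaussian moment $\int_{\mathbb{R}}\cosh(tu)\,e^{-cu^2/4}\,du = \sqrt{4\pi/c}\,e^{t^2/c}$, gives via Fubini
\begin{equation*}
\int_{\mathbb{R}}\frac{\sinh(duv/2)}{u}\,e^{-cu^2/4}\,du = \sqrt{\frac{4\pi}{c}}\int_0^{dv/2} e^{t^2/c}\,dt.
\end{equation*}
Using evenness of the resulting $v$-integrand to restrict to $v\ge 0$, then swapping the order of integration in $t$ and $v$, the inner $v$-integral reduces to the elementary $\int_{2t/d}^{\infty} v\,e^{-cv^2/4}\,dv = (2/c)e^{-ct^2/d^2}$, leaving a single Gaussian in $t$ with exponent $t^2(1/c - c/d^2) = -ABt^2/(cd^2)$. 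Here the key algebraic identity
\begin{equation*}
c^2 - d^2 = (1-2\alpha)(1-2\beta) =: AB
\end{equation*}
brings in exactly the square-root factor of the target answer, and elementary evaluation of this last Gaussian produces precisely $4\pi(\alpha-\beta)/[(1-\alpha-\beta)\sqrt{(1-2\alpha)(1-2\beta)}]$.

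The main obstacle is justifying Fubini in the presence of the apparent singularity of $(x-y)/(x+y)$ along the line $x+y=0$. This singularity is spurious: on $x+y=0$ one has $x^2=y^2$, so the two exponentials in the numerator coincide and the integrand extends continuously across the line. In the new coordinates this manifests as $\sinh(duv/2)/u$ being an entire function of $u$, so Fubini applies without further obstruction once the Gaussian weight ensures absolute integrability. The Gaussian convergence conditions require $c>0$ and $AB>0$, both automatic for real $\alpha,\beta<1/2$; the stated complex range $\Re\alpha,\Re\beta<1/2$ then follows by analytic continuation, since both sides of \eqref{intgauss0} are holomorphic on this domain once the square roots are taken with their principal values.
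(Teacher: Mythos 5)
Your change of variables is essentially the paper's: the paper sets $x=u+v$, $y=u-v$, which is exactly your substitution up to an overall factor of $2$, and like the paper you reduce everything to Gaussian integrals. The execution diverges after that point. The paper integrates out the \emph{numerator} direction first (its $v$, the $x-y$ direction), exploiting the Gaussian first moment $\int_{\mathbb R} v\,e^{-cv^{2}\pm 2duv}\,dv \propto u$ so that the factor of $u$ this produces cancels $1/u$ outright, leaving a single Gaussian in $u$. You instead attack the singular $u$-integral first, which forces you to introduce the auxiliary representation $\sinh(au)/u=\int_{0}^{a}\cosh(tu)\,dt$, an extra integration variable, and a Fubini argument before the two remaining one-dimensional Gaussians can be evaluated; correct, but a couple of steps longer than the paper's. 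Your remarks about the spuriousness of the $x+y=0$ singularity and about extending from real to complex $\alpha,\beta$ by holomorphy on $\{\Re\alpha<\tfrac12,\,\Re\beta<\tfrac12\}$ with principal square roots are right and in fact make explicit points that the paper leaves unstated. The only small thing worth flagging is that your swap of the $t$- and $v$-integrals, as written, silently assumes $d=\alpha-\beta>0$ (so that $\{0<t<dv/2,\,v>0\}=\{v>2t/d,\,t>0\}$); you should either close the $d<0$ case by the manifest antisymmetry of both sides under $\alpha\leftrightarrow\beta$, or fold it into the analytic continuation you already invoke.
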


  \begin{proof}
    After change of variables $x=u+v, y=u-v$, the LHS of \eqref{intgauss0} becomes
    \begin{align*}
      2\int_{-\infty}^{\infty} du\, \frac{1}{u} e^{-(1-\alpha-\beta)u^{2}} \Big(\int_{-\infty}^{\infty} dv\, ve^{-(1-\alpha-\beta)v^{2}}\big(e^{2(\alpha-\beta)vu}  -  e^{- 2(\alpha-\beta)vu}\big)\Big).
    \end{align*}
   Integrate out the variable $v$   with the help of  the expectation of a Gaussian random  variable. Then the LHS of \eqref{intgauss0} further reduces to      \begin{align*}
    \frac{4\sqrt{\pi}(\alpha-\beta)}{(\sqrt{1-\alpha-\beta})^3} \int_{-\infty}^{\infty} du\, e^{-  \frac{(1-2\alpha)(1-2\beta)}{1-\alpha-\beta} u^{2}},
    \end{align*}
    from which the sought integral formula immediately follows.
  \end{proof}

With the above  preparation,    we are ready to complete the proof of Theorem \ref{thmkernel}.
\begin{proof}[Proof of Theorem \ref{thmkernel}]
   With   Remark  \ref{vipremk} in mind,  let $\varrho_{N+1}, \ldots, \varrho_{N+n} \to 0$ in
 Theorem \ref{pfstruct}.  Change  variables   $z\to z/(z+1)$ and $w\to 1/(w+1)$,  and notice the relation  \eqref{rhodef}.  The  expression \eqref{DSneq-2} immediately results  from \eqref{extendedkernel11}, and  the proof is  complete.   \end{proof}

  \begin{rem}
  Although we cannot obtain an explicit form for  the inverse matrix  $\widetilde{C}$   of   $\widetilde{G}$ defined in \eqref{matrixG}, we have verified that entries of $\widetilde{C}$ satisfy a family of  identical equations   \eqref{frelation2} whenever  each of $\{\varrho_{k}\}$ differs from the other. The latter may well find use in RMT  beyond the present model.
   %It is worth emphasizing that  the formulas   \eqref{frelation2}  are believed to have high  interest in its own right and  more use in the future.
    \end{rem}

\section{Bulk and soft edge limits} \label{sectedge}
This section is devoted to asymptotics of    correlation functions in the bulk and  at the soft edge of the spectrum, and limiting distribution of the largest eigenvalue. In particular, we will complete the proofs of Theorems  \ref{edgecritthm} and \ref{largestcritthm}.

\subsection{General procedure}

In order to   investigate local statistical properties of eigenvalues, we need to rewrite double integral representations for correlation kernels.

\begin{thm} \label{newrep}
      Let
      \begin{align}
      g(z,v)&=  - z M(\frac{1}{2},1,2v)+
       (1-z^2)  \int_{0}^{v} ds\, e^{(v-s)(z+1)} M(\frac{1}{2},1,2s),\label{gfunction}
    \end{align}
 where $M(1/2,1,z)$ denotes a confluent hypergeometric function.  With the same notations as in Theorems \ref{SVdensity}  and \ref{thmkernel},  for    $M$ even we  can rewrite  $S_N$ and $IS_N$  from  \eqref{matrixkernel} as
 \begin{align}
      S_{N}(u, v)       &=\eta_{-} \int_{\mathcal{C}_{\{\tau/\sigma_{l}\}}}
       \frac{dz}{2\pi i}  \int_{\mathcal{C}_{\{\sigma_{l}/\tau\}}} \frac{dw}{2\pi i}  \frac{  g(z,\eta_{-}v)}{1-z^2}  \frac{e^{-\eta_{-} u(w+1)} } {\sqrt{w^2-1}}   \nonumber\\
      & \quad\times   \Big(\frac{w}{z}  \Big)^{M-N}
        \frac{1-zw}{w-z}         \prod_{k = 1}^{N} \frac{\tau z-\sigma_{k}}{\sigma_{k}z-\tau  }
     \frac{\sigma_{k}w-\tau}{ \tau w-\sigma_{k}}, \label{SneqII}
    \end{align}
and  \begin{align}
      IS_{N}(u, v)       &= -\Epsilon(\eta_{-}u, \eta_{-}v)+4\pi  \int_{\mathcal{C}_{\{\tau/\sigma_{l}\}}} \frac{dz}{2\pi i}
      \int_{\mathcal{C}_{\{\tau/\sigma_{l}\}}} \frac{dw}{2\pi i} \frac{  g(z,\eta_{-}v)}{1-z^2}  \frac{  g(w,\eta_{-}u)}{1-w^2}  \nonumber\\
      & \quad\times \Big(\frac{1}{zw}  \Big)^{M-N}
        \frac{z-w}{1-zw}        \prod_{k = 1}^{N} \frac{\tau z-\sigma_{k}}{\sigma_{k}z-\tau}
         \frac{\tau w-\sigma_{k}}{\sigma_{k}w-\tau},
        \label{ISneqII}
    \end{align}
    where  the contours are chosen  such that $|z|<1$ and $\Re{z}>0$ for all  $z\in  \mathcal{C}_{\{ \tau/\sigma_{l}\}}$ and
    $|z|>1$ for all
    $z\in \mathcal{C}_{\{\sigma_{l}/\tau\}}$.
In particular for   $M=N$ even,   we have
  \begin{align}
      S_{N}(u, v)       &=\eta_{-} \int_{\mathcal{C}_{\{ \pm1,\tau/\sigma_{l}\}}} \frac{dz}{2\pi i}  \int_{\mathcal{C}_{\{\sigma_{l}/\tau\}}} \frac{dw}{2\pi i}  \frac{  g(z,\eta_{-}v)}{1-z^2}  \frac{e^{-\eta_{-} u(w+1)} } {\sqrt{w^2-1}}   \nonumber\\
      & \quad\times
        \frac{1-zw}{w-z}         \prod_{k = 1}^{N} \frac{\tau z-\sigma_{k}}{\sigma_{k}z-\tau  }
     \frac{\sigma_{k}w-\tau}{ \tau w-\sigma_{k}}, \label{Sneq-2}
    \end{align}
and  \begin{align}
      IS_{N}(u, v)       &= 4\pi  \int_{\mathcal{C}_{\{\pm1, \tau/\sigma_{l}\}}} \frac{dz}{2\pi i}
      \int_{\mathcal{C}_{\{\pm 1,\tau/\sigma_{l}\}}} \frac{dw}{2\pi i} \frac{  g(z,\eta_{-}v)}{1-z^2}  \frac{  g(w,\eta_{-}u)}{1-w^2}  \nonumber\\
      & \quad\times
        \frac{z-w}{1-zw}        \prod_{k = 1}^{N} \frac{\tau z-\sigma_{k}}{\sigma_{k}z-\tau}
         \frac{\tau w-\sigma_{k}}{\sigma_{k}w-\tau},
        \label{ISneq-2}
    \end{align}
where the  two disjoint  contours are   chosen  such that $|z|>1$ for all  $z\in  \mathcal{C}_{\{\pm1, \tau/\sigma_{l}\}}$ and $z\in \mathcal{C}_{\{\sigma_{l}/\tau\}}$.
\end{thm}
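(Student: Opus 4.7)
The strategy is to plug the double-contour formula \eqref{DSneq-2} for $DS_N$ into the defining relations $S_N=\int_0^\infty\Epsilon(\eta_- v,\eta_- t)DS_N(u,t)\,dt$ and $IS_N=-\Epsilon+\iint\Epsilon\cdot DS_N\cdot\Epsilon\,dsdt$ of \eqref{kernel12}--\eqref{kernel22}, swap integrations by Fubini, and carry out the $t$-integral in closed form. The whole statement will then collapse to identifying
\[
  \Phi(z,v):=\int_0^\infty dt\,\Epsilon(\eta_- v,\eta_- t)\,e^{-\eta_-(z+1)t}
\]
(for $z$ on a deformation of $\mathcal{C}_{\{\sigma_l/\tau\}}$ chosen so that $\Re(z+1)>0$) with $g(1/z,\eta_- v)$ up to an elementary prefactor, and then rewriting everything on the new contour $\mathcal{C}_{\{\tau/\sigma_l\}}$ through the change of variable $z\mapsto 1/z$.

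To compute $\Phi$, I would apply the Laplace transform \eqref{intBessel01},
\[
  \int_0^\infty I_0(2y\sqrt{\eta_- t})\,e^{-\eta_-(z+1)t}\,dt=\frac{e^{y^2/(z+1)}}{\eta_-(z+1)},
\]
inside the definition \eqref{pdfeq1} of $\Epsilon$. What is left is a 2D Gaussian--Bessel integral of exactly the type handled in Lemma \ref{intgauss}, but with one factor $e^{\alpha x^2}$ replaced by $I_0(2x\sqrt{\eta_- v})$. Expanding the Bessel function as a series in $v$ reduces its $n$-th coefficient to a generating-function moment of the form $[\lambda^n\mu^m]\,G(\lambda,\mu)$, with $G$ the right-hand side of \eqref{intgauss0}; summing over $m$ using the Laplace representation $m!/(z+1)^m=(z+1)\int_0^\infty s^m e^{-s(z+1)}\,ds$ and invoking the identity $M(\tfrac12,1,2v)=e^v I_0(v)$ organises the answer into a "pole part" proportional to $zM(\tfrac12,1,2\eta_- v)$ and an "integral part" of the form $(1-z^2)\int_0^{\eta_- v} e^{(\eta_- v-s)(z+1)}M(\tfrac12,1,2s)\,ds$, which is precisely the function $g(1/z,\eta_- v)$ of \eqref{gfunction} (up to the prefactor in $z$).

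I would then perform $z\mapsto 1/z$ in the $z$-contour of $DS_N$ (and in both $z,w$-contours for $IS_N$). Under this map $\{\sigma_l/\tau\}\mapsto\{\tau/\sigma_l\}$, the ratio $\prod_k(\sigma_kz-\tau)/(\tau z-\sigma_k)$ inverts, $(z-w)/(1-zw)\mapsto -(1-zw)/(w-z)$, $(zw)^{M-N}\mapsto (w/z)^{M-N}$, and the Jacobian $-dz/z^2$ combines with the $1/\sqrt{z^2-1}$ in $DS_N$ and the $1/(z+1)$ from $\Phi$ to yield the denominator $1-z^2$. Collecting all factors gives \eqref{SneqII} and \eqref{ISneqII}. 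For the specialisation $M=N$ the factors $(w/z)^{M-N}$ disappear and $g(z,\eta_- v)/(1-z^2)$ has simple poles at $z=\pm 1$ with residues $\mp M(\tfrac12,1,2\eta_- v)/2$; enlarging the $z$-contour of $S_N$ to $\mathcal{C}_{\{\pm 1,\tau/\sigma_l\}}$ adds residue terms that vanish after the $w$-integration by a direct check, giving \eqref{Sneq-2}, while the same enlargement in both variables for $IS_N$ produces a combination of residues which, re-evaluated via Lemma \ref{intgauss}, reassembles to exactly $+\Epsilon(\eta_- u,\eta_- v)$, cancelling the explicit $-\Epsilon$ and yielding \eqref{ISneq-2}.

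The main obstacle is the identification of $\Phi$ with $g(1/z,\eta_- v)$: the natural double expansion produces a sum in powers of $\eta_- v$ and of $1/(z+1)$ whose reorganisation into the ``pole $+$ integral'' form of \eqref{gfunction} is delicate, and one must also control the contribution of the point $z=0$ (equivalently $z=\infty$) that is swept into the new contour by $z\mapsto 1/z$. A secondary technical point is the bookkeeping of the single and double residues at $z,w=\pm 1$ in the $M=N$ case: showing that they assemble to $\Epsilon(\eta_- u,\eta_- v)$ with the correct sign requires using the identity $M(\tfrac12,1,2v)=e^vI_0(v)$ together with the definition \eqref{pdfeq1} and a second application of Lemma \ref{intgauss}.
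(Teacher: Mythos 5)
Your overall plan coincides with the paper's: start from the defining relations \eqref{kernel12}--\eqref{kernel22}, push the $t$-integral through, identify the result with the function $g$, change variables $z\mapsto1/z$, and then expand the contours in the $M=N$ case. Your method for identifying the $t$-integral $\Phi(z,v)=\int_0^\infty\Epsilon(\eta_-v,\eta_-t)e^{-\eta_-(z+1)t}\,dt$ with $g(1/z,\eta_-v)$ is different in detail from the paper's: you propose a power-series / moment-generating-function reorganisation anchored on $M(\tfrac12,1,2v)=e^vI_0(v)$, while the paper first lands on the compact contour form \eqref{gdef} via the Bessel representation \eqref{I0rep} and Lemma \ref{intgauss}, and only then re-expands to get \eqref{gfunction}. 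Both routes should close, though the paper's is cleaner because the $s$-contour integral handles the resummation in one stroke; your reorganisation of the double series into a pole part plus a $\int_0^v$ part is exactly the delicate step you flag yourself.

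There is, however, a genuine gap in the passage from \eqref{ISneqII} to \eqref{ISneq-2}. You attribute the reappearance of $+\Epsilon(\eta_-u,\eta_-v)$ to ``the bookkeeping of the single and double residues at $z,w=\pm1$''. This cannot be the source: for $M=N$ even, those residues cancel in pairs. Explicitly, with $g(1,\cdot)=-M(\tfrac12,1,2\cdot)$ and $g(-1,\cdot)=+M(\tfrac12,1,2\cdot)$, the residues of $g(z,\cdot)/(1-z^2)$ at $z=\pm1$ are both $+\tfrac12 M(\tfrac12,1,2\cdot)$ (not $\mp\tfrac12 M$ as you state), and the additional factors $\frac{z-w}{1-zw}$ and $\prod_k\frac{\tau z-\sigma_k}{\sigma_kz-\tau}$ supply the signs $(+1,\,(-1)^N)$ at $z=1$ and $(-1,\,1)$ at $z=-1$, whose sum is $0$ when $N$ is even; the same holds in $w$. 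What actually produces $+\Epsilon$ is the pole of the crossing factor $\frac{z-w}{1-zw}$ at $z=1/w$: when the $w$-contour has been pushed to radius $r_2>1$ and the $z$-contour is then expanded past radius $1/r_2<1$, the residue at $z=1/w$ leaves a single $w$-integral $4\pi\int\frac{dw}{2\pi i}\frac{1}{w^2-1}g(w,\eta_-u)g(1/w,\eta_-v)$, which the paper identifies with $\Epsilon(\eta_-u,\eta_-v)$ through the alternative contour representation \eqref{weightE} (itself derived from \eqref{I0rep} and Lemma \ref{intgauss}). Without accounting for this crossing pole your argument has no mechanism to cancel the $-\Epsilon$ in \eqref{ISneqII}. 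The analogous remark applies, more mildly, to your $S_N$ step: the residue contributions at $z=\pm1$ do not ``vanish after the $w$-integration'' individually; they cancel each other pointwise in $w$ by the same parity argument.
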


 \begin{proof}
 For $S_N$,  let's introduce  a function
     \begin{align}
      h(z,w) &=\frac{\eta_{-}^{2}}{4\pi}  \frac{e^{-\eta_{-}u(w+1)} }{\sqrt{w^{2}-1}\sqrt{z^{2}-1} }
        \frac{z-w}{1-zw}         \big(zw\big)^{M-N}  \prod_{k = 1}^{N} \frac{\sigma_{k}z-\tau}{ \tau z-\sigma_{k}}
        \frac{\sigma_{k}w-\tau}{ \tau w-\sigma_{k}}.
    \end{align}
     Inserting \eqref{pdfeq1}
    and \eqref{DSneq-2}
     into \eqref{kernel12}, we see from   \eqref{intBessel01} that
    \begin{align*}
      S_{N}(u, v)
      &= \int_{\mathcal{C}_{\{\sigma_{l}/\tau\}}} \frac{dw}{2\pi i}  \int_{\mathcal{C}_{\{ \sigma_{l}/\tau\}}} \frac{dz}{2\pi i} h(w, z) \int_{\mathbb{R}^{2}} dx dy \,\frac{x-y}{x+y} e^{-\frac{1}{2}(x^{2} + y^{2})} \nonumber\\
      &\times \int_{0}^{\infty} dt\, e^{-\eta_{-}t(z+1)} \Big(I_{0}(2x\sqrt{\eta_{-}v}) I_{0}(2y\sqrt{\eta_{-}t}) - I_{0}(2y\sqrt{\eta_{-}v}) I_{0}(2x\sqrt{\eta_{-}t})\Big)\nonumber \\
      &=\int_{\mathcal{C}_{\{\sigma_{l}/\tau\}}} \frac{dw}{2\pi i}  \int_{\mathcal{C}_{\{ \sigma_{l}/\tau\}}} \frac{dz}{2\pi i} h(z, w) \int_{\mathbb{R}^{2}} dx dy \,\frac{x-y}{x+y} e^{-\frac{1}{2}(x^{2} + y^{2})} \nonumber\\
      &\quad\times  \frac{1}{\eta_{-}(z+1)}\Big(I_{0}(2x\sqrt{\eta_{-}v}) e^{\frac{y^{2}}{z+1}} - I_{0}(2y\sqrt{\eta_{-}v}))  e^{\frac{x^{2}}{z+1}}\Big)
    \end{align*}
    where we assume that $\Re\{z+1\}>2$ in order to ensure the existence of  $t$-integral.
           Using  the integral representation of the modified Bessel function $I_{0}(z)$
    \begin{align}
      I_{0}(2\sqrt{z}) = \int_{\mathcal{C}_{\{0\}}} \frac{ds}{2 \pi is} e^{sz + \frac{1}{s}}, \label{I0rep}
    \end{align}
    and Lemma \ref{intgauss}, we obtain
    \begin{align}
      S_{N}(u, v) &=  \int_{\mathcal{C}_{\{\sigma_{l}/\tau\}}} \frac{dw}{2\pi i}  \int_{\mathcal{C}_{\{ \sigma_{l}/\tau\}}} \frac{dz}{2\pi i} h(z, w) \frac{1}{\eta_{-}(z+1)} \int_{\mathcal{C}_{\{0\}}} \frac{ds}{2 \pi is}  e^{\frac{\eta_{-} v}{s}} \nonumber\\
      &\quad\times \int_{\mathbb{R}^{2}} dx dy \frac{x-y}{x+y} e^{-\frac{1}{2}(x^{2} + y^{2})} \big(e^{s  x^{2}} e^{\frac{y^{2}}{z+1}} - e^{s y^{2}} e^{\frac{x^{2}}{z+1}}\big)\nonumber\\
      &=\int_{\mathcal{C}_{\{\sigma_{l}/\tau\}}} \frac{dw}{2\pi i}  \int_{\mathcal{C}_{\{ \sigma_{l}/\tau\}}} \frac{dz}{2\pi i} h(z, w) \frac{4\pi}{\eta_{-}}  \frac{1}{\sqrt{z^{2}-1}}\,   g(\frac{1}{z},\eta_{-}v). \label{Sneq-3}
    \end{align}
    Here we assume   that $\Re(s) < 1/2$ and $g(z,v)$ is defined by       \begin{align}
      g(z,v)&=   \int_{\mathcal{C}_{\{0\}}} \frac{ds}{2 \pi is}  e^{\frac{v}{s}}
       \frac{(z+1)s-z}{1-(z+1)s } \frac{1}{\sqrt{1 - 2  s}}. \label{gdef}
    \end{align}
   Change variables $z\to 1/z$  and we immediately  obtain   \eqref{SneqII} from  \eqref{Sneq-3}, if  we can  derive another expression of $ g(z,v)$ as required.

   In fact,   we can rewrite $g\big(z,v\big)=  -I_1 +(1-z)I_2$ where
\begin{align*}
       I_1 = \int_{\mathcal{C}_{\{0\}}} \frac{ds}{2 \pi is}  e^{\frac{v}{s}}
       \frac{1}{\sqrt{1 - 2  s}},    \quad    I_2 =\int_{\mathcal{C}_{\{0\}}} \frac{ds}{2 \pi is}  e^{\frac{v}{s}}
       \frac{1}{1-(z+1)s } \frac{1}{\sqrt{1 - 2  s}}.
    \end{align*}
    Take the Taylor series of $1/\sqrt{1 - 2  s}$, integrate term by term  and we see that $I_1$ is
      a confluent hypergeometric function $M(\frac{1}{2},1,2v)$; see \cite[13.2.2]{OOL}.  Similarly,  we arrive at
    \begin{align*}
       I_2 &=\sum_{k=0}^{\infty} \sum_{l=0}^{\infty} \frac{(\frac{1}{2})_{l}}{l!} \int_{\mathcal{C}_{\{0\}}} \frac{ds}{2 \pi is}  e^{\frac{v}{s}}
      \big(s(z+1)\big)^k (2s)^l\nonumber \\
      &= \sum_{l=0}^{\infty} \big(\frac{2}{z+1}\big)^l \frac{(\frac{1}{2})_{l}}{l!}   \sum_{k=l}^{\infty} \frac{1}{k!}     \big(v(z+1)\big)^k,
    \end{align*}
    where   the rising factorial $(a)_l=a (a-1)\cdots (a-l+1)$.
    Thus, by  the simple fact
      \begin{align*}
       \sum_{k=l+1}^{\infty} \frac{1}{k!}    \big(v(z+1)\big)^k=(z+1)^{l+1}e^{v(z+1)}\frac{1}{l!}\int_{0}^{v} ds\, s^le^{(v-s)(z+1)},
    \end{align*}
    from which we further have
     \begin{align*}
       I_2 &=M(\frac{1}{2},1,2v)+
       (z+1) \int_{0}^{v} ds\, e^{(v-s)(z+1)} M(\frac{1}{2},1,2s).
    \end{align*}

    Combination of    $I_1$ and $I_2$  leads to \eqref{gfunction}.  Moreover,  note that the residues of the  $z$-variable integrand  in \eqref{SneqII}  at $z=1$ and $z=-1$  cancel out,   we  thus deform the contour such that it contains $\pm 1$ and 0 since  0 is not a pole when $M=N$. This    completes  the proof of \eqref{Sneq-2}.

    In order to complete     \eqref{ISneqII},
    we just need to perform similar operations upon \eqref{Sneq-3} and change $ w\to 1/w$.  To prove  \eqref{ISneq-2}, let $\mathcal{C}_{r}$ denote a centered circle of radius $r$, then with $  \max\{\tau/\sigma_{l}: 1\leq l\leq N\} <r_1<1$ we have
     \begin{align*}
      IS_{N}(u, v)       = -\Epsilon(\eta_{-}u, \eta_{-}v) +&4\pi  \int_{\mathcal{C}_{r_1}} \frac{dz}{2\pi i}
      \int_{ \mathcal{C}_{r_1}} \frac{dw}{2\pi i}  \frac{  g(w,\eta_{-}u)}{1-w^2}  \frac{  g(z,\eta_{-}v)}{1-z^2} \nonumber\\
      & \quad\times
        \frac{z-w}{1-zw}        \prod_{k = 1}^{N} \frac{\tau z-\sigma_{k}}{\sigma_{k}z-\tau}
         \frac{\tau w-\sigma_{k}}{\sigma_{k}w-\tau}.     \end{align*}
    We can first deform  $\mathcal{C}_{r_1}$ into $\mathcal{C}_{r_2}$ where $1<r_2<1/r_{1}$  since
     the residues of  the  $w$-variable integrand at $\pm 1$   cancel out. Next,  for the  $z$-variable function we deform $\mathcal{C}_{r_1}$ into   $\mathcal{C}_{r_3}$  such that   $  r_{3}>\max\{\sigma_{l} /\tau: 1\leq l\leq N\}$  and $  r_{3}>1/r_{2} $.  So by considering  the residues at $\pm 1$,  which cancel out,   and at $z=1/w$, we get
   \begin{align*}
      IS_{N}(u, v)  &   = -\Epsilon(\eta_{-}u, \eta_{-}v)   +4\pi
      \int_{ \mathcal{C}_{r_2}} \frac{dw}{2\pi i}  \frac{ 1}{w^{2}-1}   g(w,\eta_{-}u)   g(\frac{1}{w},\eta_{-}v) \nonumber\\
      & +
       4\pi  \int_{\mathcal{C}_{r_3}} \frac{dz}{2\pi i}
      \int_{ \mathcal{C}_{r_2}} \frac{dw}{2\pi i}  \frac{  g(w,\eta_{-}u)}{1-w^2}  \frac{  g(z,\eta_{-}v)}{1-z^2} \frac{z-w}{1-zw}        \prod_{k = 1}^{N} \frac{\tau z-\sigma_{k}}{\sigma_{k}z-\tau}
         \frac{\tau w-\sigma_{k}}{\sigma_{k}w-\tau}.     \end{align*}
  The requested integral representation  immediately follows from  the  fact that   the first two terms on the right-hand side cancel out.

  As a matter of fact,  using the integral representation   \eqref{I0rep} and Lemma \ref{intgauss} we rewrite
   \begin{align}
      \Epsilon(u, v) =4\pi
      \int_{ \mathcal{C}_{r_1}} \frac{ds}{2\pi i s}   \int_{ \mathcal{C}_{r_1}} \frac{dt}{2\pi i t} e^{\frac{u}{s}+\frac{v}{t}}  \frac{ s-t}{1-s-t } \frac{1}{\sqrt{1 - 2  s}} \frac{1}{\sqrt{1 - 2  t}}, \label{weightE}
    \end{align}
    where $r_1<1/2$.   Note that  $w=t/(1-t)$  is the unique pole of the following $w$- function  inside  the  circle $ \mathcal{C}_{r_4}$ with  $2r_1<r_4<1$, we have
      \begin{align*}
      \frac{ s-t}{1-s-t } =
      \int_{ \mathcal{C}_{r_4}} \frac{dw}{2\pi i }   \frac{ 1}{w^2 -1 } \frac{(w+1)s-w}{1 -(w+1)   s}  \frac{(\frac{1}{w}+1)s-\frac{1}{w}}{1 -(\frac{1}{w}+1)   s}.    \end{align*}
    Substituting it into  \eqref{weightE} and  recalling  the definition of $g(z,v)$, we have
       \begin{align*}
    \Epsilon(\eta_{-}u, \eta_{-}v)   =4\pi
      \int_{ \mathcal{C}_{r_4}} \frac{dw}{2\pi i}  \frac{ 1}{w^{2}-1}   g(w,\eta_{-}u)   g(\frac{1}{w},\eta_{-}v)
      \end{align*}
      from which the required fact follows since  $ \mathcal{C}_{r_4}$ can be deformed to  $\mathcal{C}_{r_2}$.
  \end{proof}

We also need asymptotic approximations  for the function  $g(z,v)$.
\begin{prop} \label{g-asym}
      Let  $g(z,v)$ be defined by \eqref{gfunction}, as $v\to \infty$ we have  asymptotic approximations
      \begin{align}  g(z,v)
      =\begin{cases}-e^{(z+1)v} \sqrt{z^2-1} +\frac{e^{2v}}{\sqrt{2\pi v}}\big(1+\bo(\frac{1}{v})\big) , \quad & \Re\{z\}>1;\\
    \frac{e^{2v}}{\sqrt{2\pi v}}\big(1+\bo(\frac{1}{v})\big),\quad &  \Re\{z\}< 1.   \end{cases}
    \end{align}
     % where $\sqrt{z^2-1}$ is understood as $\sqrt{z+1}\sqrt{z-1}$.
\end{prop}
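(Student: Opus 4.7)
The skeleton of the argument rests on two facts. The first is the classical large-argument asymptotic
\begin{equation*}
M(\tfrac12,1,2v)=\frac{e^{2v}}{\sqrt{2\pi v}}\big(1+\bo(1/v)\big),\qquad v\to\infty,
\end{equation*}
which already handles the summand $-zM(\tfrac12,1,2v)$ of \eqref{gfunction}. The second, obtained by termwise integration of the defining series of $M$ together with the generating identity $\sum_{k\geq 0}\binom{2k}{k}x^k=(1-4x)^{-1/2}$, is the Laplace transform
\begin{equation*}
\int_0^\infty e^{-ps}M(\tfrac12,1,2s)\,ds=\frac{1}{\sqrt{p(p-2)}},\qquad \re\{p\}>2,
\end{equation*}
which under $p=z+1$ becomes $1/\sqrt{z^2-1}$. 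All that is left is to analyse
\begin{equation*}
\mathcal{I}(z,v):=(1-z^2)e^{v(z+1)}\int_0^v e^{-s(z+1)}M(\tfrac12,1,2s)\,ds
\end{equation*}
in the two regimes $\re\{z\}>1$ and $\re\{z\}<1$, which behave differently because the integrand has net exponential behaviour $e^{s(1-z)}$ at infinity.

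For $\re\{z\}>1$ the Laplace transform above converges, so I would write
\begin{equation*}
\int_0^v e^{-s(z+1)}M(\tfrac12,1,2s)\,ds=\frac{1}{\sqrt{z^2-1}}-\int_v^\infty e^{-s(z+1)}M(\tfrac12,1,2s)\,ds,
\end{equation*}
and estimate the tail by inserting the $M$-asymptotic and integrating by parts once, reaching $e^{-v(z-1)}/((z-1)\sqrt{2\pi v})\cdot(1+\bo(1/v))$. Multiplying by $(1-z^2)e^{v(z+1)}=-(z-1)(z+1)e^{v(z+1)}$ then produces both the leading exponentially large term $-\sqrt{z^2-1}\,e^{v(z+1)}$ and a subleading piece $(z+1)e^{2v}/\sqrt{2\pi v}$; combined with $-ze^{2v}/\sqrt{2\pi v}$ from the first summand of $g$, it collapses to $e^{2v}/\sqrt{2\pi v}$, matching the claim. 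For $\re\{z\}<1$ the Laplace transform diverges and I would instead apply Watson's lemma at the right endpoint: substituting $s=v-t$ and using $M(\tfrac12,1,2(v-t))\sim e^{2(v-t)}/\sqrt{2\pi(v-t)}$ for bounded $t$ yields
\begin{equation*}
\int_0^v e^{-s(z+1)}M(\tfrac12,1,2s)\,ds=\frac{e^{v(1-z)}}{(1-z)\sqrt{2\pi v}}\big(1+\bo(1/v)\big),
\end{equation*}
so that $\mathcal{I}(z,v)=(1+z)e^{2v}/\sqrt{2\pi v}\cdot(1+\bo(1/v))$, and the same cancellation as before again produces $e^{2v}/\sqrt{2\pi v}$.

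The only genuinely delicate point is justifying the substitution of the $M$-asymptotic inside an integral whose domain extends down to $s=0$, where $M(\tfrac12,1,2s)$ is analytic while the candidate approximation $e^{2s}/\sqrt{2\pi s}$ is singular. I would resolve this by introducing a fixed cutoff $A>0$: on $[A,v]$ the Kummer asymptotic holds uniformly with explicit error control, while the contribution from $[0,A]$ is at most a constant multiple of $e^{v(z+1)}$ once the external factor is included. This remainder is exponentially smaller than $e^{2v}$ precisely under the standing hypothesis $\re\{z\}<1$, and for $\re\{z\}>1$ it is already absorbed into the Laplace transform evaluation above. Controlling the error uniformly when $z$ lies in a compact subset of one of the two half-planes (the regime actually needed in the sequel) is then routine.
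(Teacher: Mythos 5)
Your proposal is correct and follows essentially the same route as the paper's proof: split $g$ into the $-zM(\tfrac12,1,2v)$ piece handled by the Kummer asymptotic and the integral piece; for $\Re\{z\}>1$ write the finite integral as the convergent Laplace transform $1/\sqrt{z^2-1}$ minus a tail that you estimate by endpoint asymptotics; for $\Re\{z\}<1$ extract the right-endpoint contribution via the Laplace method and exploit the algebraic cancellation $(1+z)-z=1$. The paper's minor differences are cosmetic — it cites \cite[13.10.4]{OOL} for the Laplace transform rather than re-deriving it from the binomial series, uses the change of variables $s\mapsto v(s+1)$ rather than integration by parts for the tail, and handles the singularity at $s=0$ by cutting at a large $v_0$ (equivalent to your fixed $A$) while invoking monotonicity of the exponent to localize the Laplace contribution at $s=v$. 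Your explicit remark about why the $[0,A]$ contribution is exponentially subdominant — namely $\Re\{z+1\}<2$ precisely when $\Re\{z\}<1$ — is worth keeping; the paper leaves this implicit.
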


 \begin{proof}   Noticing  the approximation of  the confluent hypergeometric function \cite[13.7.2]{OOL}
  \begin{align}   M(\frac{1}{2},1,2v) =\frac{e^{2v}}{\sqrt{2\pi v}}\big(1+\bo(\frac{1}{v})\big), \label{confasym}
    \end{align}
     it remains to  approximate the integral
     \begin{equation}I= \int_{0}^{v} ds\, e^{-(z+1)s} M(\frac{1}{2},1,2s).
     \end{equation}

     When $\Re\{z\}>1$,  apply  the integral formula \cite[13.10.4]{OOL} and we  get
       \begin{equation*}I= \frac{1}{\sqrt{(z+1)(z-1)}}-\int_{v}^{\infty} ds\, e^{-(z+1)s} M(\frac{1}{2},1,2s).
     \end{equation*}
       Changing variables $s$ to  $v(s+1)$ and substituting  the approximation \eqref{confasym}   give us
      \begin{align*}I&= \frac{1}{\sqrt{z^2-1}}-v\int_{1}^{\infty} ds\, e^{(1-z)v(s+1)}   \frac{1}{\sqrt{2\pi v(s+1)}}\Big(1+\bo\big(\frac{1}{v(s+1)}\big)\Big)\\
      & =\frac{1}{\sqrt{z^2-1}}-ve^{(1-z)v} \big(1+\bo(\frac{1}{v})\big) \int_{1}^{\infty}ds\, e^{(1-z)vs}   \frac{1}{\sqrt{2\pi v}}\\
      &=\frac{1}{\sqrt{z^2-1}}+e^{(1-z)v}  \frac{1}{\sqrt{2\pi v}} \frac{1}{1-z} \big(1+\bo(\frac{1}{v})\big).
     \end{align*}
Together with \eqref{confasym},  we obtain the approximation of $g(z,v)$ when $\Re\{z\}>1$.

Next, we turn to the case of $\Re\{z\}< 1$.   By \eqref{confasym}, we have for large $s$
  \begin{equation*}-(z+1)s+\log M(\frac{1}{2},1,2s)=(1-z)s +\log\sqrt{2\pi s}+\bo\big(\frac{1}{s}\big),
     \end{equation*}
from which there exists a large $v_0\in (0,v)$ such that  $\Re\{-(z+1)s+\log M(\frac{1}{2},1,2s)\}$ is
a strictly monotone increasing function over $(v_0,v)$ because $\Re\{z\}<1$. Therefore, we  use   the Laplace method  and take the leading  contribution near $v$ to arrive at
 \begin{align*}  \int_{v_0}^{v} ds\, e^{-(z+1)s} M(\frac{1}{2},1,2s)&=
  \int_{v_0}^{v} ds\, e^{(1-z)s}    \frac{1}{\sqrt{2\pi s}}\big(1+\bo(\frac{1}{s})\big) \\
  &= e^{(1-z)v}    \frac{1}{(1-z)\sqrt{2\pi v}} \big(1+\bo(\frac{1}{v})\big),%\label{Ieq2}
     \end{align*}
from  which we further  obtain
\begin{align*}
I= e^{(1-z)v}    \frac{1}{(1-z)\sqrt{2\pi v}}  \big(1+\bo(\frac{1}{v})\big).
 \end{align*}
Combine  with \eqref{confasym} and we complete the proof   when $ \Re\{z\}< 1$.
\end{proof}

In  order to study  local eigenvalue statistics as $N \rightarrow \infty$,   we need to discuss some properties   of the $z$-function
  \begin{align}
    f(x;z) = \eta_{-}x (z+1)+ \log(\tau z - 1) -\log(z-\tau), \label{phase}
  \end{align}
  where  the  spectral variable $x\in  (0,\infty)$.   Solving the saddle point equation
  \begin{align*}
    \frac{d}{dz} f(x;z)= \eta_{-} x + \frac{\tau}{\tau z - 1} - \frac{1}{z - \tau}=0,
  \end{align*}
  and noting  $\eta_{-}=\tau /(1-\tau^2)$, we see that it has two solutions
  \begin{align}
    z_{\pm}  = \frac{1}{2}\Big(\frac{1}{\tau}+\tau \pm i \big( \frac{1}{\tau}-\tau\big)\sqrt{\frac{4}{x}-1}\Big).
  \end{align}
 Therefore, we can  divide the spectral variable $x\in (0,\infty)$ into three different regimes    by the nature of the saddle points:  \begin{enumerate}
\item [(I)] In the bulk regime of  $x\in (0,4)$, there are two complex conjugate roots;
 \item [(II)] at the soft edge  regime of $x=4$ the two complex saddle points  coalesce into  one
  \begin{equation}
    z_0=\frac{1}{2}(\frac{1}{\tau}+\tau); \label{z0saddle}
  \end{equation}
\item [(III)]in the outlier  regime of  $x\in (4,\infty)$, there are two real roots.
 \end{enumerate}

Given  a  fixed nonnegative integer $ n$,  let $\sigma_{n+1} = \cdots = \sigma_{N} = 1$. Introduce an undetermined non-zero parameter $\varphi(x)$ which depends on  $x\in(0, \infty)$. Using  a   gauge  transformation, which does not change the Pfaffian point process,  allows us  to rewrite  the  sub-kernels $DS_N, S_N$ and $IS_N$, say $S_N$,  as
  \begin{align}
   e^{\frac{\eta_{-}}{\varphi(x)}(\Re{z_{-}}+1)( u-v)}   \frac{1}{\varphi(x)} S_{N}&\Big(N\big(x+\frac{u}{N\varphi(x)}\big), N\big(x+\frac{v}{N\varphi(x)}\big)\Big) =: I_{N}+J_{N}  \label{Sleading}
  \end{align}
  where
  \begin{align}
    I_{N} &= \frac{ \eta_{-}}{\varphi(x)} \int_{\{z \in \mathcal{C}:\,  \Re{z} > 1\}} \frac{dz}{2\pi i}  \int_{\Sigma} \frac{dw}{2\pi i}   e^{N(f(x;z) - f(x;w))}  e^{\frac{\eta_{-}}{\varphi(x)}( v(z-\Re{z_{-}})-u(w-\Re{z_{-}})) } H(z, w) \label{SleadingI}
  \end{align}
  and by Proposition \ref{g-asym}
  \begin{align}
    J_{N} &=   \frac{1}{\sqrt{2\pi \eta_{-}(Nx+\frac{v}{\varphi(x)})}} \frac{ \eta_{-}}{\varphi(x)} \int_{\mathcal{C} } \frac{dz}{2\pi i}  \int_{\Sigma} \frac{dw}{2\pi i}   e^{N(f_1(x;z) - f(x;w))}  \nonumber\\
    &\quad\times
    e^{\frac{\eta_{-}}{\varphi(x)}( v(1-\Re{z_{-}})-u(w-\Re{z_{-}})) }  H(z, w)
    \frac{\sqrt{z^{2}-1}}{1-z^{2}}  \Big(1 + \bo\big(\frac{1}{  \eta_{-}(Nx+\frac{v}{\varphi(x)})}\big)\Big),\label{SleadingJ}
  \end{align}
where \begin{equation} f_1(x;z)=2\eta_{-}x+ \log(\tau z - 1) -\log(z-\tau) \label{phase1}, \end{equation} and
  \begin{align}
    H(z, w) = \frac{ 1}{\sqrt{w^{2}-1}\sqrt{z^{2}-1}}   \frac{1-zw}{w-z}    \Big( \frac{z - \tau}{\tau z - 1}   \frac{\tau w - 1}{ w - \tau}\Big)^{n} \prod_{k=1}^{n} \frac{\sigma_{k}w - \tau}{\tau w - \sigma_{k}}  \frac{\tau z - \sigma_{k} }{ \sigma_{k}z - \tau}.
  \end{align}
   Below, we will specifically  choose proper contours $ \mathcal{C}$ and $\Sigma$  as required.

The following lemma will play a central role in choosing  integration contours.
 \begin{lem}\label{extr-f}
    Given  $\tau \in (0, 1)$ and $\theta \in [0, \frac{\pi}{2})$, let
    \begin{align}
      f_{\tau ,\theta}(z) = \frac{4 \tau \cos^{2}\theta }{1 - \tau^{2}} (z + 1) + \log\frac{\tau z - 1}{z - \tau}, \quad z \in  \mathbb{C}, \label{phase-var}
    \end{align}
    then  the following hold true.
    \begin{enumerate}
      \item\label{S1} Introduce two circles
            \begin{align}
              \mathcal{C}_{\tau, \theta} = \left\{\tau + \frac{1}{2 \cos\theta} \Big(\frac{1}{\tau} - \tau\Big) e^{i\phi}: \phi \in (-\pi, \pi]\right\}, \label{Ctau}
            \end{align}
            and  \begin{align}
              \mathcal{C}_{ 1/\tau, \theta} = \left\{\frac{1}{\tau} + \frac{1}{2 \cos\theta} \Big(\frac{1}{\tau} - \tau\Big) e^{i\phi}: \phi \in (-\pi, \pi]\right\},  \label{C/tau}
            \end{align}
           then   $\Re{f_{\tau, \theta}(z)}$ attains its maximum value  along  the curve $\mathcal{C}_{\tau, \theta}$ only  at  the two conjugate points
            \begin{align}
              z_{\pm} = \tau + \frac{1}{2 \cos\theta} \Big(\frac{1}{\tau} - \tau\Big) e^{\pm i \theta}, \label{saddle-var}
            \end{align}
while  $\Re{f_{\tau, \theta}(z)}$ obtains its minimum value along   $\mathcal{C}_{1/\tau, \theta}$ at  the same two points $ z_{\pm}$.
      \item\label{S2} For any given $\phi \in [-\frac{1}{4} \pi, \frac{1}{4} \pi]$, let   $$w_{\phi} = \frac{1}{\tau} + \frac{1}{2\cos\theta} ( \frac{1}{\tau}-\tau ) e^{i \phi},$$ $\Re{f_{\tau ,\theta}(r + i\Im{w_{\phi}})}$   is strictly increasing when $ r$  increases in the interval $ [\Re w_{\phi} , \infty)$. While,  as $ r$  increases in the interval $ [1, \frac{1}{2}(\tau + \frac{1}{\tau})- \frac{1}{2}( \frac{1}{\tau} -\tau)\delta_0]$ where $\delta_0>0$ is a sufficiently small  number,  there exists $y_{0} > 0$ such that,  for any given $y \in [-y_{0}, y_{0}]$,$\Re{f_{\tau ,\theta}(r + iy)}$ is strictly decreasing.
      \item\label{S3} For given $r \in (0, \frac{1}{2}(\tau + \frac{1}{\tau}))$, $\Re{f_{\tau ,\theta}(r+iy)}$ is strictly decreasing (increasing) as  $y$ changes from 0 to $\infty$ ($-\infty$); while  for given $r \in ( \frac{1}{2}(\tau + \frac{1}{\tau}), \infty)$, $\Re{f_{\tau ,\theta}(r+iy)}$ is strictly increasing (decreasing) as  $y$ changes from 0 to $\infty$ ($-\infty$).   When $r = \frac{1}{2}(\tau + \frac{1}{\tau})$, $\Re{f_{\tau ,\theta}(r+iy)}$ is constant.
        \item \label{S4}   When $x>4$,  statement \eqref{S3} holds for  the function $\Re{f(x;r+iy)}$ defined in \eqref{phase}, too.
             \end{enumerate}
  \end{lem}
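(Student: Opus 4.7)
The plan is to reduce all four claims to sign analyses of $\Re f'_{\tau,\theta}$ and $\Im f'_{\tau,\theta}$, using the identity
\begin{equation*}
f'_{\tau,\theta}(z) \;=\; \frac{4\tau\cos^2\theta}{1-\tau^2} \;+\; \frac{1-\tau^2}{(\tau z-1)(z-\tau)} \;=\; \frac{4\tau^2\cos^2\theta}{1-\tau^2}\cdot\frac{(z-z_+)(z-z_-)}{(\tau z-1)(z-\tau)},
\end{equation*}
whose only zeros are the saddles $z_\pm$ in \eqref{saddle-var}. By Cauchy--Riemann, $\partial_r\Re f_{\tau,\theta}=\Re f'_{\tau,\theta}$ and $\partial_y\Re f_{\tau,\theta}=-\Im f'_{\tau,\theta}$, so every monotonicity claim becomes a sign statement for the real or imaginary part of this rational function.

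For statement \eqref{S1}, I would parametrize $\mathcal{C}_{\tau,\theta}$ as $z=\tau+Re^{i\phi}$ with $R=(1-\tau^2)/(2\tau\cos\theta)$. Since $\tau z-1=\tau Re^{i\phi}-(1-\tau^2)$ and $z-\tau=Re^{i\phi}$, $\Re f_{\tau,\theta}(z)$ becomes an explicit function of $\phi$, and a direct differentiation produces a factorization of the form
\begin{equation*}
\frac{d}{d\phi}\Re f_{\tau,\theta}(\tau+Re^{i\phi}) \;=\; C(\phi)\,\sin\phi\,(\cos\phi-\cos\theta),
\end{equation*}
where $C(\phi)$ is strictly positive (away from any logarithmic pole on the circle). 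The critical points on $\mathcal{C}_{\tau,\theta}$ are therefore exactly $\phi\in\{0,\pm\theta,\pi\}$, and inspecting the sign of $\sin\phi(\cos\phi-\cos\theta)$ identifies $\phi=\pm\theta$, i.e., $z_\pm$, as the unique global maxima. Parametrizing $\mathcal{C}_{1/\tau,\theta}$ as $z=1/\tau+Re^{i\phi}$ and running the analogous computation yields a factorization of the same shape with overall sign reversed, turning $z_\pm$ into unique global minima on $\mathcal{C}_{1/\tau,\theta}$.

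Statements \eqref{S3} and \eqref{S4} follow from one clean computation. With $z=r+iy$, expanding gives $\Im[(\tau z-1)(z-\tau)]=2\tau y(r-\tfrac12(\tau+1/\tau))$, so
\begin{equation*}
\frac{\partial}{\partial y}\Re f(x;r+iy) \;=\; -\Im f'(x;r+iy) \;=\; \frac{2\tau(1-\tau^2)\,y\,(r-\tfrac12(\tau+1/\tau))}{|(\tau z-1)(z-\tau)|^2}.
\end{equation*}
The sign of the numerator is exactly the content of \eqref{S3}, and since this expression is independent of the parameter $x$ (only the constant term of $f'$ depends on $x$), statement \eqref{S4} is automatic.

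The main technical work sits in statement \eqref{S2}. The relevant derivative is $\partial_r\Re f_{\tau,\theta}(r+iy)=A+(1-\tau^2)P/(P^2+Q^2)$ with $A=4\tau\cos^2\theta/(1-\tau^2)$, $P=\Re[(\tau z-1)(z-\tau)]=\tau r^2-\tau y^2-(1+\tau^2)r+\tau$ and $Q$ the corresponding imaginary part. The zero set of $P$ along $y=y_0$ consists of the two points $r_\pm(y_0)=((1+\tau^2)\pm\sqrt{(1-\tau^2)^2+4\tau^2y_0^2})/(2\tau)$, with $P\ge 0$ once $r\ge r_+(y_0)$. For part one of \eqref{S2} with $y_0=\Im w_\phi$, a direct computation reduces the inequality $\Re w_\phi\ge r_+(y_0)$ to the trigonometric estimate $\sqrt{\cos^2\theta+\sin^2\phi}\le\cos\theta+\cos\phi$, which (after squaring) holds for all $|\phi|\le\pi/4$; hence $P\ge 0$ throughout $[\Re w_\phi,\infty)$, giving $\Re f'\ge A>0$ and strict monotonicity. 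For the second part, the factored form $\Re f'_{\tau,\theta}(r)=A\tau|r-z_+|^2/((\tau r-1)(r-\tau))$ on the real axis shows $\Re f'(r)<0$ strictly on $(\tau,1/\tau)$; the hypothesized interval $[1,(1+\tau^2)/(2\tau)-(1-\tau^2)\delta_0/(2\tau)]$ is compact and its right endpoint stays strictly below $1/\tau$ by a positive margin controlled by $\delta_0$, so $\Re f'(r)$ is uniformly negative there, and continuity in $y$ then delivers a single $y_0>0$ with $\Re f'(r+iy)<0$ on the whole strip. The hard part will be packaging this uniform negativity cleanly across the interval: the $\delta_0$ safety margin in the hypothesis is precisely what is needed to keep the quantitative estimates away from the value $r=1/\tau$ where the denominator $(\tau r-1)(r-\tau)$ vanishes.
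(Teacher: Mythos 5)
Your approach is correct and is essentially the paper's: every statement is reduced to a sign analysis of a partial derivative of $\Re f_{\tau,\theta}$, and your concrete reductions match the paper's computations. For instance, in part (2) your trigonometric estimate $\sqrt{\cos^2\theta+\sin^2\phi}\le\cos\theta+\cos\phi$ for $|\phi|\le\pi/4$ squares to exactly the paper's condition $2\cos\theta\cos\phi+\cos 2\phi\ge 0$; for (3) your numerator $\Im[(\tau z-1)(z-\tau)]=2\tau y\big(r-\tfrac12(\tau+1/\tau)\big)$ is the paper's formula up to the factor $\tau$; and your observation that the $x$-dependence enters only the real constant of $f'$ disposes of (4), which the paper leaves implicit. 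Your handling of the second half of (2) via the real-axis factorization $f'_{\tau,\theta}(r)\propto (r-z_+)(r-z_-)/\big((\tau r-1)(r-\tau)\big)$ plus compactness and continuity is a clean alternative to the paper's small-$y$ Taylor expansion with the explicit $\delta_0$-margin estimate; both arguments deliver the same uniform negativity.

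One point you should correct when actually carrying out statement (1) on $\mathcal{C}_{1/\tau,\theta}$. With $w=1/\tau+Re^{i\phi}$ one has $\tau w-1=\tau Re^{i\phi}$ and $w-\tau=R(2\cos\theta+e^{i\phi})$, and the derivative becomes
\begin{equation*}
\frac{d}{d\phi}\,\Re f_{\tau,\theta}\Big(\tfrac{1}{\tau}+Re^{i\phi}\Big)
= \frac{-8\cos^2\theta\,\sin\phi\,(\cos\phi+\cos\theta)}{4\cos^2\theta+4\cos\theta\cos\phi+1},
\end{equation*}
which is \emph{not} ``the same shape with overall sign reversed'': the angular factor changes from $\cos\phi-\cos\theta$ to $\cos\phi+\cos\theta$. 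This matters because $z_\pm$ lie on $\mathcal{C}_{1/\tau,\theta}$ at angles $\phi=\pm(\pi-\theta)$, not $\pm\theta$; your stated shortcut would locate the critical angles at $\pm\theta$ and hence at the wrong points. With the corrected factorization the interior zeros are at $\phi=\pm(\pi-\theta)$, and the sign analysis shows these are the global minima, i.e., $z_\pm$, as required. The conclusion you state is right, but the intermediate claim needs this fix.
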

  \begin{proof}
    When $z \in   \mathcal{C}_{\tau, \theta}$,   statement \ref{S1}  immediately follows from the fact  that
    \begin{align*}
      \frac{\partial}{\partial \phi} \Re{f_{\tau, \theta}\Big(\tau + \frac{1}{2 \cos\theta} \Big(\frac{1}{\tau} - \tau\Big) e^{i\phi}\Big)}
      &= \frac{8 \cos^{2}\theta \sin\phi (\cos\phi - \cos\theta)}{4\cos^{2}\theta - 4\cos\theta\cos\phi + 1}.
    \end{align*}
 The case of  $w \in   \mathcal{C}_{1/\tau, \theta}$ can be similarly proven.

    To prove  statement \ref{S2}, simple calculation yields
    \begin{align}
      \frac{\partial}{\partial r} \Re{f_{\tau, \theta}(r + iy)}
      &=\frac{4\tau }{1 - \tau^{2}} \cos^{2}\theta + \frac{1 - \tau^{2}}{\tau}  \frac{r^{2} - r(\tau + \frac{1}{\tau}) + 1 - y^{2}}{\big((r - \frac{1}{\tau})^{2} + y^{2}\big) \big((r - \tau)^{2} + y^{2}\big)}.  \label{temp1}
    \end{align}
    When $w = r + iy \in \mathcal{C}_{1/\tau, \theta}$, say  $w_{\phi}$ above, we have
    \begin{align*}
      r^{2} - r(\tau + \frac{1}{\tau}) + 1 - y^{2}
      = \frac{1}{4} \big(\frac{1}{\tau} - \tau\big)^{2} \frac{ 2\cos\theta \cos\phi +  \cos 2\phi}{\cos^2\theta },
    \end{align*}
  which is positive whenever   $\phi \in[-\frac{1}{4} \pi,  \frac{1}{4}\pi]$. Hence    $\frac{\partial}{\partial r} \Re{f_{\tau, \theta}(r + i\Im{w_{\phi}})} > 0$ for all $r > \Re w_{\phi} $ and the required result follows.

  For  sufficiently small  $y$, take Taylor's expansion  and the RHS of  \eqref{temp1} becomes
    \begin{align*}
      \frac{\partial}{\partial r} \Re{f_{\tau, \theta}(r + iy)}
      &= \frac{4 \tau \cos^{2}\theta}{1 - \tau^{2}} - \frac{ \frac{1}{\tau}-\tau}{(\frac{1}{\tau}-r)(r - \tau)} + \bo(y^{2}).
    \end{align*}
    Note that   when $r\in [1, \frac{1}{2}(\tau + \frac{1}{\tau})- \frac{1}{2}( \frac{1}{\tau} -\tau)\delta_0]$  one obtains
     \begin{align*}
    & \frac{4 \tau \cos^{2}\theta}{1 - \tau^{2}} - \frac{ \frac{1}{\tau}-\tau}{(\frac{1}{\tau}-r)(r - \tau)}\leq  \frac{4 \tau }{1 - \tau^{2}} (\cos^{2}\theta -\frac{1}{1-\delta_{0}^2})<0,
    \end{align*}
 which implies that there exists $y_{0} > 0$ such that,  for any given $y \in [-y_{0}, y_{0}]$ the derivative $\frac{\partial}{\partial r} \Re{f_{\tau, \theta}(r + iy)}$ is negative. The desired monotonicity  thus follows.

    For statement \ref{S3},  since
    \begin{align*}
      \frac{\partial}{\partial y} \Re{f_{\tau, \theta}(r + iy)}
      &= \frac{2y \big(r - \frac{1}{2}(\tau + \frac{1}{\tau})\big) \big(\frac{1}{\tau} - \tau\big)}{\big((r - \frac{1}{\tau})^{2} + y^{2}\big) \big((r - \tau)^{2} + y^{2}\big)},
    \end{align*}
 when $y > 0$ it is easily to see that  the derivative   is negative for $r < \frac{1}{2}(\tau + \frac{1}{\tau})$,   positive for $r > \frac{1}{2}(\tau + \frac{1}{\tau})$ and is equal to  zero for $r = \frac{1}{2}(\tau + \frac{1}{\tau})$. When $y < 0$ it is  opposite in sign. Obviously, the required results follow.
  \end{proof}

\subsection{Critical case} \label{proofcritical}

Both the proofs of Theorems  \ref{edgecritthm} and  \ref{largestcritthm} reply on asymptotic analysis of the correlation kernels,   but for  the latter  we  need more delicate estimates  to obtain an upper bound. So  it is sufficient for us to  focus on the proof of Theorem\ref{largestcritthm}.

  \begin{proof}[Proof of Theorems  \ref{edgecritthm} and \ref{largestcritthm}]
   We need to  obtain asymptotic behaviour  of  the three sub-kernels $S_N, DS_N$ and $IS_N$, which are given by  \eqref{Sneq-2}, \eqref{DSneq-2} and \eqref{ISneq-2} respectively. The integrand in  the integral representation of $S_N$ has a typical form, whose  factors are relevant with   those in  both $DS_N$ and $IS_N$. Since all   can be handled similarly, we will just  focus on the most typical one $S_N$.

\textbf{Step 1:  Choice of contours.}

At the soft edge $x=4$, recalling  the assumptions on $\tau$ and $\sigma_{k}$  it is easy to obtain that  the saddle point
    \begin{align*}
      z_{0}
      = 1 +  2^{\frac{1}{3}}N^{-\frac{2}{3}}\kappa^{2} + \bo\big(N^{-1}\big),
    \end{align*}
    and for $k=1, \ldots, m$
    \begin{align*}
      \frac{\sigma_{k}}{\tau}
= 1 + 2^{\frac{1}{3}}N^{-\frac{2}{3}}\kappa\pi_{k} + \bo\big(N^{-1}\big).
    \end{align*}
   Therefore,   for sufficiently large $N$ we can choose  two points $z_1$ and $w_1$  such that $1 < z_{1} < w_{1} < z_{0}$ where
   \begin{equation*} z_{1} = 1 + \frac{1}{3}2^{\frac{1}{3}}N^{-\frac{2}{3}} \kappa \pi_{*}, \quad w_{1} = 1 + \frac{2}{3} 2^{\frac{1}{3}} N^{-\frac{2}{3}} \kappa \pi_{*}, \quad \pi_{*} =  \min\{ \kappa,  \pi_{1}, \ldots,  \pi_{m} \}. \end{equation*}
 Introduce a circle with centre on the real axis,   which   passes through one point    $z_{2} = 1 + i 2^{\frac{2}{3}} \sqrt{3} N^{-\frac{1}{3}} \kappa$,
     \begin{align}
       \mathcal{Q}_{\tau,z_2} &=\left\{z\in \mathbb{C}:   \Big|\frac{\tau z-1}{z-\tau} \Big|= \Big|\frac{\tau z_{2}-1}{z_{2}-\tau} \Big|\right\}. \label{circletau}
    \end{align}
    It is easy to verify that  both $-1$ and $1$ lie inside the circle  $\mathcal{Q}_{\tau,z_2}$, but   $z_{1}$  may be   inside  or  to the right of  the circle.

     When  $z_{1}$ is to the right of   $\mathcal{Q}_{\tau,z_2}$,  we choose  $ \mathcal{Q}_{\tau,z_2}$  as   the $z$-contour  $ \mathcal{C}$ in \eqref{SleadingI}  and \eqref{SleadingJ}.
    While $z_{1}$ is inside  $\mathcal{Q}_{\tau,z_2}$,
    we deform  $\mathcal{C}$ into
     \begin{equation} \mathcal{\tilde{C}}= \left\{z\in \mathcal{Q}_{\tau,z_2} :  \mathrm{Arg}(z_{3}) \leq  |\mathrm{Arg}(z) | \leq \pi  \right\}   \cup  \left\{z=z_{1}+iy:  - \Im{z_{3}} \leq y\leq  \Im{z_{3}}   \right\}. \label{dcontour}\end{equation}
Here  $z_{3}$ is   the intersection of the vertical line $x=z_{1}$ and $\mathcal{Q}_{\tau,z_2}$ in the upper plane,  and  $\bar{z}_{3}$ be its conjugate point.

           Furthermore, we deform  $ \{z\in \mathcal{Q}_{\tau,z_2}:  \Re{z} > 1\}$ for  $I_N$   into  $\mathcal{C}_{-}^{1} \cup  \mathcal{C}^{\mathrm{local}}\cup   \mathcal{C}_{+}^{1}$, where
    \begin{align*}
      \mathcal{C}_{\pm}^{\mathrm{local}} &=\left\{z_{1} + r e^{\pm i\frac{2}{3}\pi}: r \in [0, 2^{\frac{5}{3}} N^{-\frac{1}{3}} \kappa]\right\},\\
      \mathcal{C}_{\pm}^{1} &=\left\{t \pm i 2^{\frac{2}{3}} \sqrt{3} N^{-\frac{1}{3}} \kappa: t \in [z_{1} - 2^{\frac{2}{3}} N^{-\frac{1}{3}} \kappa, 1]\right\},
    \end{align*}
and $\mathcal{C}^{\mathrm{local}} = \mathcal{C}_{-}^{\mathrm{local}} \cup \mathcal{C}_{+}^{\mathrm{local}}$.

   On the other hand,   let $R = 2 +  \max\{\sigma_{k}/\tau:m+1 \leq k \leq n\}$,  set
    \begin{align*}
      \Sigma_{\pm}^{\mathrm{local}} &= \left\{w_{1} + r e^{\pm i\frac{1}{3}\pi}: r \in [0, 2 \kappa N^{-\frac{1}{3}})\right\},\\
      \Sigma_{\pm}^{1} &= \left\{w_{1} + \kappa N^{-\frac{1}{3}} \pm iy: y \in [\sqrt{3} \kappa N^{-\frac{1}{3}}, \Im{w_{2}}]\right\},\\
      \Sigma_{\pm}^{2} &= \left\{Re^{\pm i\phi}: \phi \in [0, \mathrm{Arg}(w_{2}))\right\},
    \end{align*}
   where $w_{2}$  denotes  the intersection of   the vertical line  $x= w_{1} + \kappa N^{-\frac{1}{3}}$ and   $\Sigma_{+}^{2}$.   Write $\Sigma^{\mathrm{local}} = \Sigma_{+}^{\mathrm{local}} \cup \Sigma_{-}^{\mathrm{local}}$,  $\Sigma^{\mathrm{global}} = \Sigma_{-}^{1} \cup \Sigma_{-}^{2}\cup \Sigma_{+}^{2} \cup \Sigma_{+}^{1}$,  and  take $\Sigma^{\mathrm{global}}  \cup \Sigma^{\mathrm{lobal}}$ as the contour $\Sigma$ for $I_N$ and $J_N$,  defined in \eqref{SleadingI}  and \eqref{SleadingJ}.
See Figure \ref{softcritical} for an illustration.
    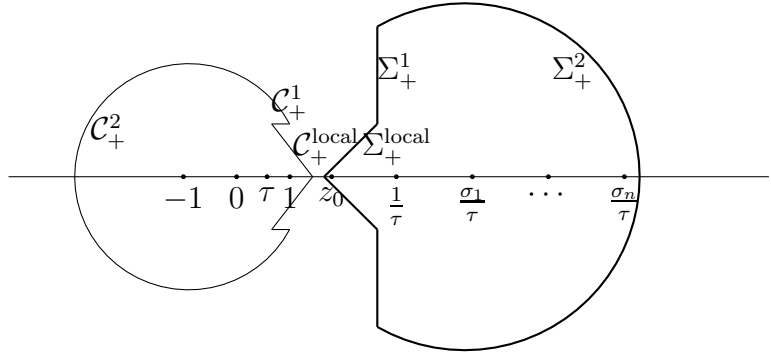
\begin{figure}[h]
      \centering
      \begin{tikzpicture}
        % draw the axis
        \draw (-3,0) -- (7,0);%node[below] {$x$}
%Contour for dz
        \draw (1, 0) -- ({-0.6 + 1.5*cos(pi/4 r)}, 0.7);
        \draw (0.6,0.4) node[right] {$\mathcal{C}_{+}^{\mathrm{local}}$};
        \draw ({-0.6 + 1.5*cos(pi/4 r)}, 0.7) -- (0.7, 0.7);
        \draw (0.7,0.6) node[above] {$\mathcal{C}_{+}^{1}$};
%        \draw ({-0.1 + 1.5*cos(pi/4 r)} ,{1.5*sin(pi/4 r)}) -- ({-0.1 + 1.5*cos(pi/4 r)} ,0.3);
        \draw (1, 0) -- ({-0.6 + 1.5*cos(pi/4 r)}, -0.7);
        \draw ({-0.6 + 1.5*cos(pi/4 r)}, -0.7) -- (0.7, -0.7);
%        \draw ({-0.1 + 1.5*cos(pi/4 r)} ,{-1.5*sin(pi/4 r)}) -- ({-0.1 + 1.5*cos(pi/4 r)} ,-0.3);
        \draw (-1.3,0.6) node[left] {$\mathcal{C}_{+}^{2}$};
%        \draw (0.9,1.1) node[above] {$\mathcal{C}_{+}^{3}$};
        \draw[domain=0.155*pi:pi] plot({-0.63 + 1.5*cos(\x r)} ,{1.5*sin(\x r)});
        \draw[domain=-pi:-0.155*pi] plot({-0.63 + 1.5*cos(\x r)} ,{1.5*sin(\x r)});
%Contour for dw
        \draw[thick] (1.15, 0) -- ({3 - 2.3*cos(pi/3 r)}, 0.7);
        \draw (1.5,0.4) node[right] {$\Sigma_{+}^{\mathrm{local}}$};
        \draw[thick] ({3 - 2.3*cos(pi/3 r)} ,{2.3*sin(pi/3 r)}) -- ({3 - 2.3*cos(pi/3 r)}, 0.7);
        \draw (1.7,1.4) node[right] {$\Sigma_{+}^{1}$};
        \draw[thick] (1.15, 0) -- ({3 - 2.3*cos(pi/3 r)}, -0.7);
        \draw[thick] ({3 - 2.3*cos(pi/3 r)}, -0.7) -- ({3 - 2.3*cos(pi/3 r)} ,{-2.3*sin(pi/3 r)});
%        \draw (1.2,0.6) node[right] {$\Sigma_{+}^{2}$};
        \draw[domain=pi/3:pi,thick] plot({3 - 2.3*cos(\x r)} ,{2.3*sin(\x r)});
        \draw (4,1.4) node[right] {$\Sigma_{+}^{2}$};
        \draw[domain=-pi:-pi/3,thick] plot({3 - 2.3*cos(\x r)} ,{2.3*sin(\x r)});
        \filldraw
          (-0.7,0) circle (0.7pt) node[below] {$-1$}
          (0,0) circle (0.7pt) node[below] {$0$}
          (0.4,0) circle (0.7pt) node[below] {$\tau$}
          (0.7,0) circle (0.7pt) node[below] {$1$}
          (5/4,0) circle (0.7pt) node[below] {$z_{0}$}
          (2.1,0) circle (0.7pt) node[below] {$\frac{1}{\tau}$}
          (3.1,0) circle (0.7pt) node[below] {$\frac{\sigma_{1}}{\tau}$}
          (4.1,0) circle (0.7pt) node[below] {$\cdots$}
          (5.1,0) circle (0.7pt) node[below] {$\frac{\sigma_{n}}{\tau}$};
%        \draw (0.6, 0) node[below] {$1$};
      \end{tikzpicture}
      \caption{Contours  of double integrals for $S_N$: critical} \label{softcritical}
    \end{figure}

\textbf{Step 2:  Estimate of $I_{N}$.}

    To estimate $I_{N}$, we split it into three parts
     \begin{align*}
      I_{N} &=   \Big(\int_{\mathcal{C}^{\mathrm{local}}} \frac{dz}{2\pi i}  \int_{\Sigma^{\mathrm{local}}} \frac{dw}{2\pi i}  + \int_{\mathcal{C}_{-}^{1} \cup \mathcal{C}_{+}^{1}} \frac{dz}{2\pi i} \int_{\Sigma} \frac{dw}{2\pi i}+\int_{\mathcal{C}^{\mathrm{local}}} \frac{dz}{2\pi i}  \int_{\Sigma^{\mathrm{global}}} \frac{dw}{2\pi i}   \Big)\nonumber\\
      & \qquad  \frac{\eta_{-}}{\varphi(4)}  e^{N(f(4;z) - f(4;w))}  e^{\frac{\eta_{-}}{\varphi(4)}( v(z-z_0)-u(w-z_0)) } H(z, w) \nonumber\\
      &=:I_{N}^{1}+I_{N}^{2}+I_{N}^{3}.
    \end{align*}
  Let's  choose  $\varphi(4)=(16N)^{-\frac{1}{3}}$ in $I_N$ and $J_N$,
   and   tune   the   parameters $\sigma_1, \ldots,\sigma_n$ near  $z_0$  as assumed in Theorems  \ref{edgecritthm} \& \ref{largestcritthm}.   Noting that  for $k=1,
  \ldots, m$
\begin{equation*}
    (16N)^{\frac{1}{3}}\eta_{-}\big( z_{0}-\frac{\tau}{\sigma_k}\big)=  \pi_{k}+\kappa +\bo(N^{-\frac{1}{3}}), \       z_{0}-1=\frac{1}{4}\eta_{-}(1-\tau)  +\bo\big(\frac{1}{N}\big),  \end{equation*}
    take the Taylor expansion at $z_0$ and we obtain
    \begin{align*}
     H(z,w)&= \frac{ 1}{2\sqrt{w-z_{0}+ \frac{1}{4\eta_{-}}(1-\tau)}\sqrt{z-z_{0}+\frac{1}{4\eta_{-}}(1-\tau)}}        \frac{\frac{1}{2\eta_{-}}(1-\tau)+(z+w-2 z_{0})}{(z-z_{0})-(w-z_{0})}
  \nonumber\\
      &\quad\times  \Big(1+\bo(N^{-\frac{1}{3}})\Big)  \prod_{k = 1}^{m}
       \frac{(16N)^{\frac{1}{3}}\eta_{-}(z-z_{0})-\pi_{k}+\kappa}{(16N)^{\frac{1}{3}}\eta_{-}(w-z_{0})-\pi_{k}+\kappa }  \frac{(16N)^{\frac{1}{3}}\eta_{-}(w-z_{0})+\pi_{k}+\kappa}{(16N)^{\frac{1}{3}}\eta_{-}(z-z_{0})+\pi_{k}+\kappa }.
        \end{align*}

On the other hand, $f(4;z)=f(4;z_0)+\frac{1}{6}f'''(z_0) (z-z_{0})^3+ \bo(\eta_{-}^{4}(z-z_{0})^4)$
   where
     \begin{align*}
   f(4;z_0)=\log(-\tau)+2(1+\tau)/(1-\tau),    \qquad   f'''(4;z_0) =  -32 \eta_{-}^{3}.
 \end{align*}
   Thus,  by using   change of variables   $z
   \to z_{0}+(16N)^{-\frac{1}{3}} \frac{1}{\eta_{-}}z$,   $w
   \to z_{0}+(16N)^{-\frac{1}{3}} \frac{1}{\eta_{-}}w$  we arrive at      \begin{align}
        I_{N}^{1} &= \int_{ (16N)^{\frac{1}{3}}\eta_{-}\big( \mathcal{C}^{\mathrm{local}}-z_{0}\big)  } \frac{dz}{2\pi i }  \int_{(16N)^{\frac{1}{3}}\eta_{-}\big(\Sigma^{\mathrm{local}} -z_{0}\big)} \frac{dw}{2\pi i}  e^{\frac{1}{3}(w^3-z^3)}  e^{ vz- uw}  \nonumber\\
      &\quad\times  \Big(1+\bo(N^{-\frac{1}{3}})\Big)
       \frac{ 1}{2\sqrt{w+ \kappa}\sqrt{z+\kappa}}
      \frac{z+w+2\kappa}{z-w}  \prod_{k = 1}^{m} \frac{ z-\pi_{k}+\kappa}{z+\pi_{k}+\kappa} \frac{ w+\pi_{k}+\kappa}{w-\pi_{k}+\kappa}. \label{I1eqn-1}
    \end{align}
  As $N\to \infty$, change  the rescaled contours  to  $\mathcal{C}_{>}-\kappa$ and $\mathcal{C}_{<}-\kappa$ and make a shift by $\kappa$,  we thus  obtain  \begin{align}
     I_{N}^{1} &= \Big(1+\bo(N^{-\frac{1}{3}})\Big)  S^{(\mathrm{soft})}(\kappa, \pi;u,v), \label{crit-i11}
    \end{align}
     which holds true uniformly for  $u, v$ in a compact subset of $\mathbb{R}$.
   Moreover,  note that  for $z\in    \mathcal{C}^{\mathrm{local}}$ and $w\in    \Sigma^{\mathrm{local}}$,
  \begin{align}  (16N)^{\frac{1}{3}}\eta_{-}\big(\Re z-z_{0}\big) &\leq  (16N)^{\frac{1}{3}}\eta_{-}\big(z_{1}-z_{0}\big) =\frac{1}{3}\pi_{*} -\kappa +\bo(N^{-\frac{1}{3}}) \leq  -a,
   \end{align}
   and
  \begin{align}  (16N)^{\frac{1}{3}}\eta_{-}\big(z_{0}-\Re w\big) &\leq  (16N)^{\frac{1}{3}}\eta_{-}\big(z_{0}-w_{1}\big)  =\kappa -\frac{2}{3}\pi_{*} +\bo(N^{-\frac{1}{3}}) \leq  b,  \label{b-bound}
   \end{align}
   where  \begin{equation} a= \kappa-\frac{4}{9}\pi_{*}, \quad b= \kappa-\frac{5}{9}\pi_{*}, \label{ab}\end{equation}   we   see  from \eqref{I1eqn-1}  that there exists a constant $C_1>0$  such that for large $N$
    \begin{align}
    |I_{N}^{1}|
      \leq   C_{1} e^{-av + bu},  \quad u, v \geq 0. \label{crit-i11'}
    \end{align}

For $I^{2}_{N}$,   we  claim that    $\Re{f(4;z)}$  increases  as $z$ moves from left to right along the horizontal line  $\mathcal{C}_{+}^{1}$  or  $\mathcal{C}_{-}^{1}$ when  $N$ is  large.  This can be verified from the fact
 \begin{align}
    &  \frac{\partial}{\partial x} \Re{f(4;x + i 2^{\frac{2}{3}} \sqrt{3} N^{-\frac{1}{3}} \kappa)} \nonumber\\
     &= 4\eta_{-} + \frac{x - \frac{1}{\tau}}{(x - \frac{1}{\tau})^{2} + 2^{\frac{4}{3}} 3N^{-\frac{2}{3}} \kappa^{2}} - \frac{x - \tau}{(x - \tau)^{2} + 2^{\frac{4}{3}} 3N^{-\frac{2}{3}} \kappa^{2}}\nonumber\\
      &\geq 4\eta_{-} - \frac{\frac{1}{\tau} - \tau}{  2^{\frac{4}{3}} 3N^{-\frac{2}{3}} \kappa^{2}} = \frac{1}{3\kappa} 2^{\frac{4}{3}}   N^{\frac{1}{3}} \big(1 + \bo(N^{-\frac{1}{3}})\big)> 0 \label{crit-z2}
    \end{align}
    for all $x \in [z_{1} - 2^{\frac{2}{3}} N^{-\frac{1}{3}} \kappa, 1]$, which implies that
 \begin{align*}
      \max_{z \in \mathcal{C}_{+}^{1} \cup \mathcal{C}_{-}^{1}} \Re{f(4; z)} \leq \Re{f(4; z_{2})}.
    \end{align*}
Besides,  simple manipulation leads to
    \begin{align}
      \Re\{f(4;z_{2})- f(4;z_{0})\}
      &=\frac{4\tau }{1 - \tau^{2}}  \big(\Re{z_{2}}-z_{0}\big) + \log\Big|\frac{z_{2} - \frac{1}{\tau}}{z_{2}-\tau}\Big| \nonumber\\
       &=- (2^{-\frac{1}{3}} - 2^{-\frac{4}{3}}) \kappa N^{-\frac{1}{3}} + \bo(N^{-\frac{2}{3}})\nonumber \\
       &\leq -\frac{1}{4}\kappa N^{-\frac{1}{3}}. \label{crit-z1}
         \end{align}
Hence, we get
    \begin{align*}
      \max_{z \in \mathcal{C}_{-}^{1} \cup \mathcal{C}_{+}^{1}} \Re\{f(4;z)- f(4;z_{0})\} \leq -\frac{1}{4}\kappa N^{-\frac{1}{3}}. %\label{crit-gz1}
    \end{align*}
   Together with    \begin{align*} \frac{\eta_{-}}{\varphi(4)} (z-z_{0})=\bo(N^{\frac{1}{3}}),  \quad \forall z \in \mathcal{C}_{-}^{1} \cup \mathcal{C}_{+}^{1}, \end{align*}
   we proceed  for the $w$-integral  as in the case of $I_{N}^1$   and   give an estimate  for $N$ large sufficiently
   \begin{align}
      \left|  I_{N}^{2}\right|
      &\leq e^{-\frac{1}{8}\kappa N^{\frac{2}{3}}}, \label{crit-i12}
    \end{align}
     which holds  uniformly for  $u, v$ in a compact subset of $\mathbb{R}$.
    Moreover, there exists a constant $C_2>0$  such that for large $N$ such that
    \begin{align}
      \left| I_{N}^{2}\right|
      &\leq e^{\frac{\eta_{-}}{\varphi(4)}( v(\Re{z_{1}}-z_{0})-u(\Re{w_{1}}-z_{0})) } \frac{\eta_{-}}{\varphi(4)}
      \int_{\mathcal{C}_{-}^{1} \cup \mathcal{C}_{+}^{1}} \frac{|dz|}{2\pi }  \int_{\Sigma} \frac{|dw|}{2\pi }  \left| e^{N(f(4;z) - f(4;w))}   H(z, w)\right| \nonumber\\
      &\leq  C_2 e^{-av + bu} e^{-\frac{1}{8}\kappa N^{\frac{2}{3}}} \label{crit-i12'}
    \end{align}
   for all $u, v \geq  0$; cf. \eqref{crit-i11'}.

At last, for $I^{3}_{N}$, let $w_{2} = w_{1} + 2 \kappa N^{-\frac{1}{3}} e^{i\frac{1}{3}\pi}$, simple manipulation  shows
    \begin{align}
      \Re\{f(4;w_{2})- f(4;z_{0})\}
      &=4\frac{\tau }{1 - \tau^{2}} (\Re{w_{2}}-z_{0}) + \log\left|\frac{w_{2} - \frac{1}{\tau}}{w_{2}-\tau}\right| \nonumber\\
    &= 2^{\frac{1}{3}}  - \frac{1}{2} \log \frac{1+2^{\frac{2}{3}} -2^{\frac{1}{3}}}{
   1+2^{\frac{2}{3}} +2^{\frac{1}{3}}  } + \bo(N^{-\frac{1}{3}}) \nonumber \\
   &\geq   \frac{1}{4} \label{crit-w1}
    \end{align}
    for $N$  large.  Applying  statement \ref{S3} of Lemma \ref{extr-f}  gives us
    \begin{align}
      \min_{w \in \Sigma_{+}^{1} \cup \Sigma_{-}^{1}} \Re\{f(4;w)- f(4;z_{0})\} \geq  \Re{f(4;w_{2})} - \Re{f(4;z_{0})}\geq \frac{1}{4}. \label{crit-wg1}
    \end{align}
However, when $z=R e^{i\phi} \in \Sigma_{+}^{2}\cup \Sigma_{-}^{2}$  with $\phi \in [-\mathrm{Arg}(w_{2}), \mathrm{Arg}(w_{2})]$,
    \begin{align}
      \frac{\partial}{\partial \phi} \Re{f(4;R e^{i\phi})}
      &= - \Big(\frac{4\tau }{1 - \tau^{2}}+ \frac{\tau}{1 - 2\tau R \cos\phi + \tau^2 R^{2}} - \frac{\tau}{\tau^{2} - 2\tau R \cos\phi + R^{2}}\Big) R \sin\phi \nonumber\\
      &
\begin{cases}
<0,\quad  \phi > 0; \\
>0, \quad  \phi < 0.
\end{cases}
  \label{crit-w2}
    \end{align}
    This implies
        \begin{align}
      \min_{w \in \Sigma_{+}^{2} \cup \Sigma_{-}^{2}}\Re\{f(4;w)- f(4;z_{0})\} \geq \Re{f(4;w_{2})} - \Re{f(4;z_{0})} \geq \frac{1}{4}. \label{crit-wg2}
    \end{align}
    Combining \eqref{crit-wg1} and \eqref{crit-wg2}, when  $N$ is large   we obtain
    \begin{align}
      \left|I_{N}^{3}\right|
      &\leq  e^{-\frac{1}{8} N}, \label{crit-i13}
    \end{align}
    uniformly for  $u, v$ in a compact subset of $\mathbb{R}$
    and
    \begin{align}
      |  I_{N}^{3}|&\leq
     e^{\frac{\eta_{-}}{\varphi(4)} ( v(\Re{z_{1}}-z_{0})-u(\Re{w_{1}}-z_{0})) } \frac{\eta_{-}}{\varphi(4)} \int_{\mathcal{C}^{\mathrm{local}}} \frac{|dz|}{2\pi } \int_{\Sigma^{\mathrm{global}}} \frac{|dw|}{2\pi }   \left|e^{N(f(4;z) - f(4;w))}  H(z, w)\right| \nonumber\\
      &\leq  C_{3} e^{-av + bu} e^{-\frac{1}{8} N } \label{crit-i13'}
    \end{align}
    for all $u, v \geq 0$.

    Combining \eqref{crit-i11}, \eqref{crit-i12} and \eqref{crit-i13},  we arrive at
    \begin{align}
     I_{N} =  \big(1+\bo(N^{-\frac{1}{3}})\big) S^{(\mathrm{soft})}(\kappa, \pi;u,v), \label{crit-i1}
    \end{align}
     uniformly for  $u, v$ in a compact subset of $\mathbb{R}$,
   while    combining \eqref{crit-i11'}, \eqref{crit-i12'} and \eqref{crit-i13'} we know  that there exists a constant  $C_4>0$ such that
    \begin{align}
    |I_{N}| \leq C_{4} e^{-au + bv}, \quad  \forall\, u, v\geq 0, \label{crit-i1'}
    \end{align}
    for sufficiently large $N$.

\textbf{Step 3: Estimate  of $J_{N}$}.

    Below we obtain upper bounds     according to  the choice of the $z$-contours  $\mathcal{Q}_{\tau,z_2}$  or $\mathcal{\tilde{C}}$, defined  by \eqref{circletau} or \eqref{dcontour}.

    In the case of the $z$-contour  $\mathcal{Q}_{\tau,z_2}$,     noting the definition of  $\mathcal{Q}_{\tau,z_2}$ and the fact that $ \Re\{f_{1}(4;z_2)- f(4;z_{2})\}=0$,  by \eqref{crit-z1} we have
     \begin{align}
   \Re\{f_{1}(4;z)- f(4;z_{0})\} &= \Re\{f(4;z_2)- f(4;z_{0})\}  \leq -\frac{1}{4}\kappa N^{-\frac{1}{3}}.   \label{crit-j-1}   \end{align}
     Recalling   $J_N$ defined in \eqref{SleadingJ},   since  \begin{align}H(z,w)=\bo(N^{\frac{4}{3}}),  \quad \forall z \in \mathcal{Q}_{\tau,z_2},  \  w\in \Sigma,  \label{H-2}\end{align}
      for $N$ large we can deal with the $w$-integral   in a similar way as for $I_{N}$ and   see from \eqref{crit-j-1}   that
     \begin{align}
    |J_{N}| &=   \bo\big(N^{\frac{2}{3}}\big)   e^{ -\frac{1}{4}\kappa N^{\frac{2}{3}}} \leq e^{ -\frac{1}{8}\kappa N^{\frac{2}{3}}}, \label{critjb1}
  \end{align}
   uniformly for  $u, v$ in a compact subset of $\mathbb{R}$.
   Moreover,  note that  for $z\in  \mathcal{Q}_{\tau,z_2}$ and $w\in    \Sigma$ (cf. eq \eqref{b-bound} and eq \eqref{ab}),
  \begin{align*} \frac{\eta_{-}}{\varphi(4)}  \big(1-z_{0}\big) &=-\kappa \leq  -a,   \quad  \frac{\eta_{-}}{\varphi(4)} \big(z_{0}-\Re w\big)   \leq  b,
   \end{align*}
   we get
  \begin{align}
    |J_{N}|    \leq e^{ -\frac{1}{8}\kappa N^{\frac{2}{3}}}
   e^{-av + bu}, \quad u, v\geq 0, \label{crit-j11'}
    \end{align}
    and  further
     \begin{align}
    |J_{N}| \leq C_5
   e^{-av + bu}, \quad u, v\geq 0 \label{critjb2}
    \end{align}
    for  some $C_5 > 0$.

    On the other hand,  in the case of the $z$-contour  $\mathcal{\tilde{C}}$,   combing the definition of  $\mathcal{Q}_{\tau,z_2}$ and statement \ref{S3} of Lemma \ref{extr-f},  we obtain
     \begin{align}
      \max_{z  \in \mathcal{\tilde{C}}} \Re\{f_{1}(4;z)- f(4;z_{0})\} &= \max_{  -\Im{z_{3}}\leq y\leq  \Im{z_{3}} } \Re\{f_{1}(4;z_{1}+iy)- f(4;z_{0})\} \nonumber\\
      &={f_{1}(4;z_{1})- f(4;z_{0})}\nonumber\\
      &=-\frac{1}{3}2^{\frac{2}{3}}\pi_{*} N^{-\frac{1}{3}}+\bo(N^{-\frac{2}{3}}). \label{crit-gz1}
    \end{align}
     Similarly, for $N$ large we see from \eqref{crit-gz1}   that
     \begin{align}
    |J_{N}| &=   \bo\big(N^{\frac{2}{3}}\big) \exp\Big(  -\frac{1}{3}2^{\frac{2}{3}}\pi_{*} N^{\frac{2}{3}}+\bo(N^{\frac{1}{3}})   \Big) \leq e^{ -\frac{1}{3}\pi_{*} N^{\frac{2}{3}}}, \label{critjb3}
  \end{align}
   uniformly for  $u, v$ in a compact subset of $\mathbb{R}$ and    further
     \begin{align}
    |J_{N}| \leq C_6
   e^{-av + bu}, \quad u, v\geq 0 \label{critjb4}
    \end{align}
    for  some $C_6 > 0$.

    \textbf{Step 4:  Completion}.

    We can proceed as in  the estimates of  $I_N$ and $J_N$ for  $S_N$ to  obtain similar  approximations for  $DS_N$ and $IS_N$; see  \eqref{crit-i1}, \eqref{crit-i1'}, \eqref{critjb1}, \eqref{critjb2}, \eqref{critjb3}, and \eqref{critjb4}. Thus, we summarize them as follows.  Put
      \begin{align*}\varphi(4)=(16N)^{-\frac{1}{3}}, \quad h(u)= e^{\frac{\eta_{-}}{\varphi(4)}(z_{0}+1)u}, \quad T_{n}=e^{-2Nf(z_{0})}\prod_{k=1}^{n}\big(\frac{\sigma_k}{\tau}\big)^{2}.  \end{align*}
    Then   \begin{align*}
   \frac{1}{\varphi(4)}   \frac{
   h(u) }{h(v)}    S_{N}\Big(4N+\frac{u}{\varphi(4)}, 4N+\frac{v}{\varphi(4)}\Big)
    & =  \big(1+\bo(N^{-\frac{1}{3}})\big) S^{(\mathrm{soft})}(\kappa, \pi;u,v),
    \\ T_{n}   h(u)h(v)  \frac{1}{\varphi(4)}  DS_{N}\Big(4N+\frac{u}{\varphi(4)}, 4N+\frac{v}{\varphi(4)}\Big) &=  \big(1+\bo(N^{-\frac{1}{3}})\big) DS^{(\mathrm{soft})}(\kappa, \pi;u,v),
    \\
   \frac{1}{T_{n}   h(u)h(v)}  \frac{1}{\varphi(4)}  IS_{N}\Big(4N+\frac{u}{\varphi(4)}, 4N+\frac{v}{\varphi(4)}\Big) &=  \big(1+\bo(N^{-\frac{1}{3}})\big) IS^{(\mathrm{soft})}(\kappa, \pi;u,v),
    \end{align*}
   hold  uniformly for  $u, v$ in a compact subset of $\mathbb{R}$.  Note that when taking the Pfaffian, the factors   $T_n, h(u)$ and $h(v)$
 cancel out each other,  combination of the above three equations  completes  the proof of Theorem \ref{edgecritthm}.

   Moreover,
  given  any   real $u_0$ there exists some constant $C>0$ such that  for $  \forall u, v\geq u_0$
     \begin{align*}
   \frac{1}{\varphi(4)}   \frac{
   h(u) }{h(v)}
    \abs{S_{N}\Big(4N+\frac{u}{\varphi(4)}, 4N+\frac{v}{\varphi(4)}\Big)}
    &   \leq C e^{-au + bv},
    \\   h(u)h(v) T_{n}   \frac{1}{\varphi(4)}  \abs{DS_{N}\Big(4N+\frac{u}{\varphi(4)}, 4N+\frac{v}{\varphi(4)}\Big)} &\leq C e^{-au -av},
    \\
   \frac{1}{   h(u)h(v) T_{n} }  \frac{1}{\varphi(4)}  \abs{IS_{N}\Big(4N+\frac{u}{\varphi(4)}, 4N+\frac{v}{\varphi(4)}\Big)} &\leq C e^{bu + bv},
       \end{align*}
    when $N$ is  sufficiently large.  By  \cite[Lemma 2.6]{BBCS},  we obtain  for  $k\geq 1$
    \begin{equation}
  \left| \frac{1}{(\varphi(4))^k} \mathrm{Pf} \Big[
        K_{N}\Big(4N+\frac{u_i}{\varphi(4)}, 4N+\frac{u_j}{\varphi(4)}\Big) \Big]_{i,j = 1}^{k} \right| \leq (2k)^{\frac{k}{2}} C^{k} \prod_{j=1}^{k} e^{-(a-b)u_{j}}. \label{upperbound}
  \end{equation}
  Applying  dominated convergence theorem to   the finite $N$  Pfaffian  series expansion
      \begin{align}
 &
 \mathrm{P} \Big(2^{-\frac{4}{3}}N^{-\frac{1}{3}}\big(\lambda_{\mathrm{max}}-4N\big) \leq   x\Big)=1+ \nonumber \\
  &\sum_{k=1} ^{N}  \frac{(-1)^k}{k!} \frac{1}{(\varphi(4))^k}  \int_{x}^{\infty} \cdots   \int_{x}^{\infty}  \mathrm{Pf}\Big[ K_{N}\Big(4N+\frac{u_i}{\varphi(4)}, 4N+\frac{u_j}{\varphi(4)}\Big)
    \Big]_{i,j = 1}^{k} du_1 \cdots   du_k, \label{seriessum}\end{align}
       we thus   conclude  the proof of     Theorem  \ref{largestcritthm}.
  \end{proof}

\subsection{Noncritical case} \label{sect:noncritical}
We state   and prove  limit  results about correlation functions   in the  bulk  and the BBP  transition of the largest eigenvalue when $\tau\in (0,1)$ is a  fixed number.
\begin{thm} \label{bulk limit}
    With the same notations as in Theorem \ref{SVdensity} and with     $M=N$ even,  for fixed  $\tau\in (0,1)$ and nonnegative integer $n$,  assume  that
    \begin{equation}  \sigma_{n+1} = \cdots = \sigma_{N} = 1  \  \mathrm{and} \
    \sigma_i \in (\tau,\infty),  \quad  i= 1, \ldots, n.
    \end{equation}
     Let  \begin{align}\varphi(x)=\frac{1}{2\pi}\sqrt{\frac{4-x}{x}}, \quad 0<x<4,\end{align}
    then  for any fixed $x\in (0,4)$      %  the  rescaled $k$-point correlation functions
    \begin{equation}
   \lim_{N \to \infty}  \Big(\frac{1}{\varphi(x)}\Big)^k R_{N}^{(k)}\Big(Nx+\frac{u_1}{\varphi(x)}, \ldots, Nx+\frac{u_k}{\varphi(x)}\Big)= \det \Big[\frac{\sin \pi(u_{i}-u_{j})}{\pi(u_{i}-u_{j})}\Big]_{i,j = 1}^{k}
  \end{equation}
 holds  uniformly for  $u_1, \ldots, u_k$ in a compact subset of $\mathbb{R}$.
  Particularly when  $k=1$,  it reduces to  the  Marchenko-Pastur law
      \begin{equation}
   \lim_{N \to \infty}  R_{N}^{(1)}\big(Nx, Nx\big)dx=  \varphi(x)dx, \quad 0<x< 4.
  \end{equation}
  \end{thm}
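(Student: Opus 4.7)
The plan is a standard steepest descent analysis of the three contour-integral representations \eqref{Sneq-2}, \eqref{DSneq-2} and \eqref{ISneq-2} of Theorem \ref{newrep}, followed by a Pfaffian-to-determinant reduction. The outcome we are chasing is that, after a suitable gauge transformation of the matrix kernel, $DS_N$ and $IS_N$ vanish pointwise while $\frac{1}{\varphi(x)}S_N(Nx+\frac{u}{\varphi(x)},Nx+\frac{v}{\varphi(x)})$ converges to the sine kernel $\frac{\sin\pi(u-v)}{\pi(u-v)}$. Once this is in hand, the $k$-point Pfaffian collapses to the $k\times k$ determinant of the sine kernel via \cite[Lemma 2.6]{BBCS}, exactly as in the last step of the proof of Theorem \ref{largestcritthm}.

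For the saddle-point setup, for fixed $x\in(0,4)$ the phase \eqref{phase} has a pair of complex conjugate saddles $z_\pm=\tfrac12(\tfrac1\tau+\tau)\pm\tfrac{i}{2}(\tfrac1\tau-\tau)\sqrt{4/x-1}$, which are precisely the intersection points of the two circles $\mathcal{C}_{\tau,\theta}$ and $\mathcal{C}_{1/\tau,\theta}$ of Lemma \ref{extr-f} with the choice $\cos\theta=\sqrt{x}/2$. By Lemma \ref{extr-f}\eqref{S1}, $\Re f(x;\cdot)$ attains its maximum on $\mathcal{C}_{\tau,\theta}$ and its minimum on $\mathcal{C}_{1/\tau,\theta}$ exactly at $z_\pm$; these are thus the appropriate steepest descent contours. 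For the $S_N$ integral \eqref{Sneq-2} I would deform the inner $z$-contour to $\mathcal{C}_{\tau,\theta}$ and the outer $w$-contour to $\mathcal{C}_{1/\tau,\theta}$; for $DS_N$ both contours go to $\mathcal{C}_{1/\tau,\theta}$, and for $IS_N$ both go to $\mathcal{C}_{\tau,\theta}$. The $n$ fixed spikes only contribute bounded multiplicative residues at $\sigma_k/\tau\neq z_\pm$, which are absorbed into the bounded factor $H(z,w)$ as in Sect.~\ref{proofcritical}.

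The crucial numerical match is the following: since $\eta_-=\tau/(1-\tau^2)$ gives $\tfrac{1}{\tau}-\tau=\tfrac{1}{\eta_-}$, the imaginary parts of the saddles satisfy $\Im z_\pm=\pm\tfrac{\pi\varphi(x)}{\eta_-}$, exactly with $\varphi(x)=\tfrac{1}{2\pi}\sqrt{(4-x)/x}$. Consequently, at the saddles the oscillatory factor in \eqref{Sleading} becomes $\exp\bigl\{\tfrac{\eta_-}{\varphi(x)}(v-u)(z_\pm-\Re z_-)\bigr\}=e^{\pm i\pi(u-v)}$. Standard Gaussian localization near each saddle (using $f''(x;z_\pm)=\pm i c(x)$ with $c(x)>0$) produces two conjugate $N^{-1/2}$ contributions whose combination yields exactly $\frac{1}{\varphi(x)}\cdot\frac{\sin\pi(u-v)}{\pi(u-v)}$, the correct sine kernel at density $\varphi(x)$. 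The piece $J_N$ in \eqref{SleadingJ} carries the additional factor $1/\sqrt{2\pi\eta_-(Nx+v/\varphi(x))}=O(N^{-1/2})$ and gives a subleading correction. The same local analysis applied to $DS_N$ and $IS_N$ produces cancellations between the $z_+$ and $z_-$ contributions because of the antisymmetric numerators $(z-w)$ and $(z-w)/(1-zw)$ in \eqref{DSneq-2} and \eqref{ISneq-2}: evaluated at $(z,w)=(z_\pm,z_\pm)$ the integrand is proportional to $(z_+-z_-)$ or $(z_--z_+)$, so after a gauge transformation $(u,v)\mapsto e^{\alpha(u)}K(u,v)e^{-\alpha(v)}$ cancelling the diverging Gaussian-integral prefactors the limits of $DS_N$ and $IS_N$ are zero, as desired.

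The main obstacle will be carefully handling the function $g(z,\eta_-v)$ present in \eqref{SneqII} and \eqref{ISneqII}: its asymptotics in Proposition \ref{g-asym} change depending on whether $\Re z$ is larger or smaller than $1$, and along the steepest descent circle $\mathcal{C}_{\tau,\theta}$ (which passes through $1$ at $\theta=0$ but separates the half-planes for $\theta>0$) one has to split the contour accordingly and check that the contributions from the arc $\Re z<1$ are subdominant. A similar but cleaner analysis for $w$ on $\mathcal{C}_{1/\tau,\theta}$ (where $\Re w>1$ uniformly) makes the $S_N$ piece transparent. Once the kernel asymptotics are established, the $k=1$ case gives the bulk density $\lim_N N^{-1}R_N^{(1)}(Nx,Nx)=\varphi(x)$, and integrating over $x\in(0,4)$ with the substitution $y=x/4$ recovers the Marchenko-Pastur density in its standard form, completing the theorem.
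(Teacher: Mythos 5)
The high-level blueprint you lay out — steepest descent on the representation of Theorem \ref{newrep}, sine kernel from $S_N$, gauge transformation sending $DS_N,IS_N\to 0$, then the Pfaffian collapses to a determinant — is the same as the paper's, and your observation that $\Im z_\pm=\pm\pi\varphi(x)/\eta_-$ dictates the density is the right numerical check. However, the central mechanism you propose for producing the sine kernel is wrong, and this is not a cosmetic issue.

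You write that ``Standard Gaussian localization near each saddle $\ldots$ produces two conjugate $N^{-1/2}$ contributions whose combination yields exactly $\frac{1}{\varphi(x)}\cdot\frac{\sin\pi(u-v)}{\pi(u-v)}$.'' This cannot be right dimensionally: after the substitution $z=z_\pm+N^{-1/2}\zeta$, $w=z_\pm+N^{-1/2}\omega$, the Jacobian contributes $N^{-1}$, the singular factor $\frac{1}{z-w}$ contributes $N^{1/2}$, and the pre-factor $\eta_-/\varphi(x)$ is $O(1)$, so the local double Gaussian near a saddle is genuinely $O(N^{-1/2})$, not $O(1)$. This is precisely what the paper proves: its term $I_N^2$ (a Cauchy principal-value integral over $\mathcal{C}^{\mathrm{local}}\times\Sigma^{\mathrm{local}}$) is $\bo(N^{-1/2})$ and \emph{does not} contribute to the limit. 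The sine kernel is extracted from a completely different piece: the paper deforms the $z$-contour by splitting it through a ``virtual'' arc $\mathcal{C}^2$ used twice with opposite orientations, obtaining a closed curve $\mathcal{C}^1\cup\mathcal{C}^2$ encircling the segment $[z_-,z_+]$; applying the residue theorem at the simple pole $z=w$ then gives
\[
  I_N^1=-\int_{z_-}^{z_+}\frac{dw}{2\pi i}\,\frac{\eta_-}{\varphi(x)}e^{\frac{\eta_-}{\varphi(x)}(v-u)(w-\Re z_-)}=-\frac{\sin\pi(u-v)}{\pi(u-v)},
\]
an $O(1)$ macroscopic residue, which is the entire limit. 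In other words, the sine kernel here arises in ``residue mode,'' not from stationary-phase Gaussian smearing; your blind proof is missing the crucial contour-decomposition idea and substitutes an analysis that is provably subleading. Your treatment of $DS_N$ and $IS_N$ is closer to the right idea (the crossing factor changes so the contribution near matching saddles vanishes), and your flagging of the $g(z,\cdot)$ asymptotics splitting across $\Re z=1$ is a real technical concern the paper does handle (hence the split into $I_N$ and $J_N$), but without the residue step for $I_N$ the proof does not go through. Also a small slip: with your conventions the exponent at $z_\pm$ is $e^{\mp i\pi(u-v)}$, not $e^{\pm i\pi(u-v)}$, and the final $k=1$ statement requires no extra integration — the density is already $\varphi(x)$.
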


We remark that  Adhikari and  Bose have recently proved the  Marchenko-Pastur law for  singular values of   real elliptic random matrices under the assumption  that matrix entries have  all higher moments; see   \cite[Theorem 3]{AB}. Different from our asymptotic analysis for the one-point correlation function, they  use  the moment method to  obtain more information about joint moments of several independent elliptic random matrices.

We need  the following Airy-type kernel with deformed real parameters $\pi_1, \ldots, \pi_m$ defined in \cite{BBP05}
  \begin{equation}
 K_{\mathrm{Airy}}(\pi;u,v)= \int_{\mathcal{C}_{L}}\frac{dz}{2i\pi }    \int_{\mathcal{C}_{R}}   \frac{dw}{2i\pi } \frac{1}{z-w}e^{\frac{1}{3}w^3 -\frac{1}{3}z^3}   e^{vz-uw} \prod_{k = 1}^{m} \frac{z-\pi_{k}}{w - \pi_{k}} \label{Airykernle}
  \end{equation}
  where  $\mathcal{C}_{L}$ and $\mathcal{C}_{R}$ are  two  non-intersecting  contours   from   $e^{-2i\pi/3}\infty$ to  $e^{2i\pi/3}\infty$
 and from $e^{i\pi/3}\infty$  to $e^{-i\pi/3}\infty$ respectively, such that $\pi_1, \ldots, \pi_m$ are on   the right-hand side of $\mathcal{C}_{R}$.  The finite GUE kernel with external source is defined as
 \begin{equation}
 K_{\mathrm{GUE}}(\pi;u,v)= \int_{-i\infty}^{i\infty}\frac{dz}{2i\pi }    \int_{\mathcal{C}_{\{\pi_{1},\ldots,\pi_m\}}}   \frac{dw}{2i\pi } \frac{1}{z-w}e^{\frac{1}{2}z^2 -\frac{1}{2}w^2}   e^{vz-uw} \prod_{k = 1}^{m} \frac{z-\pi_{k}}{w - \pi_{k}}
  \end{equation}
where all  $\pi_{1},\ldots,\pi_m$ are on  the right-hand side of the $z$-contour. Correspondingly, define two families of probability distributions by the Fredholm determinant   series expansion (see \cite{BBP05} for more details) for $m=0, 1, 2, \ldots,$
       \begin{align}
    F_m&(\pi;x)=1+ \sum_{k=1} ^{\infty}  \frac{(-1)^k}{k!}\int_{x}^{\infty} \cdots   \int_{x}^{\infty}  \det[
      {    K_{\mathrm{Airy}}(\pi;  u_i, u_j)}]_{i,j = 1}^{k} du_1 \cdots   du_k, \label{detAiryseries}
  \end{align}
and    $m=1, 2,  \ldots,$
 \begin{align}
    G_m&(\pi;x)=1+ \sum_{k=1} ^{\infty}  \frac{(-1)^k}{k!}\int_{x}^{\infty} \cdots   \int_{x}^{\infty}  \det[
      {    K_{\mathrm{GUE}}(\pi;  u_i, u_j)}]_{i,j = 1}^{k} du_1 \cdots   du_k. \label{detGUEseries}
  \end{align}
Indeed,  when $m=0$ the distribution $F_0(\pi;x)$ does not depend on parameters $\{\pi_l\}$ and is identical  to  the well-known GUE Tracy-Widom distribution. At this time it  is usually  denoted by
the notation  $F_{\mathrm{GUE}}(x)$ (cf. \cite{tw1994}). For $m\geq 1$,
$G_m(\pi;x)$ is equal to the distribution of the largest eigenvalue for an $m\times m$ GUE matrix with mean matrix  $\text{diag}(\pi_1, \ldots, \pi_m)$.

The famous BBP transition phenomenon  found in \cite{BBP05} occurs  in our model and can be  formulated as follows.
 \begin{thm}  \label{edge limit}
 With the same notations as in Theorem \ref{SVdensity} and with     $M=N$ even,  for fixed  $\tau\in (0,1)$ and nonnegative integer $n$,  assume  that $\sigma_{n+1} = \cdots = \sigma_{N} = 1$.  Let $\lambda_{\mathrm{max}}$ be  the largest  eigenvalue  of $W=X^{*}X$, then     the  following     hold true    uniformly for  $u_1, \ldots, u_k$ in a compact subset of $\mathbb{R}$ or  for any  $x$ in a compact set of $\mathbb{R}$.
\begin{enumerate}
      \item[(i)] %  \label{parti}
       For some $0\leq m\leq n$,  set
    \begin{equation}
     \sigma_k=\frac{1}{2}(1+\tau^2)+  N^{-\frac{1}{3}}2^{-\frac{4}{3}} (1-\tau^2)\pi_{k},  \quad  k= 1, \ldots, m.
    \end{equation} When  $\pi_1, \ldots, \pi_m$  are in a compact subset of $\mathbb{R}$  and $\sigma_{m+1},\ldots,\sigma_n$       are in a compact subset of $(\frac{1}{2}(1+\tau^2), \infty)$,

    \begin{equation}
   \lim_{N \to \infty}  (2^{\frac{4}{3}}N^{\frac{1}{3}})^k R_{N}^{(k)}\Big(4N+2^{\frac{4}{3}}N^{\frac{1}{3}}u_1, \ldots,  4N+2^{\frac{4}{3}}N^{\frac{1}{3}}u_k\Big)= \det \big[K_{\mathrm{Airy}}(\pi;u_{i},u_{j})\big]_{i,j = 1}^{k}
  \end{equation}
  and
   \begin{align}
  \lim_{N \to \infty}\mathbb{P}\Big(2^{-\frac{4}{3}}N^{-\frac{1}{3}}\big(\lambda_{\mathrm{max}}-4N\big) \leq   x\Big)=  F_m(\pi;x). \label{Fm}
  \end{align}

  \item[ (ii)] % \label{partii}

  For some $1\leq m\leq n$,  with $1<\theta <\frac{1}{2}(\tau +\frac{1}{\tau})$ set  \begin{equation}
  h(\theta)   =   \frac{  (\theta -\tau) ( \frac{1}{\tau}-\theta) }{\sqrt{(\frac{1}{\tau}-\tau)(\tau +\frac{1}{\tau}-2\theta)}}, \quad x(\theta)= \frac{(\frac{1}{\tau}-\tau)^2 }{ (\theta -\tau) ( \frac{1}{\tau}-\theta) } \label{xtheta},
    \end{equation}
    and \begin{equation}
     \sigma_k= \tau \theta +         \tau   h(\theta)
 \frac{\pi_{k}}{\sqrt{N}},  \quad  k= 1, \ldots,  m.
    \end{equation}
    When  $\pi_1, \ldots, \pi_m$  are in a compact subset of $\mathbb{R}$  and $\sigma_{m+1},\ldots,\sigma_n$       are in a compact subset of $(\tau \theta, \infty)$,
    \begin{multline}
   \lim_{N \to \infty}  \Big(\frac{1-\tau^2}{\tau h(\theta)} N^{\frac{1}{2}}\Big)^k R_{N}^{(k)}\Big( Nx(\theta) + \frac{1-\tau^2}{\tau h(\theta)}  N^{\frac{1}{2}} u_1, \ldots,  Nx(\theta) + \frac{1-\tau^2}{\tau h(\theta)}  N^{\frac{1}{2}} u_k \Big)\\
   = \det \big[K_{\mathrm{GUE}}(\pi;u_{i},u_{j})\big]_{i,j = 1}^{k}
  \end{multline}
  and
   \begin{align}
  \lim_{N \to \infty}\mathbb{P}\Big(
  \frac{\tau h(\theta)  }{1-\tau^2}  N^{-\frac{1}{2}}
   \big(\lambda_{\mathrm{max}}- Nx(\theta)\big) \leq   x\Big)=  G_m(\pi;x). \label{Gm}
  \end{align}
  \end{enumerate}
  \end{thm}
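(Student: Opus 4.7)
The plan is to perform saddle-point asymptotics on the double contour integrals for $DS_N$, $S_N$ and $IS_N$ given in Theorem \ref{thmkernel} (as reformulated in Theorem \ref{newrep}), following the template used for Theorems \ref{edgecritthm} and \ref{largestcritthm}, but exploiting that $\tau$ is fixed away from $1$. The guiding principle is that here the Pfaffian point process should effectively collapse to a determinantal one: within the $2\times 2$ matrix kernel \eqref{matrixkernel}, once an appropriate $\tau$- and $x$-dependent gauge is applied, $DS_N$ and $IS_N$ decay while $S_N$ converges to either $K_{\mathrm{Airy}}(\pi;u,v)$ in part (i) or $K_{\mathrm{GUE}}(\pi;u,v)$ in part (ii). The Pfaffian $\mathrm{Pf}[K_N]$ then reduces to $\det[S_N]$, giving the determinantal correlation functions of the theorem, and hence the largest-eigenvalue distributions \eqref{Fm} and \eqref{Gm} via the corresponding Fredholm determinant series \eqref{detAiryseries} and \eqref{detGUEseries}.

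The saddle-point structure of the phase $f(x;z)$ from \eqref{phase} differs in the two parts. In part (i), at $x=4$ the saddle coalesces at $z_0=\tfrac{1}{2}(\tau+1/\tau)$, so I would use the cubic Airy rescaling $z\mapsto z_0+\eta_-^{-1}(16N)^{-1/3}z$; the assumption $\sigma_k=\tfrac{1}{2}(1+\tau^2)+O(N^{-1/3})$ places the spikes at $\sigma_k/\tau=z_0+O(N^{-1/3})$, so under the rescaling the factor $\prod \tfrac{\sigma_k w-\tau}{\tau w-\sigma_k}$ produces the external source $\prod(w-\pi_k)^{-1}$ of $K_{\mathrm{Airy}}(\pi;u,v)$. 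In part (ii), the relevant saddle is the simple root $z=\theta$ of $\partial_z f(x(\theta);z)=0$; the values $x(\theta)$ and $h(\theta)$ in \eqref{xtheta} are precisely fixed by $\partial_z f(x(\theta);\theta)=0$ and the normalization $Nh(\theta)^2 \partial_z^2 f(x(\theta);\theta)=1$, and the Gaussian rescaling $z\mapsto\theta+(1-\tau^2)/(\tau h(\theta))\cdot N^{-1/2}z$ converts the local quadratic behaviour of $f$ into $\tfrac{1}{2}z^2$, while the outlier spikes $\sigma_k=\tau\theta+\tau h(\theta)\pi_k/\sqrt{N}$ produce the GUE source factor. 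In both parts the contour choices are guided by Lemma \ref{extr-f} (in particular its statement \ref{S4} for part (ii) where $x>4$), and the $g$-function asymptotics of Proposition \ref{g-asym} are used to reduce $S_N$ via \eqref{Sneq-2} and $IS_N$ via \eqref{ISneq-2} to manageable double integrals split according to whether the inner contour sits in $\{\Re z>1\}$ or $\{\Re z<1\}$.

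The main obstacle is showing that, under the same gauge $h(u)=\exp\!\bigl(\tfrac{\eta_-}{\varphi(x)}(z_0+1)u\bigr)$ (respectively a $\theta$-dependent analog for part (ii)) for which the local contribution of $S_N$ converges to a nontrivial limit, the sub-kernels $DS_N$ and $IS_N$ decay exponentially. This is the qualitative difference from the critical case, where both local contour pieces $\mathcal{C}^{\mathrm{local}}_{\pm}$ contributed comparable Airy-type terms that assembled into the Pfaffian kernel $K^{(\mathrm{soft})}$; here the $\tau$-dependent gauge must simultaneously cancel the leading exponential of one contribution and leave the complementary one suppressed. A related technical subtlety is the handling of the $-\Epsilon(\eta_-u,\eta_-v)$ term in $IS_N$, which must be cancelled against a piece of the double integral using the identity established at the end of the proof of Theorem \ref{newrep}. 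Once these estimates are in place, a uniform exponential bound on $K_N$ of the form \eqref{upperbound} (adapted to the non-critical rescaling) combined with dominated convergence applied to the Pfaffian series \eqref{pfaffianseries} collapses it to the determinantal series \eqref{detAiryseries} or \eqref{detGUEseries}, yielding \eqref{Fm} and \eqref{Gm}.
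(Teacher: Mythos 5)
Your proposal follows essentially the same route as the paper's proof: saddle-point analysis of the double contour integrals from Theorem~\ref{newrep}, using the contour bookkeeping of Lemma~\ref{extr-f} and the $g$-function asymptotics of Proposition~\ref{g-asym}; a gauge that makes $S_N$ converge while $DS_N$ and $IS_N$ vanish (so the Pfaffian collapses to a determinant); and a uniform exponential bound feeding dominated convergence on the Fredholm series to pass from $R_N^{(k)}$ to the distribution functions $F_m$, $G_m$.

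One small but real slip to flag in your account of part~(ii): the correct normalization is $h(\theta)^2\, f''(x(\theta);\theta)=1$ (no factor of $N$), because the phase in the exponent is $N f$ and the contour variable is rescaled as $z\mapsto \theta + N^{-1/2}h(\theta)\,\tilde z$; the scale $\frac{1-\tau^2}{\tau h(\theta)}N^{1/2}=1/\varphi(x)$ is the eigenvalue rescaling that multiplies $R_N^{(k)}$, whereas the contour variable shrinks by $\frac{\varphi(x)}{\eta_-}=N^{-1/2}h(\theta)$. These two scales coincide only when $h(\theta)=\frac{1-\tau^2}{\tau h(\theta)}$ i.e. $h(\theta)^2=1/\eta_-$, which is not generically the case; mixing them up would make the local Gaussian $\tfrac12 z^2$ come out with the wrong coefficient. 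The paper's Step~2 of Part~(ii) uses $f''(z_-)=h^{-2}(\theta)$ and the contour rescaling $z\to z_- + N^{-1/2}h(\theta)z$, which is what you want.

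Also, for completeness, in Step~3 (the $DS_N$ and $IS_N$ estimates) one needs the extra compensating factors $N^{\mp m}$ (resp.\ $N^{\mp 2m/3}$ at the soft edge) coming from the spike ratios in the integrand; they cancel in the Pfaffian against each other but are needed for the pointwise $\bo(1/N)$ statements and the uniform bounds of the form \eqref{upperbound}. Your plan gestures at the gauge taking care of the exponential factors but does not mention these polynomial-in-$N$ prefactors, which are part of the bookkeeping in the paper's conclusion of both parts.
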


By Slutsky's theorem,   it immediately  follows  from \eqref{Fm} and \eqref{Gm} that
 \begin{cor} \label{largestlimit}
 Under the same assumptions and notations  as in  Theorem \ref{edge limit},   $\lambda_{\mathrm{max}}/N$  converges to $4$  and $x(\theta)$  in probability,  respectively,   in Case (i)  and  Case (ii).
  \end{cor}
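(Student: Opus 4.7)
The plan is a direct application of Slutsky's theorem, after which essentially nothing remains: if $Z_N$ converges in distribution to some random variable $Z_\infty$ and $a_N$ is a deterministic null sequence, then $a_N Z_N$ converges in distribution to the constant $0$, which is the same as convergence in probability. I expect no substantive obstacle; the corollary is a pure rescaling consequence of the distributional limits \eqref{Fm} and \eqref{Gm}, and the only book-keeping is to read off the correct vanishing scalar in each case. The minor tightness issue (convergence in distribution to an honest random variable, so that multiplying by a null sequence does push the product to $0$) follows at once from the fact that $F_m(\pi;\cdot)$ and $G_m(\pi;\cdot)$ are genuine probability distribution functions, as established in \cite{BBP05}.

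For Case (i), I would introduce
\begin{equation*}
Z_N := 2^{-\frac{4}{3}} N^{-\frac{1}{3}} \bigl(\lambda_{\mathrm{max}} - 4N\bigr),
\end{equation*}
so that \eqref{Fm} says $Z_N$ converges in distribution to a random variable with cumulative distribution $F_m(\pi;\cdot)$. The trivial identity
\begin{equation*}
\frac{\lambda_{\mathrm{max}}}{N} - 4 = 2^{\frac{4}{3}} N^{-\frac{2}{3}} Z_N,
\end{equation*}
together with $2^{\frac{4}{3}} N^{-\frac{2}{3}} \to 0$, then delivers $\lambda_{\mathrm{max}}/N \to 4$ in probability via Slutsky.

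For Case (ii), I would set
\begin{equation*}
W_N := \frac{\tau h(\theta)}{1-\tau^{2}} N^{-\frac{1}{2}} \bigl(\lambda_{\mathrm{max}} - N x(\theta)\bigr),
\end{equation*}
so that \eqref{Gm} yields convergence in distribution of $W_N$ to a random variable with cumulative distribution $G_m(\pi;\cdot)$. Rearranging gives
\begin{equation*}
\frac{\lambda_{\mathrm{max}}}{N} - x(\theta) = \frac{1-\tau^{2}}{\tau h(\theta)} N^{-\frac{1}{2}} W_N,
\end{equation*}
and since the prefactor tends to $0$, the same Slutsky argument produces $\lambda_{\mathrm{max}}/N \to x(\theta)$ in probability, completing the corollary.
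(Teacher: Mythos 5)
Your argument is correct and matches the paper's exactly: the paper derives the corollary as an immediate consequence of \eqref{Fm} and \eqref{Gm} via Slutsky's theorem, which is precisely the rescaling you spell out. Nothing further is needed.
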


Next, we prove Theorem \ref{bulk limit} and   Theorem \ref{edge limit} by the method of steepest descent  for  approximating integrals.

  \begin{proof}[Proof of Theorem \ref{bulk limit}]
    In order to   prove the sine kernel in the bulk we need to  obtain asymptotic behavior for the three sub-kernels $S_N, DS_N$ and $IS_N$. Comparing the integrands associated with  those  kernels, without loss of generality,   we will  focus on  $S_N$ and give a detailed  analysis  upon the representation    \eqref{Sleading}.

    \textbf{Step 1:  Choice of contours.}

For  convenience, let's  introduce a parametrization representation for the spectral variable  in the bulk regime  by $x = 4\cos^{2}\theta, \theta \in (0, \pi /2)$. In this case,    the phase function $f(x;z)$ defined   in  \eqref{phase} is equal to  $f_{\tau, \theta}(z)$  in \eqref{phase-var}, and correspondingly  the    saddle points  of $f_{\tau, \theta}(z)$   are expressed as \eqref{saddle-var}.  We can choose integration contours of $I_N$ and $J_N$ defined in  \eqref{SleadingI}  and \eqref{SleadingJ} as follows.

    For the  $w$-contour,     put   \begin{align*}
       l_{\min}:= \min\Big\{\frac{1}{2}\big(\frac{1}{\tau} +\tau\big), \frac{\sigma_{1}}{\tau}, \ldots, \frac{\sigma_{n}}{\tau}\Big\},
          \end{align*}
let $\mathcal{C}_{1/\tau, \theta}$ be  the circle  defined in \eqref{Ctau}, we will decide the $w$-contour on whether  $l_{\min}$  lies inside   $\mathcal{C}_{1/\tau, \theta}$ or not.

\textbf{Case 1}:  $l_{\min}$ is   outside  or on  $\mathcal{C}_{1/\tau, \theta}$.   We choose $w_1 \in (1,l_{\min})$ and two points $w_2, w_3 \in \mathcal{C}_{1/\tau, \theta}$ such that $\Im w_{2}$ is  a sufficiently small  positive number and $w_3$ is  given by
     \begin{equation*}  w_3=\frac{1}{\tau} + \frac{1}{2\cos\theta} \big(\frac{1}{\tau}-\tau \big) e^{i \frac{\pi}{4}}.\end{equation*}
    At this time  let's define three  curves
   \begin{align*}
      \Sigma_{+}^{1} &= \left\{w:  \Re w= w_{1},  0 \leq \Im w \leq \Im w_2\right\} \cup \left\{w: \Im w= \Im w_2,  w_1 \leq \Re w \leq \Re w_2\right\},\\
      \Sigma_{+}^{2} &= \left\{w \in \mathcal{C}_{\frac{1}{\tau}, \theta}: \Re w_2 \leq \Re{w} \leq \Re w_3, \Im w>0\right\},\\
       \Sigma_{+}^{3} &=  \left\{w: \Im w= \Im w_3,  \Re w_3 \leq \Re w \leq w_4\right\} \cup \left\{w:  \Re w= w_{4},  0 \leq \Im w \leq \Im w_3\right\},
    \end{align*}  where
     \begin{align*}
       w_{4} &=1+ \max\Big\{ \frac{1}{\tau} +\frac{1}{2\cos\theta}\big(\frac{1}{\tau} - \tau\big), \frac{\sigma_{1}}{\tau}, \ldots, \frac{\sigma_{n}}{\tau}\Big\}.
          \end{align*}

          \textbf{Case 2}:  $l_{\min}$ is   inside   $\mathcal{C}_{1/\tau, \theta}$.  We can always choose the  point $w_1$ inside the circle such that  $w_1 \in (1,l_{\min})$.  Further, we denote    the intersection of the vertical line with real part  $w_1$ and  $\mathcal{C}_{1/\tau, \theta}$ by  the same notation   $w_{2}$, for simplicity,    and change the curve    $\Sigma_{+}^{1}$ to  the  line segment joining $w_1$ and $w_2$
 \begin{align*}
      \Sigma_{+}^{1} = \left\{w:  \Re w= w_{1},  0 \leq \Im w \leq \Im w_2\right\}.
    \end{align*}

In both cases we choose the contour  $\Sigma$ for integrals $I_N$ and $J_N$,  defined in \eqref{SleadingI}  and \eqref{SleadingJ}, as $\Sigma=\Sigma_{+}^{1} \cup \Sigma_{+}^{2}\cup \Sigma_{+}^{3} \cup \overline{\Sigma_{+}^{1}} \cup \overline{\Sigma_{+}^{2}} \cup \overline{\Sigma_{+}^{3}} $
where the bar denotes the complex conjugate  operation.  Given  a local curve  $\Sigma^{\mathrm{local}}$   around $z_{+}$ and $z_{-}$  as described in \eqref{nocri-bulk-sig}, set
$\Sigma^{\mathrm{global}} = \Sigma \backslash \Sigma^{\mathrm{local}}$,  we can also divide  it into two parts $\Sigma =\Sigma^{\mathrm{global}}  \cup \Sigma^{\mathrm{lobal}}$.

 For the  $z$-contour,   choose $z_1 \in (1, w_1)$  and let $z_2$  be the intersection of the vertical line $x=1$   and  $\mathcal{C}_{\tau, \theta}$  in the upper half  plane. Defining  three curves
    \begin{align*}
      \mathcal{C}^{1} &= \left\{z=tz_1 +(1-t) z_{2} \,\mathrm{or}\, z= tz_1 +(1-t) \bar{z_{2}}:  0 \leq t<1\right\}, \\
      \mathcal{C}^{2} &= \left\{z \in \mathcal{C}_{\tau, \theta} :  \Re{z}>1\right\},\\
      \mathcal{C}^{3} &= \left\{z \in \mathbb{C}: \left|\frac{\tau z - 1}{z - \tau}\right| = \left|\frac{\tau z_2- 1}{z_2 - \tau}\right|, \Re{z} \leq 1\right\},
    \end{align*}
   we first choose $\mathcal{C}^{3} \cup \mathcal{C}^{1}$ as the $z$-contour $ \mathcal{C}$ in \eqref{SleadingI} and \eqref{SleadingJ}. Then by introducing a virtual curve   $\mathcal{C}^{2}$ and using it two times but with different orientation, we deform  $\mathcal{C}$ into the difference of two contours   $\widetilde{\mathcal{C}} = \mathcal{C}^{2}\cup \mathcal{C}^{3}$ and
   $\mathcal{C}^{1}\cup \mathcal{C}^{2}$.
Given  a local curve  $\mathcal{C}^{\mathrm{local}}$     around $z_{+}$ and $z_{-}$  as described  in  \eqref{nocri-bulk-c}, set
$\mathcal{C}^{\mathrm{global}} = \widetilde{\mathcal{C}}  \backslash \mathcal{C}^{\mathrm{local}}$,  then
$\widetilde{\mathcal{C}}=\mathcal{C}^{\mathrm{global}}  \cup  \mathcal{C}^{\mathrm{local}}$.     See Figure \ref{bulkfig} for an illustration.

    \begin{figure}[h]
      \centering
      \begin{tikzpicture}
        % draw the axis
        \draw (-3.5,0) -- (7,0);%node[below] {$x$}
%Contour for dz
%        \draw ({0.5 + 1.5*cos(pi/4 r)} ,{1.5*sin(pi/4 r)}) -- ({0.5 + 1.5*cos(pi/4 r)} ,{-1.5*sin(pi/4 r)});
        \draw (1.5,0.6) node[right] {$\scriptstyle \mathcal{C}^{2}$};
%        \draw (1,1.3) node[above] {$\scriptstyle \mathcal{C}_{+}^{2}$};
        \draw[domain=-0.42*pi:0.42*pi, dashed] plot({0.4 + 1.25*cos(\x r)} ,{1.25*sin(\x r)});
        \draw[domain=-pi:-0.19*pi] plot({-1 + 2.1*cos(\x r)} ,{2.1*sin(\x r)});
        \draw[domain=0.19*pi:pi] plot({-1 + 2.1*cos(\x r)}, {2.1*sin(\x r)});
%        \draw[domain=-0.2*pi:0.2*pi, dashed] plot({-1 + 2.1*cos(\x r)} ,{2.1*sin(\x r)});
        \draw (-0.8,1.6) node[left] {$\scriptstyle \mathcal{C}^{3}$};

%        \draw ({0.4 + 1.25*cos(0.42*pi r)} ,{1.25*sin(0.42*pi r)}) .. controls (0.85,0) .. ({0.4 + 1.25*cos(0.42*pi r)} ,{-1.25*sin(0.42*pi r)});
        \draw ({0.4 + 1.25*cos(0.42*pi r)} ,{1.25*sin(0.42*pi r)}) -- (0.85, 0);
        \draw ({0.4 + 1.25*cos(0.42*pi r)} ,{-1.25*sin(0.42*pi r)}) -- (0.85, 0);
%        \draw[domain=-pi/2:pi/2] plot({0.7 + 0.1*cos(\x r)} ,{0.1*sin(\x r)});
%        \draw ({0.7 + 0.1*cos(pi/2 r)} ,{0.1*sin(pi/2 r)}) -- ({0.5 + 1.5*cos(0.455*pi r)} ,{1.5*sin(0.455*pi r)});
%        \draw ({0.7 + 0.1*cos(pi/2 r)} ,{-0.1*sin(pi/2 r)}) -- ({0.5 + 1.5*cos(0.455*pi r)} ,{-1.5*sin(0.455*pi r)});
        \draw (0.8,0.6) node[left] {$\scriptstyle \mathcal{C}^{1}$};

%Contour for dw
        \draw[thick] ({2.1 - 1.25*cos(0.15*pi r)} ,{1.25*sin(0.15*pi r)}) -- ({2.1 - 1.25*cos(0.15*pi r)} ,{-1.25*sin(0.15*pi r)});
        \draw (0.85,0.3) node[right] {$\scriptstyle \Sigma_{+}^{1}$};
        \draw[domain=0.15*pi:0.8*pi, thick] plot({2.1 - 1.25*cos(\x r)} ,{1.25*sin(\x r)});
        \draw[domain=-0.8*pi:-0.15*pi,thick] plot({2.1 - 1.25*cos(\x r)} ,{1.25*sin(\x r)});
        \draw (2,1.2) node[above] {$\scriptstyle \Sigma_{+}^{2}$};
        \draw[thick] ({2 + 1.25*cos(0.15*pi r)},{ 1.25*sin(-0.8*pi r)}) -- (6,{1.25*sin(-0.8*pi r)});
        \draw (5,0.7) node[above] {$\scriptstyle \Sigma_{+}^{3}$};
        \draw[thick] ({2 + 1.25*cos(0.15*pi r)},{1.25*sin(0.8*pi r)}) -- (6,{1.25*sin(0.8*pi r)});
        \draw (6,0.4) node[right] {$\scriptstyle \Sigma_{+}^{4}$};
        \draw[thick] (6,{1.25*sin(0.8*pi r)}) -- (6,{-1.25*sin(0.8*pi r)});
%        \draw (4.5,-0.5) node[below] {$\Sigma_{-}^{3}$};

        \filldraw
          (-0.7,0) circle (0.7pt) node[below] {$\scriptstyle -1$}
          (0,0) circle (0.7pt) node[below] {$\scriptstyle 0$}
          (0.4,0) circle (0.7pt) node[below] {$\scriptstyle \tau$}
          (0.7,0) circle (0.7pt) node[below] {$\scriptstyle 1$}
          (5/4,0) circle (0.7pt) node[below] {$\scriptstyle z_{0}$}
          (2.1,0) circle (0.7pt) node[below] {$\scriptstyle \frac{1}{\tau}$}
          (3.1,0) circle (0.7pt) node[below] {$\scriptstyle \frac{\sigma_{1}}{\tau}$}
          (4.1,0) circle (0.7pt) node[below] {$\cdots$}
          (5.1,0) circle (0.7pt) node[below] {$\scriptstyle \frac{\sigma_{n}}{\tau}$};
%        \draw (0.7, 0) node[below] {$\scriptscriptstyle 1$};
      \end{tikzpicture}
      \caption{Contours of double integrals for $S_N$: bulk}\label{bulkfig}
    \end{figure}
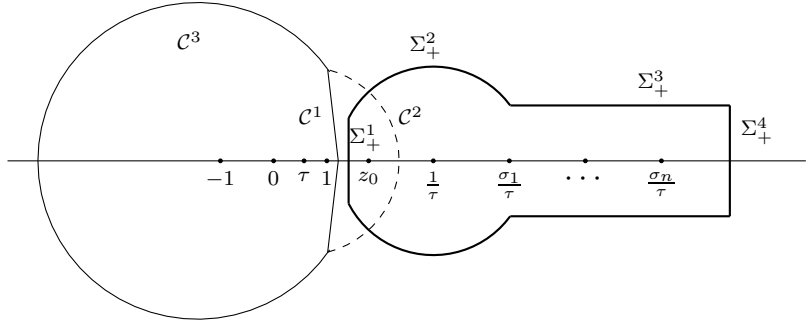

    \textbf{Step 2:  Estimates of $I_{N}$ and $J_{N}$.}

To estimate the sub-kernel  $S_N$ given  in \eqref{Sleading}, we need to split the integrals $I_N$ and $J_N$, which are defined in  \eqref{SleadingI} and \eqref{SleadingJ} respectively,  into four parts,  and prove  which parts are dominant. For this, let
    \begin{align*}
      I_{N}&=\Big(-\mathrm{P.V.} \int_{\mathcal{C}^1 \cup    \mathcal{C}^{2} } \frac{dz}{2\pi i}  \int_{\Sigma} \frac{dw}{2\pi i} + \mathrm{P.V.} \int_{\mathcal{C}^{\mathrm{local}}} \frac{dz}{2\pi i}  \int_{\Sigma^{\mathrm{local}}} \frac{dw}{2\pi i} + \int_{\mathcal{C}^{\mathrm{local}}} \frac{dz}{2\pi i}  \int_{\Sigma^{\mathrm{global}}} \frac{dw}{2\pi i} \nonumber\\
      &\quad + \int_{\mathcal{C}^{2}  \backslash \mathcal{C}^{\mathrm{local}}} \frac{dz}{2\pi i}  \int_{\Sigma} \frac{dw}{2\pi i}\Big)  \frac{\eta_{-}}{\varphi(x)}  e^{N(f(x;z) - f(x;w))}  e^{\frac{\eta_{-}}{\varphi(x)}( v(z - \Re{z_{-}})-u(w - \Re{z_{-}}))} H(z, w) \nonumber\\
      &=: -I_{N}^{1} + I_{N}^{2} + I_{N}^{3} + I_{N}^{4},
    \end{align*}
   and  \begin{align*}
      J_{N} &= \frac{1}{\sqrt{2\pi \eta_{-}(xN+\frac{v}{\varphi(x)})}}  \Big(- \mathrm{P.V.} \int_{\mathcal{C}^1 \cup    \mathcal{C}^{2} } \frac{dz}{2\pi i}  \int_{\Sigma} \frac{dw}{2\pi i}+ \mathrm{P.V.} \int_{\mathcal{C}^{\mathrm{local}}} \frac{dz}{2\pi i}  \int_{\Sigma^{\mathrm{local}}} \frac{dw}{2\pi i}  \nonumber\\
      &\quad + \int_{\mathcal{C}^{\mathrm{local}}} \frac{dz}{2\pi i}  \int_{\Sigma^{\mathrm{global}}} \frac{dw}{2\pi i} + \int_{\mathcal{C}^{\mathrm{global}}} \frac{dz}{2\pi i}  \int_{\Sigma} \frac{dw}{2\pi i}\Big) \frac{\eta_{-}}{\varphi(x)} e^{N(f(x;z) - f(x;w))}  \nonumber\\
      &\quad\times  e^{x(1-z)\eta_{-}N+\frac{\eta_{-}}{\varphi(x)}( v(1 - \Re{z_{-}})-u(w - \Re{z_{-}})) } \frac{\sqrt{z^{2}-1}}{1-z^{2}} H(z, w) \Big(1 + \bo\big(\frac{1}{N}\big)\Big) \nonumber\\
      &=: -J_{N}^{1} + J_{N}^{2} + J_{N}^{3} + J_{N}^{4},
    \end{align*}
where all the contours are positively oriented.

   For  $I_{N}^{1}$,  since  the  $z$-integrand  has  no other poles  except for $z=w$,    applying  the residue theorem  yields
    \begin{align}
      I_{N}^{1} &= - \int_{z_{-}}^{z_{+}}  \frac{dw }{2\pi i }  \frac{\eta_{-}}{\varphi(x)}   e^{\frac{\eta_{-}}{\varphi(x)}(v-u)(w - \Re{z_{-}})}  = -  \frac{ \sin \pi (u-v)}{\pi (u-v)}. \label{nocri-bulk-i11}
    \end{align}
   Similarly,    apply  the residue theorem  and then by Proposition \ref{g-asym} we obtain for  large  $N$
    \begin{align*}
      \abs{J_{N}^{1}} &= \frac{1}{\sqrt{2\pi \eta_{-}(4N+\frac{v}{\varphi(x)})}} \frac{\eta_{-}}{\varphi(x)} \nonumber \\
      &\quad \times \abs{\int_{z_{-}}^{z_{+}} \frac{dw}{2\pi i} e^{x(1-w)\eta_{-}N+\frac{\eta_{-}}{\varphi(x)}(v(1 - \Re{z_{-}})-u(w - \Re{z_{-}})) } \frac{1}{\sqrt{w^{2}-1}} \Big(1 + \bo\big(\frac{1}{N}\big)\Big)}  \nonumber\\
      &\leq  e^{-\frac{(1-\tau)^2}{4\tau}x\eta_{-}N}
    \end{align*}
    uniformly for  $u, v$ in a compact subset of $\mathbb{R}$.  Here in the last  inequality we have changed $x$ to $x/2$ in order to get an upper bound.

    Next, we   turn to analyze the remaining parts of $I_{N}$ and $J_{N}$ by the method of steepest descent.  At this time, we need to  establish some  estimates   of $\Re{f(x;z)}$ when $z$ belongs to some chosen curves.  We will use the same notations $ \eps_{1}$ and  $\eps_{2}$ to denote different constants,   for simplicity. Also, we will  only use     $ \Re{f(z_{-})}$ since
    $ \Re{f(z_{+})}=  \Re{f(z_{-})}$.

    The required  facts  are listed as  follows when the spectral variable $x\in (0,4)$.

    \begin{enumerate}
      \item By statement \ref{S1} in Lemma \ref{extr-f}, for some small $\eps_{1} > 0$ there exists $\delta_{1} > 0$ such that
          \begin{align}
            \min_{w \in ({\Sigma_{+}^{2} \cup \overline{\Sigma_{+}^{2}}})  \backslash \Sigma^{\mathrm{local}}} \Re{f(x;w)} - \Re{f(x;z_{-})} \geq \eps_{1}, \label{nocri-bulk-sig1}
          \end{align}
          where \begin{align}
            \Sigma^{\mathrm{local}} = \left\{w \in \Sigma_{+}^{2}\cup \overline{\Sigma_{+}^{2}}: \abs{w - z_{+}} < \delta_{1} \,\text{or}\, \abs{w - z_{-}} < \delta_{1}\right\} . \label{nocri-bulk-sig}
          \end{align}

      \item By statement \ref{S2} and   statement \ref{S3} in Lemma \ref{extr-f} in Case 1 for the choice of  $\Sigma_{+}^{1}$  in Step 1,  or by statement \ref{S3} in Lemma \ref{extr-f} in Case 2, $\Re{f(x;w)}$ is strictly decreasing along with $\Sigma_{+}^{1}$ as $w$ approaches  $w_2$ moving along  $\Sigma_{+}^{1}$. Thus
          \begin{align}
            \min_{w \in \Sigma_{+}^{1}\cup \overline{\Sigma_{+}^{1}}} \Re{f(x;w)} - \Re{f(x;z_{-})}= \Re{f(x;w_2)} - \Re{f(x;z_{-})}\geq \eps_{1}. \label{nocri-bulk-sig2}
          \end{align}
      \item By statement \ref{S2} in Lemma \ref{extr-f}, $\Re{f(x;w)}$ is strictly increasing along  $\Sigma_{+}^{3}$ as $\Re{w}$ increases, so
          \begin{align}
            \min_{w \in  \Sigma_{+}^{3}\cup \overline{\Sigma_{+}^{3}}} \Re{f(x;w)} - \Re{f(x;z_{-})} \geq \eps_{1}. \label{nocri-bulk-sig3}
          \end{align}
      \item  By statement \ref{S3} in Lemma \ref{extr-f}, $\Re{f(w)}$ is strictly increasing along  $\Sigma_{+}^{4}$ as $\Im{w}$ increases, so
          \begin{align}
            \min_{w \in \Sigma_{+}^{4}\cup \overline{\Sigma_{+}^{4}}} \Re{f(x;w)} - \Re{f(x;z_{-})} \geq \Re{f(x;w_{4})} - \Re{f(x;z_{-})} \geq \eps_{1}. \label{nocri-bulk-sig4}
          \end{align}
      \item By statement \ref{S1} in Lemma \ref{extr-f}, for some small $\eps_{2} > 0$ there exists $\delta_{2} > 0$ such that
          \begin{align}
            \max_{z \in \mathcal{C}^{2} \backslash \mathcal{C}^{\mathrm{local}}} \Re{f(x;z)} - \Re{f(x;z_{-})} \leq - \eps_{2}, \label{nocri-bulk-c1}
          \end{align}
          where
          \begin{align}
            \mathcal{C}^{\mathrm{local}} = \left\{z \in \mathcal{C}^{2}: \abs{z - z_{+}} < \delta_{2} \,\text{or}\, \abs{z - z_{-}} < \delta_{2}\right\}. \label{nocri-bulk-c}
          \end{align}
      \item By definition of  $\mathcal{C}^{3}$,  $\Re{f_{1}(x;z)}$ is a constant for all $z \in \mathcal{C}^{3}$,  so
          \begin{align}
            \max_{z \in \mathcal{C}^{3}} \Re{f_{1}(x;z)} - \Re{f(x;z_{-})} = \Re{f(x;z_2)} - \Re{f(x;z_{-})} \leq -\eps_{2}. \label{nocri-bulk-c3}
          \end{align}
    \end{enumerate}

    Take the Taylor expansion of  $f(x;z)$ at $z_{+}$ and $z_{-}$
    \begin{align*}
    f(x;z) &=f(x;z_{\pm}) + \frac{1}{2} f''(x;z_{\pm})(z - z_{\pm})^{2} + \bo((z - z_{\pm})^{3}),
    \end{align*}
    and note that   near the two points  $(z, w)=(z_{+},z_{+})$     or $(z_{-},z_{-})$
    \begin{align*}
     H(z,w)&=        \frac{1 }{(w-z_{\pm})-(z-z_{\pm})} \Big(1+\bo(z - z_{\pm})  +\bo(w - z_{\pm})\Big).
        \end{align*}
    we  see  that after shifting  the Cauchy principal integral
    \begin{align*}
      I_{N}^{2} &= \frac{1}{\sqrt{N}} \mathrm{P.V.} \int_{N^{\frac{1}{2}}(\mathcal{C}^{\mathrm{local}} - z_{\pm})} \frac{dz}{2\pi i}  \int_{N^{\frac{1}{2}}(\Sigma^{\mathrm{local}} - z_{\pm})} \frac{dw}{2\pi i} e^{\frac{1}{2}  f''(x;z_{\pm}) (z^{2} - w^{2}) }  \nonumber\\
      &\quad\times e^{\frac{\eta_{-}}{\varphi(x)}(v-u)(z_{\pm}-\Re{z_{-}})}   \frac{1 }{w-z} \frac{\eta_{-}}{\varphi(x)} \Big(1+\bo\big(\frac{1}{\sqrt{N}}\big) \Big)\nonumber\\
      &= \bo(N^{-\frac{1}{2}})
    \end{align*}
    uniformly for  $u, v$ in a compact subset of $\mathbb{R}$.
  Noting that $\Re\{1-z\}\leq 0$ when $z\in \mathcal{C}^{\mathrm{local}}$,  we can first  remove the factor $e^{x(1-z)\eta_{-}N}$ and then proceed in a similar manner as for $ I_{N}^{2}$ to obtain an upper bound for $J_{N}^{2}$
    \begin{align*} |J_{N}^{2}| =  \bo(N^{-1}).
    \end{align*}

    Together with  \eqref{nocri-bulk-sig1}, \eqref{nocri-bulk-sig2}, \eqref{nocri-bulk-sig3} and \eqref{nocri-bulk-sig4},  we obtain\begin{align*}
      \min_{w \in \Sigma^{\mathrm{global}}} \Re{f(x;w)} - \Re{f(x;z_{-})} \geq \eps_{1},
    \end{align*}
    from which   it is easily to find that
     \begin{align*}
      \abs{I_{N}^{3}} \leq e^{-\frac{1}{2} N \eps_{1}}, \quad
      \abs{J_{N}^{3}}  \leq e^{-\frac{1}{2} N \eps_{1}}
    \end{align*}
    when $N$ is sufficiently  large.
Similarly, combination of \eqref{nocri-bulk-c1} and \eqref{nocri-bulk-c3} yields
    \begin{align*}
      \max_{z \in \mathcal{C}^{\mathrm{global}}} \Re{f(x;z)} - \Re{f(x;z_{-})} \leq - \eps_{2},
    \end{align*}
  from which    one finds
    \begin{align*}
      \abs{I_{N}^{4}}  \leq e^{-\frac{1}{2} N \eps_{2}}, \quad
      \abs{J_{N}^{4}}  \leq e^{-\frac{1}{2} N \eps_{2}}
    \end{align*}
    for sufficiently large $N$.

    So, in summary,   we have obtained
    \begin{align*}
     e^{\frac{\eta_{-}}{\varphi(x)}(\Re{z_{-}}+1)( u-v)} & \frac{1}{\varphi(x)} S_{N}\Big(Nx+\frac{u}{\varphi(x)}, Nx+\frac{v}{\varphi(x)}\Big) \nonumber\\ &= \frac{\sin\pi(u-v)}{\pi(u-v)}\Big(1+\bo\big(\frac{1}{\sqrt{N}}\big) \Big)
    \end{align*}
    uniformly for  $u, v$ in a compact subset of $\mathbb{R}$.

        \textbf{Step 3:  Completion}.

    We can proceed as in  the estimates of  $I_N$ and $J_N$ for  $S_N$ to  obtain similar  approximations for  $DS_N$ and $IS_N$. However,  the crossing factor  between $z$ and $w$ in the integrand changes  from $(1-zw)/(z-w)$ to $(z-w)/(1-zw)$. Correspondingly, the dominant contribution changes  from the  the Taylor expansion     near the two points  $(z_{+},z_{+})$ and $(z_{-},z_{-})$ to  $(z_{+},z_{-})$ and $(z_{-},z_{+})$. After detailed analysis as in Step 2 we have
    \begin{align*}
     e^{\frac{\eta_{-}}{\varphi(x)}(\Re{z_{-}}+1)( u+v)} & \frac{1}{\varphi(x)} DS_{N}\Big(Nx+\frac{u}{\varphi(x)}, Nx+\frac{v}{\varphi(x)}\Big)  =  \bo\big(\frac{1}{N}\big)
    \end{align*}
    and
     \begin{align*}
     e^{-\frac{\eta_{-}}{\varphi(x)}(\Re{z_{-}}+1)( u+v)} & \frac{1}{\varphi(x)} IS_{N}\Big(Nx+\frac{u}{\varphi(x)}, Nx+\frac{v}{\varphi(x)}\Big)  =  \bo\big(\frac{1}{N}\big)
    \end{align*}
    uniformly for  $u, v$ in a compact subset of $\mathbb{R}$.

   As $N\to \infty$, the factors  associated with  $DS_N$ and $IS_N$ tend  to zero and thus the Pfaffian reduces to a determinant.  This completes  the proof of Theorem   \ref{bulk limit}.
  \end{proof}

  \begin{proof}[Proof of Theorem \ref{edge limit}:  Part (i)]
     At the soft edge $x = 4$, the phase function \eqref{phase} reads
    \begin{align*}
      f(z):=f(4;z)=4\eta_{-} (z+1)+ \log(\tau z - 1) -\log(z-\tau),
    \end{align*}
    from which the   saddle point is  given by $z_{0}$  as in \eqref{z0saddle}.  We proceed in a similar manner as in the proofs of Theorems  \ref{edgecritthm} and \ref{largestcritthm}.

    \textbf{Step 1:  Choice of contours.}

    Set  $\pi_{*} =  \min\{0,  \pi_{1}, \ldots,  \pi_{m} \}$, recalling $\eta_{-}=\tau/(1-\tau^2)$,  let
    \begin{align*}
      z_{1} &= z_{0} + 2^{-\frac{4}{3}} N^{-\frac{1}{3}} \eta_{-}^{-1} (\pi_{*} - 2), \quad w_{1} = z_{0} + 2^{-\frac{4}{3}} N^{-\frac{1}{3}} \eta_{-}^{-1} (\pi_{*} - 1),
          \end{align*}
          and
       \begin{align*} z_{2} &= z_{1} +\eta_{-}^{-1} \delta e^{i \frac{2}{3} \pi},  \quad w_{2} = w_{1} + \eta_{-}^{-1} \delta e^{i \frac{1}{3} \pi}
      \end{align*}
      where $\delta = N^{-\frac{1}{12}}$ can be  chosen such that \eqref{airy-w1} and \eqref{airy-z1} hold. With two curves $\mathcal{C}_{\tau, 0}$ and  $\mathcal{C}_{1/\tau, 0}$ in mind, which are   defined  in  \eqref{Ctau} and  \eqref{C/tau} respectively,  take   $z_{3}, z_{4} \in \mathcal{C}_{\tau, 0}$ and $w_{3}, w_{4} \in \mathcal{C}_{1/\tau, 0}$ such that
      $$\Re{z_{3}} = \Re{z_{2}}, \Re{w_{3}} = \Re{w_{2}},  \Im{z_{3}}>0,  \Im{w_{3}}>0,$$
      and
      $$   z_{4} =1+i \frac{1 - \tau}{2\tau} \sqrt{(1+3\tau) (1-\tau)}, \ w_{4} = \frac{1}{\tau} + \frac{1}{2} \big( \frac{1}{\tau}-\tau\big) e^{i\frac{\pi}{4}}.$$

  We are ready to construct $z$- and $w$- contours as follows.  Note that  $\mathcal{Q}_{\tau,z_4}$, which is defined in    \eqref{circletau},    stands on the left of   the saddle point $z_{0}$,  we can  choose  it  as     the $z$-contour  $ \mathcal{C}$  of integrals $I_N$ and $J_N$  in \eqref{SleadingI}  and \eqref{SleadingJ}. Then we need to deform the $z$-contour  $\{z \in \mathcal{Q}_{\tau,z_4}: \Re{z} > 1\}$ in  $I_N$ into $\mathcal{C}^{\mathrm{local}}\cup \mathcal{C}^{\mathrm{glocal}}$, where   $\mathcal{C}^{\mathrm{local}} = \mathcal{C}_{+}^{\mathrm{local}} \cup \overline{\mathcal{C}_{+}^{\mathrm{local}}}$ and $\mathcal{C}^{\mathrm{global}} =  \mathcal{C}_{+}^{1} \cup \mathcal{C}_{+}^{2} \cup \overline{\mathcal{C}_{+}^{2}} \cup \overline{\mathcal{C}_{+}^{1}}$ with
    \begin{align*}
      \mathcal{C}_{+}^{\mathrm{local}} &= \left\{z = z_{1} + r e^{i\frac{2\pi}{3}}: 0 \leq r < \eta_{-}^{-1}\delta\right\},\\
      \mathcal{C}_{+}^{1} &= \left\{z = \Re{z_{2}} + iy: \Im{z_{2}} < y < \Im{z_{3}}\right\},\\
      \mathcal{C}_{+}^{2} &= \left\{z \in \mathcal{C}_{\tau, 0} : 1 < \Re{z} < \Re{z_{2}}~\text{and}~\Im z \geq 0\right\}.
    \end{align*}
Also set
$$  w_{5} =  1+\max\Big\{\frac{3}{2\tau}-\frac{1}{2}\tau,  \frac{\sigma_{m+1}}{\tau}, \ldots,  \frac{\sigma_{n}}{\tau} \Big\},$$
   and  let
    \begin{align*}
      \Sigma_{+}^{\mathrm{local}} &= \left\{w = w_{1} + r e^{i\frac{\pi}{3}}: 0 \leq r \leq \eta_{-}^{-1} \delta\right\},\\
      \Sigma_{+}^{1} &= \left\{w = \Re{w_{2}} + iy: \Im{w_{2}} < y < \Im{w_{3} }\right\},\\
      \Sigma_{+}^{2} &= \left\{w \in \mathcal{C}_{1/\tau, 0}: \Re{w_{2}} < \Re{w} < \Re{w_{4}} ~\text{and}~ \Im{w} \geq 0\right\},\\
      \Sigma_{+}^{3} &= \left\{w = r + i \Im{w_{4}}: \Re{w_{4}} \leq r < w_{5}\right\},\\
      \Sigma_{+}^{4} &= \left\{w= w_{5} + iy: 0 \leq y \leq \Im{w_{4}}\right\},
    \end{align*}
we deform the $w$-contour of  $I_N$ and $J_N$  in \eqref{SleadingI}  and \eqref{SleadingJ} into the union of       $\Sigma^{\mathrm{local}} := \Sigma_{+}^{\mathrm{local}} \cup \overline{\Sigma_{+}^{\mathrm{local}}}$ and $\Sigma^{\mathrm{global}}: = \overline{\Sigma_{+}^{1}} \cup \overline{\Sigma_{+}^{2}} \cup \overline{\Sigma_{+}^{3}} \cup \overline{\Sigma_{+}^{4}} \cup \Sigma_{+}^{4} \cup \Sigma_{+}^{3} \cup \Sigma_{+}^{2} \cup \Sigma_{+}^{1}$.
 See Figure \ref{softfig} for an illustration.
    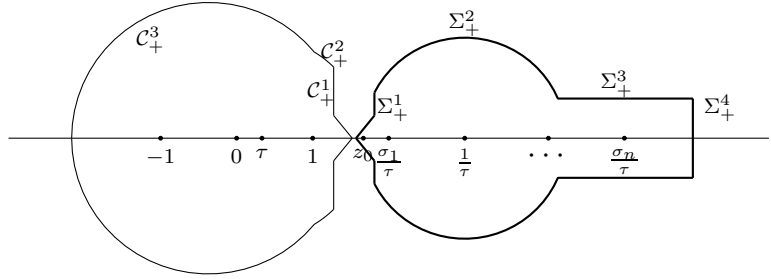
\begin{figure}[h]
      \centering
      \begin{tikzpicture}
        % draw the axis
        \draw (-3,0) -- (7,0);%node[below] {$x$}
%Contour for dz
        \draw (1.52, 0) -- ({1/3 + (4/3)*cos(pi/4 r)}, 0.3);
        \draw ({1/3 + (4/3)*cos(pi/4 r)} ,{(4/3)*sin(pi/4 r)}) -- ({1/3 + (4/3)*cos(pi/4 r)} ,0.3);
        \draw (1.52, 0) -- ({1/3 + (4/3)*cos(pi/4 r)}, -0.3);
        \draw ({1/3 + (4/3)*cos(pi/4 r)} ,{-(4/3)*sin(pi/4 r)}) -- ({1/3 + (4/3)*cos(pi/4 r)} ,-0.3);
        \draw (1.45,0.6) node[left] {$\scriptstyle \mathcal{C}_{+}^{1}$};
        \draw (1.3,0.8) node[above] {$\scriptstyle \mathcal{C}_{+}^{2}$};
        \draw[domain=pi/4:0.33*pi] plot({1/3 + (4/3)*cos(\x r)} ,{(4/3)*sin(\x r)});
        \draw[domain=-0.33*pi:-pi/4] plot({1/3 + (4/3)*cos(\x r)} ,{(4/3)*sin(\x r)});
        \draw[domain=0.22*pi:pi] plot({-0.37 + 1.8*cos(\x r)} ,{1.8*sin(\x r)});
        \draw[domain=-pi:-0.22*pi] plot({-0.37 + 1.8*cos(\x r)} ,{1.8*sin(\x r)});
        \draw (-0.8,1.3) node[left] {$\scriptstyle \mathcal{C}_{+}^{3}$};
%Contour for dw
        \draw[thick] (1.57, 0) -- ({3 - (4/3)*cos(0.15*pi r)}, 0.3);
        \draw[thick] ({3 - (4/3)*cos(0.15*pi r)} ,{(4/3)*sin(0.15*pi r)}) -- ({3 - (4/3)*cos(0.15*pi r)}, 0.3);
        \draw[thick] (1.57, 0) -- ({3 - (4/3)*cos(0.15*pi r)}, -0.3);
        \draw[thick] ({3 - (4/3)*cos(0.15*pi r)}, -0.3) -- ({3 - (4/3)*cos(0.15*pi r)} ,{-(4/3)*sin(0.15*pi r)});
        \draw (1.7,0.4) node[right] {$\scriptstyle \Sigma_{+}^{1}$};
        \draw[domain=0.15*pi:pi-0.4,thick] plot({3 - (4/3)*cos(\x r)} ,{(4/3)*sin(\x r)});
        \draw[domain=-pi+0.4:-0.15*pi,thick] plot({3 - (4/3)*cos(\x r)} ,{(4/3)*sin(\x r)});
        \draw (3,1.2) node[above] {$\scriptstyle \Sigma_{+}^{2}$};
        \draw[thick] ({3 + (4/3)*cos(pi-0.46 r)},{(4/3)*sin(pi-0.46 r)}) -- (6,{(4/3)*sin(pi-0.46 r)});
        \draw (5,0.4) node[above] {$\scriptstyle \Sigma_{+}^{3}$};
        \draw[thick] ({3 + (4/3)*cos(pi-0.46 r)},{-(4/3)*sin(pi-0.46 r)}) -- (6,{-(4/3)*sin(pi-0.46 r)});
        \draw (6,0.4) node[right] {$\scriptstyle \Sigma_{+}^{4}$};
        \draw[thick] (6,{(4/3)*sin(pi-0.46 r)}) -- (6,{-(4/3)*sin(pi-0.46 r)});
%        \draw (4.5,-0.5) node[below] {$\Sigma_{-}^{3}$};
        \filldraw
          (-1,0) circle (0.7pt) node[below] {$\scriptstyle -1$}
          (0,0) circle (0.7pt) node[below] {$\scriptstyle 0$}
          (1/3,0) circle (0.7pt) node[below] {$\scriptstyle \tau$}
          (1,0) circle (0.7pt) node[below] {$\scriptstyle 1$}
          (5/3,0) circle (0.7pt) node[below] {$\scriptstyle z_{0}$}
          (3,0) circle (0.7pt) node[below] {$\scriptstyle \frac{1}{\tau}$}
          (2,0) circle (0.7pt) node[below] {$\scriptstyle \frac{\sigma_{1}}{\tau}$}
          (4.1,0) circle (0.7pt) node[below] {$\cdots$}
          (5.1,0) circle (0.7pt) node[below] {$\scriptstyle \frac{\sigma_{n}}{\tau}$};
%        \draw (0.6, 0) node[below] {$1$};
      \end{tikzpicture}
      \caption{Contours of double integrals for $S_N$: soft}\label{softfig}% \label{softfig}
      %\caption{Contours of double integrals for $DS_N$}\label{intcontour11}
    \end{figure}

 \textbf{Step 2:  Estimates of $I_{N}$ and $J_{N}$.}

  Let   $\varphi(4)=2^{-\frac{4}{3}}N^{-\frac{1}{3}}$, rewrite $I_{N}$ and $J_{N}$ as
    \begin{align*}
      I_{N} &= \bigg(\int_{\mathcal{C}^{\mathrm{local}}} \frac{dz}{2 \pi i} \int_{\Sigma^{\mathrm{local}}} \frac{dw}{2 \pi i} + \int_{\mathcal{C}^{\mathrm{local}}} \frac{dz}{2 \pi i} \int_{\Sigma^{\mathrm{global}}} \frac{dw}{2 \pi i} + \int_{\mathcal{C}^{\mathrm{global}}} \frac{dz}{2 \pi i} \int_{\Sigma^{\mathrm{lobal}}\cup \Sigma^{\mathrm{global}}} \frac{dw}{2 \pi i}\bigg) \nonumber\\
      &\quad \frac{\eta_{-}}{\varphi(4)} e^{N(f(z) - f(w))}  e^{\frac{\eta_{-}}{\varphi(4)}( v(z - z_{0})-u(w - z_{0}))} H(z, w) \nonumber \\
      &=: I_{N}^{1} + I_{N}^{2} + I_{N}^{3},
    \end{align*}
    and
    \begin{align*}
      J_{N} &= \frac{1}{\sqrt{2\pi \eta_{-}(4N+\frac{v}{\varphi(4)})}} \frac{\eta_{-}}{\varphi(4)} \int_{\mathcal{Q}_{\tau,z_4}} \frac{dz}{2 \pi i} \int_{\Sigma} \frac{dw}{2 \pi i} e^{N(f_{1}(4;z) - f(w))} e^{\frac{\eta_{-}}{\varphi(4)}( v(1 - z_{0})-u(w - z_{0}))} \nonumber\\
      &\quad\times \frac{\sqrt{z^{2}-1}}{1-z^{2}} H(z, w) \Big(1 + \bo\big(\frac{1}{4\eta_{-}N+\frac{\eta_{-}v}{\varphi(4)}}\big)\Big) .
    \end{align*}
We need to tune external source parameters $\sigma_1, \ldots,\sigma_n$ near the saddle point $z_0$ in order to
   observe some nontrivial limits, as stated in Theorem \ref{edge limit}.  Take the Taylor expansion at $z_0$ and we get
   \begin{align*}
     I_{N}^{1} &= \frac{\eta_{-}}{\varphi(4)} \int_{\mathcal{C}^{\mathrm{local}}} \frac{dz}{2\pi i }  \int_{\Sigma^{\mathrm{local}}} \frac{dw}{2\pi i}  e^{\frac{N}{6}f'''(z_{0})((z-z_{0})^3-(w-z_{0})^3))}  e^{\frac{\eta_{-}}{\varphi(4)}( v(z-z_{0})- u(w-z_{0})) }  \nonumber\\
     &\quad\times  \frac{1}{z-w}  \prod_{k = 1}^{m} \frac{z-z_{0}-(16N)^{-\frac{1}{3}}\eta_{-}^{-1}\pi_{k}}{w-z_{0}-(16N)^{-\frac{1}{3}}\eta_{-}^{-1}\pi_{k}}
  \Big(1+\bo(z-z_{0})+\bo(w-z_{0})\Big).
   \end{align*}
 Noticing   $f'''(z_{0})=-32\eta_{-}^{3}$,    change variables $$z\to z_{0}+(16N)^{-\frac{1}{3}} \eta_{-}^{-1}z, \quad w\to z_{0}+ (16N)^{-\frac{1}{3}} \eta_{-}^{-1}w,$$ we  compute
   \begin{align}
     I_{N}^{1} &= \Big(1+\bo(N^{-\frac{1}{3}})\Big) K_{\mathrm{Airy}}(\pi;u,v), \label{airyIn1}
   \end{align}
    uniformly for  $u, v$ in a compact subset of $\mathbb{R}$. Moreover,
   for $u, v \geq 0$ there exists a constant $C>0$
   \begin{align}
     \abs{I_{N}^{1}}
     &\leq C e^{-av + bu},\label{airyIn1'}
   \end{align}
   where \begin{equation}a = \frac{\eta-}{\varphi(4)} (z_{0} - z_{1}) = 2- \pi_{*}, \quad b = \frac{\eta-}{\varphi(4)} (z_{0} - w_{1}) = 1 - \pi_{*}. \label{softab} \end{equation}

Next, we   turn to analysize the remaining parts of $I_{N}$ and $J_{N}$ by steepest-descent  method.  In order to   establish some  estimates   of $\Re{f(z)}$,    the required  facts  are listed as  follows.
    \begin{enumerate}
      \item
          For  $\delta=N^{-\frac{1}{12}}$, when $N$ is large sufficiently,    by definition of $z_2$ and $w_2$ simple manipulation yields
          \begin{align}
            \Re{f(w_{2})} - \Re{f(z_{0})}            &= 2\delta+ \bo(N^{-\frac{1}{3}})+ \frac{1}{2}\log\Big(1 - \frac{4\delta}{1+2\delta +4\delta^2}\big(1 + \bo(N^{-\frac{1}{3}})\big)\Big) \nonumber\\
            &= \frac{4\delta^{2}(1 + 2\delta)}{1 + 2\delta + 4\delta^{2}} \big(1 + \bo(N^{-\frac{1}{3}})\big)\nonumber\\
            &\geq 2N^{-\frac{1}{6}}. \label{airy-w1}
          \end{align}
         Similarly, one has
          \begin{align}
            \Re{f(z_{2})} - \Re{f(z_{0})}
             &= -\frac{4\delta^{2}(1 + 2\delta)}{1 + 2\delta + 4\delta^{2}} \big(1 + \bo(N^{-\frac{1}{3}})\big) \leq -2N^{-\frac{1}{6}}. \label{airy-z1}
          \end{align}
      \item By statement \ref{S3} of Lemma \ref{extr-f}, $\Re{f(w)}$ is strictly increasing along  $\Sigma_{+}^{1}$ as $\Im{w}$ increases. Together with \eqref{airy-w1}  one obtains
          \begin{align}
            \min_{w \in \Sigma_{+}^{1}} \Re{f(w)} - \Re{f(z_{0})} = \Re{f(w_{2})} - \Re{f(z_{0})}\geq 2N^{-\frac{1}{6}}. \label{nocri-airy-sig1}
          \end{align}
      \item By statement \ref{S1}  and statement \ref{S2} of Lemma \ref{extr-f},   $\Re{f(w)}$ is strictly increasing along $\Sigma_{+}^{2}\cup \Sigma_{+}^{3}$ as $\Re{w}$ increases, one thus obtains           \begin{align}
            \min_{w \in \Sigma_{+}^{2}\cup \Sigma_{+}^{3}}& \Re{f(w)} - \Re{f(z_{0})}= \Re{f(w_{3})} - \Re{f(z_{0})} \nonumber \\
            & \geq \Re{f(w_{2})} - \Re{f(z_{0})} \geq 2N^{-\frac{1}{6}},\label{nocri-airy-sig2}
          \end{align}
         where in the second line we have used  statement \ref{S3} of Lemma \ref{extr-f}.
      \item By  statement \ref{S3} in Lemma \ref{extr-f}, $\Re{f(w)}$ is strictly increasing along  $\Sigma_{+}^{4}$ as $\Im{w}$ increases. By choice of $w_5$,     $$w_5-\tau >\frac{3}{2}(\frac{1}{\tau}-\tau), \quad (w_5-z_{0})\eta_{-}>1, $$
       one thus derives   \begin{align}
            \min_{w \in \Sigma_{+}^{4}}& \Re{f(w)} - \Re{f(z_{0})} \geq \Re{f(w_{5})} - \Re{f(z_{0})}\nonumber \\
            &= 4 \eta_{-} (w_5-z_{0})+\log\big(1 - \frac{1}{w_5-\tau}\big( \frac{1}{\tau}-\tau\big)\big)\nonumber\\
            &>4-\log 3.
             \label{nocri-airy-sig4}
          \end{align}

      \item By  statement \ref{S3} of Lemma \ref{extr-f}, $\Re{f(z)}$ is strictly decreasing along  $\mathcal{C}_{+}^{1}$ as $\Im{z}$ increases, together with \eqref{airy-z1} which implies
          \begin{align}
            \max_{z \in \mathcal{C}_{+}^{1}} \Re{f(z)} - \Re{f(z_{0})} =\Re{f(z_{2})} - \Re{f(z_{0})}\leq -2N^{-\frac{1}{6}}. \label{nocri-airy-c1}
          \end{align}
      \item By statement \ref{S1} of Lemma \ref{extr-f},  $\Re{f(z)}$  attains its maximum over $\mathcal{C}_{+}^{2}$ at the right endpoint of $\mathcal{C}_{+}^{2}$. It is easy to see from     \eqref{nocri-airy-c1} that
          \begin{align}
            \max_{z \in \mathcal{C}_{+}^{2}} &\Re{f(z)} - \Re{f(z_{0})}=  \Re{f(z_3)} - \Re{f(z_{0})}\nonumber \\
            &\leq  \Re{f(z_2)} - \Re{f(z_{0})}\leq -2N^{-\frac{1}{6}}.\label{nocri-airy-c2}
          \end{align}

      \item By definition of  $\mathcal{Q}$,  $\Re{f_{1}(4;z)}$ is a constant for all $z \in \mathcal{Q}$.    Since
          \begin{align*}
          \Re{f(z_{4})} - \Re{f(z_{0})} =-\frac{2(1-\tau)}{1+\tau}+\frac{1}{2} \log\Big(1+\frac{4(1-\tau)}{1+\tau}\Big)<0,
          \end{align*}
          whenever $\tau\in [0,1)$,
    one has
          \begin{align}
            \max_{z \in \mathcal{Q}} \Re{f_{1}(z)} - \Re{f(z_{0})}% = \max_{z \in \mathcal{C}_{+}^{3}} \Re{f_{1}(z)} - \Re{f(z_{0})}
            = \Re{f(z_{4})} - \Re{f(z_{0})} \leq -2N^{-\frac{1}{6}}.  \label{nocri-airy-c3}
          \end{align}
    \end{enumerate}

    Combining \eqref{nocri-airy-sig1}, \eqref{nocri-airy-sig2} and \eqref{nocri-airy-sig4},  we obtain
    \begin{align*}
       \min_{w \in \Sigma^{\mathrm{global}}} \Re{f(w)} - \Re{f(z_{0})} \geq 2N^{-\frac{1}{6}}.
    \end{align*}
  Note that  when $z\in \mathcal{C}^{\mathrm{local}}$ and $w \in \Sigma^{\mathrm{global}}$
   \begin{align*}
    H(z,w)=  \bo(N^{\frac{m}{12}}) \prod_{k = 1}^{m} (z-z_{0}-(16N)^{-\frac{1}{3}}\eta_{-}^{-1}\pi_{k}),
   \end{align*}
    we proceed for $z$ variable just as  done in the estimate of $I^{1}_N$ and  know that for large $N$
    \begin{align}
      \abs{I_{N}^{2}} &\leq e^{-N^{\frac{5}{6}}},\label{airyIn2}
    \end{align}
    which holds uniformly for  $u, v$ in a compact subset of $\mathbb{R}$.
    Furthermore,  we see for  any $u, v\geq 0$ that
    \begin{align}
      \abs{I_{N}^{2}} &\leq e^{-N^{\frac{5}{6}}} e^{-av + bu},\label{airyIn2'}
    \end{align}
where $a,b$ are given in \eqref{softab}.

    Likewise, combining \eqref{nocri-airy-c1} and \eqref{nocri-airy-c2}, we obtain
    \begin{align}
      \max_{z \in \mathcal{C}^{\mathrm{global}}} \Re{f(z)} - \Re{f(z_{0})} \leq -2N^{-\frac{1}{6}},
    \end{align}
  which implies
    \begin{align}
      \abs{I_{N}^{3}} &\leq e^{-N^{\frac{5}{6}}}\label{airyIn3}
    \end{align}
   and  for $u, v \geq 0$,
    \begin{align}
      \abs{I_{N}^{3}} &\leq e^{-N^{\frac{5}{6}}} e^{-av + bu}.\label{airyIn3'}
    \end{align}

    For $J_N$, take a similar procedure as in the estimate of $I_{N}^{2}$  and we see from \eqref{nocri-airy-c3} that
    \begin{align}
      \abs{J_{N}} &\leq e^{-N^{\frac{5}{6}}}\label{airyJn}
    \end{align}
    and  for $u, v \geq 0$
    \begin{align}
      \abs{J_{N}} &\leq e^{-N^{\frac{5}{6}}} e^{-av + bu}.\label{airyJn'}
    \end{align}

    Finally, combining \eqref{airyIn1}, \eqref{airyIn2}, \eqref{airyIn3} and \eqref{airyJn}, we  obtain
    \begin{align}
     e^{\frac{\eta_{-}}{\varphi(4)}(z_{0}+1)( u-v)} & \frac{1}{\varphi(4)} S_{N}\Big(4N+\frac{u}{\varphi(4)}, 4N+\frac{v}{\varphi(4)}\Big) \nonumber\\ &=\big(1 + \bo(N^{-\frac{1}{3}})\big) K_{\mathrm{Airy}}(\pi;u,v) \label{softsl}
    \end{align}
    uniformly for  $u, v$ in a compact subset of $\mathbb{R}$.
    Combining \eqref{airyIn1'}, \eqref{airyIn2'}, \eqref{airyIn3'} and \eqref{airyJn'},
 we know that
    there exists a constant $C>0$
 such that  for $u, v \geq 0$,
    \begin{align}
      \abs{e^{\frac{\eta_{-}}{\varphi(4)}(z_{0}+1)( u-v)}  \frac{1}{\varphi(4)} S_{N}\Big(4N+\frac{u}{\varphi(4)}, 4N+\frac{v}{\varphi(4)}\Big) } \leq C e^{-av + bu}, \label{softsb}
    \end{align}
where $a = 2- \pi_{*}>0$ and $b = 1 - \pi_{*}>0$.

 \textbf{Step 3:  Completion}.

    We can proceed as in  the estimates of  $I_N$ and $J_N$ for  $S_N$ to  obtain similar  approximations for  $DS_N$ and $IS_N$. However,  the crossing factor  between $z$ and $w$ and the ratios which lead to nontrivial  local scaling factors in the integrand do change.    After detailed analysis as in Step 2,  we can obtain
    \begin{align}
     N^{-\frac{2m}{3}}\e^{\frac{\eta_{-}}{\varphi(4)}(z_{0}+1)( u+v)} & \frac{1}{\varphi(4)} DS_{N}\Big(4N+\frac{u}{\varphi(4)}, 4N+\frac{v}{\varphi(4)}\Big)  =  \bo\big(N^{-\frac{2}{3}}\big), \label{softdsl}\\
    N^{\frac{2m}{3}}\e^{-\frac{\eta_{-}}{\varphi(4)}(z_{0}+1)( u+v)} & \frac{1}{\varphi(4)} IS_{N}\Big(4N+\frac{u}{\varphi(4)}, 4N+\frac{v}{\varphi(4)}\Big)  =  \bo\big(N^{-\frac{2}{3}}\big), \label{softisl}
    \end{align}
    uniformly for  $u, v$ in a compact subset of $\mathbb{R}$, and
    there exists some constant $C>0$ such that  for $u, v\geq 0$
     \begin{align}
   N^{-\frac{2m}{3}}\e^{\frac{\eta_{-}}{\varphi(4)}(z_{0}+1)( u+v)}  \frac{1}{\varphi(4)}    \abs{ DS_{N}\Big(4N+\frac{u}{\varphi(4)}, 4N+\frac{v}{\varphi(4)}\Big)  }
    &\leq C e^{-au -av},  \label{softdsb}
    \\
   N^{\frac{2m}{3}}\e^{\frac{\eta_{-}}{\varphi(4)}(z_{0}+1)( u+v)}  \frac{1}{\varphi(4)}    \abs{ IS_{N}\Big(4N+\frac{u}{\varphi(4)}, 4N+\frac{v}{\varphi(4)}\Big)  }
    &\leq C e^{bu +bv}.  \label{softisb}  \end{align}
       Note that when taking the Pfaffian, the  factors  $N^{-{2m/3}}$ and  $\e^{\eta_{-}(z_{0}+1)( u+v)/\varphi(4)}$
 cancel out each other. Moreover,  the factors  associated with  $DS_N$ and $IS_N$ tend  to zero and thus the Pfaffian reduces to a determinant.  Combination of  \eqref{softsl}, \eqref{softdsl} and \eqref{softisl}   completes  the proof of limiting correlation functions.

    By  \cite[Lemma 2.6]{BBCS},  we obtain from   \eqref{softsb},   \eqref{softdsb} and   \eqref{softisb}  that for  $k\geq 1$
    \begin{equation}
  \left| \frac{1}{(\varphi(4))^k} \mathrm{Pf} \Big[
        K_{N}\Big(4N+\frac{u_i}{\varphi(4)}, 4N+\frac{u_j}{\varphi(4)}\Big) \Big]_{i,j = 1}^{k} \right| \leq (2k)^{\frac{k}{2}} C^{k} \prod_{j=1}^{k} e^{-(a-v)u_{j}}.
  \end{equation}
  Applying  dominated convergence theorem to   the finite $N$  Pfaffian  series expansion,
       we thus   conclude the convergence of the largest eigenvalue.
\end{proof}

\begin{proof}[Proof of Theorem \ref{edge limit}: Part (ii)]
In this case the spectral variable $x>4$,  and  we thus choose a parametrisation representation  $x(\theta)$ defined in  \eqref{xtheta}.  Also,  in  the integrals $I_N$  and  $J_N$ given in   \eqref{SleadingI} and  \eqref{SleadingJ},  take  $$\varphi(x)= N^{-\frac{1}{2}}\frac{\tau}{1-\tau^2} h(\theta),$$
and  denote $f(x;z)$ in   \eqref{phase} by $f(z)$ for  short.  Then the two solutions  $f'(z)=0$  read
    \begin{align*}
      z_{-}=\theta, \quad z_{+}=\frac{1}{\tau}-\tau -\theta.
    \end{align*}
  However, we only need to  use the steepest decent method  to    consider  behavior near the point $z_{-}=\theta$.

\textbf{Step 1:  Choice of contours.}

To construct two contours for $z$ and $w$ variables,  let
    \begin{align*}
     \pi_{*} &= 1 + \max\{|\pi_{1}|, \ldots, |\pi_m|\},\\
      \sigma_{*} &= \frac{1}{2} \Big(\theta +  \min\Big\{\frac{ \sigma_{m+1}}{\tau}, \ldots, \frac{ \sigma_{n}}{\tau}, \frac{1 + \tau^{2}}{2\tau}\Big\}\Big),\\
      \Sigma_{R} &= \left\{w = \frac{1}{\tau} +\frac{R}{\eta_{-}} e^{i\phi}: \phi \in (-\pi, \pi]\right\},\\
      \mathcal{C}_{R} &= \left\{w = \tau + \frac{R}{\eta_{-}}  e^{i\phi}: \phi \in (-\pi, \pi]\right\},
    \end{align*}
    where  $$R = \frac{1}{2} + \max\Big\{\frac{3}{2},   \frac{\sigma_{m+1}}{1-\tau^2}, \ldots, \frac{\sigma_{n}}{1-\tau^2}\Big\}.$$
Take $z_{1} = \theta - (\pi_{*} + 1)h(\theta)/\sqrt{N}$ and two points on the circle $\mathcal{C}_{R}$, denoted by   $z_{2} = z_{1} + iy_{1}, z_{3} = 1 + iy_{2} $ with  $y_1, y_2>0$. Let $\delta$ be a small positive number such that   \eqref{asy-ol-dz1} holds,  introduce some curves
    \begin{align*}
      \mathcal{C}_{+}^{\mathrm{local}} &=\left\{z = z_{1} + iy: 0 \leq y \leq \delta\right\},\\
      \mathcal{C}_{+}^{1} &=\left\{z = z_{1} + iy: 0 \leq y \leq y_{1}\right\},\\
      \mathcal{C}_{+}^{2}&=\left\{z \in \mathcal{C}_{\tau}: 1 \leq \Re{z} \leq z_{1}~\text{and}~\Im{z} \geq 0\right\},\\
      \mathcal{C}_{+}^{3}&=\left\{z\in \mathbb{C}: \left|\frac{\tau z - 1}{z - \tau}\right| = \left|\frac{\tau z_{3} - 1}{z_{3} - \tau}\right|, \Re{z} \leq 1~\text{and}~\Im(z) \geq 0\right\}.
    \end{align*}
     Set $\mathcal{C}^{\mathrm{local}} = \mathcal{C}_{+}^{\mathrm{local}} \cup  \overline{\mathcal{C}_{+}^{\mathrm{local}}}$, and  $\mathcal{C}^{\mathrm{global}} = \big(\mathcal{C}_{+}^{1} \cup \mathcal{C}_{+}^{2} \cup   \overline{\mathcal{C}_{+}^{1}} \cup   \overline{\mathcal{C}_{+}^{2}}\big) \backslash \mathcal{C}^{\mathrm{local}}$. We choose the $z$-contour $\mathcal{C}$ in \eqref{SleadingI}  and \eqref{SleadingJ}  as the union $\mathcal{C}_{+}^{1} \cup \mathcal{C}_{+}^{2} \cup \mathcal{C}_{+}^{3} \cup \overline{\mathcal{C}_{+}^{3}} \cup \overline{\mathcal{C}_{+}^{2}} \cup \overline{\mathcal{C}_{+}^{1}}$.

     For $w$-contour, let  $y_{3}$ be a  small positive number satisfying the inequality  \eqref{asy-ol-dw1'} and  choose $y_{4}>0$ such that $ \frac{1}{2}\big(\tau+\frac{1}{\tau}\big) + iy_{4} \in \Sigma_{R}$, set
    \begin{align*}
      \Sigma^{\mathrm{local}} &= \left\{w = \theta + \frac{\pi_{*} h(\theta)}{\sqrt{N}} e^{i\phi}: -\pi < \phi \leq \pi\right\},\\
      \Sigma_{+}^{1} &= \left\{w = \sigma_{*} + iy: 0 \leq y < y_{3}\right\},\\
      \Sigma_{+}^{2} &=\left\{w = r + iy_{3}: \sigma_{*} \leq r <  \frac{1}{2}\big(\tau+\frac{1}{\tau}\big) \right\},\\
      \Sigma_{+}^{3} &= \left\{w = \frac{1}{2}\big(\tau+\frac{1}{\tau}\big) + iy: y_{3} \leq y < y_{4}\right\},\\
      \Sigma_{+}^{4} &= \left\{w \in \Sigma_{R}:  \frac{1}{2}\big(\tau+\frac{1}{\tau}\big) \leq \Re{w} ~\text{and}~ \Im{w} \geq 0\right\}.
    \end{align*}
      We choose the $w$-contour in \eqref{SleadingI}  and \eqref{SleadingJ} as  $\Sigma^{\mathrm{local}} \cup \Sigma^{\mathrm{global}}$, where $\Sigma^{\mathrm{global}} = \cup_{k=1}^{3}(\Sigma_{+}^{k} \cup \overline{ \Sigma_{+}^{k}})$.

    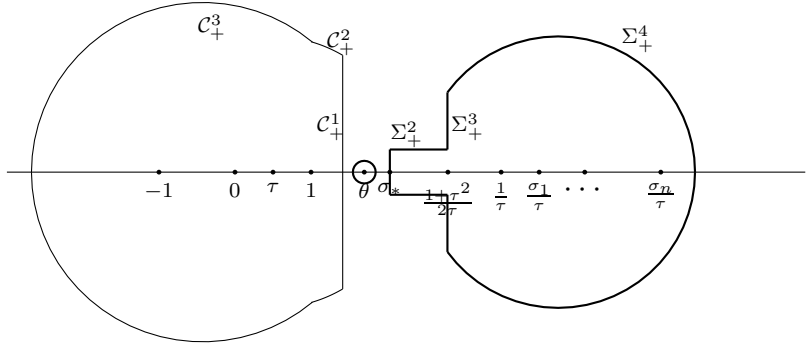
\begin{figure}[h]
      \centering
      \begin{tikzpicture}
        % draw the axis
        \draw (-3.5,0) -- (7,0);%node[below] {$x$}
%Contour for dz
        \draw ({1.8*cos(0.33*pi r)} ,{1.8*sin(0.33*pi r)}) -- ({1.8*cos(0.33*pi r)} ,{-1.8*sin(0.33*pi r)});
        \draw (1.1,0.6) node[left] {$\scriptstyle \mathcal{C}_{+}^{1}$};
        \draw (0.9,1.4) node[above] {$\scriptstyle \mathcal{C}_{+}^{2}$};
        \draw[domain=0.33*pi:0.41*pi] plot({1.8*cos(\x r)} ,{1.8*sin(\x r)});
        \draw[domain=-0.41*pi:-0.33*pi] plot({1.8*cos(\x r)} ,{1.8*sin(\x r)});
        \draw[domain=0.278*pi:pi] plot({-0.927+2.25*cos(\x r)} ,{2.25*sin(\x r)});
        \draw[domain=-pi:-0.278*pi] plot({-0.927+2.25*cos(\x r)} ,{2.25*sin(\x r)});
        \draw (-0.8,1.6) node[above] {$\scriptstyle \mathcal{C}_{+}^{3}$};

%Contour for dw
        \draw[thick] ({2.75 - 1.5*cos(pi/5 r)}, 0) -- ({2.75 - 1.5*cos(pi/5 r)}, 0.3);
        \draw[thick] ({2.75 - 1.5*cos(pi/5 r)}, 0.3) -- ({3.75 - 1.8*cos(pi/5 r)}, 0.3);
        \draw[thick] ({3.75 - 1.8*cos(pi/5 r)} ,{1.8*sin(pi/5 r)}) -- ({3.75 - 1.8*cos(pi/5 r)}, 0.3);
        \draw[thick] ({2.75 - 1.5*cos(pi/5 r)}, 0) -- ({2.75 - 1.5*cos(pi/5 r)}, -0.3);
        \draw[thick] ({2.75 - 1.5*cos(pi/5 r)}, -0.3) -- ({3.75 - 1.8*cos(pi/5 r)}, -0.3);
        \draw[thick] ({3.75 - 1.8*cos(pi/5 r)}, -0.3) -- ({3.75 - 1.8*cos(pi/5 r)} ,{-1.8*sin(pi/5 r)});
        \draw (1.4,0.5) node[right] {$\scriptstyle \Sigma_{+}^{2}$};
        \draw (2.2,0.6) node[right] {$\scriptstyle \Sigma_{+}^{3}$};
        \draw[domain=pi/5:pi,thick] plot({3.75 - 1.8*cos(\x r)} ,{1.8*sin(\x r)});
        \draw[domain=-pi:-pi/5,thick] plot({3.75 - 1.8*cos(\x r)} ,{1.8*sin(\x r)});
        \draw (4.8,1.45) node[above] {$\scriptstyle \Sigma_{+}^{4}$};

        \draw[domain=-pi:pi, thick] plot({1.2 + 0.15*cos(\x r)} ,{0.15*sin(\x r)});

        \filldraw
          (-1.5,0) circle (0.7pt) node[below] {$\scriptstyle -1$}
          (-0.5,0) circle (0.7pt) node[below] {$\scriptstyle 0$}
          (0,0) circle (0.7pt) node[below] {$\scriptstyle \tau$}
          (0.5,0) circle (0.7pt) node[below] {$\scriptstyle 1$}
          (1.2, 0) circle(0.7pt) node[below] {$\scriptstyle \theta$}
          ({2.75 - 1.5*cos(pi/5 r)}, 0) circle (0.7pt) node[below] {$\scriptstyle \sigma_{*}$}
          (2.3, 0) circle (0.7pt) node[below] {$\scriptscriptstyle \frac{1 + \tau^{2}}{2\tau}$}
          (3,0) circle (0.7pt) node[below] {$\scriptstyle \frac{1}{\tau}$}
          (3.5,0) circle (0.7pt) node[below] {$\scriptstyle \frac{\sigma_{1}}{\tau}$}
          (4.1,0) circle (0.7pt) node[below] {$\cdots$}
          (5.1,0) circle (0.7pt) node[below] {$\scriptstyle \frac{\sigma_{n}}{\tau}$};
%        \draw (0.6, 0) node[below] {$1$};
      \end{tikzpicture}
      \caption{Contours of double integrals for $S_N$: outlier}
    \end{figure}

 \textbf{Step 2:  Estimates of $I_{N}$ and $J_{N}$.}

    First, we rewrite $I_{N}$ and $J_{N}$ as
    \begin{align*}
      I_{N} &= \Big(\int_{\mathcal{C}^{\mathrm{local}}} \frac{dz}{2 \pi i} \int_{\Sigma^{\mathrm{local}}} \frac{dw}{2 \pi i} + \int_{\mathcal{C}^{\mathrm{local}}} \frac{dz}{2 \pi i} \int_{\Sigma^{\mathrm{global}}} \frac{dw}{2 \pi i} + \int_{\mathcal{C}^{\mathrm{global}}} \frac{dz}{2 \pi i} \int_{\Sigma^{\mathrm{lobal}}\cup \Sigma^{\mathrm{global}}} \frac{dw}{2 \pi i}\Big) \nonumber\\
      &\quad \frac{\eta_{-}}{\varphi(x)}  e^{\frac{\eta_{-}}{\varphi(x)}( v(z - z_{-})-u(w - z_{-}))}e^{N(f(z) - f(w))}   H(z, w) \nonumber \\
      &=: I_{N}^{1} + I_{N}^{2} + I_{N}^{3},
    \end{align*}
    and
    \begin{align*}
      J_{N} &= \frac{1}{\sqrt{2\pi \eta_{-}(xN+\frac{v}{\varphi(x)})}} \frac{\eta_{-}}{\varphi(x)} \int_{\mathcal{C}} \frac{dz}{2 \pi i} \int_{\Sigma} \frac{dw}{2 \pi i} e^{N(f_{1}(x;z) - f(w))} e^{\frac{\eta_{-}}{\varphi(x)}( v(1 - z_{-})-u(w - z_{-}))} \nonumber\\
      &\quad\times \frac{\sqrt{z^{2}-1}}{1-z^{2}} H(z, w) \Big(1 + \bo\big(\frac{1}{\eta_{-}xN}\big)\Big) .
    \end{align*}

  Take the Taylor expansion at $z_{-}$ and we get
    \begin{align*}
      I_{N}^{1}
      &= \frac{\eta_{-}}{\varphi(x)} \int_{\mathcal{C}^{\mathrm{local}}} \frac{dz}{2\pi i }  \int_{\Sigma^{\mathrm{local}}} \frac{dw}{2\pi i}  e^{\frac{N}{2}f''(z_{-})((z-z_{-})^2-(w-z_{-})^2))}  e^{\frac{\eta_{-}}{\varphi(x)}( v(z-z_{-})- u(w-z_{-})) }  \nonumber\\
      &\quad\times \frac{1}{z-w}  \prod_{k = 1}^{m} \frac{ z-z_{-}-N^{-\frac{1}{2}}h(\theta)\pi_{k}}{ w-z_{-}-N^{-\frac{1}{2}}h(\theta)\pi_{k}}
  \Big(1+\bo(z-z_{-})+\bo(w-z_{-})\Big).
    \end{align*}
   Note that $f''(z_{-})=h^{-2}(\theta)$, after changing variables $z\to z_{-}+N^{-\frac{1}{2}} h(\theta) z$, $w\to  z_{-}+N^{-\frac{1}{2}} h(\theta) w$, we  calculate that
    \begin{align}
      I_{N}^{1} &= \Big(1+\bo(N^{-\frac{1}{2}})\Big) K_{\mathrm{GUE}}(\pi;u,v), \label{olIn1}
    \end{align}
      uniformly for  $u, v$ in a compact subset of $\mathbb{R}$.
    Besides, for $u, v \geq  0$, there exists some constant $C > 0$ such that
    \begin{align}
      \abs{I_{N}^{1}} \leq C e^{-av + bu},\label{olIn1'}
    \end{align}
    where \begin{align}a = \frac{\eta_{-}}{\varphi(x)} (z_{-} - z_{1})
    = \pi_{*}+1,  \quad  b = \frac{\eta_{-}}{\varphi(x)} \frac{\pi_{*}  h(\theta)}{\sqrt{N}} = \pi_{*}.  \label{outlierab}\end{align}

Next, we   turn to analysize the remaining parts of $I_{N}$ and $J_{N}$ by steepest-descent  method.  In order to   establish some  estimates   of $\Re{f(z)}$,    the required  facts  are listed as  follows.  We will use the same notation $ \eps$ to denote different constants  for simplicity.
         \begin{enumerate}
      \item By the parametrization  representation of $x=x(\theta)$,    we see from
          \begin{align*}
            f'(z) &=
            \Big(\frac{1}{\tau} - \tau\Big) \Big(\frac{1}{(\frac{1}{\tau} - \theta) (\theta - \tau)} - \frac{1}{(\frac{1}{\tau} - z) (z - \tau)}\Big)
          \end{align*}
          that
           $f'(z) < 0$ for $z \in (\tau, \theta)$ while $f'(z) > 0$ for $z \in (\theta, \frac{1}{2}(\tau + \frac{1}{\tau}))$. Thus there exists some some $\eps >  0$ such that
          \begin{align*}
            \Re{f(\sigma_{*})} - \Re{f(z_{-})} \geq 2\eps.
          \end{align*}

      \item By statement \ref{S4} of Lemma \ref{extr-f},  $\Re{f(\sigma_{*} + i y)}$ is strictly decreasing for $y > 0$.  Combine the above  estimate and we see that  there exists a small number  $y_{3} > 0$ such that
          \begin{align}
            \Re{f(\sigma_{*} + iy_{3})} - \Re{f(z_{-})} \geq \eps. \label{asy-ol-dw1'}
          \end{align}
          Hence,
          \begin{align}
            \min_{y \in \Sigma_{+}^{1}} \Re{f(\sigma_{*}+iy)} - \Re{f(z_{-})} = \Re{f(\sigma_{*} + iy_{3})} - \Re{f(z_{-})} \geq \eps.\label{asy-ol-dw1}
          \end{align}
      \item When  $r    \in (\theta, \frac{1}{2}(\tau + \frac{1}{\tau}))$,   it follows from
          \begin{align*}
            \frac{\partial}{\partial r} \Re{f(r + iy_{3})}
            &= \frac{1}{\eta_{-}} \frac{1}{(\frac{1}{\tau} - \theta) (\theta - \tau)} + \frac{r - \frac{1}{\tau}}{(\frac{1}{\tau} - r)^{2} + y_{3}^{2}} - \frac{r - \tau}{(r - \tau)^{2} + y_{3}^{2}}\\
            &> \frac{1}{\eta_{-}}  \Big(\frac{1}{(\frac{1}{\tau} - \theta) (\theta - \tau)} + \frac{1}{(\frac{1}{\tau} - r)(r - \tau)} \Big)> 0
          \end{align*}
  that
          \begin{align}
            \min_{w \in \Sigma_{+}^{2}} (\Re{f(w)} - \Re{f(z_{-})}) \geq \Re{f(\sigma_{*} + iy_{3})} - \Re{f(z_{-})} \geq \eps. \label{asy-ol-dw2}
          \end{align}
      \item When  $R> \frac{3}{2}$ and $x>4$,  simple calculation shows
          \begin{align*}
            \frac{\partial}{\partial \phi} \Re{f\Big(\frac{1}{\tau}+ \frac{R}{\eta_{-}} e^{i\phi}\Big)}
            &=- \Big(x- \frac{1}{1+R^2+ 2R \cos\phi}\Big) R \sin\phi\\
            &< -\Big(x- \frac{1}{(R-1)^2}\Big) R \sin\phi <0.
          \end{align*}
         So   $\Re{f\big(\frac{1}{\tau}+ \frac{R}{\eta_{-}} e^{i\phi}\big)}  $  is strictly decreasing in $\phi \in (0, \pi)$. Together with the fact that $\Re{f(w)}$ is constant when $w \in \Sigma_{+}^{3}$,  we obtain
          \begin{align}
           & \min_{w  \in \Sigma_{+}^{3} \cup \Sigma_{+}^{4}} (\Re{f(w)} - \Re{f(z_{-})}) \geq
           \Re{f\big(\frac{1}{2}(\tau + \frac{1}{\tau}) + iy_{3}\big)} - \Re{f(z_{-})}\nonumber\\
            &\quad \geq \Re{f(\pi_{*} + iy_{3})} - \Re{f(z_{-})} \geq \eps. \label{asy-ol-dw3}
          \end{align}

      \item By statement \ref{S4} of Lemma \ref{extr-f},  $\Re{f(z)}$ is strictly decreasing along  the vertical line with real part $z_{-}$ or $z_1$ in the  upper half plane.        So  there exists a small number   $\delta>0$ such that for large $N$
          \begin{align*}
            \Re{f(z_1 +i\delta)} - \Re{f(z_{-})} =\bo\big(\frac{1}{\sqrt{N}}\big)+  \Re{f(z_{-}+i \delta)} - \Re{f(z_{-})}\leq -\eps.           \end{align*}
     Furthermore, on the vertical line with real part $z_1$   one has
          \begin{align}
            \max_{z \in \mathcal{C}_{+}^{1} \backslash \mathcal{C}_{+}^{\mathrm{local}}} \Re{f(z)} - \Re{f(z_{-})} =\Re{f(z_1 +i\delta)} - \Re{f(z_{-})}\leq -\eps. \label{asy-ol-dz1}
          \end{align}
      \item   Recalling  $z_2=z_1+i y_{1}\in \mathcal{C}_{R}$,  by the  calculation one obtains
        \begin{align*}
            \Re{f(z_2)} - \Re{f(z_{-})} &=-\frac{x\eta_{-}}{\sqrt{N}}(\pi_{*}+1)h(\theta)-\log\frac{\frac{1}{\tau}-\theta}{\theta-\tau}\\
          &  \quad +  \frac{1}{2}
          \log  \Big(1+\frac{\eta_{-}}{R^2} \big(\tau +\frac{1}{\tau}-2z_{1}\big)\Big)\\
          &= \bo\big(\frac{1}{\sqrt{N}}\big)-\frac{1}{2}T,
                 \end{align*}
      where         \begin{align*}
      T&=\log\Big(1+\frac{\frac{1}{\tau}+\tau-2\theta}{\theta-\tau}\Big)^2
      -\log\Big(1+\frac{\eta_{-}}{R^2} \big(\tau +\frac{1}{\tau}-2\theta\big)\Big).
                 \end{align*}
Note that  $R\geq 1/2$ and $4\eta_{-}<2/(\theta-\tau)$,   one has \begin{align*}
      T&>\log\Big(1+\frac{2}{\theta-\tau}\big(\frac{1}{\tau}+\tau-2\theta\big)\Big)
      -\log\Big(1+4\eta_{-}\big(\tau +\frac{1}{\tau}-2\theta\big)\Big)>0.                          \end{align*}
      Since \begin{align*}
            \frac{\partial}{\partial \phi} \Re{f\Big(\tau+ \frac{R}{\eta_{-}} e^{i\phi}\Big)}
            &=\Big(-x+\frac{1}{1+R^2+ 2R \cos\phi}\Big) R \sin\phi\\
            &< -\Big(x- \frac{1}{(R-1)^2}\Big) R \sin\phi <0,
          \end{align*}
      whenever $R>3/2$,    so   $\Re{f\big(\tau+ \frac{R}{\eta_{-}} e^{i\phi}\big)}  $  is strictly decreasing in $\phi \in (0, \pi)$.  This implies           \begin{align}
            \max_{z \in \mathcal{C}_{+}^{2}} \Re{f(z)} - \Re{f(z_{-})} \leq \Re{f(z_{2})} - \Re{f(z_{-})} \leq -\eps  \label{asy-ol-dz2}
          \end{align}
          for some small $\eps>0$.
      \item Combine the above estimate and   the fact that $\Re{f_{1}(x;z)}$ is a constant for all $z \in \mathcal{C}_{+}^{3}$,  one obtains
          \begin{align}
            \max_{z \in \mathcal{C}_{+}^{3}} \Re{f_{1}(x;z)} - \Re{f(z_{-})}= \Re{f(z_{3})} - \Re{f(z_{-})}<\Re{f(z_{2})} - \Re{f(z_{-})}\leq -\eps.  \label{asy-ol-dz3}
          \end{align}
      \item  For $N$ large and  $z \in \mathcal{C}_{+}^{1}$,
        \begin{align*}
           \Re{f_{1}(x;z)} - \Re{f(z_{-})}&= x\eta_{-}(1-z_{-})+\frac{1}{2}\log\frac{(\frac{1}{\tau}-z_{1})^2+    (\Im{z})^2}{(z_{1}-\tau)^2+    (\Im{z})^2}-\log\frac{\frac{1}{\tau}-z_{-}}{z_{1}-\tau}\\
     & \leq x\eta_{-}(1-\theta)+ \log\frac{\frac{1}{\tau}-z_{1}}{z_{1}-\tau}-\log\frac{\frac{1}{\tau}-z_{-}}{z_{1}-\tau}\\
     &\leq -\frac{1}{2}x\eta_{-}(\theta-1)    \leq -\eps.
          \end{align*}
      On the other hand,    combining  \eqref{asy-ol-dz2} and the fact that  $$\Re{f_{1}(x;z)} - \Re{f(z)} \leq \eta_{-}x(1 - \Re{z})\leq 0, \quad \forall z \in \mathcal{C}_{+}^{2},$$ we get
          \begin{align*}
            \max_{z \in \mathcal{C}_{+}^{2}}\Re{f_{1}(x;z)} - \Re{f(z_{-})} \leq \max_{z \in \mathcal{C}_{+}^{2}}\Re{f(z)} - \Re{f(z_{-})} \leq -\eps.
          \end{align*}
         In short,  we have
          \begin{align}
            \max_{z \in \mathcal{C}_{+}^{1} \cup \mathcal{C}_{+}^{2}} \Re{f_{1}(z)} - \Re{f(z_{-})} \leq -\eps.  \label{asy-ol-dz4}
          \end{align}
    \end{enumerate}

    Combining \eqref{asy-ol-dw1}, \eqref{asy-ol-dw2} and \eqref{asy-ol-dw3},  we obtain
    \begin{align}
     \min_{w\in \Sigma^{\mathrm{global}}} \Re{f(w)} - \Re{f(z_{-})} \geq \eps.
    \end{align}
         Note that  when $z\in \mathcal{C}^{\mathrm{local}}$ and $w \in \Sigma^{\mathrm{global}}$ \begin{align}
    H(z,w)=  \bo(1) \prod_{k = 1}^{m} (z-\theta-N^{-\frac{1}{2}} h(\theta)\pi_{k}),
    % \frac{1}{z-w}  \prod_{k = 1}^{m} \frac{z-z_{0}-(16N)^{-\frac{1}{3}}\eta_{-}^{-1}\pi_{k}}{w-z_{0}-(16N)^{-\frac{1}{3}}\eta_{-}^{-1}\pi_{k}}\Big(1+\bo(z-z_{0})+\bo(w-z_{0})\Big).
   \end{align}
    we proceed for variable $z$  just as  in the estimate of $I^{1}_N$ and  know that for large $N$
    \begin{align}
      \abs{I_{N}^{2}} \leq e^{-\frac{1}{2} \eps N},\label{olIn2}
    \end{align}
    which holds uniformly for  $u, v$ in a compact subset of $\mathbb{R}$.
    Furthermore,  we see for  any $u, v\geq 0$ that
    \begin{align}
     \abs{I_{N}^{2}} \leq e^{-\frac{1}{2} \eps N} e^{-av + bu}, \label{olIn2'}
    \end{align}
where $a,b$ are given in \eqref{outlierab}.

Likewise, combining \eqref{asy-ol-dz1} and \eqref{asy-ol-dz2} we obtain
    \begin{align*}
      \inf_{z \in \mathcal{C}^{\mathrm{global}}} \Re{f(z)} - \Re{f(z_{-})} \leq - \eps,
    \end{align*}
    which  implies
    \begin{align}
      \abs{I_{N}^{3}} \leq e^{-\frac{1}{2} \eps N}.\label{olIn3}
    \end{align}
     and  for $u, v \geq 0$
    \begin{align}
      \abs{I_{N}^{3}} \leq e^{-\frac{1}{2} \eps N} e^{-av + bu}.\label{olIn3'}
    \end{align}

    For $J_N$, take a similar procedure as in the estimate of $I_{N}^{2}$  and we see from   \eqref{asy-ol-dz3} and \eqref{asy-ol-dz4}  that
    \begin{align}
      \abs{J_{N}} \leq e^{-\frac{1}{2} \eps N}.\label{olJn}
    \end{align}
    and for $u,v \geq 0$
    \begin{align}
      \abs{J_{N}} \leq e^{-\frac{1}{2} \eps N} e^{-av + bu}.\label{olJn'}
    \end{align}

    Finally,  combining  \eqref{olIn1}, \eqref{olIn2}, \eqref{olIn3} and \eqref{olJn} we obtain
    \begin{align}
       e^{\frac{\eta_{-}}{\varphi(x)}(z_{-}+1)( u-v)} & \frac{1}{\varphi(x)}  S_{N}\Big(Nx(\theta) +   \frac{u}{\varphi(x)}, Nx(\theta) +   \frac{v}{\varphi(x)}\Big)\nonumber \\
        &= \Big(1+\bo(N^{-\frac{1}{2}})\Big) K_{\mathrm{GUE}}(\pi;u,v) \label{outsl}
    \end{align} uniformly for  $u, v$ in a compact subset of $\mathbb{R}$.
    Also, combining   \eqref{olIn1'}, \eqref{olIn2'}, \eqref{olIn3'} and \eqref{olJn'}, we know that
    there exists a constant $C>0$
 such that  for $u, v \geq 0$
    \begin{align}
    e^{\frac{\eta_{-}}{\varphi(x)}(z_{-}+1)( u-v)}  \frac{1}{\varphi(x)}     \abs{S_{N}\Big(Nx(\theta) +   \frac{u}{\varphi(x)}, Nx(\theta) +   \frac{v}{\varphi(x)}\Big)}
      \leq C e^{-av + bu}, \label{outsb}
    \end{align}
      where $a = 1+\pi_{*}$ and $b = \pi_{*}>0$.

\textbf{Step 3:  Completion}.

    We can proceed as in  the estimates of  $I_N$ and $J_N$ for  $S_N$ to  obtain similar  approximations for  $DS_N$ and $IS_N$. However,  the crossing factor  between $z$ and $w$ and the ratios which lead to nontrivial  local scaling factors in the integrand do change.    After detailed analysis as in Step 2,  we can obtain
    \begin{align}
     N^{-m}\e^{\frac{\eta_{-}}{\varphi(x)}(z_{-}+1)( u+v)} & \frac{1}{\varphi(x)} DS_{N}\Big(xN+\frac{u}{\varphi(x)}, xN+\frac{v}{\varphi(x)}\Big)  =  \bo\big(\frac{1}{N}\big), \label{outdsl}\\
    N^{m}\e^{-\frac{\eta_{-}}{\varphi(x)}(z_{-}+1)( u+v)} & \frac{1}{\varphi(x)} IS_{N}\Big(xN+\frac{u}{\varphi(x)}, xN+\frac{v}{\varphi(x)}\Big)  =  \bo\big(\frac{1}{N}\big), \label{outisl}
    \end{align}
    uniformly for  $u, v$ in a compact subset of $\mathbb{R}$, and
    there exists some constant $C>0$ such that  for $  \forall u, v \geq 0$
        \begin{align}
     N^{-m}\e^{\frac{\eta_{-}}{\varphi(x)}(z_{-}+1)( u+v)} & \frac{1}{\varphi(x)} \abs{DS_{N}\Big(xN+\frac{u}{\varphi(x)}, xN+\frac{v}{\varphi(x)}\Big) } \leq   C e^{-au -av},  \label{outdsb}  \\
    N^{m} e^{-\frac{\eta_{-}}{\varphi(x)}(z_{-}+1)( u+v)} & \frac{1}{\varphi(x)} \abs{IS_{N}\Big(xN+\frac{u}{\varphi(x)}, xN+\frac{v}{\varphi(x)}\Big)}  \leq  C e^{bu +bv}.  \label{outisb}
    \end{align}

       Note that when taking the Pfaffian, the  factors  $N^{-m}$ and  $\e^{\eta_{-}(z_{-}+1)( u+v)/\varphi(x)}$
 cancel out each other. Moreover,  the factors  associated with  $DS_N$ and $IS_N$ tend  to zero and thus the Pfaffian reduces to a determinant.  Combination of  \eqref{outsl}, \eqref{outdsl} and \eqref{outisl}   completes  the proof of limiting correlation functions.

    By  \cite[Lemma 2.6]{BBCS},  we obtain from   \eqref{outsb},   \eqref{outdsb} and   \eqref{outisb}  that for  $k\geq 1$
    \begin{equation}
  \left| \frac{1}{(\varphi(x))^k} \mathrm{Pf} \Big[
        K_{N}\Big(xN+\frac{u_i}{\varphi(x)}, xN+\frac{u_j}{\varphi(x)}\Big) \Big]_{i,j = 1}^{k} \right| \leq (2k)^{\frac{k}{2}} C^{k} \prod_{j=1}^{k} e^{-(a-v)u_{j}}.
  \end{equation}
  Applying  dominated convergence theorem to   the finite $N$  Pfaffian  series expansion,
       we thus   conclude the convergence of the largest eigenvalue.
       \end{proof}

  \section{Hard edge  limits }\label{sectlimits}
   In this section  we investigate hard edge limits when $\tau$ goes to 1 at a certain critical rate, with the help of   the double integral representations for correlation kernels (cf. Theorem \ref{newrep}).

    Given two nonnegative integers $n$ and $\alpha$,  for    $\sigma_1,  \ldots, \sigma_{n}\in [1,\infty)$ and $\tau \in (0, 1]$,  let
    \begin{align}
      \psi_{n, \alpha, \tau}(z, w)
      &= \frac{1-zw}{w-z} \Big(\frac{w}{z}\Big)^{\alpha} \prod_{k = 1}^{n} \frac{\tau z-\sigma_{k}}{\sigma_{k}z-\tau} \frac{\sigma_{k}w-\tau}{\tau w-\sigma_{k}}. \label{temp-f}
    \end{align}
   Choose  two   contours $\mathcal{C}$ and $\Sigma$ as  illustrated in Figure \ref{intcontour2} with  two intersections $e_{2} = 1+i \delta$ and $\bar{e}_{2}= 1-i \delta$ for  some $\delta > 0$, we define   for $\kappa > 0$
    \begin{align}
      DS^{(\mathrm{hard})}(u, v)
      &= - \frac{1}{4 \pi \kappa^{2}} \int_{0}^{\delta} \frac{dt}{ \pi} \Im\left\{\frac{1}{1+it}e^{-\frac{u}{\kappa} (2+it) - \frac{v}{\kappa} \frac{2+it}{1+it}}\right\}
      \nonumber\\
      &\quad  +\frac{1}{4 \pi \kappa^{2}} \mathrm{P.V.} \int_{\mathcal{C}}\frac{dz}{2\pi i} \int_{\Sigma} \frac{dw}{2\pi i} \frac{e^{-\frac{u}{\kappa} (w+1) - \frac{v}{\kappa}(1+ \frac{1}{z}) - \frac{2 \kappa}{z - 1} + \frac{2 \kappa}{w - 1}}}{z \sqrt{w^{2}-1}\sqrt{1 - z^{2}} } \psi_{n, \alpha, 1}(z, w), \label{DSlimit}
    \end{align}
    \begin{align}
      S^{(\mathrm{hard})}(u, v) &=- \frac{1}{\kappa} \int_{0}^{\delta} \frac{dt}{ \pi} \Im\left\{\frac{e^{i\frac{\pi}{4}}}{\sqrt{(2+it)t} } e^{-\frac{u}{\kappa}(2 + it)} g\big(1 + it, \frac{v}{\kappa}\big)\right\}
     \nonumber\\
      &\quad +   \frac{1}{\kappa} \mathrm{P.V.} \int_{\mathcal{C}} \frac{dw}{2\pi i} \int_{\Sigma} \frac{dz}{2\pi i} \frac{e^{- \frac{u(w+1)}{\kappa}}}{\sqrt{w^{2} - 1}} \frac{g(z, \frac{v}{\kappa})}{1 - z^{2}} e^{- \frac{2 \kappa}{z - 1} + \frac{2 \kappa}{w - 1}} \psi_{n, \alpha, 1}(z, w) , \label{Slimit}\\
      IS^{(\mathrm{hard})}(u, v) &= -\Epsilon\big(\frac{u}{\kappa}, \frac{v}{\kappa}\big)
      -4\pi \int_{0}^{\delta} \frac{dt}{ \pi} \Im\left\{\frac{1}{t(2+it)}g\big(\frac{1}{1 + it},\frac{u}{\kappa}\big) g\big(1 + it,\frac{v}{\kappa}\big)\right\}
      \nonumber\\
      &\quad +4\pi \mathrm{P.V.} \int_{\mathcal{C}} \frac{dz}{2\pi i} \int_{\Sigma} \frac{dw}{2\pi i} \frac{g(\frac{1}{w},\frac{u}{\kappa})}{w^{2} - 1} \frac{g(z,\frac{v}{\kappa})}{1-z^2} e^{- \frac{2 \kappa}{z - 1} + \frac{2 \kappa}{w - 1}} \psi_{n, \alpha, 1}(z, w) ,\label{ISlimit}
    \end{align}
   where P.V. denotes the Cauchy principal value.

    \begin{figure}[h]
      \centering
      \begin{tikzpicture}[scale=3]
        \draw[->] (-0.5,0) -- (2.5,0);

%        \draw[thick] (0.9, 0) -- (0.6,0.4);
        \draw[thick] (0.6,0.4) -- (0.1,0.4);
        \draw[thick] (0.1,0.4) -- (0.1,-0.4);
        \draw[thick] (0.1,-0.4) -- (0.6,-0.4);
%        \draw[thick] (0.6,-0.4) -- (0.9,0);
        \draw[dashed,thick] (1,0) -- (1.1,0.2);
        \draw[dashed,thick] (1.1,0.2) -- (0.9,0.4);
        \draw[dashed,thick] (0.9, 0.4) -- (0.6, 0.4);
        \draw[dashed,thick] (1,0) -- (1.1,-0.2);
        \draw[dashed,thick] (1.1,-0.2) -- (0.9,-0.4);
        \draw[dashed,thick] (0.9, -0.4) -- (0.6, -0.4);
        \draw (0.1,0.3) node[left] {$\scriptstyle \mathcal{C}$};

        \draw (1,0) -- (0.85,0.2);
        \draw (0.85,0.2) -- (1.15,0.4);
        \draw (1.15,0.4) -- (2.2,0.4);
        \draw (2.2,0.4) -- (2.2,-0.4);
        \draw (2.2,-0.4) -- (1.15,-0.4);
        \draw (1,0) -- (0.85,-0.2);
        \draw (0.85,-0.2) -- (1.15,-0.4);
        \draw (2.2,0.3) node[right] {$\scriptstyle \Sigma$};

        \filldraw
          (0,0) circle (0.3pt) node[below] {$\scriptstyle 0$}
          (0.25,0) circle (0.3pt) node[below] {$\scriptstyle \frac{1}{\sigma_{n}}$}
          (0.5,0) node[below] {$\scriptstyle \cdots$}
          (0.8,0) circle (0.3pt) node[below] {$\scriptstyle \frac{1}{\sigma_{1}}$}
          (1,0) circle (0.3pt) node[below] {$\scriptstyle 1$}
          (1.25,0) circle (0.3pt) node[below] {$\scriptstyle \sigma_{1}$}
          (1.7,0) node[below] {$\scriptstyle \cdots$}
          (2.1,0) circle (0.3pt) node[below] {$\scriptstyle \sigma_{n}$}
          (1,0.3) circle (0.3pt) node[above] {$\scriptstyle e_{2}$};
      \end{tikzpicture}
      \caption{Contours of double integrals for kernels:   hard}\label{intcontour2}
    \end{figure}
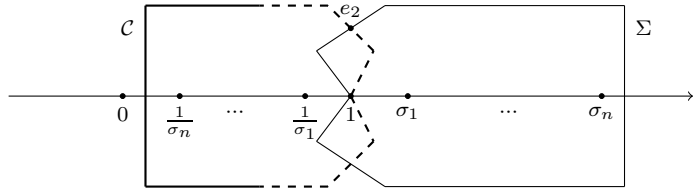

With these preparations out of the way, we  are now ready to   state the  main result  at the hard edge when $\tau$ approaches 1 at a critical rate.

  \begin{thm}\label{originlimit}
    With the same notations as in Theorem \ref{thmkernel}, for   fixed  nonnegative integers $n$ and $\alpha := M-N$, assume that
    \begin{equation}  \sigma_{n+1} = \cdots = \sigma_{N} = 1  \  \mathrm{and} \
     \sigma_i \in (1,\infty),  \quad  i= 1, \ldots, n.
    \end{equation}
    If  $N(1 - \tau) \rightarrow \kappa \in (0,\infty)$ as $N\to \infty$, then
    \begin{equation}
   \lim_{N \to \infty}  \left(\frac{2}{N}\right)^k R_{N}^{(k)}\left(\frac{2u_1}{N}, \ldots, \frac{2u_k}{N}\right)=  \mathrm{Pf}
       {  \begin{bmatrix}
        DS^{(\mathrm{hard})}(u_i, u_j) & S^{(\mathrm{hard})}(u_i, u_j) \\
        -S^{(\mathrm{hard})}(u_j, u_i) & IS^{(\mathrm{hard})}(u_i, u_j)
      \end{bmatrix}}_{i,j = 1}^{k}
  \end{equation}
 hold   uniformly for  $u_1, \ldots, u_k$ in a compact subset of $\mathbb{R}$.
  \end{thm}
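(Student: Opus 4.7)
My strategy is to follow the template established in Theorems \ref{bulk limit} and \ref{edge limit}: start from the contour integral representations in Theorem \ref{thmkernel} (together with the alternative forms involving $g$ in Theorem \ref{newrep}), scale the spectral variables by $u\mapsto 2u/N$, $v\mapsto 2v/N$, and deform contours appropriately to extract the limit. The key observation enabling the analysis is that, for the chosen regime $N(1-\tau)\to\kappa$, one has
\[
\eta_{-}=\frac{\tau}{(1-\tau)(1+\tau)}=\frac{N}{2\kappa}\bigl(1+o(1)\bigr),
\]
so the exponentials $e^{-\eta_{-}u(w+1)}$ automatically produce $e^{-u(w+1)/\kappa}$ after rescaling. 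The $g$-factors in \eqref{Slimit}--\eqref{ISlimit} are inherited, with $v$ replaced by $v/\kappa$, directly from the representations \eqref{SneqII}--\eqref{ISneqII}, so the work lies in identifying the remaining rational factors.

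The crucial step, absent from the bulk and soft-edge analyses, is to control the long product
\[
\prod_{k=n+1}^{N}\frac{\sigma_{k}z-\tau}{\tau z-\sigma_{k}}\cdot\frac{\sigma_{k}w-\tau}{\tau w-\sigma_{k}}
=\Bigl(\frac{z-\tau}{\tau z-1}\cdot\frac{w-\tau}{\tau w-1}\Bigr)^{N-n}.
\]
Writing $\tau=1-\kappa/N+o(1/N)$ and expanding $\log\bigl((z-\tau)/(\tau z-1)\bigr)=(1-\tau)(z+1)/(z-1)+O((1-\tau)^{2})$ uniformly on any compact set bounded away from $z=1$, one obtains the pointwise limit $e^{\kappa(z+1)/(z-1)+\kappa(w+1)/(w-1)}=e^{2\kappa}e^{2\kappa/(z-1)+2\kappa/(w-1)}$. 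Performing the change of variables $z\mapsto 1/z$ in the $z$-contour (as was already used to pass between \eqref{DSneq-2} and the representations of Theorem \ref{newrep}) exchanges the role of the spikes $\sigma_{k}/\tau$ and $\tau/\sigma_{k}$, produces the factor $(w/z)^{\alpha}$ from $(zw)^{\alpha}$, and converts the limiting exponentials into precisely the combination $e^{-2\kappa/(z-1)+2\kappa/(w-1)}$ appearing in \eqref{DSlimit}--\eqref{ISlimit}. The remaining finite product over $k=1,\dots,n$ assembles to $\psi_{n,\alpha,1}(z,w)$ of \eqref{temp-f}.

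Next I would deform the $z$- and $w$-contours to the pair $(\mathcal{C},\Sigma)$ pictured in Figure \ref{intcontour2}. As $\tau\to 1$, the unspiked poles $\tau/\sigma_{k}=\tau$ and $\sigma_{k}/\tau=1/\tau$ collapse onto $z=1$, so any limiting contour must pass arbitrarily close to (and, after deformation, through) $z=1$. The natural choice is to let $\mathcal{C}$ pass tangentially inside the unit circle and $\Sigma$ tangentially outside, meeting $\mathcal{C}$ at the two conjugate points $e_{2}=1+i\delta$ and $\bar e_{2}=1-i\delta$. The apparent singularity of the integrand at $z=1$ and $w=1$ (from $1/\sqrt{1-z^{2}}$, $1/\sqrt{w^{2}-1}$, and the essential factor $e^{\pm 2\kappa/(z-1)}$) is handled via a Plemelj/Sokhotski-type argument: splitting $\mathcal{C}\cup\Sigma$ into its two \emph{crossing} copies and deforming each past $z=1$ leaves a Cauchy principal value part (the double integral over $\mathcal{C}\times\Sigma$ with the P.V.\ excision) and a boundary contribution supported on the tiny segment $[1,e_{2}]\cup[1,\bar e_{2}]$. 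A short computation shows that the boundary contribution equals $\int_{0}^{\delta}\Im\{\cdot\}\,dt/\pi$, with the integrand being exactly the parenthesised expressions in \eqref{DSlimit}--\eqref{ISlimit}. I would carry out this bookkeeping separately for $DS_{N}$, $S_{N}$ and $IS_{N}$, using Proposition \ref{g-asym} to dispatch the companion integrals arising from the large-argument piece of $g$ (analogous to the $J_{N}$ terms in Section \ref{sectedge}), which all decay faster than any polynomial in $N$.

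The main obstacle, I expect, is the uniform-in-$(u,v)$ control of the contour integrals near the tangency point. The combined factor $e^{-2\kappa/(z-1)}/\sqrt{1-z^{2}}$ has an essential singularity competing with an algebraic one at $z=1$, and on the inward-pointing side of $\mathcal{C}$ the exponential is bounded but the square root is not integrable in the usual sense, which is precisely why a principal value is needed. The plan is to localise by cutting $\mathcal{C}$ at distance $\delta$ from $z=1$, estimate the global portion by steepest-descent (using that $\Re\{-2\kappa/(z-1)\}$ is strictly negative outside a shrinking half-disk around $1$), and on the local portion parametrise by arclength and extract the Cauchy principal value in the limit $\delta\to 0$ after the integration has been performed. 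Once this uniform convergence of the three sub-kernels is in hand, the gauge prefactors $e^{\pm\eta_{-}(z_{0}+1)(u\pm v)/\varphi}$ and their hard-edge analogues cancel upon forming the Pfaffian (exactly as in the end of the proof of Theorem \ref{edge limit}), and convergence of the $k$-point correlation functions follows immediately from the definition \eqref{kernelexpression}.
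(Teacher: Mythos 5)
Your strategy mirrors the paper's proof of Theorem \ref{originlimit} quite closely: the same identification $\eta_{-}\sim N/(2\kappa)$, the same exponential limit of the unspiked product (with $\sigma_{k}=1$), namely $\bigl(\tfrac{\tau z - 1}{z - \tau}\tfrac{w-\tau}{\tau w - 1}\bigr)^{N-n}\to e^{-2\kappa/(z-1)+2\kappa/(w-1)}$ after $z\mapsto 1/z$, the same limiting contour pair $(\mathcal{C},\Sigma)$ of Figure \ref{intcontour2}, and the same decomposition into a principal-value double integral plus a boundary contribution supported near $z=1$. In the paper the latter decomposition is implemented at finite $N$ by writing $DS_N=I_1-I_2-I_3$ with $I_2,I_3$ evaluated by Cauchy's theorem (integrating out $z$ over the small closed loops $\widetilde{\mathcal{C}}_{\pm}$ and keeping the residue at $z=w$), which is the concrete realisation of your Plemelj-type bookkeeping.

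Two of your supporting comments, however, are off and would misdirect the execution. First, the principal value in \eqref{DSlimit}--\eqref{ISlimit} is required by the simple pole of $\psi_{n,\alpha,1}(z,w)$ at $z=w$ at the intersection points $e_{2},\bar e_{2}$ where $\mathcal{C}$ and $\Sigma$ cross, not because ``the square root is not integrable.'' On a one-dimensional contour $|z-1|^{-1/2}$ is integrable, and along the chosen approach directions to $z=1$ one has $\Re\{1/(z-1)\}>0$ on $\mathcal{C}$ and $\Re\{1/(w-1)\}<0$ on $\Sigma$, so the factors $e^{-2\kappa/(z-1)}$ and $e^{2\kappa/(w-1)}$ decay super-exponentially as $z,w\to 1$; the $(z,w)=(1,1)$ corner is absolutely convergent and needs no regularisation. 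Second, there is no analogue of the $J_{N}$ terms here, and your plan to dispatch them via Proposition \ref{g-asym} is a false transfer from the bulk and soft-edge arguments: the argument of $g$ under the hard-edge scaling is $\eta_{-}\cdot(2v/N)=v/\kappa\,(1+o(1))$, which stays bounded, so Proposition \ref{g-asym} (a $v\to\infty$ expansion) simply does not apply. As you correctly say in your opening paragraph, the $g$-factors pass to the limit unaltered; the later remark about a ``large-argument piece of $g$'' contradicts that and should be deleted.
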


  \begin{proof}%[Proof of Theorem \ref{originlimit}]
    By the Pfaffian formulas for correlation functions  \eqref{kernelexpression}, it is sufficient to obtain  scaling limits of the  sub-kernels for $K_{N}(u, v)$ given in \eqref{matrixkernel}.
Comparing  integral representations of sub-kernels $DS_N, S_N$ and $IS_N$ in Theorems \ref{thmkernel} and  \ref{newrep},  we just focus on asymptotic behavior  of $DS_{N}$ since  the proof of $S_{N}$ and $IS_{N}$ can be obtained in a   similar way.

     Change $z$ to $1/z$ in \eqref{DSneq-2}  and we rewrite $DS_{N}$ as
    \begin{align}
      DS_{N}(u, v)
      &=   \int_{\mathcal{C}_{\{\tau/\sigma_{l}\}}}\frac{dz}{2\pi i} \int_{\mathcal{C}_{\{\sigma_{l}/\tau\}}} \frac{dw}{2\pi i}  \frac{\eta_{-}^2}{4\pi}    \frac{e^{-\eta_{-} v(1+\frac{1}{z})}}{z\sqrt{1 - z^{2}}}
      \frac{e^{-\eta_{-} u(w+1)}}{\sqrt{w^{2}-1}}\psi_{N, \alpha, \tau}(z, w), \label{DSlast}
    \end{align}
    where $\psi_{N, \alpha, \tau}$ is defined  in \eqref{temp-f}. We need to deform the contours  into new contours $\widetilde{\mathcal{C}}$ and $\widetilde{\Sigma}$, which are  illustrated in Figure \ref{intcontour3}  and  will be described  in more detail below.

    \begin{figure}
      \centering
      \begin{tikzpicture}[scale=3]
        \draw (-0.5,0) -- (2.5,0);

        \draw[thick] (0.9, 0) -- (0.6,0.4);
        \draw[thick] (0.6,0.4) -- (0.1,0.4);
        \draw[thick] (0.1,0.4) -- (0.1,-0.4);
        \draw[thick] (0.1,-0.4) -- (0.6,-0.4);
        \draw[thick] (0.6,-0.4) -- (0.9,0);
        \draw[dashed,thick] (0.9,0) -- (1.1,0.2);
        \draw[dashed,thick] (1.1,0.2) -- (0.9,0.4);
        \draw[dashed,thick] (0.9, 0.4) -- (0.6, 0.4);
        \draw[dashed,thick] (0.9,0) -- (1.1,-0.2);
        \draw[dashed,thick] (1.1,-0.2) -- (0.9,-0.4);
        \draw[dashed,thick] (0.9, -0.4) -- (0.6, -0.4);
        \draw (0.1,0.3) node[left] {$\scriptstyle \tilde{\mathcal{C}}$};

        \draw (1.15,0) -- (0.85,0.2);
        \draw (0.85,0.2) -- (1.15,0.4);
        \draw (1.15,0.4) -- (2.2,0.4);
        \draw (2.2,0.4) -- (2.2,-0.4);
        \draw (2.2,-0.4) -- (1.15,-0.4);
        \draw (2.2,0.3) node[right] {$\scriptstyle \tilde{\Sigma}$};

        \draw (1.15,0) -- (0.85,-0.2);
        \draw (0.85,-0.2) -- (1.15,-0.4);

        \filldraw
          (0,0) circle (0.3pt) node[below] {$\scriptstyle 0$}
          (0.8,0) circle (0.3pt) node[below] {$\scriptstyle \tau$}
          (0.9,0) circle (0.3pt) node[below] {$\scriptstyle e_{0}$}
          (0.85,0.2) circle (0.3pt) node[above] {$\scriptstyle e_{3}'$}
          (1.1,0.2) circle (0.3pt) node[above] {$\scriptstyle e_{3}$}
          (0.6,0.4) circle (0.3pt) node[above] {$\scriptstyle e_{4}$}
          (1,0) circle (0.3pt) node[below] {$\scriptstyle 1$}
          (1.25,0) circle (0.3pt) node[below] {$\scriptstyle \frac{1}{\tau}$}
          (1,0.1) circle (0.3pt) node[above] {$\scriptstyle e_{1}$}
          (1,0.3) circle (0.3pt) node[above] {$\scriptstyle e_{2}$}
          (1.15,0) circle (0.3pt) node[below] {$\scriptstyle e_{0}'$};
      \end{tikzpicture}
      \caption{Contours of double integrals for $DS_{N}$: hard}\label{intcontour3}
    \end{figure}

    Let $e_{0}$ and $e_{0}'$  be two  points, which are  respectively   located between $\tau$ and $1$ and between $1$ and $1/\tau$.  Take $e_{1} = 1 + i\eps_{N}$ with $0 < \eps_{N} < \delta$ and $e_{2} = 1 + i\delta$  with a fixed $\delta>0$. Also let $e_{3}$ and $e_{3}'$ be two points in the upper complex half-plane with $\Re{e_{3}} > 1$ and $\Re{e_{3}'} < 1$.  Set
   $$r_1=\frac{1}{2}\min\big\{\frac{1}{\sigma_1}, \ldots, \frac{1}{\sigma_n}\big\}, \quad r_2=1+\max\{\sigma_1, \ldots, \sigma_n\}.$$ We construct requested  integration  contours as follows.
    Let $\widetilde{\mathcal{C}}$ be a contour staring at $e_{0}$, passing through $e_{1}, e_{3}, e_{2},
    r_1,$ $\bar{e}_{2}, \bar{e}_{3}, \bar{e}_{1}$ in turn and finally returning to $e_{0}$. Likewise,
    let $\widetilde{\Sigma}$ be a  simple contour, starting at $e_{0}'$, moving anticlockwise across $\bar{e}_{1}$, $\bar{e}_{3}'$, $\bar{e}_{2}$, $r_2$,  $e_{2}$, $e_{3}'$, $e_{1}$ in turn and finally returning to $e_{0}'$.  Besides, we choose $\widetilde{\mathcal{C}}_{+}$ to be a  closed contour,  which is obtained by connecting  $e_{0}, e_{1}, e_{3}, e_{2}, e_{4}$ and $e_{0}$ in sequence by line   segments. Let $\widetilde{\mathcal{C}}_{-}$ be the complex conjugate curve of $\widetilde{\mathcal{C}}_{+}$.

    To obtain limiting correlation kernels we  split $DS_{N}$ in \eqref{DSlast} into three parts
    \begin{align}
      DS_{N}(u, v)
      & =\text{P.V.}  \Big(\int_{\widetilde{\mathcal{C}}} dz  \int_{\widetilde{\Sigma}} dw- \int_{\widetilde{\mathcal{C}}_{+}} dz \int_{\widetilde{\Sigma}} dw -  \int_{\widetilde{\mathcal{C}}_{-}} dz \int_{\widetilde{\Sigma}} dw\Big)\Big(\cdot\Big) \nonumber \\&=:I_{1} (u, v) - I_{2}(u, v) - I_{3}(u, v).\nonumber
    \end{align}
   By the assumption on $\tau$,  we see that  both $\tau$ and $1/\tau$ approach $1$ as $N \to \infty$, from which the two points $e_{0}$ and $e_{0}'$ tend to $1$ respectively from the left-hand side  and the right-hand side. Therefore, both $\widetilde{\mathcal{C}}$ and $\widetilde{\Sigma}$ are  forced to  change to the depicted curves as in Figure \ref{intcontour2},  by letting  $e_{1}$ and $\bar{e}_{1}$ approach $1$ (that is,  $\eps_{N}$ tends to $0$) and noting that they encircle  $\tau$ and $1/\tau$ respectively.
    On the other hand, we get from the assumptions of $\sigma_{n+1}=\cdots=\sigma_N=1$ and $N(1 - \tau) \rightarrow \kappa \in (0,\infty)$ that
    \begin{align}
      \Big(\frac{\tau z - 1}{z - \tau} \frac{w - \tau}{\tau w - 1}\Big)^{N-n}
      & = \Big(1 - \frac{\frac{1}{\tau} - \tau}{z - \tau}  \Big)^{N-n} \Big(1- \frac{\frac{1}{\tau}-\tau }{w - \tau}  \Big)^{-N+n} \to e^{- \frac{2 \kappa}{z - 1} + \frac{2 \kappa}{w - 1}}.\nonumber
    \end{align}
    Hence, as $N \to \infty$ the rescaled integral
    $$ \frac{4}{N^{2}} I_{1}\Big(\frac{2u}{N}, \frac{2v}{N}\Big)$$
    converges to  the principal value integral on the right-hand side of \eqref{DSlimit}.

    To compute $I_{2}$ and  $I_{3}$, we first integrate out variable  $z$. It is easy to see that only the line segments from  $\widetilde{\Sigma}$ lying inside $\widetilde{\mathcal{C}}_{\pm}$ lead to nonzero contribution, so  noting by convention
    $$\sqrt{w-1}=\begin{cases} \quad i\sqrt{1-w}, \quad &\Re w\leq 1 \& \Im w>0, \\
  - i\sqrt{1-w} , \quad  &\Re w\leq 1 \& \Im w<0, \end{cases}$$
    and
    applying Cauchy's residue theorem gives us
%    \begin{align*}
%      \frac{4}{N^{2}} \Big(I_{2}\big(\frac{2u}{N}, \frac{2v}{N}\big) + I_{3}\big(\frac{2u}{N}, \frac{2v}{N}\big)\Big)&= \frac{\eta_{-}^{2}}{\pi N^{2}}\int_{1 + i\delta}^{1 + i\eps_{N}}   \frac{dw}{2 \pi i}  \frac{ i}{w  }    e^{-\frac{2u\eta_{-}}{N} (w+1) - \frac{2v\eta_{-}}{N} \frac{w+1}{w}}  \nonumber \\
%      &-   \frac{\eta_{-}^{2}}{\pi N^{2}}  \int_{1 - i\eps_{N}}^{1 - i\delta} \frac{dw}{2 \pi i}    \frac{ i}{w  }    e^{-\frac{2u\eta_{-}}{N} (w+1) - \frac{2v\eta_{-}}{N} \frac{w+1}{w}}  \nonumber\\
%      &= \frac{\eta_{-}^{2}}{\pi N^{2}} \int_{\eps_{N}}^{\delta} \frac{dt}{ \pi} \Im\left\{\frac{1}{1+it}e^{-\frac{2u\eta_{-}}{N} (2+it) - \frac{2v\eta_{-}}{N} \frac{2+it}{1+it}}\right\} .
    %\end{align*}
        \begin{align*}
    &  \frac{4}{N^{2}} \Big(I_{2}\big(\frac{2u}{N}, \frac{2v}{N}\big) + I_{3}\big(\frac{2u}{N}, \frac{2v}{N}\big)\Big)\\
      &= \bigg(\int_{1 + i\delta}^{1 + i\eps_{N}}   \frac{dw}{2 \pi i}-\int_{1 - i\eps_{N}}^{1 - i\delta} \frac{dw}{2 \pi i} \bigg) \frac{\eta_{-}^{2}}{\pi N^{2}} \frac{ i}{w  }    e^{-\frac{2u\eta_{-}}{N} (w+1) - \frac{2v\eta_{-}}{N} \frac{w+1}{w}}
     % -   \frac{\eta_{-}^{2}}{\pi N^{2}}     \frac{ i}{w  }    e^{-\frac{2u\eta_{-}}{N} (w+1) - \frac{2v\eta_{-}}{N} \frac{w+1}{w}}
     \nonumber\\
      &= \frac{\eta_{-}^{2}}{\pi N^{2}} \int_{\eps_{N}}^{\delta} \frac{dt}{ \pi} \Im\left\{\frac{1}{1+it}e^{-\frac{2u\eta_{-}}{N} (2+it) - \frac{2v\eta_{-}}{N} \frac{2+it}{1+it}}\right\} .
    \end{align*}
Since $\eps_{N} \to 0$ converges to  $0$ as $N\to \infty$, we get
    \begin{align*}
      &\lim_{N \to \infty} \frac{4}{N^{2}} \Big(I_{2}\big(\frac{2u}{N}, \frac{2v}{N}\big) + I_{3}\big(\frac{2u}{N}, \frac{2v}{N}\big)\Big)
      = \frac{1}{4 \pi \kappa^{2}} \int_{0}^{\delta} \frac{dt}{ \pi} \Im\left\{\frac{1}{1+it}e^{-\frac{u}{\kappa} (2+it) - \frac{v}{\kappa} \frac{2+it}{1+it}}\right\}.
    \end{align*}

    Together with the limit of  $I_1$,  this gives asymptotic behavior  of $DS_N$.

   For   $IS_{N}$,  change of variable   $w \mapsto 1/w$ in \eqref{ISneqII} gives
    \begin{align*}
      IS_{N}(u, v)
      &=  -\Epsilon(\eta_{-}u, \eta_{-}v) + 4\pi \int_{\mathcal{C}_{\{\tau/\sigma_{l}\}}} \frac{dz}{2\pi i} \int_{\mathcal{C}_{\{\sigma_{l}/\tau\}}} \frac{dw}{2\pi i} \frac{g(\frac{1}{w},\eta_{-}u)}{w^{2} - 1} \frac{g(z,\eta_{-}v)}{1-z^2} \psi_{N, \alpha, \tau}(z, w).
    \end{align*}
    So we can proceed in the same way as  in  obtaining  scaling limit of $DS_{N}$ and prove that
    $$\frac{2}{N}S_{N} \big(\frac{2u}{N}, \frac{2v}{N}\big) \ \mathrm{and} \    IS_{N} \big(\frac{2u}{N}, \frac{2v}{N}\big) $$
    converge to the desired limits respectively.  This thus completes the proof.
  \end{proof}

\section{Miscellany} \label{lastsect}

\subsection{Integrable form}
In order to give new expressions of double integrals \eqref{Ssoft}, \eqref{DSsoft} and \eqref{ISsoft},  for $p=0,1,\ldots,$ let's define two families  of functions
\begin{align}
 \phi_{1,p}(\kappa, \pi;u)=& \int_{\mathcal{C}_{<}}\frac{dw}{2\pi i }     e^{\frac{1}{3}(w-\kappa)^3-u(w-\kappa)}   \frac{1}{\sqrt{2w}}
   \prod_{k = 1}^{p} \frac{w+\pi_{k}}{ w- \pi_{k}}.
  \end{align}
and
  \begin{align}
 \phi_{2,p}(\kappa, \pi;v)=& \int_{\mathcal{C}_{>}}\frac{dz}{2\pi i }     e^{-\frac{1}{3}(z-\kappa)^3+ v(z-\kappa)}   \frac{1}{\sqrt{2z}}
   \prod_{k = 1}^{p} \frac{z-\pi_{k}}{ z+ \pi_{k}}.
  \end{align}

 In the special case of  $m=0$,   $DS^{(\mathrm{soft})}(\kappa, \pi;u,v)$, $S^{(\mathrm{soft})}(\kappa, \pi;u,v)$ and $IS^{(\mathrm{soft})}(\kappa, \pi;u,v)$ are denoted by $DS^{(\mathrm{soft})}(\kappa;u,v)$, $S^{(\mathrm{soft})}(\kappa;u,v)$ and $IS^{(\mathrm{soft})}(\kappa;u,v)$ respectively, for short.  Also,  $\phi_{1,0}(\kappa, \pi;u)$ and    $\phi_{2,0}(\kappa, \pi;v)$ are denoted   by  $  \phi_{1}(\kappa;u)$ and $  \phi_{2}(\kappa;v)$. Compared to contour integral representations of Airy functions,   there is an additional square root  factor   in the integrand in   both $  \phi_{1}(\kappa;u)$ and $  \phi_{2}(\kappa;v)$. They  can be treated as generalized Airy functions and satisfy certain differential equations of order 3 as follows
   \begin{align}   \phi_{1}'''(\kappa;u)-\kappa\big(\phi_{1}''(\kappa;u)-u\phi_{1}(\kappa;u)\big) -u\phi_{1}'(\kappa;u)  -\frac{1}{2}\phi_{1}(\kappa;u)&=0, \label{3rdeqn1} \\
    \phi_{2}'''(\kappa;v)+\kappa\big(\phi_{2}''(\kappa;v)-v\phi_{2}(\kappa;v)\big) -v\phi_{2}'(\kappa;v)  -\frac{1}{2}\phi_{2}(\kappa;v)&=0.   \label{3rdeqn2}  \end{align}
    Actually,  \eqref{3rdeqn1} can be easily derived  from
  \begin{align*}
 \phi_{1}(\kappa;u)=&  - \int_{\mathcal{C}_{<}}\frac{dw}{2\pi i }  \big( (w-\kappa)^2 -u\big) \sqrt{2w}  e^{\frac{1}{3}(w-\kappa)^3-u(w-\kappa)}
  \end{align*}
  by integration by parts and
 \begin{align*}
 \phi_{1}'(\kappa;u)=\kappa  \phi_{1}(\kappa;u) -\frac{1}{2} \int_{\mathcal{C}_{<}}\frac{dw}{2\pi i }   \sqrt{2w}  e^{\frac{1}{3}(w-\kappa)^3-u(w-\kappa)},\\
 \phi_{1}'''(\kappa;u)=\kappa  \phi_{1}''(\kappa;u) -\frac{1}{2} \int_{\mathcal{C}_{<}}\frac{dw}{2\pi i }   \sqrt{2w} (w-\kappa)^2 e^{\frac{1}{3}(w-\kappa)^3-u(w-\kappa)}.
  \end{align*}
    \eqref{3rdeqn2} can be  obtained in a similar way.

   \begin{prop} With the above notations, the following hold
   \begin{align} DS^{\mathrm{soft}} (\kappa, \pi;u,v)&=DS^{(\mathrm{soft})}(\kappa;u,v) \nonumber\\
   &\quad +\sum_{p=1}^{m}\Big( \phi_{1,p-1}(\kappa, \pi;u) \phi_{1,p}(\kappa, \pi;v)-\phi_{1,p}(\kappa, \pi;u)\phi_{1,p-1}(\kappa, \pi;v) \Big), \label{repDS}
   \end{align}
   \begin{align}
   S^{\mathrm{soft}} (\kappa, \pi;u,v)&=S^{(\mathrm{soft})}(\kappa;u,v) \nonumber\\
   &\quad +\sum_{p=1}^{m}\Big( \phi_{1,p}(\kappa, \pi;u)\phi_{2,p-1}(\kappa, \pi;v) - \phi_{1,p-1}(\kappa, \pi;u)  \phi_{2,p}(\kappa, \pi;v)\Big), \label{repS}
   \end{align}
   \begin{align}
   IS^{\mathrm{soft}} (\kappa, \pi;u,v)&=IS^{(\mathrm{soft})}(\kappa;u,v) \nonumber\\
   &\quad +\sum_{p=1}^{m}\Big( \phi_{2,p-1}(\kappa, \pi;u) \phi_{2,p}(\kappa, \pi;v)-\phi_{2,p}(\kappa, \pi;u) \phi_{2,p-1}(\kappa, \pi;v)\Big). \label{repIS}
     \end{align}\end{prop}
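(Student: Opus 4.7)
The plan is to reduce all three identities \eqref{repDS}--\eqref{repIS} to a single mechanism: a telescoping of the $\pi$-dependent product in the integrand, followed by an algebraic cancellation that uncouples the double contour integral into a product of two single contour integrals.

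First, in each case I would write the $\pi$-dependent factor in the integrand of the $m$-parameter kernel as $\prod_{k=1}^{m}a_{k}(z,w)$ with
\begin{equation*}
a_{k}^{DS}=\frac{(z+\pi_{k})(w+\pi_{k})}{(z-\pi_{k})(w-\pi_{k})},\quad
a_{k}^{S}=\frac{z-\pi_{k}}{z+\pi_{k}}\cdot\frac{w+\pi_{k}}{w-\pi_{k}},\quad
a_{k}^{IS}=\frac{(z-\pi_{k})(w-\pi_{k})}{(z+\pi_{k})(w+\pi_{k})},
\end{equation*}
and apply the standard telescoping $\prod_{k=1}^{m}a_{k}-1=\sum_{p=1}^{m}\bigl(\prod_{k=1}^{p-1}a_{k}\bigr)(a_{p}-1)$. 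A direct computation gives
\begin{equation*}
a_{p}^{DS}-1=\frac{2\pi_{p}(z+w)}{(z-\pi_{p})(w-\pi_{p})},\quad a_{p}^{S}-1=\frac{2\pi_{p}(z-w)}{(z+\pi_{p})(w-\pi_{p})},\quad a_{p}^{IS}-1=\frac{-2\pi_{p}(z+w)}{(z+\pi_{p})(w+\pi_{p})}.
\end{equation*}
When multiplied against the crossing factor $(w-z)/(z+w)$ appearing in $DS^{(\mathrm{soft})}$ and $IS^{(\mathrm{soft})}$, or $(z+w)/(z-w)$ appearing in $S^{(\mathrm{soft})}$, the numerator $z\pm w$ cancels the $z\pm w$ in the crossing denominator, and the remaining expression admits an elementary partial fraction such as
\begin{equation*}
\frac{w-z}{(z-\pi_{p})(w-\pi_{p})}=\frac{1}{z-\pi_{p}}-\frac{1}{w-\pi_{p}},\qquad \frac{z+w}{(z+\pi_{p})(w-\pi_{p})}=\frac{1}{z+\pi_{p}}+\frac{1}{w-\pi_{p}},
\end{equation*}
and the analogous split in the $IS^{(\mathrm{soft})}$ case, so the integrand decomposes into a sum of two purely $z$/$w$-separable pieces.

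Second, with the coupling broken, Fubini converts each $p$-th contribution into a sum of two products of single contour integrals along $\mathcal{C}_{<}$ or $\mathcal{C}_{>}$. The elementary identities $2\pi_{p}/(z-\pi_{p})=(z+\pi_{p})/(z-\pi_{p})-1$ and $2\pi_{p}/(z+\pi_{p})=1-(z-\pi_{p})/(z+\pi_{p})$, together with their $w$-analogues, rewrite the extra $\pi_{p}$-factor as a difference $\prod_{k=1}^{p}(\cdot)-\prod_{k=1}^{p-1}(\cdot)$ of consecutive weights occurring in the definitions of $\phi_{1,\cdot}$ and $\phi_{2,\cdot}$. Collecting the two pieces of the partial fraction in each case, the ``diagonal'' contributions of the form $\phi_{*,p-1}(u)\phi_{*,p-1}(v)$ cancel and the surviving off-diagonal terms are exactly
\begin{align*}
T_{p}^{DS}&=\phi_{1,p-1}(u)\phi_{1,p}(v)-\phi_{1,p}(u)\phi_{1,p-1}(v),\\
T_{p}^{S}&=\phi_{1,p}(u)\phi_{2,p-1}(v)-\phi_{1,p-1}(u)\phi_{2,p}(v),\\
T_{p}^{IS}&=\phi_{2,p-1}(u)\phi_{2,p}(v)-\phi_{2,p}(u)\phi_{2,p-1}(v),
\end{align*}
which match precisely the summands on the right-hand side of \eqref{repDS}, \eqref{repS} and \eqref{repIS}. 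Summing over $p=1,\ldots,m$ and recognizing the $m=0$ base kernel on the left recovers the stated identities.

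The main obstacle I anticipate is purely combinatorial bookkeeping of signs: the three crossing factors and three $\pi$-product conventions each induce a slightly different partial-fraction split and a different pairing of $u,v$ with $z,w$, and a single sign slip breaks the anti-symmetric appearance of the final summand. The analytic justification of Fubini and of the contour splitting is routine, since the exponentials $e^{\pm(z-\kappa)^{3}/3}$ along $\mathcal{C}_{>}$ and $\mathcal{C}_{<}$ furnish super-exponential decay dominating all rational factors, and the chosen apex $\delta<\min\{\pi_{1},\ldots,\pi_{m}\}$ keeps the contour uniformly away from every pole $\pm\pi_{p}$, so no residues are crossed during the separation.
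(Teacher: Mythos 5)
Your proposal is correct and is essentially the paper's proof in slightly expanded form: the telescoping $\prod a_k - 1 = \sum_p (\prod_{k<p}a_k)(a_p-1)$, the cancellation of the crossing factor against $a_p-1$, and the subsequent separation into products of single contour integrals are exactly the content of the paper's identity \eqref{ide} and its term-by-term integration. The only cosmetic difference is that you pass through an extra partial-fraction split of $2\pi_p(z\pm w)/[(z\pm\pi_p)(w\mp\pi_p)]$ and then observe a cancellation of the ``diagonal'' $\phi_{*,p-1}(u)\phi_{*,p-1}(v)$ terms, whereas the paper rewrites the $p$-th summand directly as $\frac{w+\pi_p}{w-\pi_p}-\frac{z-\pi_p}{z+\pi_p}$ (and its analogues for $DS,IS$), which makes the resulting separable integrand manifest without any intermediate cancellation.
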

 \begin{proof}
 Using the identity
   \begin{align}
  \frac{z+w}{z-w} \prod_{k = 1}^{m} \frac{z-\pi_{k}}{ z+ \pi_{k}}\frac{w+\pi_{k}}{ w-\pi_{k}}=\frac{z+w}{z-w}+\sum_{p=1}^{m}\Big(\frac{w+\pi_{p}}{ w-\pi_{p}}-\frac{z-\pi_{p}}{ z+\pi_{p}}\Big) \prod_{k = 1}^{p-1} \frac{z-\pi_{k}}{ z+ \pi_{k}}\frac{w+\pi_{k}}{ w-\pi_{k}}, \label{ide}
  \end{align}
  integrate term by term and we obtain \eqref{repS}.  Verification of  the above identity \eqref{ide} is a straight   substitution of   the following facts
   \begin{align*}
\mathrm{ LHS \ of\ }  \eqref{ide}&=\frac{z+w}{z-w}+\frac{z+w}{z-w}\sum_{p=1}^{m}\Big(\frac{z-\pi_{p}}{ z+\pi_{p}}\frac{w+\pi_{p}}{ w-\pi_{p}}-1\Big) \prod_{k = 1}^{p-1} \frac{z-\pi_{k}}{ z+ \pi_{k}}\frac{w+\pi_{k}}{ w-\pi_{k}}  \end{align*}
  and
   \begin{align*}
\frac{z+w}{z-w}\Big(\frac{z-\pi_{p}}{ z+\pi_{p}}\frac{w+\pi_{p}}{ w-\pi_{p}}-1\Big) =\frac{w+\pi_{p}}{ w-\pi_{p}}-\frac{z-\pi_{p}}{ z+\pi_{p}}.
  \end{align*}

  Similarly, we can prove   \eqref{repDS} and \eqref{repIS}.
 \end{proof}

 There exist certain relations between limiting  sub-kernels at the soft edge  for the GOE and GSE ensembles, see e.g.  \cite{tw1996}. Lemmas 2.6 and 2.7  in  \cite{BBCS} show that these sub-kernels admit alternative double integrals, in which the same crossing factors  appear in the integrands as in \eqref{Ssoft},  \eqref{DSsoft} and \eqref{ISsoft}.  It is an interesting question to find possible interrelations between $DS^{(\mathrm{soft})}(\kappa, \pi;u,v)$, $S^{(\mathrm{soft})}(\kappa, \pi;u,v)$ and $IS^{(\mathrm{soft})}(\kappa, \pi;u,v)$.   However,  $S^{(\mathrm{soft})}(\kappa;u,v)$ admits  an integrable form.

\begin{prop}  When $m=0$, the sub-kernel  \eqref{Ssoft}  can be rewrited as
   \begin{align*}  S^{(\mathrm{soft})}(\kappa;u,v) &=\frac{1}{u-v}
  \Big( -2 \phi_{1}''(\kappa;u) \phi_{2}(\kappa;v)-2\phi_{1}(\kappa;u) \phi_{2}''(\kappa;v)+2\phi_{1}'(\kappa;u) \phi_{2}'(\kappa;v)
  \nonumber\\
  &+2\kappa \big( \phi_{1}'(\kappa;u) \phi_{2}(\kappa;v)-\phi_{1}(\kappa;u) \phi_{2}'(\kappa;v)\big) +(u+v) \phi_{1}(\kappa;u) \phi_{2}(\kappa;v)\Big).
     \end{align*}\end{prop}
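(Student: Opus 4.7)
\medskip

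\noindent\textbf{Plan of proof.} The strategy is to convert $(u-v)\,S^{(\mathrm{soft})}(\kappa;u,v)$ into a sum of factorized double integrals, each of which evaluates to a product of one-variable contour integrals that can be identified with $\phi_{1}$, $\phi_{2}$ and their derivatives. Write $A(w)=\tfrac{1}{3}(w-\kappa)^{3}-u(w-\kappa)$ and $B(z)=-\tfrac{1}{3}(z-\kappa)^{3}+v(z-\kappa)$, so that $A_{w}=(w-\kappa)^{2}-u$ and $B_{z}=-(z-\kappa)^{2}+v$. The crucial algebraic identity is
\begin{equation*}
u-v=(w-\kappa)^{2}-(z-\kappa)^{2}-A_{w}-B_{z}=(w-z)(w+z-2\kappa)-A_{w}-B_{z}.
\end{equation*}
Multiplying the $S^{(\mathrm{soft})}$-integrand by $u-v$, the first piece $(w-z)(w+z-2\kappa)$ kills the denominator in $\frac{z+w}{z-w}$, producing a \emph{polynomial} double integral $I_{1}$; the second piece yields $I_{2}$ in which $A_{w}e^{A}$ and $B_{z}e^{B}$ are exact derivatives and can be integrated by parts.

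\medskip

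\noindent For $I_{1}$ one expands $-(z+w)(z+w-2\kappa)$ in powers of $z$ and $w$; each monomial factorizes, and routine use of the elementary identities
\begin{equation*}
\int \tfrac{w}{\sqrt{2w}}e^{A}\,\tfrac{dw}{2\pi i}=\kappa\phi_{1}-\phi_{1}',\qquad \int \tfrac{w^{2}}{\sqrt{2w}}e^{A}\,\tfrac{dw}{2\pi i}=\phi_{1}''-2\kappa\phi_{1}'+\kappa^{2}\phi_{1},
\end{equation*}
together with their $\phi_{2}$-analogues, collapses $I_{1}$ to
\[I_{1}=-\phi_{1}''\phi_{2}-\phi_{1}\phi_{2}''+2\phi_{1}'\phi_{2}'+2\kappa\phi_{1}'\phi_{2}-2\kappa\phi_{1}\phi_{2}'.\]
For $I_{2}$, integration by parts in $w$ (resp.\ $z$) transfers the derivatives onto $\frac{z+w}{(z-w)\sqrt{4zw}}$; the two contributions combine by means of the identity
\begin{equation*}
\frac{d}{dw}+\frac{d}{dz}\ \text{applied to } \frac{z+w}{(z-w)\sqrt{4zw}}\ \text{produces factor }\ \frac{1}{(z-w)\sqrt{4zw}}\Bigl(2-\frac{(z+w)^{2}}{2zw}\Bigr)=-\frac{z-w}{4(zw)^{3/2}},
\end{equation*}
which is the cute cancellation that makes the $(z-w)^{-2}$ singularity disappear entirely. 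Hence
\[I_{2}=-\int\!\!\!\int\frac{z-w}{4(zw)^{3/2}}\,e^{A+B}\frac{dz\,dw}{(2\pi i)^{2}}=-\tfrac{\sqrt 2}{4}\phi_{2}\Phi_{1}+\tfrac{\sqrt 2}{4}\phi_{1}\Phi_{2},\]
where $\Phi_{1}=\int w^{-3/2}e^{A}\tfrac{dw}{2\pi i}$ and $\Phi_{2}=\int z^{-3/2}e^{B}\tfrac{dz}{2\pi i}$.

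\medskip

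\noindent To relate $\Phi_{1},\Phi_{2}$ back to $\phi_{1},\phi_{2}$, apply the vanishing integration-by-parts identity $\int [A_{w}g+g']e^{A}dw/(2\pi i)=0$ with test function $g(w)=1/\sqrt{2w}$; this yields $\phi_{1}''-u\phi_{1}=\tfrac{1}{2\sqrt 2}\Phi_{1}$, hence $\Phi_{1}=2\sqrt 2(\phi_{1}''-u\phi_{1})$, and analogously $\Phi_{2}=2\sqrt 2(v\phi_{2}-\phi_{2}'')$. Substituting gives
\[I_{2}=-\phi_{1}''\phi_{2}-\phi_{1}\phi_{2}''+(u+v)\phi_{1}\phi_{2},\]
and adding $I_{1}+I_{2}$ reproduces precisely the claimed expression for $(u-v)S^{(\mathrm{soft})}(\kappa;u,v)$.

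\medskip

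\noindent\textbf{Main obstacle.} No single step is deep, but the identity $2-(z+w)^{2}/(2zw)=-(z-w)^{2}/(2zw)$ used in $I_{2}$ is the decisive algebraic miracle: without it, the integration-by-parts contribution would retain a non-factorizable kernel with a double pole on $z=w$, and the reduction to products of one-variable integrals would fail. The remaining effort is careful bookkeeping of signs and $\kappa$-coefficients in $I_{1}$, and verifying that boundary terms at infinity vanish for integration by parts along $\mathcal{C}_{<}$ and $\mathcal{C}_{>}$ (which is immediate since $\Re(\pm(w-\kappa)^{3}/3)\to-\infty$ along those rays).
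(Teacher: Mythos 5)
Your proof is correct and follows essentially the same route as the paper's: both realize the factor $u-v$ as $(-\partial_w-\partial_z)$ acting on the $u,v$-exponential, integrate by parts, and exploit the cancellation $2-(z+w)^2/(2zw)=-(z-w)^2/(2zw)$, which in the paper's one-shot computation appears as the residual $\frac{1}{2z}-\frac{1}{2w}$ in the final integrand. Your explicit $I_1/I_2$ bookkeeping and the relations $\Phi_1=2\sqrt{2}\bigl(\phi_1''(\kappa;u)-u\phi_1(\kappa;u)\bigr)$, $\Phi_2=2\sqrt{2}\bigl(v\phi_2(\kappa;v)-\phi_2''(\kappa;v)\bigr)$ coincide, up to normalization, with the integration-by-parts identities the paper records for $\int_{\mathcal{C}_<}(2w)^{-3/2}e^{\frac{1}{3}(w-\kappa)^3-u(w-\kappa)}\frac{dw}{2\pi i}$ and its $z$-analogue, so the two arguments are in full agreement.
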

 \begin{proof}  Apply the integration by parts and we get
 \begin{align*}
  \int_{\mathcal{C}_{<}}\frac{dw}{2\pi i }  (2w)^{-\frac{3}{2}}  e^{\frac{1}{3}(w-\kappa)^3-u(w-\kappa)} = \phi_{1}''(\kappa;u)-u \phi_{1}(\kappa;u),
  \end{align*}
 and
  \begin{align*}
  \int_{\mathcal{C}_{>}}\frac{dz}{2\pi i }  (2z)^{-\frac{3}{2}}  e^{-\frac{1}{3}(z-\kappa)^3+v(z-\kappa)} = -\phi_{1}''(\kappa;v)+ v\phi_{1}(\kappa;v).
  \end{align*}
  Thus,
   \begin{align*}
(u-v) S^{(\mathrm{soft})}&(\kappa;u,v)= \int_{\mathcal{C}_{>}} \frac{dz}{2\pi i }    \int_{\mathcal{C}_{<}}   \frac{dw}{2\pi i }   e^{\frac{1}{3}(w-\kappa)^3-\frac{1}{3}(z-\kappa)^3}    \frac{1}{\sqrt{4zw}}
\frac{z+w}{z-w}
   \nonumber\\
& \quad \times    \big(-\frac{\partial}{\partial w}-\frac{\partial}{\partial z}\big)  e^{-u(w-\kappa)+ v(z-\kappa)}\\
&=\int_{\mathcal{C}_{>}} \frac{dz}{2\pi i }    \int_{\mathcal{C}_{<}} \frac{dw}{2\pi i }
 \frac{1}{\sqrt{4zw}} e^{\frac{1}{3}(w-\kappa)^3-u(w-\kappa)}   e^{- \frac{1}{3}(z-\kappa)^3+v(z-\kappa)}
%\Big(-(z+w) (z+w-2\kappa)  +\frac{1}{2z}-\frac{1}{2w}\Big)
   \nonumber\\
& \quad \times  \big(-(z-\kappa+w-\kappa) ^2 -2\kappa(z-\kappa+w-\kappa)  +\frac{1}{2z}-\frac{1}{2w}\big),
& \end{align*}
 from which the required result follows.
 \end{proof}

\subsection{Singular value of GUE}
As mentioned earlier, when $\tau=1$  $X$ defined in  \eqref{ellipticPDF} reduces to an $N\times N$ GUE  matrix
$H$ with density
\begin{equation} 2^{-\frac{1}{2}N} \pi^{-\frac{1}{2}N^2} e^{-\frac{1}{2}\Tr(H^2)}. \label{GUEdensity}\end{equation}
The  eigenvalues of GUE have been studied quite well,  however, only  recently have people began to investigate singular values of GUE (or GOE and even more general matrix ensembles with even weights), see   relevant  works of Forrester \cite{forrester06},  Edelman and La Croix \cite{Edelman},   Bornemann and La Croix \cite{BL},  Bornemann and Forrester  \cite{BF16}.
There exists  an unexpected decomposition that   the singular values of the GUE are distributed identically
to the union of the distinct non-zero singular values of two independent anti-GUE ensembles,    one of order $N$ and the other of order $N + 1$.   This result was first  observed by Forrester  \cite[eq.(2.6)]{forrester06}  and then by Edelman and La Croix \cite[Thm. 1]{Edelman}.     Interestingly,
Edelman and La Croix commented in \cite{Edelman} that `the singular values of the GUE play an unpredictably important role that had gone unnoticed for decades even though, in hindsight, so many clues had been around'.

 \begin{thm} \label{SVGUE}
 With  a Hermitian matrix  $H$ distributed as   in  \eqref{GUEdensity},
 let $\lambda_{\mathrm{max}}(H^2)$ be the largest eigenvalue of $H^2$,  then   for any  $x\in \mathbb{R}$          \begin{align}
  \lim_{N \to \infty}\mathbb{P}\Big(2^{-\frac{4}{3}}N^{-\frac{1}{3}}\big(\lambda_{\mathrm{max}}(H^2)-4N\big) \leq   x\Big)=  F_{\mathrm{GUE}}(2^{-\frac{2}{3}}x)F_{\mathrm{GUE}}(2^{-\frac{2}{3}}x).
  \end{align}
     \end{thm}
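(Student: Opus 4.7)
The plan is to reduce to the joint asymptotic behaviour of the largest and the (negated) smallest eigenvalue of $H$. The key identity is $\lambda_{\mathrm{max}}(H^{2})=(\max_{i}|\lambda_{i}(H)|)^{2}=\max(\lambda_{\mathrm{max}}(H),-\lambda_{\mathrm{min}}(H))^{2}$, combined with the reflection symmetry $H\stackrel{d}{=}-H$ of the density \eqref{GUEdensity}. For this normalisation the semicircle law has support $[-2\sqrt{N},2\sqrt{N}]$ and the classical GUE Tracy--Widom theorem gives
\begin{equation*}
N^{1/6}\bigl(\lambda_{\mathrm{max}}(H)-2\sqrt{N}\bigr)\Rightarrow F_{\mathrm{GUE}},
\end{equation*}
with the identical marginal limit for $N^{1/6}(-\lambda_{\mathrm{min}}(H)-2\sqrt{N})$. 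If the two edge fluctuations are asymptotically \emph{independent}, the theorem follows from a one-line rescaling, so the whole content of Theorem \ref{SVGUE} lies in this independence.

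The heart of the argument is therefore to prove the edge independence, which I would obtain from the determinantal structure of the GUE. Writing $K_{N}^{\mathrm{H}}$ for the Hermite correlation kernel associated with the weight $e^{-x^{2}/2}$ of size $N$, the joint gap probability equals the Fredholm determinant
\begin{equation*}
\mathbb{P}\bigl(\lambda_{\mathrm{max}}\le 2\sqrt{N}+N^{-1/6}s_{+},\ -\lambda_{\mathrm{min}}\le 2\sqrt{N}+N^{-1/6}s_{-}\bigr)=\det\bigl(I-K_{N}^{\mathrm{H}}|_{I_{+}\cup I_{-}}\bigr),
\end{equation*}
with $I_{\pm}$ the corresponding semi-infinite intervals at the right and left edge. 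Splitting the restricted operator into the four blocks indexed by $I_{+}$ and $I_{-}$, I would show that the two off-diagonal (cross-edge) blocks tend to zero in trace norm as $N\to\infty$, whereas each diagonal block converges to an Airy operator on the rescaled edge. Since the two arguments of the cross kernel sit at opposite macroscopic edges of the spectrum, the Christoffel--Darboux formula for $K_{N}^{\mathrm{H}}$ together with the Plancherel--Rotach asymptotics of the Hermite polynomials $H_{N},H_{N-1}$ yield exponentially decaying pointwise bounds on $K_{N}^{\mathrm{H}}(x,y)$ for $x\approx 2\sqrt{N}$, $y\approx -2\sqrt{N}$, which suffice to give the required trace-norm smallness. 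The two diagonal blocks then contribute the standard Airy--Fredholm determinant, producing $F_{\mathrm{GUE}}(s_{+})F_{\mathrm{GUE}}(s_{-})$ in the limit. This decoupling step is the main obstacle; the pointwise estimates are classical but some care is needed to upgrade them to trace-class convergence rather than merely pointwise convergence.

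Granted asymptotic independence, the conclusion is a direct rescaling. Setting $\mu_{N}=\max(\lambda_{\mathrm{max}}(H),-\lambda_{\mathrm{min}}(H))$ and $\xi_{N}=N^{1/6}(\mu_{N}-2\sqrt{N})$, the previous paragraph yields $\xi_{N}\Rightarrow \max(M_{+},M_{-})$ with $M_{\pm}$ i.i.d.\ $F_{\mathrm{GUE}}$. Expanding $\mu_{N}^{2}-4N=(\mu_{N}-2\sqrt{N})(\mu_{N}+2\sqrt{N})$ then gives
\begin{equation*}
2^{-4/3}N^{-1/3}\bigl(\mu_{N}^{2}-4N\bigr)=2^{-4/3}\bigl(4\xi_{N}+N^{-2/3}\xi_{N}^{2}\bigr)\Rightarrow 2^{2/3}\max(M_{+},M_{-}),
\end{equation*}
whence $\mathbb{P}\bigl(2^{-4/3}N^{-1/3}(\lambda_{\mathrm{max}}(H^{2})-4N)\le x\bigr)\to F_{\mathrm{GUE}}(2^{-2/3}x)\,F_{\mathrm{GUE}}(2^{-2/3}x)$, as claimed. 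An alternative route would be to degenerate Theorem \ref{largestcritthm} in the limit $\kappa\to 0^{+}$ with $m=0$ and $M=N$, bypassing edge-independence entirely; this, however, requires interchanging the limits $N\to\infty$ and $\kappa\to 0$, and establishing the necessary uniform-in-$\kappa$ control of $F^{(\mathrm{soft})}(\kappa,\pi;x)$ near $\kappa=0$ appears more intricate than the direct determinantal argument above.
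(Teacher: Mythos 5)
Your reduction, rescaling, and final limit computation are exactly those of the paper's proof: both arguments write $\lambda_{\max}(H^{2})=\max(\lambda_{\max}(H),-\lambda_{\min}(H))^{2}$, appeal to Tracy--Widom at each edge, and then expand $\mu_N^{2}-4N=(\mu_N-2\sqrt{N})(\mu_N+2\sqrt{N})$ to obtain the constant $2^{2/3}$. The one substantive difference is in how the asymptotic independence of the two edges is handled: you propose to re-derive it from the Fredholm determinant of the Hermite kernel by showing the cross-edge blocks are small in trace norm, while the paper simply invokes the known result of Bianchi--Debbah--Najim (see also Bornemann and Basor--Chen--Zhang), which is proved along essentially the lines you sketch, so your version amounts to re-proving a citable lemma. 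The paper additionally gives a second, shorter proof that avoids edge independence entirely, via the Forrester / Edelman--La Croix superposition: the singular values of the GUE coincide in distribution with the union of the singular values of two independent anti-GUE matrices of sizes $N$ and $N+1$, so the largest squared singular value is the maximum of two independent Tracy--Widom quantities and the factorisation of the limit law is immediate. This is genuinely different from, and cleaner than, the $\kappa\to 0^{+}$ degeneration of Theorem \ref{largestcritthm} that you correctly set aside as requiring extra uniformity.
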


\begin{proof}

 Let $\lambda_{\mathrm{max}}(H)$  and $\lambda_{\mathrm{min}}(H)$ be the largest and smallest eigenvalues of $H$,  then
\begin{align*}
  &\mathbb{P}\Big(2^{-\frac{4}{3}}N^{-\frac{1}{3}}\big(\lambda_{\mathrm{max}}(H^2)-4N\big) \leq   x
   \Big)\\
 &= \mathbb{P}\Big(\frac{\lambda_{\mathrm{max}}(H)}{\sqrt{N}}\leq 2\big(1+2^{-\frac{2}{3}}N^{-\frac{2}{3}}x\big)^{\frac{1}{2}}, \frac{\lambda_{\mathrm{min}}(H)}{\sqrt{N}}\geq -2\big(1+2^{-\frac{2}{3}}N^{-\frac{2}{3}}x\big)^{\frac{1}{2}}\Big)\\
 &=\mathbb{P}\Big( N^{\frac{2}{3}} \big(\frac{\lambda_{\mathrm{max}}(H)}{\sqrt{N}}-2\big) \leq 2^{-\frac{2}{3}}x+\bo(N^{-\frac{2}{3}}),
  N^{\frac{2}{3}} \big(\frac{\lambda_{\mathrm{min}}(H)}{\sqrt{N}}+2\big) \geq -2^{-\frac{2}{3}}x-\bo(N^{-\frac{2}{3}})\Big).
  \end{align*}
Since both the centered and rescaled largest and smallest eigenvalues of the GUE  asymptotically  follow the same GUE Tracy-Widom distribution,   by asymptotic independence  of largest  and smallest eigenvalues of the GUE (see   \cite{BDN} or \cite{Bo10,BCZ}), we see that  the required result immediately from  asymptotic independence  in \cite[Corollary 1]{BDN}.

 We can also give  another different proof by using  the superposition representation of singular values of  the GUE as  the union of the singular values of two independent anti-GUE matrices,  which is proved  by
Forrester  \cite[eq.(2.6)]{forrester06}  and Edelman and La Croix \cite[Thm. 1]{Edelman}, we see
that the largest singular value of the GUE  is distributed as maximum of two  independent largest eigenvalues of  anti-GUE.   Since the largest eigenvalue for each anti-GUE, after the same rescalings,  follows the same GUE Tracy-Widom distribution, we can complete the proof.
\end{proof}

We remark that Theorem \ref{SVGUE}  should hold for  more general random matrix ensembles with even weights, see  \cite{forrester06} and \cite{BF16}.

\subsection{Crossovers}
At the critical value of $\tau=1-2^{\frac{2}{3}}N^{-\frac{1}{3}}\kappa$, for any fixed $\kappa>0$, the limiting  correlation functions at the soft edge are described by a Pfaffian point process with crossover kernel  \eqref{mkernel}.   And intuitively, it seems plausible to us that \eqref{mkernel} approaches the Airy kernel of the GUE as $\kappa \to \infty$ , while    $\kappa \to  0$  it could correspond  to that of the largest squared singular value of   GUE.

\begin{prop}  \label{crosslimit}
With a $2\times 2$ matrix kernel  \eqref{mkernel},  we have
 \begin{equation}
  \lim_{\kappa \to 0}K^{(\mathrm{soft})}(\kappa, \pi;u, v) = K^{(\mathrm{soft})}(0, \pi;u, v), \label{0mkernel}
\end{equation}
and
 \begin{equation}
  \lim_{\kappa \to \infty} \diag((2\kappa)^{-m},1)K^{(\mathrm{soft})}(\kappa, \pi+\kappa;u, v)\diag(1,(2\kappa)^{m}) =K^{(\mathrm{soft})}(\infty, \pi;u, v),\label{infmkernel}
\end{equation}
where
\begin{equation*}
K^{(\mathrm{soft})}(\infty, \pi;u, v)   =  \begin{bmatrix}
       0 &  K_{\mathrm{Airy}}(\pi;u,v) \\
       - K_{\mathrm{Airy}}(\pi;v,u)  & 0
        \end{bmatrix}.
\end{equation*}
 \end{prop}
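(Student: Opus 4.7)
I would verify both limits by pointwise asymptotic analysis of the double contour integrals \eqref{Ssoft}--\eqref{ISsoft}, using dominated convergence to pass the limit inside the integrals.

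For the continuity statement \eqref{0mkernel}, the contours $\mathcal{C}_{>}$ and $\mathcal{C}_{<}$ depend only on $\delta \in (0, \min\{\pi_k\})$ and are independent of $\kappa$; on these fixed contours the integrands of $DS^{(\mathrm{soft})}$, $S^{(\mathrm{soft})}$ and $IS^{(\mathrm{soft})}$ are jointly continuous in $(\kappa, z, w)$ at $\kappa = 0$, with Airy-type decay $|e^{\pm\frac{1}{3}(w - \kappa)^3}|$ on the arms that is uniform for $\kappa$ in a bounded neighborhood of $0$. A $\kappa$-independent integrable majorant follows by bounding the exponential separately from the rational part, and the dominated convergence theorem delivers the entrywise limit $K^{(\mathrm{soft})}(\kappa, \pi; u, v) \to K^{(\mathrm{soft})}(0, \pi; u, v)$.

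For \eqref{infmkernel} I would first shift each integration variable by $\kappa$, substituting $z \mapsto z + \kappa$ and $w \mapsto w + \kappa$ after the parameter replacement $\pi_k \mapsto \pi_k + \kappa$. The cubic phases collapse to the pure Airy exponentials $e^{\frac{1}{3} w^3 - u w}$ and $e^{-\frac{1}{3} z^3 + v z}$, and the shifted contours $\mathcal{C}_{>} - \kappa$, $\mathcal{C}_{<} - \kappa$ can be deformed back to the fixed $\mathcal{C}_{>}$, $\mathcal{C}_{<}$ without crossing any singularity: the simple poles at $z, w = \pi_k$ remain strictly to the right of the tips, the zeros of $z + w + 2\kappa$ stay near $\mathrm{Re}(\cdot) = -\kappa$ on the far left, and the branch points of $\sqrt{z + \kappa}$, $\sqrt{w + \kappa}$ at $-\kappa$ are likewise avoided by the deformation strip. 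With the contours fixed, the uniform expansions $1/\sqrt{4(z + \kappa)(w + \kappa)} \sim 1/(2\kappa)$, $(z - w)/(z + w + 2\kappa) \sim (z - w)/(2\kappa)$ and $(z \pm \pi_k + 2\kappa)/(z \mp \pi_k) \sim 2\kappa/(z \mp \pi_k)$, with errors $O(1/\kappa)$ uniform in $(z, w)$ on the fixed contours, show that in $S^{(\mathrm{soft})}$ the various powers of $2\kappa$ cancel exactly and the pointwise limit of the integrand matches that of $K_{\mathrm{Airy}}(\pi; u, v)$; in $DS^{(\mathrm{soft})}$ and $IS^{(\mathrm{soft})}$ the gauge factors $(2\kappa)^{\mp m}$ are designed to absorb the leading power of $2\kappa$ produced by the spike-product, leaving a residual $(2\kappa)^{-2}$ from the square root and crossing factor that forces both diagonal entries to zero.

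The main obstacle is the rigorous verification of dominated convergence for the rescaled diagonal entries, for which the pointwise integrand still carries a large power of $\kappa$ before the gauge factor is applied. I would bypass this by working with the integrable form \eqref{repDS}--\eqref{repIS}, which reduces the whole problem to the one-dimensional integrals $\phi_{1,p}$ and $\phi_{2,p}$. By residue calculus at the poles $\pi_k + \kappa$ enclosed by $\mathcal{C}_{<}$, each $\phi_{i,p}(\kappa, \pi + \kappa; \cdot)$ admits a clean asymptotic of the form (explicit power of $2\kappa$) times a limiting generalized Airy function $\int e^{w^3/3 - u w}/\prod_{k}(w - \pi_k)\, dw$; the antisymmetric bilinear pairings in \eqref{repDS} and \eqref{repIS} then produce the required cancellations forcing the diagonal entries to vanish in the limit, while the pairings in \eqref{repS} recover the standard Cauchy-Binet expansion of $K_{\mathrm{Airy}}(\pi; u, v)$ in terms of the same generalized Airy functions.
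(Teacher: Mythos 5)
Your approach coincides with the paper's own proof: continuity in $\kappa$ on fixed contours for \eqref{0mkernel}, and for \eqref{infmkernel} the replacement $\pi_k \mapsto \pi_k + \kappa$ followed by the shift $z, w \mapsto z + \kappa, w + \kappa$, a contour deformation back to fixed $\mathcal{C}_{>}, \mathcal{C}_{<}$, and pointwise asymptotics. Your additional care about the branch point at $-\kappa$ and dominated convergence is sound, and the detour through \eqref{repDS}--\eqref{repIS} is a reasonable way to handle the diagonal entries.

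There is, however, a power-counting gap in your treatment of $DS^{(\mathrm{soft})}$, and it is shared with the paper's own sketch. After the replacement and shift the spike product in \eqref{DSsoft} becomes $\prod_{k=1}^{m} \frac{(z + \pi_k + 2\kappa)(w + \pi_k + 2\kappa)}{(z - \pi_k)(w - \pi_k)}$, and \emph{all} $2m$ shifted factors are $\sim 2\kappa$, so the spike product grows like $(2\kappa)^{2m}$, not $(2\kappa)^{m}$ as your argument implicitly requires. Together with the $(2\kappa)^{-2}$ from the square root and crossing factor this gives $DS^{(\mathrm{soft})}(\kappa, \pi + \kappa; u, v) = \bo(\kappa^{2m-2})$, whence $(2\kappa)^{-m} DS^{(\mathrm{soft})}(\kappa, \pi + \kappa; u, v) = \bo(\kappa^{m-2})$, which vanishes only for $m \leq 1$. (The $IS$ side is fine: its spike product is $\bo(\kappa^{-2m})$, so $(2\kappa)^{m} IS^{(\mathrm{soft})} = \bo(\kappa^{-m-2})$ always vanishes.) Your integrable-form route confirms this: after the shift one finds $\phi_{1,p}(\kappa, \pi + \kappa; \cdot) \sim (2\kappa)^{p - 1/2}$ and $\phi_{2,p}(\kappa, \pi + \kappa; \cdot) \sim (2\kappa)^{-p - 1/2}$, so the $p = m$ term of \eqref{repDS} is $\bo(\kappa^{2m-2})$, and no antisymmetry cancellation rescues it for $m \geq 2$ since the leading-order integral factorises into $\phi$-type pieces that do not vanish identically. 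The conjugating power that actually kills both diagonal entries is $(2\kappa)^{\mp 2m}$ (any exponent strictly between $2m-2$ and $2m+2$ works). The paper's sketch asserts $(2\kappa)^{-m} DS^{(\mathrm{soft})}(\kappa, \pi+\kappa;u,v)=\bo(\kappa^{-2})$, which is the same slip, so you are reproducing rather than introducing it; but your justification that the gauge factors absorb the leading spike power should be corrected, or the statement should be restricted to $m \leq 1$.
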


\begin{proof}
Obviously, \eqref{0mkernel} follows from the definition of  $K^{(\mathrm{soft})}(\kappa, \pi;u, v)$.  In \eqref{Ssoft},  \eqref{DSsoft} and  \eqref{ISsoft}, replace   $\pi_{k}$ by  $\pi_{k}+\kappa$ and  make change of variables $z,w \to z+\kappa, w+\kappa$,  as $\kappa \to \infty$ we have
  \begin{align*}
 (2\kappa)^{-m} DS^{(\mathrm{soft})}(\kappa, \pi+\kappa;u,v)=\bo(\kappa^{-2}),\\
  (2\kappa)^{m} IS^{(\mathrm{soft})}(\kappa, \pi+\kappa;u,v)=\bo(\kappa^{-2}),
\end{align*}
and
 \begin{align*}
 S^{(\mathrm{soft})}(\kappa, \pi+\kappa;u,v) \to K_{\mathrm{Airy}}(\pi;u,v).
\end{align*}
This completes the proof of \eqref{infmkernel}.
\end{proof}

Furthermore,
as to  the distribution $F^{(\mathrm{soft})}(\kappa,\pi;x)$ defined  in \eqref{pfaffianseries}, we can prove that
it tends to the GUE Tracy-Widom distribution $F_{\mathrm{GUE}}(x)$ as $\kappa \to \infty$,
by following the key steps in  the proof of Theorem \ref{largestcritthm}.
However, as  $\kappa \to 0$ it should tend to
  another distribution   $F_{\mathrm{GUE}}(2^{-\frac{2}{3}}x)F_{\mathrm{GUE}}(2^{-\frac{2}{3}}x)$ as in Theorem \ref{SVGUE}.
 Besides,     as some of parameters $\kappa, \pi_1, \ldots, \pi_m$  change in a certain way,  some distributions    can be obtained from  $F^{(\mathrm{soft})}(\kappa,\pi;x)$, for instance,  the distribution  $(F_{\mathrm{GOE}}(x))^2$.  This is because  it is equal to
$F_{1}(0,x)$ as a  special case of $m=1$ and $\pi_1=0$ in  \eqref{detAiryseries}; see \cite[eq.(3.34)]{BF03} or \cite[Sect. 1.2]{BBP05}.
 %The proof  can be easily  given by following the key steps in  the proof of Theorem \ref{largestcritthm} and here we omit  specific details.

\subsection{Open questions}
We have investigated the complex elliptic Ginibre ensemble with a spike, and derived its singular value PDF as a Pfaffian point process. In the large $N$ limit, we have obtained crossover phenomena at the soft and hard edges  when $\tau$ changes   at the critical rate; see Theorems  \ref{edgecritthm}, \ref{largestcritthm} and \ref{originlimit}. We have also proved the sine kernel in the bulk and the BBP transition of the largest eigenvalue when $\tau$ is fixed; see Theorems \ref{bulk limit} and \ref{edge limit}.   Our  first question is to prove the analogous  results about the real elliptic Ginibre ensemble.

{\bf Question 1}. With the same scalings,  verify the analogous  results  about the real elliptic Ginibre ensemble (with a spike). In particular,  find the  similar  crossover phenomena at the soft and hard edges as in Theorems  \ref{edgecritthm}, \ref{largestcritthm} and \ref{originlimit}.

The second question is to prove  universality of singular values of complex  elliptic random matrices with general entries;  see e.g.  \cite{girko85} and \cite{NO15} for definition.

{\bf Question 2}. Verify local universality of singular values for complex elliptic random matrices under certain higher moment assumptions. In particular, prove the four moment theorem for   complex elliptic random matrices (cf. \cite{TV2010, TV2011}).

Compare  Proposition \ref{crosslimit} and Theorem \ref{SVGUE},  when $m=0$  we can ask

{\bf Question 3}.
Can  $F^{(\mathrm{soft})}(0,0;x)$ given in Definition \ref{pflargest} be simplified  to  the form $F_{2}(2^{-\frac{2}{3}}x)F_{2}(2^{-\frac{2}{3}}x)$?

The last question is about the existence of crossover   phenomenon in the bulk when $\tau$ tends to 1 at a certain rate.

{\bf Question 4}.
Is there a crossover   phenomenon in the bulk for $K_{N}(u,v)$ in  \eqref{matrixkernel} as   $\tau$ tends to 1 at a certain rate?

\
\
\

{\it Acknowledgements:}
We are very grateful to P.J.  Forrester  for his valuable  comments.
D.-Z. Liu was   supported   by the Natural Science Foundation of China \# 11771417, the Youth Innovation Promotion Association CAS  \#2017491,  Anhui Provincial Natural Science Foundation \#1708085QA03 and the Fundamental Research Funds for the Central Universities \#WK0010450002.
%%%%%%%%%%
%Furthermore, we would like to thank the referee for valuable comments, including the map to standard biorthogonal ensembles which provides an alternative proof of Proposition \ref{PropPP}.

%\begin{appendix}

%\end{appendix}
% ----------------------------------------------------------------

\end{document}